\newcommand{\cD}{\mathcal{D}}
\newcommand{\cE}{\mathcal{E}}
\newcommand{\cF}{\mathcal{F}}
\newcommand{\cG}{\mathcal{G}}
\newcommand{\cL}{\mathcal{L}}
\newcommand{\CC}{C^\infty}
\newcommand{\F}{\mathcal{F}}
\renewcommand{\d}{\mathrm d}               
\newcommand{\G}{\mathcal{G}}            
\renewcommand{\O}{\mathcal{O}}             
\renewcommand{\H}{\mathcal{H}}          
\renewcommand{\gg}{\mathfrak{g}}        
\newcommand{\tto}{\rightrightarrows}    
\DeclareMathOperator{\Ker}{Ker}           
\numberwithin{equation}{section}
\newtheorem{theorem}{Theorem}[section]
\newtheorem{lemma}[theorem]{Lemma}
\newtheorem{proposition}[theorem]{Proposition}
\newtheorem{corollary}[theorem]{Corollary}
\theoremstyle{definition}
\newtheorem{definition}[theorem]{Definition}
\newtheorem{example}[theorem]{Example}
\newtheorem{remark}[theorem]{Remark}
\begin{document}

\title{Measures on differentiable stacks}

\author{Marius Crainic}
\address{Mathematical Institute, Utrecht University,
The Netherlands}
\email{m.crainic@uu.nl}

\author{Jo\~ao Nuno Mestre}
\address{Mathematical Institute, Utrecht University,
The Netherlands}
\curraddr{CMUP, Rua do Campo Alegre 687,  4169-007 Porto, Portugal}
\email{jnmestre@gmail.com}

\date{}

\begin{abstract} We introduce and study measures and densities (geometric measures) on differentiable stacks, using a rather straightforward generalization of Haefliger's approach to leaf spaces and to transverse measures for foliations. In general we prove Morita invariance, a Stokes formula which provides reinterpretations in terms of (Ruelle-Sullivan type) algebroid currents, and a Van Est isomorphism. In the proper case we reduce the theory to classical (Radon) measures on the underlying space, we provide explicit (Weyl-type) formulas that shed light on Weinstein's notion of volumes of differentiable stacks; in particular, in the symplectic case, we prove the conjecture left open in Weinstein's ``The volume of a differentiable stack'' \cite{alan-volume}. We also revisit the notion of Haar systems (and the existence of cut-off functions). Our original motivation comes from the study of Poisson manifolds of compact types \cite{PMCT1,PMCT2,PMCT3}, which provide two important examples of such measures: the affine and the Duistermaat-Heckman measures.

\end{abstract}

\maketitle

\textit{Mathematics Subject Classification} (2010). 58H05; 22A22.\newline
\textit{Keywords:} Lie groupoids, differentiable stacks, transverse measures.

\setcounter{tocdepth}{1}
\tableofcontents

\section{Introduction}\label{intro}

In this paper we study measures on differentiable stacks or, equivalently, transverse measures on Lie groupoids. We use the point of view that differentiable stacks can be represented by Morita equivalence classes of Lie groupoids; the intuition is that the stack associated to a Lie groupoid $\G$ over a manifold $M$ models the  space of orbits $M/\G$ of $\G$ (very singular in general!). Accordingly, one uses the notation $M//\G$ to denote the stack represented by $\G$. The framework that differentiable stacks provide for studying singular spaces is a rather obvious extension of Haefliger's philosophy \cite{haefliger2} on leaf spaces (and the transverse geometry of foliations), just that one allows now general Lie groupoids and not only \'etale ones. 

We will show that a rather straightforward extension of Haefliger's approach to transverse measures for foliations \cite{haefliger-minimal} allows one to talk about measures and geometric measures ( = densities) for differentiable stacks. For geometric measures (densities), when we compute the resulting volumes, we will recover the formulas that are taken as definition by Weinstein \cite{alan-volume}; also, using our viewpoint, we prove the conjecture left open in {\it loc.cit}.

We would also like to stress that the original motivation for this paper comes from the study of Poisson manifolds of compact types \cite{PMCT1,PMCT2,PMCT3}; there, the space of symplectic leaves comes with two interesting measures: the integral affine and the Duistermaat-Heckman measures. Although the leaf space is an orbifold, the measures are of a ``stacky'' nature that goes beyond Haefliger's framework. From this point of view, this paper puts those measures into the general framework of measures on stacks; see also the comments below and our entire Section \ref{sec-sympl-gpds}.\\

The measures that we have in mind in this paper are Radon measures on locally compact Hausdorff spaces $X$, interpreted as positive linear functionals on the space $C_{c}(X)$ of compactly supported continuous functions, or $C_{c}^{\infty}(X)$ if $X$ is a manifold (the basic
definitions are collected in Section \ref{measures}). Therefore, for a stack $M//\G$ represented by a Lie groupoid $\G\tto M$, the main problem is to define the space $C_{c}^{\infty}(M//\G)$; this is done in Haefliger's style, by thinking about ``compactly supported smooth functions on the orbit space $M/\G$''. To make sure that the resulting notion of measure, called transverse measures for $\G$, is intrinsic to the stack, we have to prove the following (see Theorem  \ref{theorem-ME} for a more precise statement): 

\begin{theorem} The notion of transverse measure for Lie groupoids is invariant under Morita equivalences.
\end{theorem}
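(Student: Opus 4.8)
The plan is to reduce an arbitrary Morita equivalence to the single elementary move of pulling a groupoid back along a surjective submersion, and then to check that the space $C^{\infty}_{c}(M//\G)$ of Section \ref{measures}, together with its cone of positive functionals, is unchanged under that move. Recall that two Lie groupoids $\G\tto M$ and $\H\tto N$ are Morita equivalent precisely when there is a principal bibundle $P$ carrying commuting principal left $\G$- and right $\H$-actions, with moment maps $\pi_{\G}\colon P\to M$ and $\pi_{\H}\colon P\to N$ that are surjective submersions. For a surjective submersion $\pi\colon Q\to M$ write $\pi^{!}\G\tto Q$ for the pullback groupoid, whose arrows are triples $(q',g,q)$ with $\s(g)=\pi(q)$ and $\t(g)=\pi(q')$. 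The structural fact I will use is the canonical isomorphism of Lie groupoids over $P$,
\[
\pi_{\G}^{!}\G \;\cong\; \pi_{\H}^{!}\H ,
\]
sending the arrow given by $g\in\G$ (from $p$ to $p'$) to the unique $h\in\H$ with $(g\cdot p)\cdot h=p'$, which exists and is unique because $\H$ acts freely and transitively along the fibres of $\pi_{\G}$ (and symmetrically for $\G$ along $\pi_{\H}$). Granting this, it suffices to prove the two pullback isomorphisms $C^{\infty}_{c}(M//\G)\cong C^{\infty}_{c}(P//\pi_{\G}^{!}\G)$ and $C^{\infty}_{c}(N//\H)\cong C^{\infty}_{c}(P//\pi_{\H}^{!}\H)$; composing them with the displayed identification transports transverse measures from one side to the other.

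Thus the theorem reduces to a single claim: pullback along a surjective submersion $\pi\colon Q\to M$ induces a canonical isomorphism $C^{\infty}_{c}(Q//\pi^{!}\G)\cong C^{\infty}_{c}(M//\G)$ matching positive functionals. To build it I would use fibre integration. Since $C^{\infty}_{c}(M//\G)$ is a quotient of compactly supported sections over $M$ (twisted by densities along the orbits) by the relations generated by the $\G$-action, and the $\pi^{!}\G$-orbit of $q$ is exactly $\pi^{-1}$ of the $\G$-orbit of $\pi(q)$, integrating a compactly supported section on $Q$ along the fibres of $\pi$ produces a compactly supported section on $M$. I would then check that (i) the density twist in the definition of $C^{\infty}_{c}(-//-)$ is exactly the one for which $\pi_{*}$ lands in the correct twisted space, (ii) $\pi_{*}$ carries the $\pi^{!}\G$-relations into the $\G$-relations and so descends to the quotients, and (iii) the induced map is a bijection: surjectivity because $\pi$ admits local sections, so a section on $M$ can be spread over $Q$ via a partition of unity and a fibrewise bump of total integral one; injectivity because any section with $\pi_{*}$ trivial modulo the $\G$-action is already a $\pi^{!}\G$-relation.

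Positivity is the last point. I would verify that $\pi_{*}$ maps the cone of classes of nonnegative sections onto the corresponding cone over $M$, so that the transpose identifies positive functionals with positive functionals, i.e.\ transverse measures with transverse measures. Finally I would record naturality of the whole construction in the auxiliary choices (fibrewise densities, partitions of unity, local sections), so that the composite isomorphism attached to a Morita equivalence depends only on the bibundle $P$ and is compatible with isomorphisms of bibundles; this is what makes the statement intrinsic to the stack $M//\G$ rather than to the presentation.

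The step I expect to be the main obstacle is (ii)--(iii) in the pullback case, namely that fibre integration descends to an \emph{isomorphism} of the quotient spaces. The difficulty is essentially bookkeeping with densities: because $\pi$ has positive-dimensional fibres one must integrate, and one has to confirm that the density bundle appearing in the definition of $C^{\infty}_{c}(-//-)$ is precisely the one that renders $\pi_{*}$ canonical and makes the two systems of relations correspond exactly. Once the density conventions are pinned down, surjectivity and positivity are straightforward, the injectivity estimate is the only genuinely technical input, and the reduction through the bibundle in the first paragraph is purely formal.
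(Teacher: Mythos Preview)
Your plan is sound and would work, but it takes a different route from the paper. The paper does not reduce to the pullback case; instead it proves the stronger statement that the entire differentiable homology $H_{\bullet}(\G)$ is Morita invariant (Theorem~\ref{thm-Mor-inv}) by building a double complex $C_{\bullet,\bullet}(P)$ directly on the bibundle $P$, with augmentations $\int_{\alpha}$ and $\int_{\beta}$ to the complexes computing $H_{\bullet}(\G)$ and $H_{\bullet}(\H)$; exactness of the rows and columns is supplied by Lemma~\ref{conf2} (the case of a principal action), and the degree-zero statement $C_{c}^{\infty}(M//\G)\cong C_{c}^{\infty}(N//\H)$ drops out as a special case. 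Your approach is more elementary and stays in degree zero throughout: you factor the bibundle through the isomorphism $\pi_{\G}^{!}\G\cong\pi_{\H}^{!}\H$ of pullback groupoids over $P$, then establish $C_{c}^{\infty}(Q//\pi^{!}\G)\cong C_{c}^{\infty}(M//\G)$ by explicit fibre integration. What the paper's route buys is a single homological argument that simultaneously handles all degrees and all coefficients; what your route buys is a concrete map requiring no spectral-sequence machinery, and you are more explicit about positivity than the paper is.

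On your acknowledged obstacle: the density bookkeeping is exactly as you suspect. The algebroid $A'$ of $\pi^{!}\G$ sits in a short exact sequence $0\to\ker(d\pi)\to A'\to\pi^{*}A\to 0$, so $\mathcal{D}_{A'}\cong\mathcal{D}_{\ker(d\pi)}\otimes\pi^{*}\mathcal{D}_{A}$, and fibre integration over $\pi$ is then the canonical map $C_{c}^{\infty}(Q,\mathcal{D}_{A'})\to C_{c}^{\infty}(M,\mathcal{D}_{A})$. For injectivity in your step (iii), note that $\G(\pi)$ sits inside $\pi^{!}\G$ as the arrows with trivial $\G$-component; once you lift a $\G$-relation on $M$ to a $\pi^{!}\G$-relation on $Q$ (via a fibrewise bump of integral one, as in your surjectivity argument), the remainder has vanishing fibre integral and is handled precisely by the paper's Lemma~\ref{conf1}. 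So the injectivity input you need is already isolated in the paper, just not assembled into your pullback statement.
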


We will also look at measures that are geometric. On a manifold $X$, these correspond to densities $\rho\in \mathcal{D}(X)$ on $X$ - for which the resulting integration is based on the standard integration on $\mathbb{R}^n$. We will show that the same discussion applies to differentiable stacks. For a groupoid $\G\tto M$, the object that is central to the discussion (and which replaces the density bundle $\mathcal{D}_{X}$ from the classical case) is the transverse density bundle 
\[ \mathcal{D}_{\G}^{\textrm{tr}}:= \mathcal{D}_{A^*}\otimes \mathcal{D}_{TM} \]
(the tensor product of the density bundle of the dual of the Lie algebroid $A$ of $\G$ and the density bundle of $TM$). This is a representation of $\cG$ or, equivalently, it represents a vector bundle of the stack $M//\G$; morally, this is the density bundle $\mathcal{D}_{M//\G}$ of the stack $M//\G$. 

While geometric measures on a manifold $X$ correspond to sections of $\mathcal{D}_X$, geometric measures on $M//\cG$ correspond to sections of the vector bundle represented by $\mathcal{D}_{\G}^{\textrm{tr}}$ or, more precisely, by 
$\cG$-invariant sections of $\mathcal{D}_{\G}^{\textrm{tr}}$. Following Haefliger's viewpoint, these will be called transverse densities for $\G$. Such a transverse density $\sigma$ can be decomposed  as 
\[ \sigma= \rho^{\vee}\otimes \tau,\] 
where $\rho$ is a density on the Lie algebroid and $\tau$ one on $M$; such decompositions are useful for writing down explicit formulas for the resulting measures. 

While the general theory may seem rather abstract, it becomes more concrete when applied to stacks represented by Lie groupoids $\G\tto M$ which are proper in the sense that the map $(s, t): \G\rightarrow M$ is proper ($s$ is the source map, and $t$ is the target). This notion generalizes compactness of Lie groups and properness of Lie group actions; in Haefliger's picture, it corresponds to leaf spaces that are orbifolds. Section \ref{sec-proper} is devoted to the proper case. The key remark is that  
the orbit space 
\[ B:= M/\G\]
(with the quotient topology) is not so pathological anymore: it is a locally compact Hausdorff space and there is even a well-behaved notion of ``smooth functions on $B$'': functions that, when pulled-back to $M$, are smooth. In particular, one can look at (standard) Radon measure on the topological space $B$. We will prove:

\begin{theorem} For proper groupoids $\G$, averaging induces a 1-1 correspondence: 
\[ 
\left\{\txt{transverse\\ measures for $\cG$ \,}\right\}
\stackrel{1-1}{\longleftrightarrow}
\left\{\txt{standard\ Radon\ measures\\ on\ the\ orbit\ space\ $B$}\right\}
.\]
\end{theorem}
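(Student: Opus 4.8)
The plan is to exploit the two structures that properness makes available, as recalled above: the orbit space $B=M/\cG$ is locally compact Hausdorff, and $\cG$ admits a right-invariant Haar system $\{\lambda^{x}\}_{x\in M}$ together with a cut-off function $c\in C_{c}^{\infty}(M)$, normalized so that $\int_{s^{-1}(x)}c(t(g))\,d\lambda^{x}(g)=1$ for all $x$ and so that $\pi\colon M\to B$ is proper on $\mathrm{supp}(c)$. Out of the Haar system I would build the averaging operator $I$, sending a compactly supported representative $\phi$ of a class in $C_{c}^{\infty}(M//\cG)$ to its fibrewise integral $x\mapsto\int_{s^{-1}(x)}\phi(t(g))\,d\lambda^{x}(g)$; right-invariance of $\lambda$ guarantees that $I(\phi)$ is $\cG$-invariant, hence a compactly supported smooth function on $B$, and that $I$ annihilates the Haefliger coboundaries $s_{!}-t_{!}$, so that it descends to $I\colon C_{c}^{\infty}(M//\cG)\to C_{c}^{\infty}(B)$. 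In the opposite direction the cut-off provides a lift $\sigma_{c}\colon h\mapsto[\,c\cdot\pi^{*}h\,]$ from functions on $B$ to classes on $M//\cG$.

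With these maps I would define the correspondence as follows. A transverse measure $\nu$, i.e.\ a positive linear functional on $C_{c}^{\infty}(M//\cG)$, is sent to $\bar\nu:=\nu\circ\sigma_{c}$ on $C_{c}^{\infty}(B)$; since $c\ge 0$ and $\pi^{*}$ preserves positivity, $\bar\nu$ is positive, and after checking continuity in the inductive-limit topology and extending from $C_{c}^{\infty}(B)$ to $C_{c}(B)$, the Riesz--Markov theorem yields a Radon measure on $B$. Conversely a Radon measure $\mu$ on $B$ is sent to $\nu_{\mu}:=\bigl(\textstyle\int_{B}(-)\,d\mu\bigr)\circ I$ on $C_{c}^{\infty}(M//\cG)$, which is positive because $I$ and $\mu$ are. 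Both assignments are manifestly linear and order-preserving.

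One composite is immediate from the normalization of $c$: for $h\in C_{c}^{\infty}(B)$, using that $\pi(t(g))=\pi(s(g))$ along any arrow $g$, one computes $I(\sigma_{c}(h))=I(c\cdot\pi^{*}h)=h$, whence $\overline{\nu_{\mu}}=\mu$ for every $\mu$. The real content of the theorem is the reverse composite $\nu_{\bar\nu}=\nu$, which amounts to showing $\sigma_{c}\circ I=\mathrm{id}$ on $C_{c}^{\infty}(M//\cG)$; equivalently, that for every representative $\phi$ the element $\phi-c\cdot\pi^{*}(I\phi)$ is annihilated by every transverse measure, realized by an explicit primitive under $s_{!}-t_{!}$. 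This is the step I expect to be the main obstacle: I would obtain the primitive by a Fubini-type manipulation of the double integral $\int_{s^{-1}(x)}c(t(g))\,\phi(\cdots)\,d\lambda^{x}(g)$, using right-invariance to recognize the relevant integrand as $\partial$ of a compactly supported section on $\cG$. Properness enters twice here, to keep all supports compact under $\pi$ and the fibre integrations, and to legitimize the interchange of integrals and the smoothness of $I\phi$.

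Finally I would tie up the regularity loose ends: that the construction is independent of the auxiliary choices (two cut-offs $c,c'$ yield liftings with $I(\sigma_{c}(h))=I(\sigma_{c'}(h))$, so by the injectivity of $I$ just established their difference is annihilated by every $\nu$, giving $\nu\circ\sigma_{c}=\nu\circ\sigma_{c'}$), and that $C_{c}^{\infty}(B)$ is dense in $C_{c}(B)$ in the sense needed for positive functionals to extend uniquely — so that the target is genuinely the space of standard Radon measures on $B$. Assembling these, averaging $\nu\mapsto\bar\nu$ and its inverse $\mu\mapsto\nu_{\mu}$ give the asserted bijection.
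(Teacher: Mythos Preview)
Your approach is essentially the same as the paper's proof of Theorem~\ref{thm-iso-Cc(B)}: both establish that $I$ (the paper's $\mathrm{Av}$) and $\sigma_c$ are mutual inverses between $C_c^{\infty}(M//\cG)$ and $C_c^{\infty}(B)$, verify $I\circ\sigma_c=\mathrm{id}$ from the normalization of $c$, and for $\sigma_c\circ I=\mathrm{id}$ exhibit an explicit primitive. The primitive you are anticipating is simply $u=s^*\phi\cdot t^*c\in C_c^{\infty}(\cG)$: one checks directly that $s_!^{\lambda}(u)(x)=\phi(x)$ by the normalization of $c$, while $t_!^{\lambda}(u)(x)=c(x)\,I\phi(x)$ after rewriting the $t$-fiber integral via inversion, so $(s_!-t_!)(u)=\phi-c\cdot\pi^*(I\phi)$. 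The genuine ``Fubini-type'' double integral is actually needed earlier, in checking that $I$ annihilates $\mathrm{Im}(s_!-t_!)$; the paper does this by interpreting both terms as integrals over all arrows within the orbit through $x$.

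One small correction: you wrote $c\in C_c^{\infty}(M)$, but cut-off functions are not compactly supported in general (they cannot be, unless $B$ is compact). The correct hypothesis is $c\in C^{\infty}(M)$ with $\pi|_{\mathrm{supp}(c)}$ proper; this is exactly what ensures $c\cdot\pi^*h$ and $s^*\phi\cdot t^*c$ are compactly supported, so your argument goes through unchanged once you relax that assumption.
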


For a more detailed statement, please see Theorem \ref{thm-iso-Cc(B)}. This theorem is rather surprising: it says that, from the point of view of measure theory, proper stacks can be treated as ordinary topological spaces. Although this may sound a bit disappointing, there is a very important gain here: the {\it geometric} measures on $B$, which depend on the full stack structure and not just on the underlying space. More concretely: one obtains abstract Radon measures $\mu_{\sigma}$ on $B$ out of more geometric data, namely the transverse densities $\sigma= \rho^{\vee}\otimes \tau$ mentioned above; the resulting integration can be computed by rather explicit Weyl-type formulas:
\begin{equation}\label{Weyl-type-0} 
\int_M f(x)\ d\mu_{\tau}(x) =\int_B\left(\int_{\O_b}f(y)\ d\mu_{\rho_{\O_b}}(y) \right)\ d\mu_{\sigma}(b),
\end{equation}
This is explained in our Proposition \ref{prop-tr-dens-B}, which is one of the main results of this paper (see also the comment that follows the proposition).

Another digression is provided by the case of symplectic groupoids, which is treated in Section \ref{sec-sympl-gpds}. This case presents several particularities, due to the presence of symplectic/Poisson geometry. In particular, we recast the measures from \cite{PMCT2}:

\begin{theorem} Any regular proper symplectic groupoid $(\G, \Omega)$ over $M$ carries a canonical transverse density. This gives rise to a canonical measure $\mu_{\mathrm{aff}}$ on the leaf space $B= M/\G$, called the affine measure. 

If $\G$ is $s$-proper, but not necessarily regular, then it carries a canonical transverse density. This gives rise to a canonical measure  $\mu_{DH}$ on $B$, called the Duistermaat-Heckman measure; it is the  push forward of the Liouville measure induced by $\Omega$.
\end{theorem}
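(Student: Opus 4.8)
The plan is to produce the canonical transverse density explicitly from the two pieces of geometry that a (proper) symplectic groupoid carries, and then feed it into the machinery already developed (the proper-case correspondence of Theorem~\ref{thm-iso-Cc(B)} and the Weyl-type formula of Proposition~\ref{prop-tr-dens-B}). Throughout I use the canonical identification of the Lie algebroid $A$ of a symplectic groupoid with the cotangent algebroid $T^*M$ of the induced Poisson structure; in particular $A^*\cong TM$ and the anchor is $\pi^\sharp\colon T^*M\to TM$, whose kernel at $x$ is the (abelian) isotropy $\gg_x=(T_x\O)^\circ$ and whose image is the tangent $T_x\O$ to the symplectic leaf through $x$. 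Two canonical ingredients are then at hand: first, the leafwise symplectic forms give a leafwise Liouville density $\lambda$; second, properness forces the isotropy groups to be compact, hence tori, so that $\exp\colon\gg_x\to\G_x$ has a lattice $\Lambda_x$ as kernel, and this lattice bundle $\Lambda\subset\gg=\Ker\pi^\sharp$ produces a canonical density $\rho_\Lambda$ along the isotropy directions.

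First I would treat the regular case. Here the symplectic leaves form a regular foliation $\F$, and the anchor sequence $0\to\gg\to A\xrightarrow{\pi^\sharp}T\F\to0$ together with $0\to T\F\to TM\to\nu\to0$ let me invoke the canonical isomorphisms $\mathcal D_E\cong\mathcal D_{E'}\otimes\mathcal D_{E''}$ attached to a short exact sequence. I set $\rho:=\rho_\Lambda\otimes\lambda\in\mathcal D_A$ and $\tau:=\lambda\otimes\rho_\Lambda^\vee\in\mathcal D_{TM}$, and define the transverse density as $\sigma:=\rho^\vee\otimes\tau$. Both inputs are intrinsic, so $\sigma$ is canonical; moreover the leafwise Liouville factors cancel against their duals, leaving $\sigma=\rho_\Lambda^\vee\otimes\rho_\Lambda^\vee$, the square of the transverse integral-affine density on $\nu$ --- reassuringly independent of any choice of splitting. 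Feeding $\sigma$ into the Weyl-type formula \eqref{Weyl-type-0} and computing in a foliated chart $M\cong\F\times T$ with $B\cong T$, Fubini collapses the leafwise integration to the Liouville volume and leaves exactly the dual-lattice measure on $T\cong B$; this is the affine measure $\mu_{\mathrm{aff}}$.

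For the $s$-proper, possibly singular case the decomposition above is unavailable, since $T\F$ and $\nu$ are no longer vector bundles. Instead I would build $\sigma$ directly from the multiplicative form: the volume $v_\Omega=\Omega^{m}/m!$ induced by $\Omega$ (with $m=\dim M$, $\dim\G=2m$) is canonical on $\G$, and since $s$ is proper its $s$-fibers are compact, so fiber-integration along $\mathcal D_{T\G}\cong\mathcal D_{T^s\G}\otimes s^*\mathcal D_{TM}$ yields a canonical density on $M$. Multiplicativity of $\Omega$ makes the associated $s$-fiber volumes into a $\G$-invariant Haar system, and packaging these together produces a $\G$-invariant section $\sigma$ of $\mathcal D^{\mathrm{tr}}_\G=\mathcal D_{A^*}\otimes\mathcal D_{TM}$ which restricts to the lattice-square construction on the regular locus. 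By construction the transverse measure $\mu_\sigma=:\mu_{DH}$ is then, via the same Weyl-type disintegration, the push-forward to $B$ of the Liouville measure $v_\Omega$, the $s$-properness guaranteeing convergence of the fiber integrals.

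The main obstacle I anticipate is twofold. The routine-looking but genuinely necessary point is the $\G$-invariance of $\sigma$: one must check that multiplicativity of $\Omega$ really does make the leafwise Liouville density and the isotropy lattice invariant, i.e.\ that they are preserved by the groupoid (and not merely by the infinitesimal algebroid) action. The harder point is the non-regular $DH$ case, where the clean leafwise-$\otimes$-transverse factorization collapses along the singular locus; there one must show that the Liouville/fiber-integration recipe defines a genuine (smooth) transverse density across strata of varying leaf dimension, and that the resulting measure is indeed the push-forward of $v_\Omega$. Verifying that $s_*v_\Omega$ disintegrates as leafwise symplectic volume times $\mu_{DH}$ is the crux of the argument.
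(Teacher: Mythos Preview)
Your regular case is essentially the paper's argument. Your lattice density $\rho_\Lambda\in\Gamma(\mathcal D_{\gg})$ is exactly what the paper calls $\kappa_{\mathrm{Haar}}$ (the Haar density of the identity component $\G_x^0$, which for a torus $\gg_x/\Lambda_x$ is the lattice density), and your $\rho_\Lambda^\vee\otimes\rho_\Lambda^\vee$ is the paper's $\beta_{\mathrm{Haar}}\otimes\beta_{\mathrm{Haar}}$ under the isomorphism (\ref{rewr-Poisson}). One imprecision: the isotropy groups $\G_x$ need not be connected, so they are not tori; only the $\G_x^0$ are. This matters later (the factor $\iota(b)$ in the Weyl formula), and the paper is careful to use $\G_x^0$ throughout Subsection~\ref{The proper regular case}. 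The $\G$-invariance you flag as an obstacle is handled in the paper by noting that (\ref{reg-rewrite}) is an isomorphism of $\G$-representations and that the Haar density is preserved by the adjoint action (being bi-invariant and normalized).

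Your $s$-proper case has the right starting point (fiber-integrate the Liouville density of $\Omega$) but contains a genuine error and an unnecessary detour. The error: you assert that the resulting $\sigma_{DH}$ ``restricts to the lattice-square construction on the regular locus''. This is false. The paper explicitly remarks (end of Subsection~\ref{The general case}) that even when both constructions apply, the affine and Duistermaat--Heckman transverse densities are \emph{different}, related by a Duistermaat--Heckman formula. So you cannot use agreement with the regular case as a consistency check. The detour: you try to argue that the Liouville $s$-fiber volumes form a $\G$-invariant Haar system; this is not how the paper proceeds and is not obviously true. The paper's route is cleaner and avoids this: one shows (cited from \cite{PMCT1,PMCT2}) that $\rho^M_{DH}:=\int_{s\text{-fibers}}|\Omega^{\mathrm{top}}|/\mathrm{top}!$ is a density on $M$ invariant under Hamiltonian flows, and then Proposition~\ref{Poisson-first} converts this directly into the transverse density $\sigma_{DH}=\rho^M_{DH}\otimes\rho^M_{DH}$, with no Haar-system step. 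The push-forward description of $\mu_{DH}$ then follows from Proposition~\ref{prop-tr-dens-B}.
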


See Corollary \ref{corr-sympl-gpd-affine} and Subsection \ref{The general case}. In \cite{PMCT2} it is shown that the two measures are related by a Duistermaat-Heckman formula. As application of our framework, we will provide a new proof of the Weyl-type integration formula from \cite{PMCT2} using (\ref{Weyl-type-0}), and we will prove the characterization of the affine measure $\mu_{\mathrm{aff}}$ that is left as a conjecture in \cite{alan-volume}. 

Returning to the general theory, a surprising (at first) feature of transverse measures (geometric or not!) is that:

\begin{theorem} Any transverse measure for a Lie groupoid $\cG$  satisfies the Stokes formula; hence it  gives rise to a closed algebroid current. If $\cG$ is $s$-connected, this gives rise to a 1-1 correspondence:
\[ 
\left\{\txt{transverse\\ measures for $\cG$ \,}\right\}
\stackrel{1-1}{\longleftrightarrow}
\left\{\txt{(positive) closed $A$-currents of top degree\\ with coefficients in the orientation bundle $\mathfrak{o}_A$ \,} \right\}
.\]
\end{theorem}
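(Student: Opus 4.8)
The plan is to observe that, after unwinding the definitions, both sides of the claimed correspondence are positive functionals on one and the same space, so that the real content of the theorem is the matching of the adjective \emph{closed} on the current side with the adjective \emph{invariant} on the measure side. Recall that a top-degree $A$-current with coefficients in $\mathfrak{o}_A$ is a continuous linear functional on $\Gamma_c(\Lambda^r A^*\otimes\mathfrak{o}_A)$, where $r=\rank(A)$; since $\Lambda^r A^*\otimes\mathfrak{o}_A\cong\cD_A$ canonically, this is the same as a functional on the space $\Gamma_c(\cD_A)$ of compactly supported $A$-densities, and the current is \emph{closed} precisely when it annihilates the image of the (twisted) algebroid differential $d_A\colon \Gamma_c(\Lambda^{r-1}A^*\otimes\mathfrak{o}_A)\to\Gamma_c(\cD_A)$. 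On the other hand, $C^\infty_c(M/\!/\cG)$ is the space of $\cG$-coinvariants of $\Gamma_c(\cD_A)$, so a transverse measure is exactly a positive $\cG$-invariant functional on $\Gamma_c(\cD_A)$. Thus the desired map is simply to descend, respectively lift, the same functional, and everything reduces to two implications between \emph{closed} and \emph{$\cG$-invariant}.

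First I would prove the Stokes formula for an arbitrary $\cG$, i.e.\ that a $\cG$-invariant positive functional $\mu$ is automatically closed. The key algebraic fact is that contraction $\Gamma(A)\otimes\Gamma(\cD_A)\to\Gamma(\Lambda^{r-1}A^*\otimes\mathfrak{o}_A)$, $\alpha\otimes\rho\mapsto\iota_\alpha\rho$, is surjective, so every element in the image of $d_A$ is a sum of terms $d_A(\iota_\alpha\rho)$. Since $d_A\rho=0$ at top degree, the Cartan formula gives $d_A(\iota_\alpha\rho)=\cL_\alpha\rho$, the Lie derivative of the density $\rho$ along $\alpha$ for the canonical $A$-action on $\cD_A$. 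Now $\cL_\alpha\rho$ is the $t$-derivative at $0$ of the pullback of $\rho$ by the flow $\exp(t\alpha)$ through (local) bisections of $\cG$; as these pullbacks differ from $\rho$ by the action of honest arrows of $\cG$, invariance of $\mu$ forces $\mu(\cL_\alpha\rho)=0$, hence $\mu$ kills all $d_A$-exact densities. This uses only local flows, so no connectivity hypothesis is needed; positivity of the resulting current is immediate, since $\mu$ is positive and the identification $\Lambda^r A^*\otimes\mathfrak{o}_A\cong\cD_A$ is an isomorphism of cones of non-negative densities.

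For the converse, and hence for the bijection, I would assume $\cG$ is $s$-connected and show that a positive closed current $T$ is automatically $\cG$-invariant, so that it descends to the coinvariants and defines a transverse measure. By the same Cartan identity, closedness of $T$ is exactly the infinitesimal invariance $T(\cL_\alpha\rho)=0$ for all $\alpha\in\Gamma(A)$ and $\rho\in\Gamma_c(\cD_A)$. The point is then to integrate this infinitesimal invariance to global invariance: when $\cG$ is $s$-connected every arrow is a concatenation of flows of (time-dependent) sections of $A$, so the action of a fixed arrow on $\Gamma_c(\cD_A)$ is obtained by integrating the operators $\cL_\alpha$ along a path in a source-fiber; invariance of $T$ under each one-parameter family, together with connectedness of the source-fibers, yields $T(g\cdot\rho)=T(\rho)$ for all $g\in\cG$. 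Since the two assignments $\mu\mapsto T_\mu$ and $T\mapsto\mu_T$ are both induced by the identity on functionals of $\Gamma_c(\cD_A)$, they are mutually inverse and they preserve positivity; this gives the stated $1$--$1$ correspondence.

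The main obstacle is precisely this last integration step. One has to make rigorous that ``$s$-connected'' lets one reconstruct the full $\cG$-action on $\Gamma_c(\cD_A)$ from the infinitesimal Lie derivatives, which is a Lie-theoretic statement about $\cG$ being generated by flows of its algebroid: one must write a general arrow as an ordered product of time-$t$ flows, check that differentiating the invariance constraint along such a path reproduces exactly the condition $T(\cL_\alpha\rho)=0$, and control the supports so that everything stays within the compactly supported densities on which $T$ is defined. A secondary technical point, used throughout, is the canonical identification $\Lambda^{r}A^*\otimes\mathfrak{o}_A\cong\cD_A$ together with the surjectivity of contraction onto $\Lambda^{r-1}A^*\otimes\mathfrak{o}_A$; these are what let the Stokes formula be phrased purely in terms of the twisted $d_A$ and what guarantee that the correspondence respects positivity. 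I expect the Van~Est isomorphism alluded to earlier to be the clean conceptual device that packages this integration, though a direct flow argument along source-fibers should also suffice.
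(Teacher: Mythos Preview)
There is a genuine gap in both directions, and it is the same gap each time: you conflate two different notions of ``invariance'' for a functional $\mu$ on $\Gamma_c(\cD_A)$. The condition defining a transverse measure is $\mu\circ s_!=\mu\circ t_!$, i.e.\ $\mu$ vanishes on $\mathrm{Im}(s_!-t_!)$; this is \emph{not} the same as invariance under pullback by bisections or ``the action of honest arrows of $\cG$''. In fact the latter phrase is already problematic: $\cD_A$ is not a representation of $\cG$ in general (only $\cD_{A}^{\mathrm{tr}}$ is), so a single arrow does not act on its fibres. Bisections do act on $\Gamma_c(\cD_A)$ via the adjoint action on $A$ and the induced diffeomorphism of $M$, but relating that action to the maps $s_!,t_!$---which are integrations over positive-dimensional $s$-fibres, not evaluations along bisections---is precisely the content you have skipped. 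In the \'etale case the two notions coincide because one can localise $s_!,t_!$ to single arrows; for non-\'etale groupoids this identification is the substance of the theorem, not a preliminary. So your forward step ``invariance of $\mu$ forces $\mu(\cL_\alpha\rho)=0$'' is unjustified, and for the converse, even granting bisection invariance of a closed current $T$, you have not shown that $T$ vanishes on $\mathrm{Im}(s_!-t_!)$.

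The paper's proof proceeds quite differently and avoids bisections and flows entirely. For Stokes it shows the inclusion $\mathrm{Im}(d_A)\subset\mathrm{Im}(s_!-t_!)$ by passing to $\cG$: one identifies $\Omega_c^{\bullet}(A,\mathfrak{o}_A)$ with the $\cG$-coinvariants of the foliated complex $\Omega_c^{\bullet}(\cF(s),\mathfrak{o}_{\cF(s)}\otimes s^*\cD_A)$ via $\int_t$ (Lemma~\ref{lemma-coinv-DeRham}); then a $d_A$-exact element lifts to a $d_s$-exact one, and the augmentation $s_!$ of the fibrewise de~Rham complex kills it, producing the desired preimage under $s_!-t_!$. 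For the converse one uses the simplicial structure of the nerve: given $u=(s_!-t_!)v$, lift $v$ through $\delta_{0\,!}\colon C_c^\infty(\cG_2,\cD^3)\to C_c^\infty(\cG,\cD^2)$ and use the exactness of the columns (Lemma~\ref{conf2}) to rewrite $u=t_!w$ with $s_!w=0$; then $s$-connectedness enters only via the fibrewise Poincar\'e lemma for the submersion $s$ (Lemma~\ref{conf3}), which forces $w$ to be $d_s$-exact and hence $u\in\mathrm{Im}(d_A)$. Your Cartan identity $d_A(\iota_\alpha\rho)=\cL_\alpha\rho$ is correct and does reduce the question to Lie derivatives, but the passage from $\cL_\alpha$ to the coinvariant condition requires these fibre-integration arguments, not an integration of flows.
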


The precise statement is given in Propositions \ref{pre-VE-iso-deg-0} and \ref{VE-iso-deg-0}. The surprise comes from the fact that Stokes  formulas are usually satisfied only by geometric measures. The explanation is that the Stokes formula in this setting involves derivatives only in the longitudinal direction. Note that, when applied to foliations, this construction gives rise to the standard Ruelle-Sullivan current associated to a transverse measure. 

Conceptually, the space $C_{c}^{\infty}(M//\G)$ that we use to define the notion of transverse measure is best understood in terms of differentiable cohomology with compact supports (which, modulo re-indexing, will also be called ``differentiable homology''); one has, directly from the definitions, 
\[ C_{c}^{\infty}(M//\G)= H_0(\G)= H^{r}_{c}(\G),\]
where $r$ is the dimension of the $s$-fibers of $\G$. We will prove

\begin{theorem} $H_{\bullet}(\G)= H^{r- \bullet}_{c}(\G)$ is Morita invariant and it is related to the compactly supported cohomology of the Lie algebroid by a Van Est map 
\[ VE^{\bullet}: H^{\bullet}_{c}(A, \mathfrak{o}_A) \rightarrow H^{\bullet}_{c} (\G) .\]
If the $s$-fibers of $\G$ are homologically $k$-connected, where $k\in \{0, 1, \ldots, r-1\}$ (and $r$ is the rank of $A$), then $VE^0, \ldots, VE^k$ are isomorphisms.
\end{theorem}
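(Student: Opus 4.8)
The plan is to deduce all three assertions from a single Van Est double complex adapted to compact supports and to the orientation twist, reading the comparison maps off as edge homomorphisms and controlling them by the connectivity of the source fibers. Throughout I would use the re-indexing $H_\bullet(\G) = H^{r-\bullet}_c(\G)$, so that the statement about $VE^0, \ldots, VE^k$ is really a statement in the \emph{top} range $H^r_c, H^{r-1}_c, \ldots, H^{r-k}_c$ of the compactly supported groupoid cohomology.

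For Morita invariance I would first reduce, as usual, to a weak equivalence $\phi \colon \H \to \G$ induced by a surjective submersion $\epsilon \colon N \to M$ on objects (the pullback groupoid). The source fibers of $\H$ then fiber over those of $\G$ with fibers the fibers of $\epsilon$, so the source-fiber dimension increases by exactly $d := \dim N - \dim M$, i.e.\ $r_{\H} = r + d$. The point is that integration along the $\epsilon$-fibers, using compact supports and the orientation twist to make it well defined, induces a quasi-isomorphism that lowers cohomological degree by $d$; composed with the nerve pullback this matches $H^{r_{\H}-\bullet}_c(\H)$ with $H^{r-\bullet}_c(\G)$, so the re-indexed homology $H_\bullet$ is preserved. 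Since every Morita equivalence is a span of such weak equivalences, invariance follows.

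For the Van Est map I would work with the double complex on the nerve $\G^{(\bullet)}$ whose $(p,q)$-entry consists of compactly supported sections of $\Lambda^q (T^{\bar s}\G^{(p)})^*$ twisted by the orientation bundle, where $T^{\bar s}$ denotes the tangent along the fibers of the total source map $\bar s \colon \G^{(p)} \to M$; the horizontal differential is the simplicial (groupoid) differential and the vertical one is fiberwise de Rham with compact supports. Integrating a top foliated form along the source fibers identifies the spectral sequence of one filtration with $H^{r-\bullet}_c(\G) = H_\bullet(\G)$, while the other filtration produces, after fiberwise integration and differentiation along the unit section, the compactly supported algebroid complex $C^\bullet_c(A, \mathfrak{o}_A)$; the orientation bundle $\mathfrak{o}_A$ is forced precisely because $T^{\bar s}\G^{(p)}$ restricts to $\bar s^* A$ along the units and fiber integration requires orienting it. The map $VE^\bullet$ is then the edge homomorphism of this double complex, pointing from algebroid to groupoid cohomology (opposite to the classical Van Est, as dictated by the degree reversal).

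Finally, the connectivity hypothesis enters through the fiberwise cohomology. If the source fibers are homologically $k$-connected, then by Poincaré duality with compact supports their compactly supported de Rham cohomology is $\R$ in top degree $r$ and vanishes in degrees $r-1, \ldots, r-k$; equivalently, after the re-indexing it is concentrated in homological degree $0$ throughout the range $0, \ldots, k$. Feeding this into the first spectral sequence collapses it in that range onto the row computing $H^\bullet_c(A, \mathfrak{o}_A)$, which forces $VE^0, \ldots, VE^k$ to be isomorphisms (and $VE^{k+1}$ injective). I expect the main obstacle to be analytic rather than formal: making fiber integration with compact supports and the orientation twist rigorous on the nerve, and in particular checking convergence and degeneration of the spectral sequence in the noncompact setting, where the finiteness that guarantees these facts for ordinary cohomology is not automatic. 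Producing a clean compactly supported Van Est map already at the cochain level --- so that the edge homomorphism is genuinely induced by a map of complexes and not merely a zig-zag --- is where the real work lies.
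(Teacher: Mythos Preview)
Your architecture is essentially the paper's: both the Morita invariance and the Van Est isomorphism are obtained from augmented double complexes whose rows and columns are controlled, respectively, by a ``principal action'' acyclicity lemma and by the compactly supported fiberwise de Rham cohomology of the $s$-fibers. A few points where the paper's implementation differs from your sketch are worth noting.

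First, the paper does not work with orientation twists alone but with \emph{density bundles}: the $(i,k)$-entry of the Van Est double complex is
\[
C_{i,k}= C_{c}^{\infty}\!\bigl(\G_{k+1},\; p_{0}^{*}(\Lambda^{r-i} A^*\otimes \mathfrak{o}_A)\otimes p_{1}^{*}\mathcal{D}_A\otimes\cdots\otimes p_{k+1}^{*}\mathcal{D}_A\bigr),
\]
so every fiber integration is the canonical integration of densities. This is what dissolves the ``analytic obstacle'' you flag at the end: no orientations need to be chosen, no auxiliary Haar data, and the chain-level maps are honest surjections rather than zig-zags.

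Second, your choice of fibration is not quite the right one. You propose using the total source map $\bar s\colon \G^{(p)}\to M$, but its fibers have dimension $rp$, not $r$, so the connectivity hypothesis on the $s$-fibers of $\G$ does not directly control them. The paper instead uses the face map $\delta_0\colon \G_{k+1}\to \G_k$, whose fibers are exactly the $s$-fibers of $\G$; the rows of the double complex are then the compactly supported foliated complex of $\delta_0$, and a lemma for submersions (Poincar\'e duality along the fibers) gives the required vanishing in degrees $r-1,\ldots,r-k$. The columns, for fixed $i$, are identified with the differentiable homology complex of the action groupoid of $\G$ acting on itself from the right; since this action is principal with quotient $t\colon\G\to M$, the columns are exact by the submersion-groupoid computation.

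Finally, for Morita invariance the paper works directly with a principal bibundle $P$ and forms a double complex $C_{\bullet,\bullet}(P)$ on the spaces $P_{p,q}\subset \G^p\times P\times \H^q$, again with density coefficients, obtaining quasi-isomorphisms to both $C_\bullet(\G)$ and $C_\bullet(\H)$ via fiber integration; your reduction to pullback weak equivalences is an equivalent route.
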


This is Theorem \ref{thm-Mor-inv} and Theorem \ref{thm-Van-Est} in the main body. This theorem provides the tools to prove some of the previous results (e.g. the Stokes formula and the reinterpretation in terms of currents).

The main body of the paper provides the details for the story we have just described. However, along the way, we also have to revisit some of the standard material that is, we believe, not so clearly treated in the existing literature. One such standard material concerns Haar systems for Lie groupoids $\G$, i.e.,  families 
\[ \mu= \{\mu^x\}\]
of measures on the $s$-fibers of $\G$ that are invariant under right translations. In the ``geometric case'', i.e., when the measures come from densities, this boils down to (certain) sections of the density bundle $\mathcal{D}_A$ of the algebroid $A$ of $\G$; we call them Haar densities. When trying to use such Haar systems/densities to perform ``standard averaging'', one is lead to the notion of properness of $\mu$: which means that the source map is proper when restricted to the support of $\mu$. We will show: 

\begin{proposition} For any Lie groupoid $\G\tto M$, the following are equivalent:
\begin{enumerate}
\item[1.] $\G$ is proper.
\item[2.] $\G$ admits a proper Haar system.
\item[3.] $\G$ admits a proper Haar density.
\end{enumerate}
In particular, if $\rho\in C^\infty(M, \mathcal{D}_A)$ is a full Haar density then there exists a function $c\in C^{\infty}(M)$ such that $c\cdot \rho$ is a proper Haar density which, moreover, may be arranged to be normalized. Such a $c$ is called a \textbf{cut-off function for $\rho$}.
\end{proposition}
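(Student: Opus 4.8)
The plan is to prove the cycle of implications $(1)\Rightarrow(3)\Rightarrow(2)\Rightarrow(1)$, with the construction of the cut-off function as the heart of the argument. Throughout I use that a density $\rho\in C^\infty(M,\mathcal{D}_A)$ induces a right-invariant family $\mu_\rho=\{\mu_\rho^x\}$ of smooth measures on the $s$-fibers, by right-translating $\rho_{t(g)}$ to each $g\in s^{-1}(x)$; that the support of the resulting Haar system equals $t^{-1}(\mathrm{supp}\,\rho)$; and that full (nowhere-vanishing, positive) Haar densities always exist because $\mathcal{D}_A$ is a trivial line bundle. The implication $(3)\Rightarrow(2)$ is then immediate: a proper Haar density $\rho$ induces $\mu_\rho$, whose support $t^{-1}(\mathrm{supp}\,\rho)$ is by definition a set on which $s$ is proper.

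For $(2)\Rightarrow(1)$, let $\mu$ be a proper Haar system with support $S=\mathrm{supp}(\mu)$. First I would observe that right-invariance forces $S$ to have the form $t^{-1}(C)$ for a closed set $C=t(S)\subseteq M$: right-invariance gives $g\in S\iff gh\in S$ for every composable $h$, and since any two arrows with the same target differ by a right-translation, membership in $S$ depends only on $t(g)$; moreover $C$ meets every orbit because each $\mu^x$ is nonzero. Given compact $K,L\subseteq M$, properness of $s|_S$ makes $Q:=s^{-1}(K)\cap t^{-1}(C)$ and $P:=s^{-1}(L)\cap t^{-1}(C)$ compact, and $s(P)=L$ since $C$ meets the orbit of every point of $L$. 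For $g\in s^{-1}(K)\cap t^{-1}(L)$ I pick $h\in P$ with $s(h)=t(g)$; then $hg\in Q$ and $g=h^{-1}(hg)\in P^{-1}\cdot Q$. As $P^{-1}\cdot Q$ is the image under multiplication of a compact set of composable pairs, it is compact, so $s^{-1}(K)\cap t^{-1}(L)$ is a closed subset of a compact set, hence compact; thus $(s,t)$ is proper.

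The implication $(1)\Rightarrow(3)$, together with the final cut-off statement, is the main step. Fixing a full Haar density $\rho$ with Haar system $\mu_\rho$, for nonnegative $\chi\in C^\infty(M)$ I consider the averaged function
\[ \phi_\chi(x)=\int_{s^{-1}(x)}\chi(t(g))\,d\mu_\rho^x(g). \]
A right-translation change of variables shows $\phi_\chi$ is constant along orbits, and if $s$ is proper on $t^{-1}(\mathrm{supp}\,\chi)$ then the integrand is compactly supported, so $\phi_\chi$ is finite and smooth. The key construction is to produce $\chi\ge 0$ whose support meets every orbit while $s|_{t^{-1}(\mathrm{supp}\,\chi)}$ is proper: since $\G$ is proper, the orbit space $B=M/\G$ is locally compact Hausdorff and paracompact, so I choose a locally finite cover of $B$ by relatively compact opens, lift it to compactly supported bumps $\chi_i\ge 0$ on $M$ with $\bigcup_i\pi(\mathrm{supp}\,\chi_i)=B$, and set $\chi=\sum_i\chi_i$. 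Properness of $\G$ makes each $s^{-1}(K)\cap t^{-1}(\mathrm{supp}\,\chi_i)$ compact, and local finiteness downstairs ensures only finitely many of these meet a given $s^{-1}(K)$, so $s$ is proper on $t^{-1}(\mathrm{supp}\,\chi)$. Then $\phi_\chi>0$ everywhere, and because $\phi_\chi$ is orbit-invariant the function $c:=\chi/\phi_\chi$ satisfies $\phi_c\equiv 1$; hence $c\rho$ is a normalized proper Haar density and $c$ is the required cut-off function.

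The main obstacle is precisely this last construction: arranging simultaneously that $\mathrm{supp}\,\chi$ meets every orbit (so the normalization is possible) and that $s$ stays proper on $t^{-1}(\mathrm{supp}\,\chi)$, even though $\mathrm{supp}\,\chi$ is noncompact in general. This is where properness of $\G$ enters essentially, through the interplay between the local finiteness of the cover of $B$ and the compactness of the sets $(s,t)^{-1}(K\times\mathrm{supp}\,\chi_i)$; it also uses that $B$ is paracompact, which itself follows from properness. The remaining points — the orbit-invariance and smoothness of $\phi_\chi$, the closedness of $C$ in the second implication, and the routine measure-theoretic bookkeeping — I expect to be straightforward.
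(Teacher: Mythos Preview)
Your cycle $(2)\Rightarrow(1)$ is essentially the paper's argument (the same factorization $\G(K,L)\subset \G(K,T)^{-1}\cdot\G(L,T)$ with $T=C=t(\mathrm{supp}\,\mu)$), and $(3)\Rightarrow(2)$ is immediate in both. The genuine difference is in $(1)\Rightarrow(3)$: the paper builds the support of the cut-off function out of \emph{slices}, choosing a locally finite family $\{\Sigma_i'\}$ of slices whose $\pi$-images cover $B$, shrinking to $\Sigma_i$, and taking $T=\bigcup_i\overline{\Sigma_i}$; the slice theorem for proper groupoids is what guarantees that such transversals exist and that their saturations are open. You bypass slices entirely and work directly with the topology of $B=M/\G$, using its paracompactness to produce a locally finite relatively compact cover and then lifting to compactly supported bumps on $M$. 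Your route is more elementary in that it does not invoke the slice theorem, and your treatment of the normalization via $\phi_\chi$ and $c=\chi/\phi_\chi$ is more explicit than the paper's (which leaves the normalization implicit).

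One step needs tightening. For your ``local finiteness downstairs'' argument to show that only finitely many $t^{-1}(\mathrm{supp}\,\chi_i)$ meet $s^{-1}(K)$, what you actually need is that $\{\pi(\mathrm{supp}\,\chi_i)\}$ is locally finite in $B$; local finiteness of the original cover $\{V_i\}$ alone does not give this, and your stated condition $\bigcup_i\pi(\mathrm{supp}\,\chi_i)=B$ does not control where each $\pi(\mathrm{supp}\,\chi_i)$ lands. The fix is to choose the lifts so that $\mathrm{supp}\,\chi_i\subset\pi^{-1}(V_i)$, which is possible because $\pi$ is open: after shrinking to $\overline{V_i'}\subset V_i$, cover the compact $\overline{V_i'}$ by finitely many sets $\pi(U)$ with $U\subset\pi^{-1}(V_i)$ relatively compact open, and take $\chi_i$ positive on the union of these $U$'s and compactly supported in $\pi^{-1}(V_i)$. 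With this adjustment (which is exactly what the slice construction achieves in the paper, via $\pi(\overline{\Sigma_i})\subset V_i'$), your argument goes through.
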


Cut-off functions are an important technical tool since they allow to 
perform averaging in order to produce invariant objects (functions, metrics, etc); the standard reference for the existence of such functions is \cite{Tu}; our previous proposition is the result of our attempt to understand the proof from {\it loc. cit.} and the actual meaning of such cut-off functions. Given the independent interest on Haar systems, all these results are put together into an appendix.

Finally, we would like to point our that our approach is very much compatible with the non-commutative approach to measures on orbit spaces \cite{Connes2,Connes}, as the ``localization at units'' of the non-commutative measures arising as traces on the convolution algebras (see Proposition \ref{equiv-to-trace} and \ref{prop-equiv-viewpoints}). Of course, the non-commutative viewpoint calls for more: understanding higher traces (Hochschild/cyclic cohomology computations) and other localizations (not just at units) in more geometric terms. We hope that our study of transverse measures/densities is a useful step in that direction; it would be interesting to combine it with the related computations of Pflaum-Posthuma-Tang from \cite{PPT}.\\

\noindent \textbf{Acknowledgements}

This research was supported by the Nederlandse Organisatie voor Wetenschappelijk Onderzoek Vici Grant no. 639.033.312. The second author was supported also by the Funda\c{c}\~{a}o para a Ci\^encia e a Tecnologia grant  SFRH/BD/71257/2010 under the POPH/FSE programmes. We would also like to acknowledge various discussions with Rui Loja Fernandes, Ioan M\unichar{259}rcu\unichar{539} and David Mart\'inez Torres.

\noindent \textbf{Notation and conventions:}

All the vector bundles in this paper are assumed to be of constant rank. Given a vector bundle $E$ over a manifold $M$, we denote by $\Gamma(E)= C^\infty(M,E)$ the space of smooth sections of $E$ and by $C_c^\infty(M,E)$ the space of smooth sections with compact support.

We use the notation $\cG\tto M$ to indicate that $\cG$ is a Lie groupoid over $M$. The source and target maps are denoted by $s$ and $t$, respectively. The fibers of $s$ will be called $s$-fibers. We only 
 work with groupoids whose $s$-fibers have the same dimension or, equivalently, whose algebroid $A$ has constant rank. More notation related to groupoids and algebroids can be found in the Appendix.
 
All the groupoids appearing in this paper are assumed to be Hausdorff.

\section{Measures on manifolds}\label{measures}

\subsection{Radon measures}\label{subsec-Radon measures}
To fix the notations, we start by recalling some background on (Radon) measures on manifolds. 

\begin{definition} Let $X$ be a locally compact Hausdorff topological space. A \textbf{(Radon) measure} on $X$ is a linear functional 
\[ \mu:  C_c(X)\rightarrow \mathbb{R}\]
which is positive, i.e., satisfies $\mu(f)\geq 0$ for all $f\in C_{c}(X)$ with $f\geq 0$.
\end{definition} 

Via the Riesz duality theorem (cf. e.g. \cite[Ch.\ 2]{rudin}), this notion is equivalent to the more intuitive one in terms of set-measures, i.e., measures  
\[ \mu: \mathcal{B}(X) \rightarrow [0, \infty]\]
defined on the $\sigma$-algebra generated by the topology of $X$, with the property that $\mu$ is finite on compacts, and is inner regular in the sense that, for any Borel set $B$,
\begin{equation}\label{inner-reg} 
\mu(B) = \textrm{sup}\, \{ \mu(K)\ :\ K \subset B,\ K- \textrm{compact}\}.
\end{equation}
This condition implies that $\mu$ is uniquely determined by what it does on compacts; it is also determined by what it does on opens, since it follows that 
\begin{equation}\label{outer-reg-on-cpcts} 
\mu(K)= \textrm{inf}\, \{ \mu(U)\ :\ K\subset U,\ U-\textrm{open} \}
\end{equation}
for any compact $K\subset X$. Such a measure gives rise to an integration map
\[ I_{\mu}: C_{c}(X) \rightarrow \mathbb{R}, \ \ I_{\mu}(f)= \int_{X} f(x)\ d\mu(x).\]
defined on the space of compactly supported continuous functions on $X$, i.e., a measure in the sense of the previous definition. The way in which $\mu$ can be recovered from $I_{\mu}$ comes from the formula 
\[ \mu(U)= \textrm{sup} \{ I_{\mu}(f)\ :\ f\in C_{c}(X), \quad 0\leq f \leq 1, \quad \textrm{supp(f)}\subset U \}\]
valid for all open subsets $U\subset X$ (together with the previous remark that $\mu$ is uniquely determined by what it does on open subsets).

\begin{remark}[a handy notation]\label{notation-integrals-0}\rm \ 
Although the point of view of set-measures will not be used in the paper, we will still use 
the formula
\[ \mu(f)= \int_{X} f(x)\ d\mu(x) \]
as a notation that indicates which is the function on $X$ that $\mu$ acts on. For instance, if $\mu$ is a Radon measure on a group $G$, then writing
$\int_{G} f(gh, h^2g^3)\ d\mu(h)$ 
indicates that $g$ is fixed and one applies $\mu$ to the function $G\ni h \mapsto f(gh, h^2g^3)$. 
\end{remark}

\begin{remark}[automatic continuity + a guiding idea for our approach]\label{automatic-continuity} \rm \ 
The definition that we adopted is often convenient to work with because it is purely algebraic. However, $C_{c}(X)$ carries its standard topology: the inductive limit topology that arises by writing $C_{c}(X)$ as the union over all compacts $K\subset X$ of the spaces $C_{K}(X)$ of continuous functions supported in $K$, where each such space is endowed with the sup-norm; convergence 
$f_n\to f$ in this topology means convergence in the sup-norm together with the condition that there exists a compact $K\subset M$ such that $\textrm{supp}(f_n)\subset K$ for all $n$.

The main reason that continuity is not mentioned in the definition is because it follows from the positivity condition (see also below). However, it is very important. For instance, if one wants to allow more general measures (real-valued), then one gives up on the positivity condition but one requests continuity.

In contrast with the continuity property (which is important but not explicitly required), the space $C_c(X)$ on which $\mu$ is defined is not so important. A very good illustration of this remark is the fact that, if $X$ is a manifold, we could use the space $C_{c}^{\infty}(X)$ (which is more natural in the smooth setting) to define the notion of measure. All that matters is that the (test) spaces $C_c(X)$ and $C_{c}^{\infty}(X)$ contain enough ``test functions'' to model the topology of $X$ (hence to characterize completely the Radon measures). This remark is important for our approach to transverse measures, approach that is centred around the question: what is the correct space of ``transverse test functions''? 

The following brings together some known results that we were alluding to; we include the proof for completeness and also for later use (cf. Proposition \ref{1to1measures-orbi}).
\end{remark}

\begin{proposition}\label{1to1measures}
For any smooth manifold $M$, 
\begin{enumerate}
\item[1.] $C_{c}^{\infty}(M)$ is dense in $C_{c}(M)$.
\item[2.] any positive linear functional on $C_{c}(M)$ is automatically continuous; and the same holds for $C_{c}^{\infty}(M)$ with the induced topology.  
\item[3.] the construction $\mu\mapsto \mu|_{C_{c}^{\infty}(M)}$ induces a 1-1 correspondence:
\[ \{\textrm{(Radon)\ measures\ on}\ M\} \stackrel{\sim}{\longleftrightarrow} \ \{ \textrm{positive\ linear\ functionals\ on}\ C_{c}^{\infty}(M)\} .\]
\end{enumerate}
\end{proposition}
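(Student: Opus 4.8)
The plan is to prove the three statements in order, since (1) feeds into (2) and both feed into (3). For part (1), I would show $C_c^\infty(M)$ is dense in $C_c(M)$ with respect to the inductive limit topology. Given $f \in C_c(M)$ with $\mathrm{supp}(f) \subset K$, I would first enlarge $K$ slightly to a compact neighbourhood $K'$, then approximate $f$ uniformly by smooth functions supported in $K'$. Concretely, one covers $K'$ by finitely many charts, uses a partition of unity to reduce to the Euclidean case, and there convolves $f$ with a smooth mollifier $\chi_\varepsilon$; as $\varepsilon \to 0$ the convolutions converge uniformly to $f$ (by uniform continuity of $f$) and their supports stay inside $K'$ for $\varepsilon$ small. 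This gives a sequence $f_n \in C_c^\infty(M)$ with $f_n \to f$ uniformly and supports in the fixed compact $K'$, which is exactly convergence in the inductive limit topology described in Remark \ref{automatic-continuity}.

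For part (2), the key is that positivity forces a local bound. I would fix a compact $K \subset M$ and choose an auxiliary $\varphi \in C_c^\infty(M)$ (or $C_c(M)$) with $0 \le \varphi$ and $\varphi \equiv 1$ on $K$; such a bump function exists by the usual construction. For any $f$ supported in $K$, one has the pointwise inequality $-\|f\|_\infty \varphi \le f \le \|f\|_\infty \varphi$, and applying the positive functional $\mu$ to $\|f\|_\infty \varphi \pm f \ge 0$ yields
\[
|\mu(f)| \le \mu(\varphi)\, \|f\|_\infty .
\]
Thus $\mu$ is bounded on each $C_K(M)$ with constant $\mu(\varphi)$, which is precisely continuity for the inductive limit topology. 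The same argument works verbatim on $C_c^\infty(M)$, since the bump function can be taken smooth; this establishes both assertions of (2).

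For part (3), I would combine (1) and (2) to get a bijection. Injectivity of $\mu \mapsto \mu|_{C_c^\infty(M)}$ follows from density: a Radon measure is continuous by (2), two continuous functionals agreeing on the dense subspace $C_c^\infty(M)$ agree on all of $C_c(M)$. For surjectivity, given a positive linear functional $\nu$ on $C_c^\infty(M)$, part (2) shows it is continuous, and by (1) the subspace $C_c^\infty(M)$ is dense in $C_c(M)$; since $\nu$ is bounded on each $C_K(M) \cap C_c^\infty(M)$ by $\nu(\varphi)$, it extends uniquely by continuity to a bounded functional $\tilde\nu$ on each $C_K(M)$, hence to all of $C_c(M)$. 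Positivity passes to the extension because any $f \in C_c(M)$ with $f \ge 0$ is a uniform limit (with controlled supports) of $f_n \in C_c^\infty(M)$, and one can arrange $f_n \ge 0$ (e.g. by mollifying $f$, whose mollifications of a nonnegative function are nonnegative), so $\tilde\nu(f) = \lim \nu(f_n) \ge 0$. The two constructions are mutually inverse by uniqueness of the continuous extension.

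The main obstacle I anticipate is the surjectivity step in (3), specifically ensuring that the extension $\tilde\nu$ remains \emph{positive} rather than merely continuous. Continuity of the extension is automatic, but positivity requires approximating a nonnegative continuous function by nonnegative smooth ones with uniformly controlled supports; this is where the explicit mollifier construction from part (1) must be invoked carefully, using that convolution of a nonnegative function with a nonnegative mollifier is nonnegative, together with a partition of unity argument to globalize from charts while preserving both nonnegativity and support control.
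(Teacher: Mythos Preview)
Your proposal is correct. Part~2 matches the paper's argument verbatim. Part~1 differs: you use the standard mollifier construction (reduce to charts, convolve with an approximate identity), whereas the paper applies Stone--Weierstrass to $C_c^\infty(M)$ as a subalgebra of $C_0(M)$, then multiplies the resulting approximants by a smooth cutoff to control supports. Part~3 uses the same density-plus-continuity extension in both, but the positivity of the extension is handled differently: you exploit that mollification preserves nonnegativity (so your Part~1 construction already gives $f_n \ge 0$ when $f \ge 0$), while the paper --- whose Stone--Weierstrass approximants have no sign control --- instead approximates $\sqrt{f}$ by smooth $g_n$ and uses $g_n^2 \to f$ with $g_n^2 \ge 0$.

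Your route is arguably more self-contained, since the positivity needed in Part~3 comes for free from the explicit approximation in Part~1. The paper's choice is deliberate, however: its proof was written so that it applies word for word to the orbit space $B = M/\mathcal{G}$ of a proper groupoid (Proposition~\ref{1to1measures-orbi}), where $B$ is not a manifold and mollifiers are unavailable, but the algebraic hypotheses of Stone--Weierstrass (subalgebra, point-separating, non-vanishing) can still be verified using averaging.
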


Sometimes one would like to integrate more than just {\it compactly supported} functions. One way to do so is by working with measures with compact supports. The notion of support of a measure $\mu$, denoted $\textrm{supp}(\mu)$ can be defined since measures on $X$ form a sheaf (due to the existence of partitions of unity). Explicitly, it is the smallest closed subspace of $X$ with the property that
\[ \mu(f)= 0\ \ \ \forall f\in C_{c}(X\setminus \textrm{supp}(\mu)) .\]
Note that, when $X$ is a manifold, one could also use $C_{c}^{\infty}(X)$ to characterize the support. In the next lemma, continuity refers to the topology on $C(X)$ of uniform convergence on compacts. We now assume that $X$ is also second countable. The following result follows along the lines of Chapters 22 and 24 in \cite{Treves}.

\begin{lemma}\label{cpctly-cupp-measures} For a measure $\mu$ on $X$, the following are equivalent:
\begin{enumerate}
\item[1.] $\mu$ is compactly supported.
\item[2.] $\mu$ has an extension (necessarily unique) to a linear continuous map
\[ \mu: C(X) \rightarrow \mathbb{R} \]
\end{enumerate}
(and, for manifolds, the similar statement that uses $C^{\infty}(X)$).
\end{lemma}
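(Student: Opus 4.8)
The plan is to prove the two implications separately, using the fact that continuity on $C(X)$ for the topology of uniform convergence on compacts is controlled by a single seminorm, together with the defining property of $\mathrm{supp}(\mu)$ recalled just before the lemma.

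First I would dispatch the easy direction $(2) \Rightarrow (1)$. Since $X$ is second countable and locally compact Hausdorff it is $\sigma$-compact, and the topology on $C(X)$ is generated by the seminorms $p_K(f) = \sup_{x \in K}|f(x)|$ ranging over compacts $K$. Continuity of the extension $\mu : C(X) \to \mathbb{R}$ then yields a single compact $K_0$ and a constant $C$ with $|\mu(f)| \le C\, p_{K_0}(f)$ for all $f$ (a bound by finitely many seminorms is dominated by the one attached to the union of the corresponding compacts). Restricting to $f \in C_c(X \setminus K_0)$ gives $p_{K_0}(f) = 0$, hence $\mu(f) = 0$; by the definition of the support this forces $\mathrm{supp}(\mu) \subseteq K_0$, so $\mu$ is compactly supported.

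For $(1) \Rightarrow (2)$, set $K = \mathrm{supp}(\mu)$ and choose a cutoff $\chi \in C_c(X)$ with $\chi \equiv 1$ on an open neighborhood of $K$ (smooth $\chi$ in the manifold case), and define $\tilde\mu(f) := \mu(\chi f)$ for $f \in C(X)$. The key point to verify is independence of $\chi$: if $\chi'$ is another such cutoff, then $\chi - \chi'$ vanishes on a neighborhood of $K$, so $(\chi - \chi')f \in C_c(X \setminus K)$ and $\mu$ annihilates it by the definition of support; thus $\mu(\chi f) = \mu(\chi' f)$. Linearity is immediate, and $\tilde\mu$ extends $\mu$ because for $f \in C_c(X)$ one may take $\chi \equiv 1$ on a neighborhood of $\mathrm{supp}(f) \cup K$, giving $\chi f = f$. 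For continuity, let $L = \mathrm{supp}(\chi)$; by the automatic continuity of $\mu$ on $C_c(X)$ (Proposition \ref{1to1measures}, part 2) there is a constant $C_L$ with $|\mu(g)| \le C_L \|g\|_{\mathrm{sup}}$ for $g$ supported in $L$, whence $|\tilde\mu(f)| = |\mu(\chi f)| \le C_L \|\chi\|_{\mathrm{sup}}\, p_L(f)$, so $\tilde\mu$ is continuous for the seminorm $p_L$.

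Finally, uniqueness of the extension reduces to the density of $C_c(X)$ in $C(X)$ for the topology of uniform convergence on compacts: writing $X = \bigcup_n K_n$ as an increasing exhaustion by compacts with $K_n \subseteq \mathrm{int}(K_{n+1})$ and choosing $\chi_n \equiv 1$ on $K_n$, one gets $\chi_n f \to f$ uniformly on compacts for every $f \in C(X)$, so two continuous extensions agreeing on $C_c(X)$ must coincide. The manifold statement is identical, using smooth cutoffs and the density of $C_c^\infty(X)$. I expect the only genuinely delicate point to be the well-definedness of $\tilde\mu$ in $(1)\Rightarrow(2)$, since that is precisely where the defining property of the support (equivalently, the sheaf property of measures noted before the lemma) enters; the rest is routine manipulation of seminorms.
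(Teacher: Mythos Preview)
Your proof is correct and follows essentially the same route as the paper: for $(1)\Rightarrow(2)$ both define the extension by $f\mapsto \mu(\chi f)$ for a cutoff $\chi$ equal to $1$ on the support and deduce continuity and uniqueness from density of $C_c(X)$, while for $(2)\Rightarrow(1)$ both read off a single controlling compact from the seminorm description of the topology on $C(X)$. The only cosmetic differences are that you explicitly verify independence of the cutoff (the paper folds this into the check that $\tilde\mu$ extends $\mu$) and that you invoke the automatic-continuity estimate on $C_c(X)$ rather than phrasing the continuity of $\tilde\mu$ sequentially.
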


\subsection{Geometric measures: densities}
\label{Geometric measures: densities}
In the case of manifolds there is a distinguished class of measures to consider: the ones for which, locally, the corresponding integration is  the standard integration on $\mathbb{R}^n$. This brings us to the notion of densities on manifolds, that we recall next (cf. \cite{densities} for a textbook account).

Remark first that any group homomorphism $\delta: GL_r\rightarrow \mathbb{R}^*$ allows us to associate to any $r$-dimensional vector space a canonical 1-dimensional vector space
\[ L_{\delta}(V):= \{\xi: \textrm{Fr}(V)\rightarrow \mathbb{R}\ :\  \xi(e\cdot A)= \delta(A) \xi(e) \ \textrm{for\ all} \ e\in \textrm{Fr}(V),\ A\in GL_r\},\]
where $\textrm{Fr}(V)= \textrm{Isom}(\mathbb{R}^r, V)$ is the space of frames on $V$, endowed with the standard (right) action of $GL_r$. When $\delta= \textrm{det}$, we obtain the top exterior power $\Lambda^{\textrm{top}}V^*$, for $\delta= \textrm{sign}\circ \textrm{det}$, we obtain the \textbf{orientation space} $\mathfrak{o}_{V}$ of $V$, and with $\delta= |\textrm{det}|$ we obtain the \textbf{space $\mathcal{D}_{V}$ of densities} of $V$. More generally, for $l\in \mathbb{Z}$, $\delta= |\textrm{det}|^l$ defines the \textbf{space $\mathcal{D}_{V}^{l}$ of $l$-densities} of $V$.
There are canonical isomorphisms:
\[ \mathcal{D}_V\otimes \mathfrak{o}_{V}\cong \Lambda^\mathrm{top} V^*, \quad \mathcal{D}_{V}^{l_1}\otimes \mathcal{D}_{V}^{l_2}\cong \mathcal{D}_{V}^{l_1+ l_2}.\]
When $V= L$ is 1-dimensional, $D_{L}$ is also denoted by $|L|$; so, in general,
\[ \mathcal{D}_{V}= |\Lambda^\mathrm{top} V^*|,\]
which fits well with the fact that for any $\omega\in \Lambda^\mathrm{top} V^*$, $|\omega|$ makes sense as an element of $\mathcal{D}_{V}$.
For 1-dimensional vector spaces $W_1$ and $W_2$, one has a canonical isomorphism $|W_1| \otimes |W_2| \cong |W_1\otimes W_2|$
(in particular $|W^*|\cong |W|^*$). From the properties of $\Lambda^\mathrm{top}V^*$ (or by similar arguments), one obtains canonical isomorphisms:
\begin{enumerate}
\item[1.] $\mathcal{D}_{V}^{*}\cong \mathcal{D}_{V^*}$ for any vector space $V$.
\item[2.] For any short exact sequence of vector spaces
\[ 0\rightarrow V\rightarrow U\rightarrow W\rightarrow 0\]
(e.g. for $U= V\oplus W$), one has an induced isomorphism $\mathcal{D}_{U}\cong \mathcal{D}_{V}\otimes \mathcal{D}_{W}$.
\end{enumerate}

Since the previous discussion is canonical (free of choices), it can applied (fiberwise) to vector bundles over a manifold $M$ so that, for any such vector bundle $E$, one can talk about the associated line bundles over $M$
\[ \mathcal{D}_E,\ \Lambda^\mathrm{top} E^*,\ \mathfrak{o}_{E}\]
and the previous isomorphisms continue to hold at this level. However, at this stage, only $\mathcal{D}_E$ is trivializable, and even that is in a non-canonical way.

\begin{definition} A \textbf{density on a manifold} $M$ is any section of the density bundle $\mathcal{D}_{TM}$. We denote by $\mathcal{D}(M)$ the space of densities on $M$. 
\end{definition}

The main point about densities is that they can be integrated in a canonical fashion: one has an integration map
\begin{equation}\label{eq-can-int}
\int_{M}: C_{c}^{\infty}(M, \mathcal{D}_{TM}) \rightarrow \mathbb{R}.
\end{equation}
Locally, for densities $\rho$ supported in a coordinate chart $(x_1, \ldots, x_n)$, writing 
$\rho=  f |dx_1\ldots dx_n|$, the integral of $\rho$ is the usual integral of $f$. Of course, the integral is well-defined because of 
the change of variables formula 
\[ \int_{h(U)}f=\int_U f\circ h |Jac(h)| \]
and the fact that the coefficient $f$ of $\rho$ changes precisely to $ f\circ h |Jac(h)|$. In what follows, the integration (\ref{eq-can-int}) of compactly supported densities will be called \textbf{the canonical integration on $M$}.

\begin{definition}\label{geom-measures-maniflds}
The measure associated to a positive density $\rho\in \mathcal{D}(M)$ is:
\[ \mu_{\rho}: C_{c}^{\infty}(M)\rightarrow \mathbb{R},\ \ \mu_{\rho}(f)= \int_{M} f\cdot \rho .\]
Radon measures of this type will be called \textbf{geometric measures}. 
\end{definition}

\subsection{Some basic examples/constructions}\label{subsection-some basic examples}

\begin{example}[Haar measures and densities] \label{ex-Haar0} Let $G$ be a Lie group. Recall that a \textbf{right Haar measure} on $G$ is any non-zero measure on $G$ that is invariant under right translations (and similarly for left Haar measures). This notion makes sense for general  locally compact Hausdorff topological groups, and a highly non-trivial theorem says that such measures exist and are unique up to multiplication by scalars. However, for Lie groups the situation is much easier; for instance, for the existence, one can search among the geometric measures, i.e., the ones induced by densities $\rho$ on $G$. The invariance condition means that $\rho$ is obtained from its value at $1$, $\rho_1\in \mathcal{D}_{\mathfrak{g}}$ (where $\mathfrak{g}$ is the Lie algebra of $G$), by right translation. Hence Haar densities on $G$ are in 1-1 correspondence with non-zero-elements of the $1$-dimensional vector space of densities on $\mathfrak{g}$. 

Recall also that, while the right and left Haar measures are in general different, for compact groups, they coincide. Actually, in the compact case, it follows that there exists a unique right (and left) Haar measure on $G$ for which $G$ has volume $1$. This is called \textbf{the Haar measure}  of the compact group $G$. Of course, to obtain it, one just starts with any non-zero density $\rho_{1}$ on $\mathfrak{g}$ and one rescales it by the resulting volume of $G$. Note that, due to this normalization condition, when dealing with a non-connected compact Lie group $G$, the Haar density of $G$ differs from the one of the identity component $G^{0}$ by a factor which is the number of connected components of $G$. We refer to \cite[Sec.\ 3.13]{DK} for a textbook account on Haar measures and densities on Lie groups.

With the basic understanding that when passing from groups to groupoids the source-fibers of the groupoid are used for making sense of right translations, there is a rather obvious generalization of the notion of Haar measures/densities to the world of groupoids - called Haar systems/densities. Although this is well known, some of the basic facts are hard to find or have been overlooked. They are collected in the (self-contained) Appendix. 
\end{example}

\begin{example}[push-forward measures/densities] \rm \ 
If $\pi: P\rightarrow B$ is a proper map then measures $I$ on $P$ can be pushed-down to measures $\pi_{!}(I)$ on $B$ by:
\[ \pi_{!}(I) (f)= I(f\circ \pi),\]
or, in the integral notation (cf. Remark \ref{notation-integrals-0}),
\[ \int_{B} f(b)\ d\mu_{\pi_{!}(I)}(b):= \int_{P} f(\pi(p))\ d\mu_{I}(p).\]
If $\pi$ is a submersion, then the construction $I\mapsto \pi_!(I)$ takes geometric measures on $P$ to geometric measures on $B$. The key remark is that integration over the fibers makes sense (canonically!) for densities, to give a map 
\[ \pi_{!}= \int_{\textrm{fibers}}: \mathcal{D}(P)\rightarrow \mathcal{D}(B).\]
Here is the definition of $\pi_{!}(\rho)(b)$ for a density $\rho$ on $P$ and for $b\in B$: restrict $\rho$ to the fiber $P_b= \pi^{-1}(b)$; one finds for each $p\in P_b$:
\[ \rho(p)\in \mathcal{D}_{T_pP} \cong \mathcal{D}_{T_p P_b}\otimes \mathcal{D}_{T_bB},\]
where the last isomorphism is the one induced by the short exact sequence 
\[ 0 \rightarrow T_pP_b  \rightarrow T_pP  \rightarrow T_bB \rightarrow  0.\]
Hence we can interpret
\begin{equation}\label{dens-bdle-dec} 
\rho|_{P_b}\in \mathcal{D}(P_b)\otimes \mathcal{D}_{T_bB}; 
\end{equation}
integrating over $P_b$ (compact because $\pi$ is proper) we find the desired element
\[ \pi_{!}(\rho)(b):= \int_{P_b} \rho \in \mathcal{D}_{T_bB}.\]
The fact that this operation is compatible with the one on measures, i.e., that $\pi_{!}(I_{\rho})= I_{\pi_{!}(\rho)}$, follows from the Fubini formula for densities:
\[ \int_{P} \rho = \int_{B} \pi_{!}(\rho) \ \ \left(= \int_{B} \int_{\textrm{fibers}} \rho\right)\]
\end{example}

\begin{example}\label{inv-meas}(invariant measures/densities) The previous discussion can be continued further in the case of a (right) principal $G$-bundle
\[ \pi: P\rightarrow B\]
for a compact Lie group $G$ (cf. \cite[Sec.\ 3.13]{DK}). One of the main outcomes will be that the construction  $I \mapsto \pi_{!}(I)$ induces a bijection 
\[ \{\textrm{invariant\ measures/densities\ on}\ P\} \stackrel{\sim}{\longleftrightarrow} \{\textrm{measures/densities\ on}\ B\}.\]
We concentrate here on densities. What is special in this case is that each fiber $P_b$ ($b\in B$) carries a canonical ``Haar density'' $\rho_{\mathrm{Haar}}^{b}$: to see this one 
chooses $p\in P_b$ and one uses the diffeomorphism $m_p: G\rightarrow P_{b}$, $g\mapsto pg$, to transport $\rho_{\mathrm{Haar}}^{G}$ to a density on $P_b$; 
the independence of the choice of $p$ follows from the bi-invariance of $\rho_{\mathrm{Haar}}^{G}$. 
Using $\rho_{\mathrm{Haar}}^{b}$, arbitrary densities on $P$ can be decomposed, via  (\ref{dens-bdle-dec}), as
\[ \rho(p)= \rho_{\mathrm{Haar}}^{b}\otimes \rho_B(p) \ \ \textrm{for\ some\ }\ \rho_{B}(p)\in \mathcal{D}_{T_bB}.\]
A simple check shows that $\rho$ is invariant if and only if $\rho_{B}(p)$ does not depend on $p$ but only on $b= \pi(p)$. Of course, in this case $\rho_{B}= \pi_{!}(\rho)$ (because of the normalization of the Haar density of $G$). Therefore one obtains an isomorphism:
\[ \pi_{!}: \mathcal{D}(P)^{G} \cong \mathcal{D}(B).\]
For $\rho_{B}\in \mathcal{D}(B)$, we denote by $\rho_{\mathrm{Haar}}\otimes \rho_{B}$ 
the induced invariant density on $P$. 
\end{example}

\section{Transverse measures}
\label{Transverse measures}

In this section we introduce the notion of transverse measures for groupoids using a rather straightforward generalization of Haefliger's interpretation of transverse measures for foliations (and for \'etale groupoids). 

\subsection{Making sense of ``orbit spaces''; transverse objects}
\label{Making sense of ``orbit spaces''; transverse objects}

Intuitively, given a groupoid $\G$ over $M$, transverse structures on $\G$ are structures that are intrinsic to the geometry of $\G$  
that lives in the direction transverse to the orbits of $\G$; more suggestively (but less precisely), one may think that they are structures associated to the orbit space
\[ M/\G := M/ (x\sim y \ \ \textrm{iff}\ \ \exists \ g\in \G \ \textrm{from}\ x \ \textrm{to} \ y). \]
Of course, the actual quotient {\it topological space} $M/\G$ may be very pathological and uninteresting; for that reason, when one refers to the ``orbit {\it space}'' one often has in mind much more than just the topological space itself. There are several points of view that allow one to make sense of ``singular spaces'' (like $M/\G$) in a satisfactory but precise way. We recall here a few.\\

\textbf{Leaf spaces as \'etale groupoids:} This is Haefliger's approach to the study of the transverse geometry of foliated spaces $(M, \mathcal{F})$, giving a satisfactory meaning to ``the spaces of leaves $M/\mathcal{F}$'' \cite{haefliger,haefliger2}. The first point is that any foliated manifold $(M, \F)$ has an associated \textbf{holonomy groupoid} $\textrm{Hol}(M, \F)$; it is a groupoid over $M$ whose arrows are determined by germs of holonomy transformations.
In this way the leaf space $M/\F$ is realised as the orbit space of a groupoid, and one may think that 
$\textrm{Hol}(M, \F)$ represents (as some kind of ``desingularization'') $M/\F$. The second point is that this representative can be simplified by restricting to a complete transversal $T\subset M$ of the foliation:
\[ \textrm{Hol}_{T}(M, \F):= \textrm{Hol}(M, \F)|_{T}\]
will have the same orbit space (at least intuitively) and has a rather special extra property: it is \'etale, in the sense that its source and target maps are local diffeomorphisms. The last property make it possible to handle \'etale groupoids (and their orbit spaces) very much like one handles usual manifolds (think of a manifold $M$ as the \'etale groupoid over $M$ with only identity arrows).

To make the entire story precise, one also has to give a precise meaning to ``two groupoids give rise to (or model) the same orbit space''. This is precisely what the notion of Morita equivalence of groupoids does; the basic example is the holonomy groupoid $\textrm{Hol}(M, \F)$ of a foliation being Morita equivalent to $\textrm{Hol}_{T}(M, \F)$; hence, staying within the world of \'etale groupoids, Haefliger's philosophy is: the leaf space $M/\F$ is represented by an \'etale groupoid (namely $\textrm{Hol}_{T}(M, \F)$ for some complete transversal $T$), which is well-defined up to Morita equivalence.\\

\textbf{Differentiable stacks:}
Stacks originate in algebraic geometry, where they are used to model moduli spaces that are not well-defined otherwise (note the similarity with leaf spaces). The topological and smooth versions of the theory 
were studied in more detail only later (see for example \cite{behrend_xu,heinloth,metzler}), and it was immediately noticed that the resulting notion of ``differentiable stack'' can be represented by Lie groupoids, two Lie groupoids representing the same stack if and only if they are Morita equivalent. Actually, a large part of the existing literature views (by definition) differentiable stacks as Morita equivalence classes of Lie groupoids; the Morita class (stack) of a Lie groupoid $\G$ over $M$ is then suggestively denoted by $M//\G$. In our opinion, this is a rather  obvious extension of Haefliger's philosophy from \'etale to general Lie groupoids (but, sadly enough, Haefliger is often forgotten by the literature on differentiable stacks. 
In this paper we use the groupoid point of view on stacks; with these in mind, what we do here is again rather straightforward: just extend Haefliger's approach to transverse integration from \'etale groupoids to general Lie groupoids.\\

\textbf{Non-commutative spaces:} Another approach to ``singular spaces'' is provided by Connes' non-commutative geometry \cite{Connes}; while standard spaces (e.g. compact Hausdorff spaces) are fully characterized by their (commutative) algebra of continuous scalar-valued functions, the idea is that non-commutative algebras (possibly with extra structure) should be interpreted as algebras of functions on a ``non-commutative space''. Therefore, when one deals with a singular space $X$, one does not look at its points or at its topological/smooth structure (usually ill-behaved) but one tries to model it via a non-commutative algebra. In general, this modelling step is not precisely defined  and it very much depends on the specific $X$ one looks at. However, in most examples, one uses groupoids as an intermediate step.

More precisely: one first realizes $X$ as the orbit space of a Lie groupoid $\G$ (i.e., one makes sense of $X$ as a differentiable stack) and then one appeals to a standard construction that associates to a Lie groupoid $\G$ over $M$ a (usually non-commutative) algebra, namely the \textbf{convolution algebra} of the groupoid. Before we recall its definition let us mention that, intuitively, the convolution algebra of $\G$ should be thought of as a ``non-commutative model'' for the algebra of (compactly supported) functions on the singular orbit space $M/\G$ (or better: of the stack $M//\G$). For that reason, we will denote the convolution algebra by $\mathcal{N}\mathcal{C}_{c}^{\infty}(M//\G)$.  
 As a vector space, it is simply $C_{c}^{\infty}(\G)$ - the space of compactly supported smooth functions on $\G$. The convolution product is, in principle, given by a convolution formula
\begin{equation}\label{int-conv-prod} 
`` (u_1\star u_2)(g)= \int_{g_1g_2= g} u_1(g_1)u_2(g_2) '' .
\end{equation}
The simplest case when this makes sense is when $\G$ is \'etale, case in which 
\[ (u_1\star u_2)(g)= \sum_{g_1g_2= g} u_1(g_1)u_2(g_2).\]
For general Lie groupoids, it is customary to choose a full Haar system (see Definition \ref{def-Haar-system}) or, even better, to take advantage of the smooth structure and start with a full Haar density $\rho\in C^\infty(M,\mathcal{D}_A)$ (Haar densities are recalled in the appendix - see Definition \ref{def-Haar-density}). Using the induced integration (of functions) along the $s$-fibers, one can now make precise sense of (\ref{int-conv-prod}) as:
\begin{equation}
\label{star-rho} 
(u_1\star_{\rho} u_2)(g)= \int_{s^{-1}(x)} u_1(gh^{-1})u_2(h)\  d\mu_{\stackrel{\rightarrow}{\rho}}(h) \ \ \ \ \textrm{where}\ x= s(g).
\end{equation}
It is not difficult to see that the choice of the full Haar density does not affect the isomorphism class of the resulting convolution algebra.
However, at the price of becoming a bit more abstract, but keeping (\ref{int-conv-prod}) as intuition, one can proceed intrinsically as follows (cf. e.g. \cite{Connes}): consider the bundle $\mathcal{D}_{A}^{1/2}$ of half densities, pull it back to $\G$ via $s$ and $t$ and define
\begin{equation}\label{CcG-intr} 
\mathcal{N}\mathcal{C}_{c}^{\infty}(M//\G):= C_{c}^{\infty}(\G, t^*\mathcal{D}_{A}^{1/2}\otimes s^*\mathcal{D}_{A}^{1/2}).
\end{equation}
Given $u_1$ and $u_2$ in this space, we look at the expression (\ref{int-conv-prod}). For 
\[ x\stackrel{g_1}{\leftarrow} z  \stackrel{g_2}{\leftarrow} y\]
such that $g= g_1g_2$, $u(g_1)u(g_2)$ makes sense as a tensor product; since 
\[ \mathcal{D}_{A, z}^{1/2} \otimes \mathcal{D}_{A, z}^{1/2}= \mathcal{D}_{A, z}\cong \mathcal{D}_{T_{g_2}(s^{-1}(y))},\]
(where we use the right translations for the last identification), we see that 
\begin{equation}
\label{ug1-ug2} 
u(g_1)v(g_2)\in   \mathcal{D}_{A, x}^{1/2}\otimes \mathcal{D}_{T_{g_2}(s^{-1}(y))} \otimes \mathcal{D}_{A, y}^{1/2};
\end{equation}
hence, when $g= g_1g_2$ is fixed, we deal with a density in the $g_2\in s^{-1}(y)$ argument; integrated, this gives rise to 
\[ (u_1\star u_2)(g) \in \mathcal{D}_{A, x}^{1/2}\otimes \mathcal{D}_{A, y}^{1/2},\]
hence to a new element $u_1\star u_2$ in our space (\ref{CcG-intr}).

\subsection{Transverse integration \`a la Haefliger}\label{subsec-transverse integration Haefliger}
The notion of transverse measure and transverse integration is best understood in the case of foliated manifolds $(M, \F)$; the idea is that such a measure 
 should measure the size of transversals to the foliation in a way that is invariant under holonomy transformations  (so that, morally, they measure not the transversals but the subspaces that they induce in the leaf space). This idea can be implemented either by working directly with set-measures, as done e.g. by Plante \cite{Plante}, or by working with the dual picture (via the Riesz theorem) in terms of linear functionals, as done by Haefliger \cite{haefliger-minimal}. 

Here we follow Haefliger's approach, but applied to a general Lie groupoid $\G$ over a manifold $M$. The main point is to define  what deserves to be called ``the space of compactly supported smooth functions on the orbit space $M/\G$''. It is instructive to first think about the meaning of ``the space of smooth functions on the orbit space $M/\G$''; indeed, there is an obvious candidate, namely
\[ C^{\infty}(M//\G):= C^{\infty}(M)^{\G-\textrm{inv}},\]
the space of smooth functions on $M$ that are $\G$-invariant (i.e., constant on the orbits of $\G$). In this way, intuitively, a ``smooth function on $M/\G$'' is represented by its pull-back to $M$. One may try to add compact supports to the previous discussion (to define  $C^{\infty}_{c}(M//\G)$), but one encounters a serious problem: smooth invariant functions may fail to have compact support (this happens already for actions of groups). Moreover, this approach would be too naive; the point is that the theory with compact supports should not be seen just as a subtheory of the one without support conditions, but as a dual theory (think e.g. of DeRham cohomology and its version with compact supports, which are related by Poincare duality).

\begin{example}[A simple example that illustrates the general philosophy]\label{exemplification}\rm \ 
Probably the best example is that of an action groupoid $\G= \Gamma\ltimes M$ associated to an action of a discrete group $\Gamma$ on a manifold $M$. As for smooth functions, there is an obvious approach to  ``measures on the orbit space'': measures on $M$, $\mu: C_{c}^{\infty}(M)\rightarrow \mathbb{R}$, which are $\Gamma$-invariant in the sense that
\[ \mu(\gamma^*(f))= \mu(f) \ \ \ \forall\ f\in C^{\infty}_{c}(M), \ \gamma\in \Gamma,\]
where $\gamma^{*}$ denotes the induced action on smooth functions ($\gamma^*(u)(x)= u(\gamma\cdot x)$). Therefore, if we want to represent such measures as linear functionals, we have to consider the quotient of $C_{c}^{\infty}(M)$ by the linear span of elements of type $f- \gamma^*(f)$. This is a construction that is defined more generally, for actions of $\Gamma$ on a vector space $V$; the resulting quotient is the space $V_{\Gamma}$ of co-invariants (dual to the space of invariants $V^{\Gamma}$ - cf. e.g. \cite{weibel}). Therefore we arrive 
 at the following model for the space of compactly supported smooth functions on the orbit space:
\begin{equation}\label{Cc-discrete-group-actions} 
C_{c}^{\infty}(M//\Gamma):= C_{c}^{\infty}(M)_{\Gamma},
\end{equation}
and invariant measures correspond to (positive) linear functionals on $C_{c}^{\infty}(M//\Gamma)$.

When the action is free and proper, then $B= M/\Gamma$ is itself a manifold, and the integration over the fibers of the canonical projection $\pi$,
\[ \pi_{!}: C_{c}^{\infty}(M)\rightarrow C_{c}^{\infty}(B), \ \ \pi_{!}(u)(b)= \sum_{x\in \pi^{-1}(b)} u(x),\]
descends to an isomorphism of our model (\ref{Cc-discrete-group-actions}) with $C_{c}^{\infty}(B)$. In contrast, the invariant part of $C_{c}^{\infty}(M)$ is trivial if $\Gamma$ is infinite.  

For general actions, $C_{c}^{\infty}(M)^{\Gamma}$ has a nicer description if $\Gamma$ is finite; actually, in that case, $V^{\Gamma}\cong V_{\Gamma}$ for any $\Gamma$-vector space $V$
(where the isomorphism is induced by the canonical projection and the inverse by averaging). Also, although $M/\Gamma$ may fail to be a manifold, it is a locally compact Hausdorff space, and there is a satisfactory definition for the spaces of smooth functions on $M/\Gamma$: 
\[ C^{\infty}(M/\Gamma):= \{ f\in C(M/\Gamma) : f\circ \pi\ \textrm{is\ smooth}\}, \]
\[ C_{c}^{\infty}(M/\Gamma):= \{f\in C^{\infty}(M/\Gamma) : \textrm{supp}(f)-\textrm{compact}\}.\]
We deduce that, if $\Gamma$ is finite, 
\[ C_{c}^{\infty}(M//\Gamma)\cong C_{c}^{\infty}(M)^{\Gamma} \cong C_{c}^{\infty}(M/\Gamma),\]
hence the elements of $C_{c}^{\infty}(M//\Gamma)$ can be seen as compactly supported functions on $M/\Gamma$. 
This phenomena is at the heart of our separate section on proper groupoids. 
\end{example}

\begin{example}[\'Etale groupoids; leaf spaces]\rm \ The definition (\ref{Cc-discrete-group-actions}) (and the heuristics behind it) easily extend to general \'etale groupoids $\G$ over $M$. One defines
\begin{equation}\label{C-non-c-Hefliger}
C^{\infty}(M//\G):= C^{\infty}(M)^{\G-\textrm{inv}},
\end{equation}
or, equivalently but easier to dualise,
\[  C^{\infty}(M//\G)= \textrm{Ker}(s^*- t^*: C^{\infty}(M)\rightarrow C^{\infty}(\G)),\]
where $s^*(u)= u\circ s$ and similarly for $t^*$. Dual to $s^*$ we have:
\[ 
s_{!}: C_{c}^{\infty}(\G) \rightarrow C_{c}^{\infty}(M), \ \ s_{!}(u)(x)= \sum_{s(g)= x} u(g)
\]
and similarly $t_{!}$ and then one defines 
\begin{equation}\label{Cc-Hefliger}
C^{\infty}_{c}(M//\G):= \textrm{Coker} (s_{!}- t_{!}: C_{c}^{\infty}(\G) \rightarrow C_{c}^{\infty}(M)).
\end{equation}
This space, as well as (\ref{C-non-c-Hefliger}), is invariant under Morita equivalences; this is implicit in Haefliger's work and will be discussed in full generality later in the paper.

This construction can be applied to the \'etale holonomy groupoids associated to a foliation $(M, \F)$: for any complete transversal $T$ one considers $C_{c}^{\infty}(T//\F):=  C_{c}^{\infty}(T//\textrm{Hol}_{T}(M, \F))$.  These are precisely the spaces denoted $\Omega^{0}_{c}(T/\F)$ by Haefliger \cite{haefliger-minimal} and used to handle transverse integration; our definition is basically the same as his, just that we use the groupoids explicitly. Morita invariance shows that, up to canonical isomorphisms, $C_{c}^{\infty}(T//\F)$ does not depend on the choice of $T$; it serves as a model for ``$C_{c}^{\infty}(M/\F)$''. Of course, transverse measures are now understood as positive linear functionals on $C_{c}^{\infty}(T//\F)$. One point that is not clarified by Haefliger and which may serve as motivation for extending the theory to more general groupoids is whether there is an intrinsic description of ``$C_{c}^{\infty}(M/\F)$'' defined directly using objects that live on $M$.
\end{example}

\subsection{Transverse measures: the general case}\label{subsec-Transverse measures: the general case}

To extend the previous discussion to general Lie groupoids $\G$ over $M$ one only needs to make sense of the integration over the $s$ and $t$-fibers of $\G$. This issue is identical to the one we encountered when making sense of the convolution product (the last part of Subsection \ref{Making sense of ``orbit spaces''; transverse objects}); so, it is not surprising that one can proceed as there: either fix a strictly positive density $\rho\in C^{\infty}(M, \mathcal{D}_A)$ and define a version of $C_{c}^{\infty}(M//\G)$ that depends on $\rho$ but whose isomorphism class does not depend on $\rho$, or provide a more abstract but intrinsic version of $C_{c}^{\infty}(M//\G)$. We prefer to start with the choice-free approach. 

Hence, as in Subsection \ref{Making sense of ``orbit spaces''; transverse objects}, we use the Lie algebroid $A$ of $\cG$, the associated density bundle $\mathcal{D}_A$ and its pull-backs to $\G$ via the source and target maps.

One has a canonical integration over the $s$-fibres map
\begin{equation}\label{S-!} 
s_{!}: C_{c}^{\infty}(\G, t^*\mathcal{D}_A\otimes s^* \mathcal{D}_A)\rightarrow C_{c}^{\infty}(M, \mathcal{D}_A);
\end{equation}
indeed, for $u$ in the left hand side, its restriction to an $s$-fiber $s^{-1}(x)$ is a section of 
\[ ( t^{*} \mathcal{D}_A)|_{s^{-1}(x)}\otimes \mathcal{D}_{A, x}\cong  \mathcal{D}_{T(s^{-1}(x))}\otimes \mathcal{D}_{A, x}.\]
This allows us to integrate $u|_{s^{-1}(x)}$ and obtain an element in $\mathcal{D}_{A, x}$; varying $x$, this gives rise to $s_{!}(u)$. 
A similar reasoning makes sense of $t_{!}$. 

\begin{definition} The \textbf{intrinsic model for compactly supported smooth functions} on the orbit space is defined as
\[ C_{c}^{\infty}(M//\G):=  C_{c}^{\infty}(M, \mathcal{D}_A)/\textrm{Im}(s_{!}- t_{!}).\]
A \textbf{transverse measure for} $\G$ is 
a positive linear functional on $C_{c}^{\infty}(M//\G)$ or, equivalently, a linear map
\[ \mu: C_{c}^{\infty}(M, \mathcal{D}_A) \rightarrow \mathbb{R}\]
which is positive and which satisfies the invariance condition
\begin{equation}\label{transv-meas-invariance}
\mu\circ s_{!}= \mu\circ t_{!} .
\end{equation}
\end{definition}

\begin{remark}[some intuition]\label{cpct-supp-intuition}\rm \ 
The intuition is that elements $\rho \in C_{c}^{\infty}(M, \mathcal{D}_A)$ represent ``compactly supported smooth functions on $M/\G$'' as follows: while sections $\rho$ of $\cD_A$ can be interpreted as invariant family of densities $\{\rho^x\}$ (as in the Appendix - see (\ref{dens-ind-s-fiber})), their integrals (when defined) will give an invariant function of $x$, hence a function on $M/\G$: 
\[ \textrm{Av}(\rho): M/\G\rightarrow \mathbb{R}, \  \textrm{Av}(\rho)(\mathcal{O}_x)= \int_{s^{-1}(x)} \rho^x .\]
Of course, there are problems to make this precise, and that is the reason for working on $M$. 
Nevertheless, we will see that such problems will disappear in one important case: for proper Lie groupoids. That is the subject of Section \ref{sec-proper}. 
\end{remark}

\begin{remark}[the relationship with the non-commutative approach] It is instructive to compare our more classical approach with the non-commutative one (see Subsection \ref{Making sense of ``orbit spaces''; transverse objects}). In the non-commutative approach one  looks at linear maps $\tau$ on $\mathcal{NC}_{c}^{\infty}(M//\G)$ (see (\ref{CcG-intr})) which are \textbf{traces}, i.e., which satisfy:
\begin{equation}\label{trace-condition}
\tau(u\star v)= \tau(v \star u), \ \ \forall\ u, v\in \mathcal{NC}_{c}^{\infty}(M//\G).
\end{equation}
A general well-known phenomena is that classical approaches are recovered as ``localization at units'' of the non-commutative ones (see e.g. \cite{BN}). Hence, one expects that our notion of transverse measure induces, by restricting to units, such traces. And, indeed, this fits perfectly with our approach (and even with our use of $\mathcal{D}_A$). First of all, for $u\in \mathcal{NC}_{c}^{\infty}(M//\G)$, its restriction to units gives elements
\[ u(1_x)\in \mathcal{D}_{A, x}^{1/2} \otimes \mathcal{D}_{A, x}^{1/2} = \mathcal{D}_{A, x},\]
so that $u|_{M}\in C_{c}^{\infty}(M, \mathcal{D}_A)$. Hence any linear $\mu$ as before induces
\[ \tilde{\mu}: \mathcal{NC}_{c}^{\infty}(M//\G) \rightarrow \mathbb{R}, \ u\mapsto \mu(u|_{M}).\]
\end{remark}

\begin{proposition}\label{equiv-to-trace} $\mu$ satisfies the invariance condition (\ref{transv-meas-invariance}) if and only if $\tilde{\mu}$ satisfies the tracial condition (\ref{trace-condition}).
\end{proposition}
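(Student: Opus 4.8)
The plan is to reduce the tracial identity to the invariance condition (\ref{transv-meas-invariance}) by exhibiting, for each pair $u, v\in \mathcal{N}\mathcal{C}_c^\infty(M//\G)$, a single element $w\in C_c^\infty(\G, t^*\mathcal{D}_A\otimes s^*\mathcal{D}_A)$ such that $(u\star v)|_M= s_!(w)$ and $(v\star u)|_M= t_!(w)$. Granting this, since $\tilde{\mu}(u\star v)= \mu((u\star v)|_M)$ by definition, the difference $\tilde{\mu}(u\star v)- \tilde{\mu}(v\star u)$ equals $(\mu\circ s_!- \mu\circ t_!)(w)$, and both implications will follow at once. Write $\check{u}(g):= u(g^{-1})$; since inversion interchanges $s$ and $t$ and swaps the two half-density factors, $u\mapsto \check{u}$ is an isomorphism of $\mathcal{N}\mathcal{C}_c^\infty(M//\G)$ onto itself. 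The candidate is the pointwise product $w(g):= \check{u}(g)\, v(g)= u(g^{-1})v(g)$; tracking tensor factors as in (\ref{ug1-ug2}) one checks that $u(g^{-1})v(g)\in \mathcal{D}_{A,s(g)}\otimes \mathcal{D}_{A,t(g)}$, so $w$ is indeed a section of $t^*\mathcal{D}_A\otimes s^*\mathcal{D}_A$, the domain of $s_!$ and $t_!$ in (\ref{S-!}).

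The first identity is immediate from the convolution formula (\ref{int-conv-prod}): restricting to a unit $g= 1_x$ forces $y= x$ in the notation used there, so that $g_1= g_2^{-1}$ and, writing $h:= g_2$,
\[ (u\star v)(1_x)= \int_{h\in s^{-1}(x)} u(h^{-1})v(h)= \int_{s^{-1}(x)} w = s_!(w)(x),\]
the middle integral being exactly the canonical integration of $w|_{s^{-1}(x)}$ that defines $s_!$. For the second identity I would use that inversion intertwines the two fiber integrations, i.e. $t_!(w)= s_!(\check{w})$: the change of variables $g\mapsto g^{-1}$ carries the right-translation trivialization of the $s$-fibers to the left-translation trivialization of the $t$-fibers, and densities carry no sign. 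A direct computation gives $\check{w}(h)= w(h^{-1})= u(h)v(h^{-1})= v(h^{-1})u(h)$, which by the previous paragraph (applied to the pair $(v,u)$) is precisely the integrand producing $(v\star u)|_M$; hence $t_!(w)= s_!(\check{w})$ has value $\int_{s^{-1}(x)} v(h^{-1})u(h)= (v\star u)|_M(x)$, using that the tensor product of elements of the one-dimensional density lines is commutative.

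With the two identities in hand the forward implication is clear: if $\mu\circ s_!= \mu\circ t_!$ then $\tilde{\mu}(u\star v)= \tilde{\mu}(v\star u)$ for all $u, v$, which is (\ref{trace-condition}). For the converse, the tracial condition gives $(\mu\circ s_!- \mu\circ t_!)(w)= 0$ for every $w$ of the special form $\check{u}\cdot v$; since $\check{\,}$ is a bijection, these $w$ are exactly the pointwise products $a\cdot b$ of two sections $a, b\in C_c^\infty(\G, t^*\mathcal{D}_A^{1/2}\otimes s^*\mathcal{D}_A^{1/2})$. It then remains to check that such products linearly span all of $C_c^\infty(\G, t^*\mathcal{D}_A\otimes s^*\mathcal{D}_A)$: writing $L= t^*\mathcal{D}_A^{1/2}\otimes s^*\mathcal{D}_A^{1/2}$, a partition-of-unity argument over trivializing charts of the line bundle $L$ expresses any compactly supported section of $L^{\otimes 2}$ as a finite sum of products of sections of $L$ (on each chart multiply by a smooth cutoff equal to $1$ on the relevant support). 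Hence $\mu\circ s_!- \mu\circ t_!$ vanishes on a spanning set, thus on all of $C_c^\infty(\G, t^*\mathcal{D}_A\otimes s^*\mathcal{D}_A)$, which is (\ref{transv-meas-invariance}).

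The main obstacle I anticipate is the bookkeeping behind the identity $t_!(w)= s_!(\check{w})$: one must verify that $g\mapsto g^{-1}$ carries the right-invariant density trivialization of the $s$-fibers used to define $s_!$ precisely to the left-invariant trivialization of the $t$-fibers used to define $t_!$, so that the two canonical fiber integrations agree after inversion. This is a routine but delicate compatibility of the algebroid density identifications (of the kind recorded in the Appendix) with the inversion map; once it is established, the remainder of the argument is purely formal.
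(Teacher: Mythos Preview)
Your proof is correct and follows essentially the same approach as the paper: introduce $w(g)=u(g^{-1})v(g)$, verify $(u\star v)|_M=s_!(w)$ and $(v\star u)|_M=t_!(w)$, and conclude. The only minor difference is in the converse: you argue that the products $\check u\cdot v$ linearly span $C_c^\infty(\G,t^*\mathcal{D}_A\otimes s^*\mathcal{D}_A)$ via a partition of unity, whereas the paper observes the stronger fact that every such section is a \emph{single} product---after trivializing the density bundles, take $v=\phi$ and $u$ any compactly supported function equal to $1$ on the inverse of $\mathrm{supp}(\phi)$.
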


\begin{proof} Applying the definition of $u\star v$ at points $1_x$ and comparing the resulting formula with the one defining $s_{!}$ we find that 
\[ (u\star v)|_{M}= s_{!}(\phi),\]
where $\phi(g)= u(g^{-1})v(g)$ defines an element in $C_{c}^{\infty}(\G, t^*\mathcal{D}_A\otimes s^*\mathcal{D}_A)$.
Similarly, or as a consequence, we have $(v\star u)|_{M}= t_{!}(\phi)$. This explains the statement (strictly speaking, one has to check that any compactly supported $\phi$ can be written as $\phi(g)= u(g^{-1})v(g)$ for some compactly supported $u$ and $v$. Trivializing the density bundles this becomes a question about functions, which is clear: just take $v= \phi$ and $u\in C_{c}^{\infty}(\G)$ any function which is $1$ on the inverse of the support of $\phi$.)
\end{proof} 

\begin{remark}[the more down to earth approach]\label{remark-s-shriek}  \rm \ If we choose a strictly positive density $\rho\in C^\infty(M, \mathcal{D}_A)$ (see the Appendix), then one can use $\rho$ to trivialize all the density bundles. Then $s_{!}$ becomes identified with 
 \[ s_{!}^{\rho}: C^{\infty}_{c}(\G) \rightarrow C_{c}^{\infty}(M), \ \  s_{!}^{\rho}(u)(x)=\int_{s^{-1}(x)} u(g) \stackrel{\rightarrow}{\rho}(g)\]
(for $\stackrel{\rightarrow}{\rho}$, see (\ref{dens-ind-s-fiber}) in the Appendix); similarly for $t_{!}$, hence $C_{c}^{\infty}(M//\G)$ is identified with its more concrete (but $\rho$-dependent)  model 
\[ C_{c}^{\infty}(M//\G, \rho):= \textrm{Coker} (s_{!}^{\rho}- t_{!}^{\rho}: C_{c}^{\infty}(\G) \rightarrow C_{c}^{\infty}(M)).\]
\end{remark}

Let us state right away one of the most basic properties of our definition: Morita invariance. This will follow from a more general result - Theorem \ref{thm-Mor-inv}.

\begin{theorem}\label{theorem-ME} Any Morita equivalence between two groupoids $\G$ (over $M$) and $\H$ (over $N$) gives rise to an  isomorphism between $C_{c}^{\infty}(M//\G)$ and $C_{c}^{\infty}(N//\H)$ and induces a 1-1 correspondence between the transverse measures of $\G$ and of $\H$.
\end{theorem}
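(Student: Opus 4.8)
The plan is to recall the definition of a Morita equivalence between $\G$ and $\H$ as a principal bi-bundle $P$, i.e.\ a manifold $P$ equipped with two surjective submersions $J_M \colon P \to M$ and $J_N \colon P \to N$, a left $\G$-action along $J_M$ and a right $\H$-action along $J_N$ that commute, such that $J_M$ is a principal $\H$-bundle and $J_N$ is a principal $\G$-bundle. The key idea is that such a $P$ should induce, by a pull-back/integrate-over-the-fibers procedure, a chain-level map between the spaces of compactly supported densities $C_{c}^{\infty}(M, \mathcal{D}_A)$ and $C_{c}^{\infty}(N, \mathcal{D}_B)$ (where $B$ is the algebroid of $\H$) that is compatible with the respective $s_{!} - t_{!}$ operators, hence descends to the quotients defining $C_{c}^{\infty}(M//\G)$ and $C_{c}^{\infty}(N//\H)$.

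First I would construct the comparison at the level of $P$ itself. The two submersions $J_M$ and $J_N$ have fibers that can be identified (via the principal actions) with $s$-fibers of $\H$ and $\G$ respectively, so integration over the $J_N$-fibers using the canonical density machinery of the push-forward examples earlier in the excerpt produces a map from compactly supported densities on $P$ down to $C_{c}^{\infty}(N, \mathcal{D}_B)$, and symmetrically for $J_M$. The crucial point is the canonical isomorphism of line bundles over $P$ relating $J_M^* \mathcal{D}_A$ and $J_N^* \mathcal{D}_B$ up to the fiberwise density bundles of the two principal actions; this is exactly the kind of short-exact-sequence decomposition $\mathcal{D}_{U} \cong \mathcal{D}_V \otimes \mathcal{D}_W$ recorded in the densities subsection. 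Using this identification, pulling back a density on $M$ by $J_M$ and integrating over the $J_N$-fibers yields a well-defined element of $C_{c}^{\infty}(N, \mathcal{D}_B)$, giving a candidate map $P_{!} \colon C_{c}^{\infty}(M, \mathcal{D}_A) \to C_{c}^{\infty}(N, \mathcal{D}_B)$.

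Next I would verify intertwining with the invariance operators. The relation $s_{!} - t_{!}$ encodes the $\G$-action; because the $\G$-action on $P$ is along $J_M$ and commutes with the $\H$-action along $J_N$, a Fubini-type computation (using the Fubini formula for densities, $\int_P = \int \int_{\mathrm{fibers}}$) shows that $P_{!}$ carries the image of $s_{!}^{\G} - t_{!}^{\G}$ into the image of $s_{!}^{\H} - t_{!}^{\H}$, and in fact matches them up. This is where most of the bookkeeping lives, and I expect it to be the main obstacle: one must keep the density-bundle identifications fully canonical throughout (the remark on \emph{why densities} warns precisely that ad hoc trivializations would make this step extremely tricky), and one must check that the two halves of the action groupoid bookkeeping on $P$ correspond correctly to the $s$ and $t$ operators on each side. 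Once the intertwining holds, $P_{!}$ descends to a map $\overline{P_{!}} \colon C_{c}^{\infty}(M//\G) \to C_{c}^{\infty}(N//\H)$.

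Finally I would establish that $\overline{P_{!}}$ is an isomorphism by exhibiting an inverse coming from the opposite bi-bundle $\overline{P}$ (the same $P$ with the roles of $\G$ and $\H$, and of $J_M$ and $J_N$, interchanged), and checking that the composite $\overline{\overline{P}_{!}} \circ \overline{P_{!}}$ equals the identity. The composite corresponds to integrating over the fibers of $P \times_N P \to M \times M$ and then collapsing by the $\G$-action; by the principality of the actions this fiber product is a $\G$-space whose quotient identification shows the composite is cohomologous to the identity at the chain level, hence equals the identity after passing to the quotient. Since a transverse measure is by definition a positive linear functional on $C_{c}^{\infty}(M//\G)$, and $\overline{P_{!}}$ is a linear isomorphism, pulling functionals back along $\overline{P_{!}}$ gives the desired $1$-$1$ correspondence of transverse measures, provided one also checks that positivity is preserved; positivity follows because $P_{!}$ is built from integration of densities, which sends non-negative densities to non-negative densities. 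As indicated in the text, the cleanest route to all of this is to deduce it from the Morita invariance of the differentiable homology $H_\bullet(\G)$ (Theorem~\ref{thm-Mor-inv}), since $C_{c}^{\infty}(M//\G) = H_0(\G)$, so I would ultimately phrase the argument as the degree-zero instance of that more general statement.
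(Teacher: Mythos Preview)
Your closing sentence is exactly how the paper proceeds: Theorem~\ref{theorem-ME} is stated with the remark that it follows from the more general Theorem~\ref{thm-Mor-inv}, and one simply reads off the degree-zero case using $C_c^\infty(M//\G)=H_0(\G)$.

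The direct ``pull back along $J_M$, then integrate over the $J_N$-fibers'' map $P_!$ you sketch in the earlier paragraphs, however, does not work as written and is \emph{not} the route the paper takes. The gap is at the very first step: $J_M$ is a surjective submersion but is not proper, so pulling back a compactly supported section of $\mathcal{D}_A$ along $J_M$ does not produce something compactly supported on $P$, and the $J_N$-fibers (which are $\G$-orbits, diffeomorphic to $s$-fibers of $\G$) are typically non-compact. Concretely, the restriction of $J_M$ to a $J_N$-fiber is identified with $t:s^{-1}(x)\to M$, so the integrand has compact support on each fiber only if $\G$ is proper. Thus your $P_!$ is not defined in general.

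The paper circumvents this by using a \emph{roof} rather than a direct map: one builds a double complex $C_{\bullet,\bullet}(P)$ with $C_{p,q}=\Gamma_c(P_{p,q},\mathcal{D}_\G^{p+1}\otimes\mathcal{D}_C\otimes\mathcal{D}_\H^{q+1})$ (where $C=J_M^{!}A\cong J_N^{!}B$ is the pull-back algebroid on $P$) together with augmentation maps $\int_\alpha$ and $\int_\beta$ to $C_\bullet(\G)$ and $C_\bullet(\H)$. These augmentations go \emph{from} compactly supported data on $P$ \emph{down} to $M$ and $N$ (fiber integration, which does preserve compact supports), and Lemma~\ref{conf2} shows each augmented row and column is exact, so both are quasi-isomorphisms. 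The isomorphism on $H_0$ is then obtained as a zigzag, inverting one augmentation and composing with the other. Your idea that the opposite bi-bundle furnishes the inverse is in the right spirit and matches the functoriality remark after the proof of Theorem~\ref{thm-Mor-inv}, but the actual argument does not construct a single chain map $C_c^\infty(M,\mathcal{D}_A)\to C_c^\infty(N,\mathcal{D}_B)$. For the positivity of the induced correspondence on transverse measures, one uses that a positive $v\in C_c^\infty(N,\mathcal{D}_B)$ admits a positive lift to $C_{0,0}(P)$ (as in the proof of Lemma~\ref{conf1}), and fiber integration sends this to a positive representative on the $\G$-side.
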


\begin{example}[the case of submersions]\label{ex-case-submersions}\rm \ When looking at ``transverse notions'', i.e.,  notions that morally live on the orbit space, the ``test case'' is provided by the groupoids whose orbit spaces $B$ are already smooth or, more precisely, by the groupoids that are (Morita) equivalent to smooth manifolds $B$. Such groupoids are associated to smooth submersions $\pi: P\rightarrow B$. Explicitly, any such submersion $\pi$ gives rise to a Lie groupoid $\G(\pi)$: 
\[ \G(\pi)= P\times_{B} P= \{ (p, q)\in P\times P\ :\ \pi(p)= \pi(q)\},\]
with the source and target being the second, respectively the first, projection, and the multiplication $(p, q)\cdot (q, r)= (p, r)$. The Lie algebroid of $\G(\pi)$ is the sub-bundle of $TP$ consisting of vectors that are tangent to the fibers of $\pi$, 
\[\F(\pi):= \textrm{Ker}(d\pi)\subset TP.\]
In this case the intuition mentioned in Remark \ref{cpct-supp-intuition} works without problems and tells us that we should look at the fiber integration map 
\begin{equation}
\label{int-over-fiber-again}
\int_{\pi-\textrm{fibers}}: C_{c}^{\infty}(P, \mathcal{D}_{\F(\pi)})\rightarrow C_{c}^{\infty}(B),
\end{equation}
Even more, it motivates the use of the density bundle starting with the question: how can one represent 
compactly supported smooth functions on $B$ by functions on $P$, in a canonical way? Of course, the main problem is then to understand the kernel of this map. The definition of $C_{c}^{\infty}(M//\G)$ suggests the answer: look at  
\[ s_{!}-t_{!}
: C_{c}^{\infty}(P\times_{B} P, t^{*}\mathcal{D}_{\F(\pi)}\otimes s^{*}\mathcal{D}_{\F(\pi)})\rightarrow C_{c}^{\infty}(P, \mathcal{D}_{\F(\pi)}).\]

\begin{lemma}\label{conf1} For the groupoid $\G(\pi)$ associated to a submersion $\pi: P\rightarrow B$, the integration over the fiber (\ref{int-over-fiber-again}) is surjective and its kernel is the image of $s_{!}-t_{!}$; therefore it induces an isomorphism
\[ C_{c}^{\infty}(P//\G(\pi)) \cong C_{c}^{\infty}(B) .\]
\end{lemma}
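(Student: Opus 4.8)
The plan is to analyse the fiber integration map concretely, with careful attention to the density bookkeeping, and to reduce everything to a single normalisation construction. First I would unwind the map (\ref{int-over-fiber-again}): since $\F(\pi)=\Ker(d\pi)$ is the vertical bundle, a section $\rho$ of $\mathcal{D}_{\F(\pi)}$ restricts on each fiber $P_b:=\pi^{-1}(b)$ to a genuine density, and $\int_{\pi-\textrm{fibers}}(\rho)(b):=\int_{P_b}\rho|_{P_b}\in\mathbb{R}$ depends smoothly on $b$. For the groupoid side, with $s(p,q)=q$ and $t(p,q)=p$, the fiber $s^{-1}(q)$ is identified with $P_{\pi(q)}$ and $t^{-1}(p)$ with $P_{\pi(p)}$; for $(p,q)\in\G(\pi)$ with $\pi(p)=\pi(q)=b$, a section $u$ of $t^*\mathcal{D}_{\F(\pi)}\otimes s^*\mathcal{D}_{\F(\pi)}$ takes values $u(p,q)\in\mathcal{D}_{T_pP_b}\otimes\mathcal{D}_{T_qP_b}$, i.e.\ $u|_{P_b\times P_b}$ is exactly a density on $P_b\times P_b$. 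With this in hand $s_!(u)$ is obtained by integrating out the first variable and $t_!(u)$ by integrating out the second.

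The whole proof then rests on the existence of a compactly supported ``normalising'' section: for any compact $K\subset B$, a density $\chi\in C_c^\infty(P,\mathcal{D}_{\F(\pi)})$ with $\int_{P_b}\chi=1$ for all $b\in K$. Granting this, surjectivity is immediate: given $f\in C_c^\infty(B)$ with $K=\textrm{supp}(f)$, the density $\rho=(f\circ\pi)\cdot\chi$ lies in $C_c^\infty(P,\mathcal{D}_{\F(\pi)})$ and satisfies $\int_{P_b}\rho=f(b)\int_{P_b}\chi=f(b)$ on $K$, and $=0=f(b)$ off $K$. The inclusion $\textrm{Im}(s_!-t_!)\subseteq\Ker(\int_{\pi-\textrm{fibers}})$ is a Fubini argument: for $u$ as above, both $\int_{\pi-\textrm{fibers}}(s_!u)(b)$ and $\int_{\pi-\textrm{fibers}}(t_!u)(b)$ equal the single double integral $\int_{P_b\times P_b}u$, so their difference vanishes.

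For the remaining inclusion $\Ker(\int_{\pi-\textrm{fibers}})\subseteq\textrm{Im}(s_!-t_!)$ I would use $\chi$ as a unit. Take $\rho$ with $\int_{P_b}\rho=0$ for all $b$, set $K=\pi(\textrm{supp}\,\rho)$ (compact), and choose $\chi$ normalised on $K$. Define $u(p,q):=\rho(p)\otimes\chi(q)$ for $(p,q)\in\G(\pi)$; this is a section of $t^*\mathcal{D}_{\F(\pi)}\otimes s^*\mathcal{D}_{\F(\pi)}$ with compact support contained in $\{(p,q):p\in\textrm{supp}\,\rho,\ q\in\textrm{supp}\,\chi\}$. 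Integrating out the first variable gives $s_!(u)(q)=\bigl(\int_{P_b}\rho\bigr)\otimes\chi(q)=0$, while integrating out the second gives $t_!(u)(p)=\rho(p)\otimes\int_{P_b}\chi=\rho(p)$ for $p\in\textrm{supp}\,\rho$. Hence $(s_!-t_!)(-u)=\rho$, so $\rho\in\textrm{Im}(s_!-t_!)$. Combining the three facts yields the induced isomorphism $C_c^\infty(P//\G(\pi))\cong C_c^\infty(B)$.

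The main obstacle is the construction of $\chi$, which is where properness of $\pi$ is \emph{not} available and must be circumvented by localising to the compact set $K$. I would build it by covering a relatively compact open $W\supset K$ by finitely many trivialising charts $\pi^{-1}(U_i)\cong U_i\times F_i$, choosing on each a positive compactly supported vertical density $\chi_i$ with positive fiber integral over $U_i$, and patching via a partition of unity $\{\lambda_i\}$ on $B$ to get $\tilde\chi=\sum_i(\lambda_i\circ\pi)\chi_i$ with $g(b):=\int_{P_b}\tilde\chi>0$ on $\overline W$. Picking $\eta\in C_c^\infty(W)$ equal to $1$ on $K$ and setting $\chi:=\bigl((\eta/g)\circ\pi\bigr)\tilde\chi$ (with $\eta/g$ extended by zero, smooth since $g>0$ on $\overline W$) gives $\int_{P_b}\chi=\eta(b)$, equal to $1$ on $K$, with $\chi$ compactly supported. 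The only care needed here is to verify the density bookkeeping in the trivialisations and that the fiber integrals are genuinely positive, both routine.
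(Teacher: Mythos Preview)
Your proof is correct and follows essentially the same approach as the paper: construct a normalising vertical density and use the tensor product $u(p,q)=\rho(p)\otimes\chi(q)$ to exhibit elements of the kernel as $(s_!-t_!)$ of something. The only cosmetic difference is packaging: the paper builds a single global section $c\in C^{\infty}_{\pi\text{-}c}(P,\mathcal{D}_{\F(\pi)})$ with $\int_{\pi}c=1$ on all of $B$ (fiberwise compact support, not globally compact) and then cuts off with $\phi\in C_c^\infty(B)$ equal to $1$ on $\pi(\mathrm{supp}\,\rho)$, whereas you build a compactly supported $\chi$ normalised only on the given compact $K$---these are interchangeable, and your version is just the paper's $(\phi\circ\pi)\cdot c$.
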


\begin{proof} Let $C^{\infty}_{\pi-c}(P, \mathcal{D}_{\F(\pi)})$ be the space of sections of $\mathcal{D}_{\F(\pi)}$ with fiberwise compact supports. We claim that the integration over the fiber, now viewed as a map
\[ \int_{\pi}:= \int_{\pi-\textrm{fibers}}: C^{\infty}_{\pi-c}(P, \mathcal{D}_{\F(\pi)}) \rightarrow C^{\infty}(B),\]
is surjective. Since $\int_{\pi} \pi^*(f) u= f \int_{\pi} u$, it suffices to show that there exists:
\[ c\in C^{\infty}_{\pi-c}(P, \mathcal{D}_{\F(\pi)}) \ \ \textrm{such\ that}\ \ \int_{\pi} c= 1.\]
To see this, choose an open cover $\{V_i\}_{i\in I}$ of $B$ such that, for each $i$, there exists an open $U_i\subset P$ with the property that $\pi|_{U_i}$ is diffeomorphic to the projection $V_i\times \mathbb{R}^q\rightarrow V_i$. On each $U_i$ choose 
\[ c_i\in C^{\infty}_{\pi|_{U_i}-c}(U_i, \mathcal{D}_{\F(\pi)})\]
such that $\int_{\pi|_{U_i}} c_i= 1$. Choose also a partition of unity $\{\eta_i\}_{i\in I}$ with $\eta_i$ supported in $V_i$. Set now $c= \sum_{i} \eta_i c_i$. 

The surjectivity from the statement follows immediately using $c$: for $f\in C_{c}^{\infty}(B)$, $\pi^*(f)\cdot c$ has compact support and $\int_{\pi} \pi^*(f)\cdot c= f$. Let now $u\in C_{c}^{\infty}(P, \mathcal{D}_{\F(\pi)})$ such that $\int_{\pi-\textrm{fibers}} u= 0$. Construct
\[ v\in  C_{c}^{\infty}(P\times_{B} P, t^{*}\mathcal{D}_{\F(\pi)}\otimes s^{*}\mathcal{D}_{\F(\pi)}), \ v(p, q)= \phi(\pi(q))  u(p)\otimes c(q) \]
where $\phi\in C_{c}^{\infty}(B)$ is chosen to be $1$ on $\pi(\textrm{supp}(u))$. The role of $\phi$ is to make $v$ have compact support. Compute now:
\[ s_{!}(v)(p)= \phi(\pi(b)) c(p) \int_{\pi}u = 0, \ t_{!}(v)(q)= u(p) \phi(\pi(p)) \int_{\pi} c= u(p);\]
therefore $u$ is in the image of $s_{!}-t_{!}$.
\end{proof}
\end{example}

\section{{\it Geometric} transverse measures (transverse densities)}
\label{gen-II}

\subsection{The transverse volume and density bundles}\label{sub-tr-dens-bundle}

We are now interested in {\it geometric} transverse measures, i.e.,  the
 analogues of the measures  induced by densities (Definition \ref{geom-measures-maniflds}).
Looking for linear functionals on $C_{c}^{\infty}(M, \mathcal{D}_A)$ that arise from sections of a vector bundle combined with the canonical integration on $M$ (like for standard densities) one immediately is led to the transverse density bundle:

\begin{definition}\label{def-tr-gpd-dens} For a Lie algebroid $A$ over $M$, the \textbf{the transverse density bundle of $A$} is the vector bundle over $M$ defined by: 
\[ \mathcal{D}_{A}^{\textrm{tr}}:= \mathcal{D}_{A^*}\otimes \mathcal{D}_{TM} .\]
Given a section $\sigma$ of $\mathcal{D}_{A}^{\textrm{tr}}$, \textbf{a decomposition of $\sigma$} is any writing of $\sigma$ of type
$\sigma= \rho^{\vee}\otimes \tau$ 
with $\rho\in C^{\infty}(M,\mathcal{D}_A)$ strictly positive and $\tau$ a density on $M$ (where $\rho^{\vee}$ is the dual section induced by $\rho$).
\end{definition}

Similarly one can define the \textbf{transverse volume and orientation bundles}
\[ \mathcal{V}^{\textrm{tr}}_{A}:= \mathcal{V}_{A^*}\otimes \mathcal{V}_{TM}= \Lambda^{\textrm{top}}A\otimes \Lambda^{\textrm{top}}T^*M,\ \ \ \mathfrak{o}^{\textrm{tr}}_{A}:= \mathfrak{o}_{A^*}\otimes \mathfrak{o}_{TM}\]
and the usual relations between these bundles continue to hold in this setting; e.g.:
\[ \mathcal{D}_{A}^{\textrm{tr}}= |\Lambda^{\textrm{top}}A\otimes \Lambda^{\textrm{top}}T^*M|= |\mathcal{V}^{\textrm{tr}}_{A}| . \]

One of the main properties of these bundles is that they are representations of $A$ and, even better, of $\G$, whenever $\G$ is a Lie groupoid with algebroid $A$; hence they do deserve the name of ``transverse'' vector bundles. We describe the canonical action of $\G$ on the transverse density bundle $\mathcal{D}_{A}^{\textrm{tr}}$; for the other two the description is identical. We have to associate to any arrow $g: x\rightarrow y$ of $\G$ a linear transformation 
\[ g_{*}: \mathcal{D}_{A, x}^{\textrm{tr}}\rightarrow \mathcal{D}_{A, y}^{\textrm{tr}}.\]
The differential of $s$ and the right translations induce a short exact sequence
\[ 0 \rightarrow A_y \rightarrow T_g\G \stackrel{ds}{\rightarrow} T_{x}M \rightarrow 0 ,\]
which, in turn (cf. item 2 in Subsection \ref{Geometric measures: densities}), induces an isomorphism:  
\begin{equation}\label{DTG-dec} 
\mathcal{D}(T_g\G)\cong \mathcal{D}(A_y)\otimes \mathcal{D}(T_xM).
\end{equation}
Using the similar isomorphism at $g^{-1}$ and the fact that the differential of the inversion map gives an isomorphism $T_g\G\cong T_{g^{-1}}\G$, we find an isomorphism
\[ \mathcal{D}(A_y)\otimes \mathcal{D}(T_xM)\cong \mathcal{D}(A_x)\otimes \mathcal{D}(T_yM).\]
and therefore an isomorphism:
\[ \mathcal{D}(A_{x}^{*})\otimes \mathcal{D}(T_xM)\cong \mathcal{D}(A_{y}^{*})\otimes \mathcal{D}(T_yM),\]
and this defines the action $g_*$ we were looking for (it is straightforward to check that this defines indeed an action). 

\begin{definition}  A \textbf{transverse density} for the Lie groupoid $\G$ is any $\G$-invariant section of the transverse density bundle 
$\mathcal{D}_{A}^{\textrm{tr}}$. When we want to stress the structure of $\G$-representation present on $\mathcal{D}_{A}^{\textrm{tr}}$, we will denote it by $\mathcal{D}_{\G}^{\textrm{tr}}$.
\end{definition}

\subsection{Transverse densities as transverse measures}\label{subsec-Transverse densities as transverse measures}

Returning to the relevance of the transverse density bundle to transverse measures, note that the canonical pairing 
between $\mathcal{D}_{A}$ and $\mathcal{D}_{A^*}$  and the integration of densities on $M$ allows us to interpret any section 
$\sigma$ of $\mathcal{D}_{A}^{\textrm{tr}}$ as a linear map 
\[ \mu_{\sigma}: C_{c}^{\infty}(M, \mathcal{D}_A) \rightarrow \mathbb{R}.\]
Explicitly, if $\sigma= \rho^{\vee}\otimes \tau$ is a decomposition of $\sigma$,
\[ \mu_{\sigma}(f \cdot \rho)=\int_{M} f\cdot \tau=  \int_{M} f(x)\  d\mu_\tau(x)\ \ \ \forall\ f\cdot \rho\in C_{c}^{\infty}(M, \mathcal{D}_A) .\]
Of course, the question is: when does $\mu_{\sigma}$ define a transverse measure 
(i.e., descends to $C_{c}^{\infty}(M//\G)$)? We have:

\begin{proposition}\label{prop-transverse-measures-densities} For a section $\sigma\in C^{\infty}(M,\mathcal{D}_{\G}^{\mathrm{tr}})$, the following are equivalent:
\begin{enumerate}
\item[1.] $\mu_{\sigma}\circ s_{!}=  \mu_{\sigma}\circ t_{!}$ (i.e.,  $\sigma$ gives rise to a transverse measure, if it is positive).
\item[2.] $\sigma$ is an invariant section of  $\mathcal{D}_{\G}^{\mathrm{tr}}$ (i.e., it is a transverse density).
\end{enumerate}
\end{proposition}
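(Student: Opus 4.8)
The plan is to convert the functional identity $\mu_{\sigma}\circ s_{!}= \mu_{\sigma}\circ t_{!}$ (an equation between linear maps on $C_{c}^{\infty}(\G, t^*\mathcal{D}_A\otimes s^*\mathcal{D}_A)$) into a \emph{pointwise} identity of sections over $\G$, and then to recognise that pointwise identity as the invariance $g_{*}\sigma_{s(g)}= \sigma_{t(g)}$. First I would observe that both $\mu_{\sigma}\circ s_{!}$ and $\mu_{\sigma}\circ t_{!}$ are of ``integration against a kernel'' type. Unwinding the definition of $s_{!}$ in (\ref{S-!}) and pairing the resulting section of $\mathcal{D}_A$ against $\sigma= \rho^{\vee}\otimes \tau$, the Fubini formula for densities lets me rewrite
\[ \mu_{\sigma}(s_{!}(u))= \int_{\G} \langle u, \Theta_s\rangle \]
for a canonical section $\Theta_s$ of $(t^*\mathcal{D}_A\otimes s^*\mathcal{D}_A)^{*}\otimes \mathcal{D}_{T\G}$, and likewise $\mu_{\sigma}(t_{!}(u))= \int_{\G}\langle u, \Theta_t\rangle$. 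Since integration over $\G$ of a compactly supported density-valued section is nondegenerate (if $\int_{\G}\langle u, \Theta\rangle$ vanishes for all compactly supported $u$, then $\Theta\equiv 0$, by a bump-function argument), condition~1 is equivalent to the pointwise equality $\Theta_s(g)= \Theta_t(g)$ for every $g\in \G$.

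Next I would compute the two kernels, most transparently after choosing a global decomposition $\sigma= \rho^{\vee}\otimes \tau$ and using $\rho$ to trivialise $\mathcal{D}_A$ as in Remark \ref{remark-s-shriek}. Writing $u= \tilde u\cdot (t^*\rho\otimes s^*\rho)$ with $\tilde u\in C_{c}^{\infty}(\G)$, the functional $\mu_{\sigma}\circ s_{!}$ becomes integration of $\tilde u$ against a genuine density $\lambda_s$ on $\G$ which, under the $ds$-decomposition (\ref{DTG-dec}) of $\mathcal{D}_{T_g\G}\cong \mathcal{D}_{A_{t(g)}}\otimes \mathcal{D}_{T_{s(g)}M}$, equals $\rho_{t(g)}\otimes \tau_{s(g)}$ (the $s$-fibre factor being the right-translate of $\rho$). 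Running the same computation for $t_{!}$ --- whose fibrewise density identification goes through the inversion, exactly as in the construction of the $\G$-action in Subsection \ref{sub-tr-dens-bundle} --- gives a density $\lambda_t$ which, under the corresponding $dt$-decomposition $\mathcal{D}_{T_g\G}\cong \mathcal{D}_{A_{s(g)}}\otimes \mathcal{D}_{T_{t(g)}M}$, equals $\rho_{s(g)}\otimes \tau_{t(g)}$. Thus condition~1 becomes the equality $\lambda_s(g)= \lambda_t(g)$ in the one-dimensional space $\mathcal{D}_{T_g\G}$, for every $g$.

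Finally I would match this with invariance. The two decompositions of $\mathcal{D}_{T_g\G}$ are precisely the isomorphisms $\psi_s, \psi_t$ out of which $g_{*}$ was built in Subsection \ref{sub-tr-dens-bundle}; writing $\psi_t^{-1}\psi_s(\rho_{t(g)}\otimes \tau_{s(g)})= c(g)\,\rho_{s(g)}\otimes \tau_{t(g)}$ for a positive scalar $c(g)$, the equality $\lambda_s(g)= \lambda_t(g)$ is exactly $c(g)= 1$. On the other hand, dualising the algebroid slots (the step passing from $\mathcal{D}(A)\otimes \mathcal{D}(TM)$ to $\mathcal{D}(A^*)\otimes \mathcal{D}(TM)$ in the construction of $g_{*}$) turns the same relation into $g_{*}\sigma_{s(g)}= c(g)\,\sigma_{t(g)}$, so that $g_{*}\sigma_{s(g)}= \sigma_{t(g)}$ is again equivalent to $c(g)= 1$. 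Hence condition~1 holds iff $c\equiv 1$ iff $\sigma$ is $\G$-invariant, which is condition~2.

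The main obstacle I anticipate is the density bookkeeping in the middle step: one must check carefully that the fibrewise identification implicit in $t_{!}$ is the \emph{same} inversion-based identification used to define $g_{*}$, so that the transition isomorphism $\psi_t^{-1}\psi_s$ measuring the defect between $\lambda_s$ and $\lambda_t$ is literally the map defining the action. Getting this matching right --- rather than off by an inversion or a left/right-translation convention --- is exactly what makes the single scalar $c(g)$ control both sides simultaneously; once that is pinned down, the rest reduces to the nondegeneracy of integration and the formal cancellation of dual density lines.
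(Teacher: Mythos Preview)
Your proposal is correct and follows essentially the same route as the paper. Your densities $\lambda_s$ and $\lambda_t$ are exactly the paper's $\rho_{\G}$ and $i^*\rho_{\G}$, so your pointwise equality $\lambda_s=\lambda_t$ is the paper's condition~3 (invariance of $\rho_{\G}$ under inversion), and your final identification of this with $\G$-invariance of $\sigma$ is the paper's $2\Leftrightarrow 3$ step; the only organizational difference is that the paper establishes $1\Leftrightarrow 3$ by passing through the trace condition on the convolution algebra (Proposition~\ref{equiv-to-trace} and the $3\Leftrightarrow 4$ computation), whereas you obtain it directly by Fubini, which is slightly more economical for this proposition alone but bypasses the link to traces that the paper wants anyway for Proposition~\ref{prop-equiv-viewpoints}.
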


Hence transverse densities appear as transverse measures of geometric type. The proof of the proposition will be given together with that of Proposition \ref{prop-equiv-viewpoints}.

\subsection{The compatibility with other view-points} \label{subsec-The compatibility with other view-points}

Here we give two more equivalent characterizations of the invariance of a section $\sigma\in C^{\infty}(M,\mathcal{D}_{A}^{\textrm{tr}})$ which show the compatibility of this condition with the points of view of 
measure groupoids \cite{hahn} and of non-commutative geometry \cite{Connes}. However, unlike the previous proposition, 
the next one depends on a decomposition $\sigma= \rho^{\vee} \otimes \tau$ (see Definition \ref{def-tr-gpd-dens}) that we now fix. 
In this situation, we have:
\begin{enumerate}
\item[1.] an induced measure $\mu_{\G}$ (to be defined) on the manifold $\G$ and, from the point of view of measure groupoids, the main condition to require is the invariance of $\mu_{\G}$ under the inversion map. Of course, since we are in the geometric setting, $\mu_{\G}= \mu_{\rho_{\G}}$ will be associated to a density $\rho_{\G}$ on $\G$. Finally,   
$\rho_{\G}$ is defined as the density whose value at an arrow $g: x\rightarrow y$ of $\G$ equals to  $\rho_y\otimes \tau_x$ modulo the isomorphism (\ref{DTG-dec}). 
\item[2.] the concrete realization $(C_{c}^{\infty}(\G), \star_{\rho})$ (using $\rho$) of the convolution algebra and, using $\tau$, the associated integration over the units
\[ \tilde{\mu}_{\tau}: C_{c}^{\infty}(\G)\rightarrow \mathbb{R}, \ u\mapsto \mu_{\tau}(u|_{M})= \int_{M} u(1_x) \tau(x).\]
The interesting condition is, again, the trace condition.
\end{enumerate}

\begin{proposition}\label{prop-equiv-viewpoints} Given $\sigma= \rho^{\vee} \otimes \tau$ as above, the following are equivalent:
\begin{enumerate}
\item[2.] $\sigma$ is invariant (i.e., it is a transverse density).
\item[3.] the measure $\mu_{\rho_{\G}}$ on $\G$ is invariant under the inversion map.
\item[4.] $\tilde{\mu}_{\tau}$ is a trace on the algebra $(C_{c}^{\infty}(\G), \star_{\rho})$.
\end{enumerate}
\end{proposition}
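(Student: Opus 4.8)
The plan is to prove Propositions \ref{prop-transverse-measures-densities} and \ref{prop-equiv-viewpoints} simultaneously, establishing the equivalence of all four conditions $(1)$--$(4)$ by collapsing each of them to one and the same pointwise scalar identity attached to every arrow $g\colon x\to y$. Throughout I abbreviate by $\Phi_g\colon \mathcal{D}(A_y)\otimes\mathcal{D}(T_xM)\to\mathcal{D}(A_x)\otimes\mathcal{D}(T_yM)$ the isomorphism built in Subsection \ref{sub-tr-dens-bundle} by composing \eqref{DTG-dec} at $g$, the inversion isomorphism $T_g\G\cong T_{g^{-1}}\G$, and \eqref{DTG-dec} at $g^{-1}$. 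This $\Phi_g$ is at the same time the fiberwise push-forward $\iota_*$ of densities on $\G$ read through the $s$-decomposition, and the isomorphism out of which $g_*$ is manufactured by dualizing the $A$-factor via $\mathcal{D}(V)^*\cong\mathcal{D}(V^*)$.

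First I would treat $(2)\Leftrightarrow(3)$. Since $\mu_{\rho_\G}$ is geometric and a density is determined by the measure it induces, invariance of $\mu_{\rho_\G}$ under the involutive diffeomorphism $\iota$ is equivalent to $\iota_*\rho_\G=\rho_\G$. Evaluating at $g$ and using $\rho_\G(g)=\rho_y\otimes\tau_x$ and $\rho_\G(g^{-1})=\rho_x\otimes\tau_y$ under \eqref{DTG-dec}, condition $(3)$ becomes the pointwise identity $\Phi_g(\rho_y\otimes\tau_x)=\rho_x\otimes\tau_y$. On the other hand, condition $(2)$, namely $g_*(\rho^\vee_x\otimes\tau_x)=\rho^\vee_y\otimes\tau_y$, is obtained from the previous one by dualizing the $A$-factor. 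The key check is that, because $g_*$ and $\Phi_g$ correspond to the same element under the canonical reshuffle $\mathrm{Hom}(U,V)\cong U^*\otimes V$ together with $\mathcal{D}(V)^*\cong\mathcal{D}(V^*)$, the two identities are governed by the same scalar: writing $\rho=r\,a$, $\tau=t\,m$ in trivializing frames, both $(2)$ and $(3)$ read $\Phi_g(a_y\otimes m_x)=\tfrac{r_x t_y}{r_y t_x}\,a_x\otimes m_y$. Hence $(2)\Leftrightarrow(3)$.

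Next I would establish $(1)\Leftrightarrow(2)$, which is Proposition \ref{prop-transverse-measures-densities}. Here I would express both sides of \eqref{transv-meas-invariance} as integrals over $\G$: for $u\in C_c^\infty(\G,t^*\mathcal{D}_A\otimes s^*\mathcal{D}_A)$, pairing $u$ with $\rho^\vee$ in the appropriate slot and tensoring with $\tau$ produces densities $\alpha_u,\beta_u$ on $\G$ for which Fubini gives $\mu_\sigma(s_!u)=\int_\G\alpha_u$ and $\mu_\sigma(t_!u)=\int_\G\beta_u$, where $\alpha_u$ uses the $s$-decomposition of the line $\mathcal{D}(T_g\G)$ and $\beta_u$ the $t$-decomposition. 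Thus $(1)$ says $\int_\G(\alpha_u-\beta_u)=0$ for all $u$; since $\alpha_u(g)-\beta_u(g)$ is the value of $u$ at $g$ times a fixed density, a standard localization argument (concentrating $u$ near a prescribed arrow) makes this equivalent to the pointwise equality $\alpha_u(g)=\beta_u(g)$. Comparing the $s$- and $t$-decompositions of the single line $\mathcal{D}(T_g\G)$ is precisely the isomorphism $\Phi_g^{-1}$, because the $t$-decomposition at $g$ equals the $s$-decomposition at $g^{-1}$ transported by $d\iota$; so this pointwise equality is once more the scalar identity of $(2)$, giving $(1)\Leftrightarrow(2)$.

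Finally $(1)\Leftrightarrow(4)$ I would obtain by citing Proposition \ref{equiv-to-trace}. The canonical $\rho$-trivialization $w\mapsto w\cdot(t^*\rho^{1/2}\otimes s^*\rho^{1/2})$ identifies $(C_c^\infty(\G),\star_\rho)$ with the intrinsic convolution algebra $\mathcal{N}\mathcal{C}_c^\infty(M//\G)$ and carries $\tilde\mu_\tau$ to the functional $u\mapsto\mu_\sigma(u|_M)$ of Proposition \ref{equiv-to-trace} (applied to $\mu=\mu_\sigma$); that proposition then turns the trace condition \eqref{trace-condition} into \eqref{transv-meas-invariance}, i.e.\ into $(1)$. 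The main obstacle is not any single implication but the bookkeeping shared by all of them: one must verify that the three distinct canonical isomorphisms in play---the two short exact sequences \eqref{DTG-dec}, the inversion isomorphism, and the dualities $\mathcal{D}(V)^*\cong\mathcal{D}(V^*)$---are threaded together with consistent orderings so that $(1)$, $(2)$ and $(3)$ all collapse to the identical scalar $\lambda(g)=r_xt_y/(r_yt_x)$; getting a single reshuffle or sign backwards would replace one equivalence by its reciprocal and break the chain.
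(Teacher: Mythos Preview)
Your proof is correct and shares the same core with the paper's argument: both reduce everything to the single pointwise identity $\Phi_g(\rho_y\otimes\tau_x)=\rho_x\otimes\tau_y$, i.e.\ to $\iota^*\rho_\G=\rho_\G$. Your treatment of $(2)\Leftrightarrow(3)$ via the reshuffle $\mathrm{Hom}(U,V)\cong U^*\otimes V$ is exactly the paper's argument with the contractions $c_{23},c_{13}$ written more abstractly, and your $(1)\Leftrightarrow(4)$ via Proposition~\ref{equiv-to-trace} is identical to the paper's.

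The one organizational difference is that the paper closes the chain via $(3)\Leftrightarrow(4)$ (computing $\mu_{\rho_\G}$ on elements $f(g)=u(g^{-1})v(g)$ and recognizing $\tilde\mu_\tau(u\star_\rho v)$ versus $\tilde\mu_\tau(v\star_\rho u)$), whereas you close it via a direct $(1)\Leftrightarrow(2)$ (Fubini turns $\mu_\sigma\circ s_!$ and $\mu_\sigma\circ t_!$ into $\int_\G\alpha_u$ and $\int_\G\beta_u$, and localization in $u$ forces the pointwise equality). These are really two packagings of the same integral identity over $\G$: your $\alpha_u$ is $\rho_\G$ in the $s$-decomposition and your $\beta_u$ is $\rho_\G$ in the $t$-decomposition, so your localization step is precisely the paper's $\iota^*\rho_\G=\rho_\G$. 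Your route has the mild advantage of making Proposition~\ref{prop-transverse-measures-densities} self-contained (not routed through the convolution algebra), at the cost of the density bookkeeping you flag at the end; the paper's route makes the link between inversion-invariance and the trace property more transparent.
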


\begin{proof}(of Propositions \ref{prop-transverse-measures-densities} and \ref{prop-equiv-viewpoints}) We show that that $1$-$4$ from the last two propositions are all equivalent. 
In order to handle the invariance of $\sigma$, we write more explicitly the action of $\G$ on $\mathcal{D}_{\G}^{\textrm{tr}}$ from Section \ref{sub-tr-dens-bundle}. If $g:x\to y$ is an arrow of $\G$, we use the canonical identifications 
\begin{align*}\mathcal{D}(A_{x}^{*})\otimes \mathcal{D}(A_{y}^{*})\otimes \mathcal{D}(A_y)\otimes \mathcal{D}(T_xM) &\stackrel{c_{23}}{\to}\mathcal{D}_{A, x}^{\textrm{tr}}\\
\mathcal{D}(A_{x}^{*})\otimes \mathcal{D}(A_{y}^{*})\otimes \mathcal{D}(A_x)\otimes \mathcal{D}(T_yM)
&\stackrel{c_{13}}{\to}\mathcal{D}_{A, y}^{\textrm{tr}},
\end{align*}
where $c_{ij}$ denotes the map given by contraction of the $i$-th and $j$-th factors of the tensor product in the left-hand side. Since $\rho$ is strictly positive, under these identifications $\sigma_x\in \mathcal{D}_{A, x}^{\textrm{tr}}$ corresponds to $\rho^{\vee}_x \otimes \rho^\vee_y \otimes  \rho_y \otimes \tau_x$ and  $\sigma_y$ corresponds to $\rho^{\vee}_x \otimes \rho^\vee_y \otimes  \rho_x \otimes \tau_y$. The action $g_{*}: \mathcal{D}_{A, x}^{\textrm{tr}}\rightarrow \mathcal{D}_{A, y}^{\textrm{tr}}$, when written in terms of these identifications, is given by the tensor product of the identity map of $\mathcal{D}(A_{x}^{*})\otimes \mathcal{D}(A_{y}^{*})$ with the map ${di}_g$ induced by the differential of the inversion of $\G$ at $g$. Therefore, $\sigma$ is an invariant section if and only if for all $g\in \G$ with $g:x\to y$, 
\[di_g(\rho_y \otimes \tau_x)=\rho_x \otimes \tau_y.\]
This is the same as the condition that $i^*(\rho_{\G})= \rho_{\G}$ which, of course, is equivalent to the fact that the associated measure $\mu_{\rho_{\G}}$ is invariant under the inversion of $\G$. This proves the equivalence of $2$ and $3$.

The equivalence between $1$ and $4$ is just a rewriting of Proposition \ref{equiv-to-trace} via the identification of the intrinsic convolution algebra with $(C_{c}^{\infty}(\G), \star_{\rho})$. We are left with the equivalence of $3$ and $4$. 
Rewrite $3$ as:
\begin{equation}\label{cond-inv-in-proof} 
\mu_{\rho_{\G}}(u)= \mu_{\rho_{\G}}(u^*) 
\end{equation}
for all $u\in C_{c}^{\infty}(\cG)$, where $u^*(g)= u(g^{-1})$. 
 Using the definition of $\rho_{\G}$  we obtain the formula for the associated integration 
\[ \mu_{\rho_{\G}}(f)= \int_{\G} f(g) \rho_{\G}(g)= \int_{\G} f(g) \rho_{t(g)}\otimes \tau_{s(g)}= \int_{M}  \left( \int_{s^{-1}(x)} f(g)\  d\mu_{\stackrel{\rightarrow}{\rho}}(g)\right) \tau_x .\]

As in the proof of Proposition \ref{equiv-to-trace}, it suffices to require (\ref{cond-inv-in-proof}) on elements of type $f= u^*v$ (hence $f(g)= u(g^{-1})v(g)$). But, on such elements, the previous formula combined with the definition (\ref{star-rho}) of $\star_{\rho}$ gives $\int_{M} (u\star_{\rho} v)(1_x) \tau_x= \tilde{\mu}_{\tau}(u\star_{\rho} v)$; then, on $f^*= v^* u$ we obtain $\tilde{\mu}_{\tau}(v\star_{\rho} u)$. Hence $3$ is equivalent to $\tilde{\mu}_{\tau}$ being a trace.  
\end{proof}

\begin{remark} The non-commutative view-point indicates that one should look also at higher versions of traces; this is related to the Hochschild and cyclic cohomology of the convolution algebra \cite{Connes} (see also \cite{BN}) from which traces emerge in degree zero. On the other hand also the invariant sections of $\mathcal{D}_{A}^{\textrm{tr}}$ show up as degree zero elements in a  cohomology: the differentiable cohomology of $\G$ with coefficients in $\mathcal{D}_{A}^{\textrm{tr}}$. It is natural to expect that, via restriction at units, the differentiable cohomology in higher degrees will give rise to higher (non-commutative) traces; such higher traces will be ``geometric'' in the same way that measures induced by densities are. Our comments are very much in line with the work of Pflaum-Posthuma-Tang \cite{PPT}. 
\end{remark}

\section{Intermezzo: the case of proper Lie groupoids}
\label{sec-proper}

In this section we will show that ``the general nonsense'' from the previous sections takes a much more concrete (but not so obvious) form in the case of proper groupoids. Throughout this section we fix a proper groupoid $\G$ over $M$, and we consider the orbit space 
\[ B:= M/\G, \ \ \ \textrm{with\ quotient\ map}\ \ \pi: M\rightarrow B.\]
Properness implies that $B$ is Hausdorff and locally compact; however, $B$ has much more structure. The upshot of this section is that the abstract transverse measures discussed in the previous two sections descend to (standard) measures on $B$. We start with the existence result for transverse densities.

\begin{proposition}\label{existence-tr-dens}
Any proper Lie groupoid admits strictly positive transverse densities.
\end{proposition}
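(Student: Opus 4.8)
The plan is to produce such a density by \textbf{averaging}, exactly as one averages to produce invariant objects on proper groupoids. The bundle $\mathcal{D}_A^{\textrm{tr}}$ is of density type, so each fibre carries a canonical cone of positive elements; since density bundles are (non-canonically) trivializable and positivity is a convex condition, one can glue local positive sections by a partition of unity to obtain a strictly positive global section $\sigma_0\in C^\infty(M,\mathcal{D}_A^{\textrm{tr}})$. This $\sigma_0$ need not be $\G$-invariant, and the whole point is to correct it. For that I would use the two ingredients that are available \emph{precisely because} $\G$ is proper: a strictly positive full Haar density $\rho\in C^\infty(M,\mathcal{D}_A)$, inducing the right-invariant measures $\mu_{\stackrel{\rightarrow}{\rho}}$ on the $s$-fibres, and a cut-off function $c\in C^\infty(M)$ for $\rho$, whose existence for proper groupoids is guaranteed by the Proposition on Haar systems above (and the Appendix).

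With these at hand I would set, for $y\in M$,
\[ \sigma(y):=\int_{s^{-1}(y)} c(t(g))\,(g^{-1})_*\big(\sigma_0(t(g))\big)\ d\mu_{\stackrel{\rightarrow}{\rho}}(g). \]
For $g\in s^{-1}(y)$ the arrow $g^{-1}$ runs from $t(g)$ to $y$, so $(g^{-1})_*\sigma_0(t(g))\in \mathcal{D}_{A,y}^{\textrm{tr}}$ and the integrand is an honest $\mathcal{D}_{A,y}^{\textrm{tr}}$-valued function of $g$. The integral is well defined and smooth: properness together with the support property of the cut-off function makes $c\circ t$ compactly supported along each $s$-fibre, so the integral is finite, while smoothness follows from smooth dependence on parameters. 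Invariance is then a change-of-variables computation. Given $h:y\to z$, pulling $h_*$ inside the integral and using the cocycle identity $h_*(g^{-1})_*=(hg^{-1})_*$, the substitution $g\mapsto k=gh^{-1}$ is a bijection $s^{-1}(y)\to s^{-1}(z)$ which preserves $t$ (hence preserves $c\circ t$ and $\sigma_0\circ t$) and, by right-invariance of $\mu_{\stackrel{\rightarrow}{\rho}}$, preserves the measure; since $(hg^{-1})_*=(k^{-1})_*$, this turns $h_*\sigma(y)$ into $\sigma(z)$. Thus $\sigma$ is $\G$-invariant, i.e.\ a transverse density.

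It remains to see that $\sigma$ is strictly positive, and this is the point I expect to require the most care. The key fact is that the $\G$-action $g_*$ on $\mathcal{D}_A^{\textrm{tr}}$ preserves the canonical positive cones: $g_*$ is assembled from the canonical operations on densities (the identification (\ref{DTG-dec}), duality, and tensor product), each of which sends positive elements to positive elements because it is built from the positive function $|\textrm{det}|$. Hence every $(g^{-1})_*\sigma_0(t(g))$ is a strictly positive element of the $1$-dimensional space $\mathcal{D}_{A,y}^{\textrm{tr}}$, and since $c\geq 0$ with fibrewise total mass $\int_{s^{-1}(y)} c(t(g))\,d\mu_{\stackrel{\rightarrow}{\rho}}(g)>0$ (equal to $1$ after normalizing the cut-off), the integral of a nonnegatively weighted family of strictly positive elements of a fixed positive half-line is again strictly positive. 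This yields a strictly positive transverse density. The only genuinely delicate points are the positivity-preservation of the action and the convergence of the averaging integral; both are handled, respectively, by the canonical nature of the density constructions and by the properness-via-cut-off mechanism.
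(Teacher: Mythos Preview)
Your proof is correct and follows essentially the same averaging argument as the paper's own proof: start from an arbitrary strictly positive section of $\mathcal{D}_A^{\mathrm{tr}}$ and average it over the $s$-fibres using the (proper) Haar system guaranteed by Proposition~\ref{equiv-proper}, relying on positivity-preservation of the $\G$-action to conclude strict positivity of the result. The only cosmetic difference is that the paper absorbs your cut-off function $c$ into a proper Haar system $\mu$ from the outset, whereas you keep the full Haar density $\rho$ and the cut-off $c$ separate in the integrand; otherwise the computations of invariance and positivity are identical.
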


\begin{proof} This is an immediate application of averaging, completely similar to our illustrative Lemma \ref{illustration-average}: using any proper Haar system $\mu$, if one starts with a strictly positive section $\rho\in C^{\infty}(M,\mathcal{D}_{A}^{\textrm{tr}})$, then 
\[ \textrm{Av}_{\mu}(\rho)_x:= \int_{s^{-1}(x)} g^*\rho_{t(g)}\ d\mu^x(g) \]
remains strictly positive and it is equivariant: for $a: x\rightarrow y$, 
\[  a^*  \textrm{Av}_{\mu}(\rho)_{y}= \int_{s^{-1}(y)} a^*g^*\rho_{t(g)} \ d\mu^{y}(g);\]
using $a^*g^*= (ga)^*$, $t(g)= t(ga)$ and invariance of $\mu$, the last expression is
\[ \int_{s^{-1}(y)} (ga)^*\rho_{t(ga)}\ d\mu^{y}(g)=  \int_{s^{-1}(x)} g^*\rho_{t(g)}\ d\mu^x(g)= \textrm{Av}_{\mu}(\rho)_x.\]
Hence the existence of invariant transverse densities and measures follows from the existence of proper Haar systems (Proposition \ref{equiv-proper}).
\end{proof}

\subsection{Classical measures on the orbit space $B$}\label{subsec-Classical measures on the orbit space $B$}

We have already mentioned that $B=M/\G$ is a quite nicely behaved space. Although it is not a manifold in general, in many respects it behaves like one. For instance, one has a well-behaved algebra of smooth functions:
\[ C^{\infty}(B):= \{ f: B \rightarrow \mathbb{R}\ :\ f\circ \pi \in C^{\infty}(M)\},\]
(and one can show that one can recover the topological space $B$ out of the algebra  $C^{\infty}(B)$ as its spectrum - see \cite{thesis}). The algebra of compactly supported smooth functions is defined as usual, by just adding the compact support condition:
\[ C^{\infty}_{c}(B):= \{ f\in C^{\infty}(B)\ :\ \ \textrm{supp}(f)- \textrm{compact}\}.\]
Of course, one can define similarly $ C^{k}_{c}(B)$ for any integer $k\geq 0$; for $k= 0$, since $B$ is endowed with the quotient topology, one recovers the usual space of compactly supported continuous functions on the space $B$. As in the case of smooth manifolds (Proposition \ref{1to1measures}), one has:

\begin{proposition}\label{1to1measures-orbi}
If $\G$ is a proper Lie groupoid over $M$, $B= M/\G$, then
\begin{enumerate}
\item[1.] $C_{c}^{\infty}(B)$ is dense in $C_{c}(B)$.
\item[2.] any positive linear functional on $C_{c}(B)$ is automatically continuous; and the same holds for $C_{c}^{\infty}(B)$ with the induced topology.  
\item[3.] the construction $I\mapsto I|_{C_{c}^{\infty}(B)}$ induces a 1-1 correspondence:
\[ \{\textrm{measures\ on}\ B\} \stackrel{\sim}{\longleftrightarrow} \ \{ \textrm{positive\ linear\ functionals\ on}\ C_{c}^{\infty}(B)\} .\]
\end{enumerate}
\end{proposition}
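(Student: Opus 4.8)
The plan is to follow the proof of Proposition \ref{1to1measures} essentially verbatim, replacing the manifold $M$ by the orbit space $B$ and replacing ``smooth bump/cut-off functions'' by $\G$-invariant smooth functions on $M$ produced by averaging over a proper Haar system. Throughout I would fix a proper, normalized Haar system $\mu$, whose existence is guaranteed by Proposition \ref{equiv-proper}, and use averaging exactly as in the proof of Proposition \ref{existence-tr-dens}: for $f\in C_c^\infty(M)$ the function $\textrm{Av}_\mu(f)(x)=\int_{s^{-1}(x)} f(t(g))\, d\mu^x(g)$ is $\G$-invariant and smooth (the invariance is the same reparametrization computation as in Proposition \ref{existence-tr-dens}), hence descends to an element of $C^\infty(B)$; moreover $\textrm{Av}_\mu(f)(x)\neq 0$ forces $\pi(x)\in\pi(\textrm{supp}(f))$, so that the support of $\textrm{Av}_\mu(f)$ in $B$ is contained in the compact set $\pi(\textrm{supp}(f))$ and $\textrm{Av}_\mu$ maps $C_c^\infty(M)$ into $C_c^\infty(B)$.

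For part 1, I would invoke the Stone--Weierstrass theorem for locally compact Hausdorff spaces (as in the manifold case, cf.\ \cite[Thm.\ 10.A.2]{deitmar}) applied to $\mathcal{A}=C_c^\infty(B)$ viewed inside $C_0(B)$. That $\mathcal{A}$ is a subalgebra is clear. The two remaining conditions, point-separation and non-vanishing, are where properness enters. Given two distinct orbits $\mathcal{O}_x\neq\mathcal{O}_y$, properness forces these to be disjoint closed subsets of $M$, so I can choose $f\in C_c^\infty(M)$ with $f\geq 0$, $f(x)>0$ and $\textrm{supp}(f)\cap\mathcal{O}_y=\emptyset$; then $\textrm{Av}_\mu(f)\in C_c^\infty(B)$ is positive at $\pi(x)$ and zero at $\pi(y)$, giving separation and non-vanishing at once. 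Stone--Weierstrass then yields density of $C_c^\infty(B)$ in $C_0(B)$ for the sup-norm, and the passage from $C_0(B)$ to $C_c(B)$ is the same compact-support trick as before: I multiply an approximating sequence by a fixed cut-off $\phi\in C_c^\infty(B)$ with $\phi\equiv 1$ on the compact support $K$ of the target function, which keeps all supports inside the fixed compact $\textrm{supp}(\phi)$. Such a $\phi$ exists again by averaging: picking a compact $L\subset M$ with $\pi(L)=K$ (a standard consequence of $\pi$ being an open surjection onto a locally compact Hausdorff space), choosing $g\in C_c^\infty(M)$ with $g\geq 0$ positive on $L$, the function $h:=\textrm{Av}_\mu(g)\in C_c^\infty(B)$ is strictly positive on $K$, and composing $h$ with a suitable $\chi\in C^\infty(\mathbb{R})$ equal to $1$ on $[\min_K h,\infty)$ and vanishing near $0$ produces the desired $\phi$.

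Parts 2 and 3 are then purely formal and copy the manifold argument. For part 2, given a compact $K\subset B$ I use the cut-off $\phi_K\in C_c^\infty(B)$ from the previous paragraph (or Urysohn for the $C_c(B)$ statement): for $f$ supported in $K$ one has $-\|f\|_{\textrm{sup}}\,\phi_K\leq f\leq \|f\|_{\textrm{sup}}\,\phi_K$, so positivity gives $|\mu(f)|\leq \mu(\phi_K)\,\|f\|_{\textrm{sup}}$, the required continuity estimate. For part 3, continuity together with density lets a positive functional on $C_c^\infty(B)$ extend uniquely to a continuous functional on $C_c(B)$; the extension is positive by the square-root trick (for $f\geq 0$ one has $\sqrt f\in C_c(B)$, approximated by $g_n\in C_c^\infty(B)$ so that $g_n^2\to f$ and each $\mu(g_n^2)\geq 0$), and restriction is injective by density, giving the stated bijection.

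I expect the only genuine work to be the two geometric inputs used in part 1: that the orbits of a proper groupoid are disjoint closed submanifolds (so the separating bump function exists), and that every compact $K\subset B$ is the image of a compact $L\subset M$. Both are standard consequences of properness of $\G$, and once they are granted, the averaging operator converts ordinary bump and cut-off functions on $M$ into their $\G$-invariant counterparts on $B$; the remainder of the proof is then a direct transcription of Proposition \ref{1to1measures}.
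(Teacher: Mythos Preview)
Your proposal is correct and follows essentially the same approach as the paper. The paper's own proof simply says that Proposition \ref{1to1measures} was written so as to apply verbatim to $B$, with the single missing ingredient being that $C^{\infty}(B)$ separates closed (saturated) subsets, which is exactly Lemma \ref{illustration-average}; you have spelled out this averaging argument explicitly rather than invoking the lemma, and your construction of the cut-off $\phi_K$ via $\chi\circ\textrm{Av}_\mu(g)$ is a slightly more hands-on variant of what Lemma \ref{illustration-average} gives directly (a $[0,1]$-valued invariant function equal to $1$ on one closed saturated set and $0$ on another), but the content is the same.
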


\begin{proof} 
The density statement is well-known, but here is a less standard argument: use Stone-Weierstrass (cf. \cite[Thm.\ 10.A.2]{deitmar}). When $B$ is compact this is straightforward. In general, consider $\mathcal{A}= C_{c}^{\infty}(B)$ as a subspace of the space of continuous functions vanishing at infinity \[C_0(B):=\{f\in C(B)\ :\ \forall\ \epsilon>0\ ,\ \exists\ K\subset B\ \mathrm{compact,\ such\ that}\ |f_{|B\backslash K}|<\epsilon \};\]  the Stone-Weierstrass theorem for locally compact Hausdorff spaces (as it follows from the one for compact spaces applied to the one-point compactification of $B$ and the unitization of $\mathcal{A}$) tells us that $\mathcal{A}$ is dense in $C_0(B)$ with respect to the sup-norm, provided it is:
\begin{enumerate}[1.]
\item a sub-algebra of $C_0(B)$.
\item point-separating: given $x, y\in B$ distinct, there exists $f\in \mathcal{A}$ with $f(x)\neq f(y)$.
\item non-vanishing at any point: for any $x\in B$, there exists $f\in \mathcal{A}$ with $f(x)\neq 0$. 
\end{enumerate}
This is clearly satisfied by our $\mathcal{A}$ - our main illustration of the averaging procedure (Lemma \ref{illustration-average} in the Appendix) is precisely the fact that $C^{\infty}(M//\G)$ separates closed subsets. Thus we deduce that for any $f\in C_0(B)$ one can find a sequence $(f_n)_{n\geq 1}$ in $C_{c}^{\infty}(B)$ that converges to $f$ in the sup-norm. If $f$ has compact support $K$ then, choosing $\phi\in C_{c}^{\infty}(B)$ with $\phi|_{K}= 1$, we have that $\{\phi\cdot f_n\}_{n\geq 1}$ converges uniformly to $\phi\cdot f= f$ and the support of all these functions are inside the compact $\textrm{supp}(\phi)$; hence the last convergence holds in $C_{c}(B)$. 

For part 2 recall that the continuity of a function $\mu$ on $C_{c}(B)$ means that for every compact $K\subset B$, there exists a constant $C_K$ such that 
\[ |\mu(f)| \leq C_{K} ||f||_{\textrm{sup}} \ \ \ \forall\ f\in C_{K}(B).\]
To see that this is implied by the positivity of $\mu$, for $K\subset B$ compact, we choose a function $\phi_K\in C_{c}(B)$ that is $1$ on $K$, for any $f\in C_{K}(B)$ we have that
\[ - ||f||_{\textrm{sup}} \phi_K \leq f \leq  ||f||_{\textrm{sup}} \phi_K\]
hence, by the positivity of $\mu$, 
\[ |\mu(f)|\leq C_K  ||f||_{\textrm{sup}},\ \ \ \textrm{where}\ C_K= \mu(\phi_K).\]
Exactly the same argument (just that one chooses $\phi_K$ smooth) works for $C_{c}^{\infty}(B)$. 

With 1 and 2 at hand, 3 is basically the standard result that continuous functionals on a dense subspace of a locally convex vector space extend uniquely to continuous linear functionals on the entire space; starting with $\mu$ on $C_{c}^{\infty}(B)$, the extension $\tilde{\mu}$ is (and must be given by) $\tilde{\mu}(f)= \lim_{n\to \infty} \mu(f_n)$ where $\{f_n\}_{n\geq 1}$ is a sequence in $C_{c}^{\infty}(B)$ converging to $f$ (the continuity of $\mu$ implies that the sequence $\{\mu(f_n)\}_{n\geq 1}$ is Cauchy, hence the limit is well defined and is easily checked to be independent of the choice of the sequence). What does not follow right away from the standard argument is the positivity of $\tilde{\mu}$, but this is arranged by a simple trick: for $f\in C_{c}(B)$ one can find a sequence of smooth functions $g_n$ such that $g_{n}^{2}\to f$ (because $\sqrt{f}\in C_{c}(B)$), and then $\tilde{\mu}(f)$ is the limit of $\mu(g_{n}^{2})\geq 0$.
\end{proof}

\subsection{Transverse measures $\equiv$ classical measures on $B$}\label{subsec-transverse and classical measures}

Next, we relate transverse measures to classical measures on $B$. The main point is that the intuition described in Remark \ref{cpct-supp-intuition} can now be made precise; consider the intrinsic averaging:
\begin{equation}\label{intr-average}
\textrm{Av}_{\, \G}= \textrm{Av}: C_{c}^{\infty}(M, \mathcal{D}_A)\rightarrow C_{c}^{\infty}(B), \ 
\textrm{Av}(\rho)(\pi(x))=  \int_{s^{-1}(x)} \overrightarrow{\rho}|_{s^{-1}(x)}.
\end{equation}

\begin{theorem}\label{thm-iso-Cc(B)} The averaging map (\ref{intr-average}) induces an isomorphism
\begin{equation}\label{intr-average2} 
\textrm{Av}_{\, \G}= \textrm{Av}: C_{c}^{\infty}(M//\G) \stackrel{\sim}{\rightarrow} \CC_c(B) .
\end{equation}
In particular, this induces a 1-1 correspondence
\[ \{\textrm{transverse\ measures\ on} \ \G\} \stackrel{\sim}{\longleftrightarrow} \ \{\textrm{measures\ on}\ B=M/\G\}.\]
For the inverse of $\textrm{Av}$, choose any proper normalized Haar density $\rho$ (Def. \ref{proper-dens}) and then, for $f\in C_{c}^{\infty}(B)$, the corresponding element in $C_{c}^{\infty}(M//\G)$ is the one represented by $\pi^*(f)\rho\in C^{\infty}_c(M, \mathcal{D}_A)$.
\end{theorem}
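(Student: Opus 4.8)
The plan is to establish that the averaging map (\ref{intr-average}) is well-defined on $C_c^\infty(M,\mathcal{D}_A)$, lands in $C_c^\infty(B)$, descends to the quotient $C_c^\infty(M//\G)$, and is an isomorphism with the stated inverse. First I would check that $\textrm{Av}(\rho)$ is genuinely a function on $B$: the expression $\int_{s^{-1}(x)} \overrightarrow{\rho}|_{s^{-1}(x)}$ is $\G$-invariant in $x$, by exactly the equivariance computation already carried out in the proof of Proposition \ref{existence-tr-dens} (using $a^*g^* = (ga)^*$, $t(g)=t(ga)$, and invariance of the Haar system under right translation); hence it factors through $\pi$ and gives a well-defined function on $B$. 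That this function is smooth in the sense of $C^\infty(B)$ follows because its pullback $\textrm{Av}(\rho)\circ\pi$ is a fiber integral of a smooth compactly supported section, hence smooth on $M$; properness of $\G$ guarantees the $s$-fibers carrying the support are compact, so the integral is finite and the result has compact support downstairs in $B$.

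Next I would verify that $\textrm{Av}$ kills the image of $s_! - t_!$, so that it descends to the quotient defining $C_c^\infty(M//\G)$. This is the assertion $\textrm{Av}\circ s_! = \textrm{Av}\circ t_!$, i.e.\ that the averaged transverse measure is invariant; but this is precisely the invariance condition (\ref{transv-meas-invariance}) realized concretely, and it reduces to a Fubini-type interchange of the integration over $s$-fibers of $\G$ with the averaging integral. Concretely one unwinds $s_!$ and $t_!$ from (\ref{S-!}) and uses the groupoid multiplication together with right-invariance of the Haar density to match the two double integrals; this is the same bookkeeping as in the proof of Proposition \ref{equiv-to-trace}. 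Thus $\textrm{Av}$ induces a well-defined linear map $C_c^\infty(M//\G)\to C_c^\infty(B)$.

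The heart of the argument is that this induced map is an isomorphism, and here I would exhibit the inverse directly as stated: fix a proper normalized Haar density $\rho$ (whose existence is guaranteed by Proposition \ref{equiv-proper} and the cut-off discussion), and send $f\in C_c^\infty(B)$ to the class of $\pi^*(f)\rho\in C_c^\infty(M,\mathcal{D}_A)$. The normalization $\int_{s^{-1}(x)}\overrightarrow{\rho}|_{s^{-1}(x)}=1$ gives immediately that $\textrm{Av}(\pi^*(f)\rho)(\pi(x)) = f(\pi(x))\int_{s^{-1}(x)}\overrightarrow{\rho} = f(\pi(x))$, so this map is a right inverse of $\textrm{Av}$, proving surjectivity. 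For injectivity I would show that the composite in the other order is the identity on $C_c^\infty(M//\G)$, i.e.\ that for any $\eta\in C_c^\infty(M,\mathcal{D}_A)$ the difference $\eta - \pi^*(\textrm{Av}(\eta))\rho$ lies in $\textrm{Im}(s_!-t_!)$; this is the key step and the one I expect to be the main obstacle. The natural candidate is to build, out of $\eta$ and the normalized Haar density $\rho$, an explicit element of $C_c^\infty(\G, t^*\mathcal{D}_A\otimes s^*\mathcal{D}_A)$ whose image under $s_!-t_!$ is exactly this difference — in the spirit of the element $v(p,q)=\phi(\pi(q))u(p)\otimes c(q)$ constructed in the proof of Lemma \ref{conf1} for the submersion case. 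The obstacle is that here $\G$ is only proper, not étale or a submersion groupoid, so the cleaves $s^{-1}(x)$ need not map injectively to $B$; one must use properness to guarantee compact supports and the normalization of $\rho$ to make the averaging collapse correctly.

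Finally, once the isomorphism (\ref{intr-average2}) is established, the claimed $1$-$1$ correspondence between transverse measures on $\G$ and Radon measures on $B$ follows formally by dualizing: transverse measures are by definition positive linear functionals on $C_c^\infty(M//\G)$, measures on $B$ are positive linear functionals on $C_c^\infty(B)$ (which by Proposition \ref{1to1measures-orbi} are the same as Radon measures on $B$), and the isomorphism $\textrm{Av}$ transports one to the other while preserving positivity, since $\textrm{Av}$ sends positive densities to nonnegative functions on $B$.
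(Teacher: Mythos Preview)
Your proposal is correct and follows essentially the same route as the paper. The one place where you hesitate---the injectivity step---is resolved in the paper exactly as you anticipate, by an explicit preimage in the spirit of Lemma~\ref{conf1}: writing the proper normalized Haar density as $\rho = c\cdot\rho'$ with $c$ a cut-off function and $\rho'$ full, and representing an arbitrary element of $C_c^\infty(M,\mathcal{D}_A)$ as $h\cdot\rho'$, the difference $h\cdot\rho' - \pi^*\textrm{Av}(h\cdot\rho')\,\rho$ is precisely $(s_!^{\rho'}-t_!^{\rho'})(s^*h\cdot t^*c)$ after trivializing by $\rho'$. So the ``obstacle'' you flag is not one: the cut-off function $c$ plays the role of the section $c$ in Lemma~\ref{conf1}, and properness (via the support condition on $c$) replaces the role of the submersion structure there.
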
  

\begin{proof}
To check that the map $\textrm{Av}$ is well-defined, recall that if two elements of $C_{c}^{\infty}(M, \mathcal{D}_A)$ represent the same class in $C_{c}^{\infty}(M//\G)$, they differ by a density in the image of $(s_{!}-t_{!})$. If, as in Remark \ref{remark-s-shriek}, we fix any strictly positive density $\rho_0\in C^{\infty}(M, \mathcal{D}_A)$ to trivialize all the density bundles, we then check that 

\[\mathrm{Av}((s^\rho_{!}-t^\rho_{!})(u))(\pi(x))=\int_{s^{-1}(x)}\left(\int_{s^{-1}(t(g))}u(g)\overrightarrow{\rho_0}- \int_{s^{-1}(t(g))}u(g^{-1}) \overrightarrow{\rho_0} \right)\overrightarrow{\rho_0},\] which vanishes, being the difference of two double integrals with the same value. This is because both correspond to integrating $u$ over all arrows $g$ whose source (and target) belong to the orbit $\pi(x)$.

Next, choosing any proper normalized Haar density $\rho$, the inverse to $\mathrm{Av}$, given as above by mapping $f\in \CC_c(B)$ into the class of $\pi^*(f)\rho$ in $C_{c}^{\infty}(M//\G)$ is well defined. Indeed, for such an $f$, there is a compact subset $K$ of $M$ such that $\pi(K)$ is the support of $f$. This means that the support of $\pi^*f$ is the saturation of $K$, and the intersection of it with the support of $\rho$ is compact, because $\rho$ is a proper Haar density, so $\pi^*(f)\rho$ has compact support as well.

Finally, to see that this is indeed the inverse to $\mathrm{Av}$, we first check that \begin{align*}\mathrm{Av}([\pi^*(f)\rho])(\pi(x))
&=\int_{s^{-1}(x)}\pi^*(f)(t(g))\overrightarrow{\rho}=f(\pi(x))\int_{s^{-1}(x)}\overrightarrow{\rho}=f(\pi(x)).
\end{align*} Next, suppose that the proper normalized Haar density $\rho$ is of the form $\rho=c\cdot \rho'$, where $c$ is a cut-off function and $\rho'$ is full. Then, for any other $h\cdot \rho'\in C_{c}^{\infty}(M, \mathcal{D}_A)$,
\begin{align*}(h\cdot\rho'-\pi^*&\mathrm{Av}(h\cdot\rho')\rho)(x)
=h(x)\rho'(x) - \left(\int_{s^{-1}(x)}h(t(g))\overrightarrow{\rho'}\right)c(x)\rho'(x)\\
&= \left(\int_{s^{-1}(x)}h(x)c(t(g))\overrightarrow{\rho'} - \int_{s^{-1}(x)}c(x)h(t(g))\overrightarrow{\rho'}\right)\rho'(x),
\end{align*} and this expression represents zero in $C_{c}^{\infty}(M//\G)$. Indeed, using $\rho'$ to trivialize the density bundles, it becomes precisely $(s^{\rho'}_{!}-t^{\rho'}_{!})(s^*h\cdot t^*c)$.\end{proof}

\subsection{The measure on $B$ induced by a transverse density}\label{subsec-measure on B from tr density}

The previous section implies that any positive transverse density $\sigma\in C^{\infty}(M,\mathcal{D}_{A}^{\textrm{tr}})$ induces a transverse measure for $\G$, hence a measure on $B$, that we still denote by 
\[ \mu_{\sigma}: C_{c}^{\infty}(B)\rightarrow \mathbb{R}.\]
To make this more explicit, we need the notion of cut-off function for a full Haar density $\rho$ ( Proposition \ref{equiv-proper}): a smooth positive function $c$ on $M$ satisfying:
\begin{enumerate}
\item[1.] the restriction of $s$ to $t^{-1}(\textrm{supp}(c))$ is proper (as a map to $M$).
\item[2.] $\int_{s^{-1}(x)} c(t(g))\ d\mu_{\rho}^{x}(g)= 1$ for all $x\in M$.
\end{enumerate}

\begin{proposition}\label{prop-tr-dens-B} If $\sigma$ is a positive  transverse density for $\cG$, then the induced measure $\mu_{\sigma}$ on $B$ is given by
\begin{equation}\label{eq-mu-sigma}
\int_{B} h(b)\ d\mu_{\sigma}(b)= \int_{M} c(x) h(\pi(x))\ d\mu_{\tau}(x)
\end{equation}
where, to write the right hand side, we choose any decomposition $\sigma= \rho^{\vee}\otimes \tau$ as in Definition \ref{def-tr-gpd-dens}, and we use a cut-off function $c$ for $\rho$.

Moreover, $\mu_{\sigma}$ is uniquely characterized by the formula
\[ \int_M f(x)\ d\mu_{\tau}(x) =\int_B\left(\int_{\O_b}f(y)\ d\mu_{\rho_{\O_b}}(y) \right)\ d\mu_{\sigma}(b),\]
where $\rho_{\O}$ denotes the density induced by $\rho$ on the orbit (cf. Section \ref{Induced measures/densities on the orbits}).
\end{proposition}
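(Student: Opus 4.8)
The plan is to deduce both displayed formulas from Theorem~\ref{thm-iso-Cc(B)} together with the inversion-invariance of the groupoid measure $\mu_{\rho_{\G}}$ established in Proposition~\ref{prop-equiv-viewpoints}. Throughout I fix the decomposition $\sigma=\rho^{\vee}\otimes\tau$ and a cut-off function $c$ for the strictly positive (full) Haar density $\rho$, so that $c\cdot\rho$ is a proper normalized Haar density. The first formula is then essentially immediate: by Theorem~\ref{thm-iso-Cc(B)}, applied with the proper normalized Haar density $c\rho$, a function $h\in C_{c}^{\infty}(B)$ corresponds under the inverse of $\mathrm{Av}$ to the class of $\pi^{*}(h)\cdot c\rho$ in $C_{c}^{\infty}(M//\G)$. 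Since $\sigma$ is invariant, $\mu_{\sigma}$ descends to $C_{c}^{\infty}(M//\G)$, and evaluating it on this representative via its defining formula $\mu_{\sigma}(f\cdot\rho)=\int_{M}f\,\tau$, with $f=\pi^{*}(h)\,c$, gives
\[ \int_{B} h\ d\mu_{\sigma}= \mu_{\sigma}\big(\pi^{*}(h)\,c\cdot\rho\big)= \int_{M} c(x)\, h(\pi(x))\ d\mu_{\tau}(x). \]

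For the characterization formula I would introduce $\Phi(f)(b):=\int_{\O_b} f\ d\mu_{\rho_{\O_b}}$ and first observe, from the defining push-forward property of $\rho_{\O}$ along $t\colon s^{-1}(x)\to\O_{\pi(x)}$ (Section~\ref{Induced measures/densities on the orbits}), that $\Phi(f)(\pi(x))=\int_{s^{-1}(x)} f(t(g))\ d\mu_{\overrightarrow{\rho}}(g)=\mathrm{Av}(f\rho)(\pi(x))$. In particular $\Phi(f)\in C_{c}^{\infty}(B)$. Applying the first formula to $h=\Phi(f)$ then reduces the claim to the identity
\[ \int_{M} c(x)\Big(\int_{s^{-1}(x)} f(t(g))\ d\mu_{\overrightarrow{\rho}}(g)\Big) d\mu_{\tau}(x)= \int_{M} f\ d\mu_{\tau}. \]

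To prove this last identity I would recognize its left-hand side as $\mu_{\rho_{\G}}(F)$ for $F(g)=c(s(g))\,f(t(g))$, using the formula $\mu_{\rho_{\G}}(F)=\int_{M}\big(\int_{s^{-1}(x)}F(g)\,d\mu_{\overrightarrow{\rho}}(g)\big)\tau_{x}$ recorded in the proof of Proposition~\ref{prop-equiv-viewpoints} (here $c(s(g))$ is constant on each $s$-fiber). Since $\sigma$ is a transverse density, that proposition also gives $\mu_{\rho_{\G}}(F)=\mu_{\rho_{\G}}(F^{*})$, where $F^{*}(g)=F(g^{-1})=c(t(g))\,f(s(g))$. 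Now $f(s(g))=f(x)$ is constant on $s^{-1}(x)$, so pulling it out and invoking the cut-off normalization $\int_{s^{-1}(x)} c(t(g))\ d\mu_{\overrightarrow{\rho}}(g)=1$ collapses the inner integral to $1$, leaving exactly $\int_{M} f\ d\mu_{\tau}$. This inversion-trick-plus-normalization step is the technical heart of the argument and the main obstacle; a secondary point requiring care is matching the push-forward definition of $\rho_{\O}$ with the $s$-fiber integration, so that $\Phi=\mathrm{Av}(\,\cdot\,\rho)$ and the compactness/convergence of all the fiber integrals is guaranteed by properness.

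Finally, for the uniqueness clause I would note that $\Phi=\mathrm{Av}(\,\cdot\,\rho)$ is surjective onto $C_{c}^{\infty}(B)$: indeed $f\mapsto f\rho$ identifies $C_{c}^{\infty}(M)$ with $C_{c}^{\infty}(M,\mathcal{D}_A)$ because $\rho$ is strictly positive, and $\mathrm{Av}$ is onto by Theorem~\ref{thm-iso-Cc(B)}. Hence prescribing the values $\int_{B}\Phi(f)\ d\mu_{\sigma}$ for all $f$ determines $\int_{B} h\ d\mu_{\sigma}$ for every $h\in C_{c}^{\infty}(B)$, so the displayed formula characterizes $\mu_{\sigma}$ uniquely.
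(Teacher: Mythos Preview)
Your proposal is correct and follows essentially the same route as the paper: derive \eqref{eq-mu-sigma} by transporting $\mu_{\sigma}$ through the inverse of $\mathrm{Av}$ from Theorem~\ref{thm-iso-Cc(B)}, then prove the Weyl-type formula by rewriting $\int_{\O_b} f\,d\mu_{\rho_{\O_b}}$ as an $s$-fiber integral, recognizing the resulting double integral as $\mu_{\rho_{\G}}$ applied to $(s^*c)(t^*f)$, and using the inversion-invariance of $\mu_{\rho_{\G}}$ from Proposition~\ref{prop-equiv-viewpoints} together with the cut-off normalization. Your explicit surjectivity argument for $\Phi=\mathrm{Av}(\,\cdot\,\rho)$ to justify the uniqueness clause is a small addition the paper leaves implicit.
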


Note that it is rather remarkable that the right hand side of the formula \eqref{eq-mu-sigma} for $\mu_{\sigma}$ does not depend on the choice of the decomposition or of the cut-off function. 

\begin{proof} The formula defining $\mu_\sigma$ on $C_c^\infty(B)$ is obtained simply by transferring the definition of $\mu_\sigma$ on $C_{c}^{\infty}(M//\G)$ through the isomorphism described in Theorem \ref{thm-iso-Cc(B)}. 
For the second part we also use the discussion on $\rho_\O$ from \ref{Induced measures/densities on the orbits} to compute:
\begin{align*}\int_B\left(\int_{\O_b}f(y)\ d\mu_{\rho_{\O_b}}(y) \right)\ d\mu_{\sigma}(b)&=\int_M c(x)\left(\int_{\O_x}f(y)\ d\mu_{\rho_{\O_b}}(y)\right)\ d\mu_{\tau}(x)\\
&=\int_M c(x)\left(\int_{s^{-1}(x)}f(t(g))\ d\mu_{\overrightarrow{\rho}}(g)\right)d\mu_{\tau}(x)\\
&=\int_M\left( \int_{s^{-1}(x)}c(s(g))f(t(g))\  d\mu_{\overrightarrow{\rho}}(g)\right)\ d\mu_{\tau}(x).
\end{align*}

The last integral is the same as the integral of the function $(s^*c)\cdot(t^*f)$ over $\G$ with respect to the density $\rho_\G$ induced by $\rho$ and $\tau$. Since  $\sigma= \rho^{\vee}\otimes \tau$ is invariant, Proposition \ref{prop-equiv-viewpoints} implies that $\rho_\G$ is invariant under the inversion, hence
\begin{align*}\int_B\left(\int_{\O_b}f(y)\ d\mu_{\rho_{\O_b}}(y) \right)\  d\mu_{\sigma}(b)&=\int_M\left( \int_{s^{-1}(x)}c(t(g))f(s(g))\  d\mu_{\overrightarrow{\rho}}(g)\right)\ d\mu_{\tau}(x)\\
&=\int_M f(x) \left( \int_{s^{-1}(x)}(c(t(g))\  d\mu_{\overrightarrow{\rho}}(g)\right)\ d\mu_{\tau}(x)\\
&=\int_M f(x)\ d\mu_{\tau}(x)\qedhere
\end{align*}
\end{proof}

\subsection{Some interesting consequences}\label{subsec-Some interesting consequences}

Here are some immediate but interesting consequences of Theorem \ref{thm-iso-Cc(B)} and the integral formula from Proposition \ref{prop-tr-dens-B}.

\begin{corollary}
With the same notations as above, if $\G$ is compact and $\sigma$ is a transverse density then, for any $h\in C^{\infty}(B)$,  
\[ \int_M h(\pi(x))\ d\mu_{\tau}(x) =\int_B h(b)\cdot \mathrm{Vol}(\O_b, \mu_{\O_b}) \  d\mu_{\sigma}(b).\]
In particular, the volume function $\mathrm{Vol}_{\rho}:  b\mapsto \mathrm{Vol}(\O_b, \mu_{\O_b})$ 
is a continuous function on $B$, smooth in the sense that its pull-back to $M$ is smooth, and its integral with respect to $\mu_{\sigma}$ is $\mathrm{Vol}(M, \mu_{\tau})$. 
\end{corollary}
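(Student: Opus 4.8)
The plan is to derive every assertion by specializing the characterization of $\mu_{\sigma}$ from Proposition \ref{prop-tr-dens-B} to functions that are constant along the orbits. First I would record the reductions that compactness of $\G$ buys us: since the unit embedding $M\hookrightarrow \G$ is a closed embedding into a compact Hausdorff space, $M$ is compact; each $s$-fiber $s^{-1}(x)$ is a closed subset of $\G$, hence compact, so each orbit $\O_x=t(s^{-1}(x))$ is compact, and $B=M/\G$ is compact as a continuous image of $M$. In particular every smooth function on $M$ (resp. on $B$) is automatically compactly supported, so the formula of Proposition \ref{prop-tr-dens-B} applies to $f=\pi^*h$ for any $h\in C^{\infty}(B)$ without support worries, and the constant function $1$ is a legitimate test function.

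For the displayed formula I would apply Proposition \ref{prop-tr-dens-B} to $f=\pi^*h=h\circ\pi$. The key observation is that $\pi^*h$ is constant on each orbit: for $y\in\O_b$ one has $(\pi^*h)(y)=h(\pi(y))=h(b)$. Hence the inner integral factors,
\[ \int_{\O_b}(\pi^*h)(y)\ d\mu_{\rho_{\O_b}}(y)=h(b)\int_{\O_b}d\mu_{\rho_{\O_b}}(y)=h(b)\cdot \mathrm{Vol}(\O_b,\mu_{\O_b}), \]
and substituting into the characterization yields exactly
\[ \int_M h(\pi(x))\ d\mu_{\tau}(x)=\int_B h(b)\cdot \mathrm{Vol}(\O_b,\mu_{\O_b})\ d\mu_{\sigma}(b). \]
Taking $h=1$ (legitimate since $B$ is compact) immediately gives $\int_B \mathrm{Vol}_{\rho}(b)\ d\mu_{\sigma}(b)=\mathrm{Vol}(M,\mu_{\tau})$.

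It remains to address regularity of the volume function. Using the description of $\rho_{\O}$ from Section \ref{Induced measures/densities on the orbits} (the same identity used in the proof of Proposition \ref{prop-tr-dens-B}, now with $f=1$), I would rewrite
\[ \mathrm{Vol}(\O_x,\mu_{\O_x})=\int_{s^{-1}(x)}\overrightarrow{\rho}, \]
which is precisely the averaging map (\ref{intr-average}) applied to $\rho$, i.e. $\pi^*\mathrm{Vol}_{\rho}=\mathrm{Av}(\rho)$. Right-invariance of $\overrightarrow{\rho}$ shows the right-hand side is constant along orbits, so $\mathrm{Vol}_{\rho}$ is well defined on $B$; and since fiber integration of a smooth density over the compact $s$-fibers of the proper submersion $s$ produces a smooth function on $M$ — equivalently, since $\mathrm{Av}$ lands in $\CC_c(B)$ by the averaging machinery behind Theorem \ref{thm-iso-Cc(B)} — the pull-back $\pi^*\mathrm{Vol}_{\rho}$ is smooth, so that $\mathrm{Vol}_{\rho}\in C^{\infty}(B)\subset C(B)$. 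I do not expect a genuine obstacle here: the only points requiring care are the compactness reductions that guarantee compact supports and finiteness, and the identification of the orbit volume with the averaging map, from which smoothness is inherited rather than proved anew.
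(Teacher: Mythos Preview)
Your proposal is correct and follows essentially the same route as the paper: you specialize the characterization formula of Proposition \ref{prop-tr-dens-B} to $f=\pi^*h$ and use that such functions are constant on orbits, and for smoothness you identify $\mathrm{Vol}(\O_x,\mu_{\O_x})=\int_{s^{-1}(x)}\overrightarrow{\rho}$. The paper phrases the smoothness step slightly differently---it cites condition 2 of Definition \ref{def-Haar-system} directly rather than invoking the averaging map---but this is the same argument.
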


Note that, in terms of set-measures, this reads as
\[ \mu_{\sigma}= \frac{1}{\mathrm{Vol}_{\rho}} \pi_{!}(\mu_{\tau}).\]

Smoothness of the volume function is a simple consequence of the equality $\mathrm{Vol}(\O_x, \mu_{\O_x})=\mathrm{Vol}(s^{-1}(x), \mu_{\overrightarrow{\rho}})$ (see Section \ref{Induced measures/densities on the orbits}); this function is smooth, by condition $2$ of the definition of a Haar system (Definition \ref{def-Haar-system}).

\begin{corollary} With the same notations as above, if $\G$ is compact  and $\sigma$ is a transverse density, then the volume of $B= M/\G$ with respect to $\mu_{\sigma}$ is given by
\[ \mathrm{Vol}(B, \mu_{\sigma})=\int_M \frac{1}{\mathrm{Vol}(\O_x, \mu_{\O_x})}\  d\mu_{\tau}(x) .\]
\end{corollary}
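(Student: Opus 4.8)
The plan is to deduce this directly from the previous corollary by feeding it the right test function, rather than by re-running the computation from Proposition \ref{prop-tr-dens-B}. First I would note that since $\G$ is compact, its base $M$ (the space of units) is compact, and hence so is the orbit space $B=M/\G$; in particular the constant function $1$ belongs to $\CC_c(B)=C^\infty(B)$ and
\[ \mathrm{Vol}(B,\mu_\sigma)=\int_B 1\ d\mu_\sigma(b). \]
The aim is therefore to realize the integrand $1$ as a product $h(b)\cdot \mathrm{Vol}(\O_b,\mu_{\O_b})$ for an admissible $h$, so that the previous corollary can be applied.

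To this end I would recall that the orbit-volume function $\mathrm{Vol}_\rho\colon b\mapsto \mathrm{Vol}(\O_b,\mu_{\O_b})$ is a strictly positive element of $C^\infty(B)$. Its smoothness (in the sense that its pull-back to $M$ is smooth) was already observed right after the previous corollary, and its strict positivity follows from the identity $\mathrm{Vol}(\O_x,\mu_{\O_x})=\mathrm{Vol}(s^{-1}(x),\mu_{\overrightarrow{\rho}})$ together with the fact that $\rho$ is strictly positive and the $s$-fibers $s^{-1}(x)$ are compact (being closed in the compact space $\G$), so the total mass is nonzero. Consequently its reciprocal $h:=1/\mathrm{Vol}_\rho$ is again a strictly positive function in $C^\infty(B)$, hence a legitimate test function.

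Applying the previous corollary to this $h$ is then immediate: on the right-hand side the factor $\mathrm{Vol}(\O_b,\mu_{\O_b})$ cancels against $h(b)=1/\mathrm{Vol}(\O_b,\mu_{\O_b})$, giving
\[ \int_B h(b)\cdot \mathrm{Vol}(\O_b,\mu_{\O_b})\ d\mu_\sigma(b)=\int_B 1\ d\mu_\sigma(b)=\mathrm{Vol}(B,\mu_\sigma), \]
while on the left-hand side, using $\O_{\pi(x)}=\O_x$, one reads off
\[ \int_M h(\pi(x))\ d\mu_\tau(x)=\int_M \frac{1}{\mathrm{Vol}(\O_x,\mu_{\O_x})}\ d\mu_\tau(x); \]
comparing the two sides yields the claimed formula.

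The computation is essentially a one-line cancellation once the test function $h=1/\mathrm{Vol}_\rho$ is identified, so the only point genuinely requiring care — and thus the main (if modest) obstacle — is verifying that $1/\mathrm{Vol}_\rho$ really lies in $C^\infty(B)$, i.e.\ that the orbit-volume function is both smooth in the relevant sense and bounded away from zero so that passing to the reciprocal preserves smoothness. As a consistency check I would also remark that applying Proposition \ref{prop-tr-dens-B} directly with $h=1$ produces the a priori different expression $\mathrm{Vol}(B,\mu_\sigma)=\int_M c(x)\ d\mu_\tau(x)$ for any cut-off function $c$ of $\rho$; comparing with the formula just obtained shows that $\int_M c\ d\mu_\tau=\int_M 1/\mathrm{Vol}_\rho\ d\mu_\tau$, reconciling the two viewpoints.
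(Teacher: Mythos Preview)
Your proposal is correct and is exactly the intended argument: the paper gives no separate proof of this corollary, treating it as immediate from the previous one, and the only sensible way to do so is to plug in $h=1/\mathrm{Vol}_\rho$, after checking this function is smooth and strictly positive. Your justification of strict positivity (via $\mathrm{Vol}(\O_x,\mu_{\O_x})=\mathrm{Vol}(s^{-1}(x),\mu_{\overrightarrow{\rho}})$, compactness of the $s$-fibers, and strict positivity of $\rho$ in the decomposition $\sigma=\rho^\vee\otimes\tau$) is fine.
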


We see that this expression for the volume of $B$ with respect to $\mu_\sigma$ corresponds to the definition given by Weinstein (cf. Definition 2.3 and Theorem 3.2 in \cite{alan-volume}) for the volume of the stack $M//\G$ with data $(\rho,\tau)$.

\section{Intermezzo: the case of regular Lie groupoids}
\label{sec-regular}

\subsection{The general regular case} 
\label{The regular case} 

We now have a closer look at the regular case, i.e., when all the orbits of $\G$ have the same dimension. In this case the connected components of the orbits form a regular foliation on the base manifold $M$; we denote it by $\F$. We interpret $\cF$ as an involutive sub-bundle of $TM$; as such, it is the image of the anchor $\sharp: A\to TM$  of the algebroid $A$ of $\G$. 

One special feature of the regular case is that the naive orbit space $M/\cG$ coincides with the orbit space $M/\cE$ of some smaller groupoid $\cE$, namely one that integrates $\cF$ as an algebroid. For instance, when $\cG$ has connected $s$-fibers, then $\cE$ could be any $s$-connected integration of $\cF$, such as the holonomy or the monodromy groupoid of $\cF$. However, for any regular $\cG$ (even without $s$-connected fibers) there is a canonical choice for $\cE$:  
the quotient of $\G$ obtained by dividing out the action of the connected components $\G_{x}^{0}$ of the isotropy groups $\G_{x}$ (for the smoothness, see Proposition 2.5 in \cite{moerdijk}). We denote it by 
\begin{equation}\label{EEE}
\mathcal{E}= \mathcal{E}(\G)
\end{equation}
and call it \textbf{the foliation groupoid associated to $\G$}. 

The previous discussion indicates that there should be a close relationship between transverse measures for $\cG$ and for $\cE= \cE(\cG)$. To see that this is indeed the case, we pass to another special feature of the regular case: the vector bundles 
\[ \mathfrak{g}= \textrm{Ker}(\sharp), \ \ \nu= TM/\mathcal{F}\]
are representations of $\cG$ in a canonical way, called the \textbf{isotropy and the normal representations}, respectively (cf. e.g. \cite{QA}). For the action of an arrow $g: x\rightarrow y$ on $\mathfrak{g}$ we use the adjoint action $\textrm{Ad}_g(a)= g a g^{-1}$ going from the isotropy group of $\G$ at $x$ to the one at $y$; by differentiation, we obtain the action going from $\mathfrak{g}_x$ to $\mathfrak{g}_y$. For the action on $\nu$, start with $v\in \nu_x$ and choose a curve $g(t): x(t)\rightarrow y(t)$ with $g(0)= g$ and such that $\dot{x}(0)\in T_xM$ represents $v$; then $g\cdot v\in \nu_y$ is the vector represented by $\dot{y}(0)\in T_yM$. 

\begin{lemma} One has a canonical isomorphism of representations of $\G$:
\begin{equation}\label{reg-rewrite} 
\mathcal{D}_{\cG}^{\mathrm{tr}} \cong  \mathcal{D}_{\mathfrak{g}^*}\otimes   \mathcal{D}_{\nu} 
\end{equation}
For $\sigma\in \Gamma(\mathcal{D}^\mathrm{tr}_{A})$, $\kappa\in \Gamma(\cD_{\gg})$ nowhere vanishing and $\beta\in \Gamma(\cD_{\nu})$ we write 
\begin{equation} \label{reg-rewrite2} 
\sigma\equiv \kappa^{\vee}\otimes \beta
\end{equation}
if the two elements correspond to each other by the previous isomorphism. 
\end{lemma}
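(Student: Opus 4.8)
The plan is to produce the isomorphism first at the level of line bundles and then upgrade it to an isomorphism of $\G$-representations. For the line bundles, note that in the regular case $\sharp$ has constant rank, so $\F=\mathrm{Im}(\sharp)$ is a subbundle of $TM$ and the anchor fits into the four-term exact sequence
\[ 0\rightarrow\mathfrak{g}\rightarrow A\stackrel{\sharp}{\rightarrow}TM\rightarrow\nu\rightarrow0,\]
which splits through $\F$ into $0\to\mathfrak{g}\to A\to\F\to0$ and $0\to\F\to TM\to\nu\to0$. Applying the density functor (item 2 of Subsection \ref{Geometric measures: densities}) gives $\mathcal{D}_A\cong\mathcal{D}_{\mathfrak{g}}\otimes\mathcal{D}_{\F}$ and $\mathcal{D}_{TM}\cong\mathcal{D}_{\F}\otimes\mathcal{D}_{\nu}$. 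Dualizing the first, tensoring, and pairing off the two copies of $\mathcal{D}_{\F}$ via the canonical $\mathcal{D}_{\F^*}\otimes\mathcal{D}_{\F}\cong\mathbb{R}$, we obtain
\[ \mathcal{D}_{A}^{\mathrm{tr}}=\mathcal{D}_{A^*}\otimes\mathcal{D}_{TM}\cong\mathcal{D}_{\mathfrak{g}^*}\otimes\mathcal{D}_{\nu},\]
which is the isomorphism \eqref{reg-rewrite} of vector bundles.

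It remains to check $\G$-equivariance, which is the only real content. Fix an arrow $g\colon x\to y$. I would first write the transverse action $g_*$ in a symmetric form: splitting $ds_g$ by right translations gives \eqref{DTG-dec}, namely $\mathcal{D}(T_g\G)\cong\mathcal{D}(A_y)\otimes\mathcal{D}(T_xM)$, while splitting $dt_g$ by left translations gives $\mathcal{D}(T_g\G)\cong\mathcal{D}(A_x)\otimes\mathcal{D}(T_yM)$; comparing the two produces the isomorphism $\Psi_g\colon\mathcal{D}(A_y)\otimes\mathcal{D}(T_xM)\cong\mathcal{D}(A_x)\otimes\mathcal{D}(T_yM)$ underlying $g_*$ as in Subsection \ref{sub-tr-dens-bundle} (this agrees with the inversion description there, since the differential of the inversion interchanges left and right translations).

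The crux is to factor $\Psi_g$ through the isotropy and normal data. I would introduce
\[ W_g=\{(u,v)\in T_xM\oplus T_yM\ :\ g\cdot[u]=[v]\ \textrm{in}\ \nu\},\]
which is precisely the image of $(ds_g,dt_g)\colon T_g\G\to T_xM\oplus T_yM$; the kernel of this map is the tangent space at $g$ to the set of arrows from $x$ to $y$, identified by right and by left translation with $\mathfrak{g}_y$ and $\mathfrak{g}_x$, the comparison being exactly $\Ad_g$. Hence $\Psi_g$ becomes $\mathcal{D}(\Ad_g)\otimes\mathrm{id}_{\mathcal{D}(W_g)}$. On the other hand the two projections of $W_g$ onto $T_xM$ and $T_yM$ are surjective with kernels $\F_y$ and $\F_x$, so $\mathcal{D}(\F_y)\otimes\mathcal{D}(T_xM)\cong\mathcal{D}(W_g)\cong\mathcal{D}(\F_x)\otimes\mathcal{D}(T_yM)$; substituting $\mathcal{D}(T_xM)\cong\mathcal{D}(\F_x)\otimes\mathcal{D}(\nu_x)$ and $\mathcal{D}(T_yM)\cong\mathcal{D}(\F_y)\otimes\mathcal{D}(\nu_y)$ and cancelling the $\F$-factors reduces this comparison to $\mathcal{D}(\nu_x)\cong\mathcal{D}(\nu_y)$, which is $\mathcal{D}$ of the normal action because $W_g$ is the graph of $[u]\mapsto g\cdot[u]$. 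Assembling these identifications, $g_*$ becomes the tensor product of the isotropy action on $\mathcal{D}_{\mathfrak{g}^*}$ with the normal action on $\mathcal{D}_{\nu}$, that is, the $\G$-action on the right-hand side, which is what we want.

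I expect the main obstacle to be the bookkeeping in this last step: one must verify that the several short exact sequences attached to $T_g\G$ and to $W_g$ fit into one compatible diagram, so that the canonical density identifications compose correctly and no stray automorphism of the lines appears; in particular one has to confirm that the residual map $\mathcal{D}(\nu_x)\cong\mathcal{D}(\nu_y)$ is genuinely $\mathcal{D}$ of the normal representation, and that the $\mathcal{D}_{\F}$ contributions coming from the $A$-side and from the $TM$-side cancel rather than accumulate. Since every map in sight is a canonical isomorphism of lines, this is ultimately an exercise in the naturality of the short-exact-sequence isomorphism for densities.
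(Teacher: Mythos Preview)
Your proof is correct and follows the same route as the paper: use the two short exact sequences $0\to\mathfrak{g}\to A\to\F\to0$ and $0\to\F\to TM\to\nu\to0$ to obtain the line-bundle isomorphism, then verify equivariance. The paper dispatches the second step with the phrase ``working out the actions, one obtains the lemma,'' whereas you actually carry this out via the space $W_g=\mathrm{Im}(ds_g,dt_g)$; your concern about the bookkeeping is legitimate but the required compatibilities do hold (they amount to the naturality of the density isomorphism with respect to the evident $3\times 3$ diagram relating $K_g\subset T_gs^{-1}(x)\subset T_g\G$ to $\F_y\subset W_g$ and $T_xM$).
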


For the proof, consider the two exact sequences  of vector bundles over $M$:
\[ 0\rightarrow \mathfrak{g}\rightarrow A\stackrel{\rho}{\rightarrow} \mathcal{F}\rightarrow 0,\ \ 
 0\rightarrow \mathcal{F}\rightarrow TM \rightarrow \nu \rightarrow 0.\]
to deduce that there are canonical isomorphism (see 1 and 2 in \ref{Geometric measures: densities}):
\[ \mathcal{D}_{A}\cong \mathcal{D}_{\mathfrak{g}}\otimes \mathcal{D}_{\mathcal{F}}, \quad 
\mathcal{D}_{TM }\cong \mathcal{D}_{\mathcal{F} }\otimes \mathcal{D}_{\nu }.\]
Working out the actions, one obtains the lemma. Note that the lemma
applied to $\cE= \cE(\cG)$ (or just the previous argument) gives $\mathcal{D}_{\cE}^{\textrm{tr}}\cong \mathcal{D}_{\nu}$, hence also to a canonical isomorphism of representations of $\cG$: 
\[ \mathcal{D}_{\cG}^{\mathrm{tr}} \cong  \mathcal{D}_{\mathfrak{g}^*}\otimes \mathcal{D}_{\cE}^{\mathrm{tr}}.\]
Hence, to talk about strictly positive transverse densities for both $\cG$ and $\cE$, we have to assume the existence of 
$\kappa\in \Gamma(\mathcal{D}_{\mathfrak{g}})$ strictly positive and invariant. On the other hand,
since $\mathcal{D}_{A}= \mathcal{D}_{\mathfrak{g}}\otimes \mathcal{D}_{\cF}$, the pairing with $\kappa^{\vee}\in \Gamma(\cD_{\gg^*})$ induces  a map
\[ \kappa^{\vee} : C_{c}^{\infty}(M, \mathcal{D}_A)\rightarrow C_{c}^{\infty}(M, \mathcal{D}_{\cF}).\]
The next result follows now by a straightforward computation:

\begin{proposition}\label{pro-kappa}  Assume that $\kappa\in \Gamma(\mathcal{D}_{\mathfrak{g}})$ is strictly positive and invariant. Then the pairing with $\kappa^{\vee}$ descends to an isomorphism 
\begin{equation}\label{isom-kappa} 
\kappa^{\vee} : C_{c}^{\infty}(M//\G) \stackrel{\sim}{\rightarrow} C_{c}^{\infty}(M//\mathcal{E}),
\end{equation}
and it induces a 1-1 correspondence 
\[ \{\textrm{transverse\ measures\ for} \ \G\} \stackrel{\sim}{\longleftrightarrow} \ \{\textrm{transverse\ measures\ for} \ \cE\}.\]
Moreover, this preserves the geometricity of measures; more precisely, via the relation $\sigma\equiv \kappa^{\vee}\otimes \beta$ (see  (\ref{reg-rewrite2})), $\kappa$ induces a 1-1 correspondence between 
    \begin{itemize}
    \item  invariant sections $\beta$ of $\cD_{\nu}$ (i.e.,  transverse densities for $\mathcal{E}$)
    \item invariant sections $\sigma$ of $\cD_{A}^{\textrm{tr}}$ (i.e., transverse densities for $\G$).
    \end{itemize}
uniquely characterized by the commutative diagram:
\[ \xymatrix{
 C_{c}^{\infty}(M//\G)\ar[rr]^-{\kappa^{\vee}} \ar[rd]_-{\mu_{\sigma}} & & C_{c}^{\infty}(M//\mathcal{E})\ar[ld]^-{\mu_{\beta}}\\
 & \mathbb{R} & }\]
\end{proposition}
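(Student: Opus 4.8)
The plan is to reduce \eqref{isom-kappa} to an equality of two subspaces of $C_c^\infty(M)$. Note first that $\G$ and $\cE$ are \emph{not} Morita equivalent in general (their isotropy groups differ), so Theorem \ref{theorem-ME} does not apply and a direct argument is needed. Since $\cD_\F$ is trivializable, I would choose a strictly positive $\rho_{\cE}\in C^\infty(M,\cD_\F)$ and set $\rho:=\kappa\otimes\rho_{\cE}$, a strictly positive Haar density for $\G$ under $\cD_A\cong\cD_{\gg}\otimes\cD_\F$. Trivializing all density bundles by $\rho$ and $\rho_{\cE}$ as in Remark \ref{remark-s-shriek}, I identify $C_c^\infty(M//\G)=\textrm{Coker}(s_{!}^{\rho}-t_{!}^{\rho})$ and $C_c^\infty(M//\cE)=\textrm{Coker}(s_{!}^{\rho_{\cE}}-t_{!}^{\rho_{\cE}})$, both as quotients of $C_c^\infty(M)$; since $\kappa^\vee(f\rho)=f\,\langle\kappa^\vee,\kappa\rangle\,\rho_{\cE}=f\rho_{\cE}$, the map $\kappa^\vee$ becomes the \emph{identity} of $C_c^\infty(M)$. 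Thus \eqref{isom-kappa} is equivalent to the equality $\textrm{Im}(s_{!}^{\rho}-t_{!}^{\rho})=\textrm{Im}(s_{!}^{\rho_{\cE}}-t_{!}^{\rho_{\cE}})$ inside $C_c^\infty(M)$.

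To prove this equality I would integrate along the quotient morphism $q\colon\G\to\cE$ of \eqref{EEE}. For $w\colon x\to y$ in $\cE$ and any $g$ with $q(g)=w$ its fibres are the isotropy cosets $q^{-1}(w)=g\,\G_{x}^{0}$, on each of which $\kappa$ induces, by right translation, a Haar density $\overrightarrow{\kappa}$; set
\[ q_{!}^{\kappa}\colon C_c^\infty(\G)\to C_c^\infty(\cE),\qquad q_{!}^{\kappa}(u)(w)=\int_{q^{-1}(w)} u\,\overrightarrow{\kappa}, \]
which is well defined because $q$ is a surjective submersion and $u$ is compactly supported. The heart of the matter is the pair of Fubini identities
\[ s_{!}^{\rho}=s_{!}^{\rho_{\cE}}\circ q_{!}^{\kappa},\qquad t_{!}^{\rho}=t_{!}^{\rho_{\cE}}\circ q_{!}^{\kappa}. \]
For $s_{!}$ this follows from the factorization $\overrightarrow{\rho}=\overrightarrow{\kappa}\otimes\overrightarrow{\rho_{\cE}}$ along the short exact sequence $0\to T_g(g\,\G_x^0)\to T_g s^{-1}(x)\xrightarrow{dq} T_{q(g)} s_{\cE}^{-1}(x)\to 0$ together with the ordinary Fubini formula for densities. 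The $t_{!}$ identity is obtained in the same way, and this is where the invariance of $\kappa$ is used: invariance under the adjoint action of the isotropy means exactly that each $\G_x^0$ is unimodular, so $\overrightarrow{\kappa}$ is also left-invariant and the \emph{same} $q_{!}^{\kappa}$ intertwines the target integrations. Granting these, $s_{!}^{\rho}-t_{!}^{\rho}=(s_{!}^{\rho_{\cE}}-t_{!}^{\rho_{\cE}})\circ q_{!}^{\kappa}$, giving $\subseteq$; the reverse inclusion follows once $q_{!}^{\kappa}$ is surjective, which I would prove by a cut-off construction along $q$ exactly as in Lemma \ref{conf1} (over the compact $\textrm{supp}(v)$ build $\chi\in C_c^\infty(\G)$ with fibrewise $\overrightarrow{\kappa}$-integral $1$ and take $u=(q^*v)\chi$). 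This proves \eqref{isom-kappa}; as $\kappa^\vee$ is the identity in the trivialized picture and carries nonnegative functions to nonnegative functions, it preserves positivity in both directions and hence induces the stated bijection on transverse measures.

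For the geometric part, the isomorphism \eqref{reg-rewrite} is one of $\G$-representations and $\kappa^\vee$ is a nowhere-vanishing \emph{invariant} section of $\cD_{\gg^*}$, so $\beta\mapsto\kappa^\vee\otimes\beta$ is a $\G$-equivariant isomorphism $\cD_{\nu}\xrightarrow{\sim}\cD_{\G}^{\textrm{tr}}$ which therefore restricts to a bijection between invariant sections, i.e. between transverse densities for $\cE$ and for $\G$. For the commuting triangle I would decompose compatibly: writing $\rho=\kappa\otimes\rho_{\cE}$, the relation $\sigma=\rho^\vee\otimes\tau\equiv\kappa^\vee\otimes\beta$ from \eqref{reg-rewrite2} forces $\beta=\rho_{\cE}^\vee\otimes\tau$ with the same density $\tau$ on $M$. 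Then $\kappa^\vee(f\rho)=f\rho_{\cE}$ gives $\mu_\beta(\kappa^\vee(f\rho))=\int_M f\,\tau=\mu_\sigma(f\rho)$, that is $\mu_\sigma=\mu_\beta\circ\kappa^\vee$, which is the asserted commutativity.

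The main obstacle is the pair of Fubini identities, and specifically the $t_{!}$-version: one must check that the right-translated Haar density $\overrightarrow{\kappa}$ is also the correct density to integrate against from the target side, which is precisely the unimodularity of the isotropy groups forced by the invariance of $\kappa$. Everything else reduces to bookkeeping with the density-bundle isomorphisms recalled in \ref{Geometric measures: densities}.
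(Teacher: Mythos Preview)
Your argument is correct and in fact supplies the ``straightforward computation'' that the paper leaves to the reader. The reduction via $\rho=\kappa\otimes\rho_{\cE}$ to showing $\mathrm{Im}(s_{!}^{\rho}-t_{!}^{\rho})=\mathrm{Im}(s_{!}^{\rho_{\cE}}-t_{!}^{\rho_{\cE}})$ is exactly the right move, and the fibrewise integration $q_{!}^{\kappa}$ along the quotient $q\colon\G\to\cE$ is the natural way to establish it; the surjectivity of $q_{!}^{\kappa}$ via a fibrewise cut-off is clean.

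One point deserves sharpening. You attribute the $t_{!}$-Fubini identity to unimodularity of each $\G_{x}^{0}$, but that alone is not enough: what you actually need is that the right-translated fibre density $\overrightarrow{\kappa}$ coincides with the left-translated one $\overleftarrow{\kappa}$. At an arrow $g\colon x\to y$ this reads $(R_{g})_{*}\kappa_{y}=(L_{g})_{*}\kappa_{x}$, i.e.\ $(\mathrm{Ad}_{g})_{*}\kappa_{x}=\kappa_{y}$, which is precisely the full $\G$-invariance of $\kappa$ assumed in the hypothesis, and not merely its restriction to the isotropy. Unimodularity of $\G_{x}^{0}$ (i.e.\ $\mathrm{Ad}_{\G_{x}^{0}}$-invariance of $\kappa_{x}$) would only give bi-invariance along each individual isotropy group, not the compatibility between different fibres that makes the single operator $q_{!}^{\kappa}$ intertwine both $s_{!}$ and $t_{!}$. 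Since you do have the full invariance, the conclusion stands; just adjust the justification.

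The geometric part and the commuting triangle are handled correctly.
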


\subsection{The proper regular case}
\label{The proper regular case}

We now assume that $\G$ is both regular and proper. In this case there are two new features that are present:

\begin{itemize}
\item Both $C_{c}^{\infty}(M//\G) $ and $C_{c}^{\infty}(M//\cE)$ are isomorphic to $C_{c}^{\infty}(B)$, by the averaging of Theorem \ref{thm-iso-Cc(B)} applied to $\G$ and $\cE$, respectively.  

\item There is a preferred choice of a strictly positive invariant section $\kappa$: since the isotropy groups $\G_x$ are compact, their Lie algebras $\mathfrak{g}_x$ come with induced Haar densities; we consider the ones associated to the identity components of $\G_{x}$ - they  
define a strictly positive invariant section
\[ \kappa_{\mathrm{Haar}}\in \Gamma(\mathcal{D}_{\mathfrak{g}}).\]
\end{itemize}
Note that choosing the Haar densities associated to the identity components (rather than of the entire $\G_x$'s) is essential for the smoothness of $\kappa_{\mathrm{Haar}}$; indeed, while the bundle consisting of the full isotropy groups may fail to be smooth, passing to identity components does produce a smooth bundle of Lie groups (cf. Proposition 2.5 in \cite{moerdijk}). 
Using the normalization and the invariance conditions for the Haar densities, the following is immediate:

\begin{proposition}\label{weinst-conj0} $\kappa_{\mathrm{Haar}}$ is the only strictly positive invariant section of $\mathcal{D}_{\mathfrak{g}}$ with the property that the isomorphism (\ref{isom-kappa}) becomes the identity after identifying $C_{c}^{\infty}(M//\G)$ and $C_{c}^{\infty}(M//\cF)$ with $C_{c}^{\infty}(B)$. 
\end{proposition}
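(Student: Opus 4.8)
The plan is to spell out the condition in the statement and reduce it to a single fibre-integration identity. By Proposition \ref{pro-kappa}, after using $\mathcal{D}_A\cong\mathcal{D}_{\mathfrak g}\otimes\mathcal{D}_{\mathcal F}$ to write a class in $C_c^\infty(M//\G)$ as $[\kappa\otimes\lambda]$ with $\lambda\in C_c^\infty(M,\mathcal{D}_{\mathcal F})$, the isomorphism (\ref{isom-kappa}) is just $[\kappa\otimes\lambda]\mapsto[\lambda]$, because $\langle\kappa^\vee,\kappa\rangle=1$. The identifications of $C_c^\infty(M//\G)$ and $C_c^\infty(M//\cE)$ with $C_c^\infty(B)$ are the averaging isomorphisms $\textrm{Av}_\G$ and $\textrm{Av}_\cE$ of Theorem \ref{thm-iso-Cc(B)}. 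Thus ``(\ref{isom-kappa}) is the identity'' means precisely
\[ \textrm{Av}_\cE(\lambda)=\textrm{Av}_\G(\kappa\otimes\lambda)\qquad\text{for all } \lambda, \]
and my first step is to compute the right-hand side.

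To do so I would exploit the quotient map $q\colon\G\to\cE$ that divides out the bundle of identity components $\{\G_x^0\}$ (smooth by Proposition 2.5 of \cite{moerdijk}). For fixed $x$ it restricts to a fibre bundle $s_\G^{-1}(x)\to s_\cE^{-1}(x)$ whose fibre over $\bar g\colon x\to y$ is the coset $g_0\G_x^0=\G_y^0 g_0$ (for any lift $g_0$), diffeomorphic to the compact group $\G_x^0$; the exact sequence $0\to\mathfrak g\to A\to\mathcal F\to 0$ is its infinitesimal counterpart. I expect the right-translated density $\overrightarrow{\kappa\otimes\lambda}$ to split along this fibration as a fibrewise density built from $\overrightarrow{\kappa}$ times a base density built from $\overrightarrow{\lambda}$, so that Fubini for densities gives
\[ \textrm{Av}_\G(\kappa\otimes\lambda)(\pi(x))=\int_{s_\cE^{-1}(x)}\Big(\int_{q^{-1}(\bar g)}\overrightarrow{\kappa}\Big)\,\overrightarrow{\lambda}. \]
The inner integral is the volume of $\G_y^0$ for the Haar density determined by $\kappa_y$, i.e.\ $\mathrm{Vol}(\G_y^0,\kappa_y)$; here the invariance of $\kappa$ is decisive, since $\textrm{Ad}_{g_0}\colon\G_x^0\to\G_y^0$ sends $\kappa_x$ to $\kappa_y$ and preserves Haar volume, so this number is the constant $\mathrm{Vol}(\G_x^0,\kappa_x)$ all along the $\cE$-fibre. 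Pulling it out yields
\[ \textrm{Av}_\G(\kappa\otimes\lambda)(\pi(x))=\mathrm{Vol}(\G_x^0,\kappa_x)\cdot\textrm{Av}_\cE(\lambda)(\pi(x)). \]

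Comparing this with the displayed condition, (\ref{isom-kappa}) is the identity if and only if $\mathrm{Vol}(\G_x^0,\kappa_x)=1$ for every $x$, that is, if and only if $\kappa$ restricts on each $\mathfrak g_x$ to the normalized Haar density of $\G_x^0$ in the sense of Example \ref{ex-Haar0}; this is exactly the defining property of $\kappa_{\mathrm{Haar}}$. Uniqueness is then immediate: since $\mathcal{D}_{\mathfrak g_x}$ is one-dimensional, the normalization $\mathrm{Vol}(\G_x^0,\kappa_x)=1$ pins down the positive element $\kappa_x$ uniquely, so $\kappa=\kappa_{\mathrm{Haar}}$. I expect the only real obstacle to be the middle step — justifying the Fubini splitting of $\overrightarrow{\kappa\otimes\lambda}$ along $q$, i.e.\ verifying that right translation is compatible with $0\to\mathfrak g\to A\to\mathcal F\to 0$ and that $\overrightarrow{\kappa}$ restricts on each fibre to a translate of the Haar density coming from $\kappa$. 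This bookkeeping with the several density bundles under right translation is where the work lies; once it and the $\textrm{Ad}$-invariance of Haar volume are in place, the rest is formal.
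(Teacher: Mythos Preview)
Your proposal is correct and is precisely the argument the paper has in mind: the paper declares the result ``immediate'' from ``the normalization and the invariance conditions for the Haar densities'', and you have simply unpacked what that means --- invariance of $\kappa$ makes $x\mapsto \mathrm{Vol}(\G_x^0,\kappa_x)$ constant on orbits, and the Fubini computation along $q\colon s_\G^{-1}(x)\to s_\cE^{-1}(x)$ shows $\textrm{Av}_\G(\kappa\otimes\lambda)=\mathrm{Vol}(\G^0,\kappa)\cdot\textrm{Av}_\cE(\lambda)$, so the commutativity forces the normalization $\mathrm{Vol}(\G_x^0,\kappa_x)=1$, i.e.\ $\kappa=\kappa_{\mathrm{Haar}}$. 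The Fubini bookkeeping you flag is indeed the only thing to verify, and it goes through exactly as you outline since right translation by $g\colon x\to y$ carries the sequence $0\to\mathfrak g_y\to A_y\to\mathcal F_y\to 0$ to the vertical/horizontal sequence of $q$ at $g$.
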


Putting this and Proposition \ref{pro-kappa} together we find in particular:

\begin{corollary}\label{cor-reg-proper} The relation $\sigma\equiv \kappa^{\vee}_{\mathrm{Haar}}\otimes \beta$ (see  (\ref{reg-rewrite2})) induces a bijection  between 
    \begin{itemize}
    \item  invariant sections $\beta$ of $\cD_{\nu}$ (i.e.,  transverse densities for $\mathcal{E}$)
    \item invariant sections $\sigma$ of $\cD_{A}^{\textrm{tr}}$ (i.e., transverse densities for $\G$).
    \end{itemize}
Moreover, the measures $\mu_{\sigma}$ and $\mu_{\beta}$ induced on $B$ by $\sigma$ and $\beta$ coincide.
\end{corollary}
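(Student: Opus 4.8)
The plan is to feed the distinguished section $\kappa_{\mathrm{Haar}}$ into Proposition \ref{pro-kappa} and then read off what the normalization of Proposition \ref{weinst-conj0} contributes in the proper case; almost no new computation is required. First I would check that $\kappa_{\mathrm{Haar}}\in \Gamma(\cD_{\mathfrak{g}})$ meets the hypotheses of Proposition \ref{pro-kappa}, namely that it is strictly positive and $\G$-invariant. This is exactly the content of the construction preceding Proposition \ref{weinst-conj0}: strict positivity holds because the Haar densities of the identity components $\G_{x}^{0}$ are nowhere vanishing and assemble into a smooth section, while invariance follows from the bi-invariance of Haar densities together with the $\textrm{Ad}$-action defining the isotropy representation. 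With the hypotheses in place, Proposition \ref{pro-kappa} applied to $\kappa=\kappa_{\mathrm{Haar}}$ immediately gives the first assertion: the relation $\sigma\equiv \kappa_{\mathrm{Haar}}^{\vee}\otimes \beta$ of (\ref{reg-rewrite2}) is a one-to-one correspondence between invariant sections $\beta$ of $\cD_{\nu}$ and invariant sections $\sigma$ of $\cD_{A}^{\textrm{tr}}$.

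For the \emph{Moreover} part I would transport both measures to $C_{c}^{\infty}(B)$. Proposition \ref{pro-kappa} supplies the commutative triangle $\mu_{\sigma}=\mu_{\beta}\circ \kappa_{\mathrm{Haar}}^{\vee}$ at the level of the intrinsic models, where $\kappa_{\mathrm{Haar}}^{\vee}\colon C_{c}^{\infty}(M//\G)\to C_{c}^{\infty}(M//\cE)$ is the isomorphism (\ref{isom-kappa}). Since $\G$, and hence $\cE$, is proper, Theorem \ref{thm-iso-Cc(B)} identifies both models with $C_{c}^{\infty}(B)$ through the averaging maps $\textrm{Av}_{\G}$ and $\textrm{Av}_{\cE}$, and by definition the measures $\mu_{\sigma}$ and $\mu_{\beta}$ on $B$ are the transports of the respective transverse measures along $\textrm{Av}_{\G}^{-1}$ and $\textrm{Av}_{\cE}^{-1}$. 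The key input is Proposition \ref{weinst-conj0}, whose defining property of $\kappa_{\mathrm{Haar}}$ says precisely that (\ref{isom-kappa}) becomes the identity of $C_{c}^{\infty}(B)$ under these identifications, i.e. $\textrm{Av}_{\cE}\circ \kappa_{\mathrm{Haar}}^{\vee}=\textrm{Av}_{\G}$, equivalently $\kappa_{\mathrm{Haar}}^{\vee}\circ \textrm{Av}_{\G}^{-1}=\textrm{Av}_{\cE}^{-1}$. Composing the triangle on the right with $\textrm{Av}_{\G}^{-1}$ then rewrites the $B$-measure induced by $\sigma$ as $\mu_{\beta}\circ \textrm{Av}_{\cE}^{-1}$, which is exactly the $B$-measure induced by $\beta$; hence $\mu_{\sigma}=\mu_{\beta}$ on $B$.

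The argument is essentially bookkeeping, so there is no deep obstacle. The one point that deserves care is keeping the two a priori distinct meanings of each symbol straight, the transverse measure on $C_{c}^{\infty}(M//\G)$ versus the Radon measure on $B$ for $\mu_{\sigma}$, and likewise for $\mu_{\beta}$, so that the single commutativity statement of Proposition \ref{weinst-conj0} genuinely forces the two $B$-measures to coincide rather than merely to coincide up to an identification. Confirming that $\kappa_{\mathrm{Haar}}$ is exactly the normalization that collapses the triangle of Proposition \ref{pro-kappa} to the equality $\mu_{\sigma}=\mu_{\beta}$ is the crux, and it is handed to us by Proposition \ref{weinst-conj0}.
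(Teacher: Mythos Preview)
Your proposal is correct and follows exactly the approach indicated by the paper, which simply states that the corollary is obtained by ``putting this [Proposition \ref{weinst-conj0}] and Proposition \ref{pro-kappa} together''. You have spelled out precisely how the two propositions combine: Proposition \ref{pro-kappa} with $\kappa=\kappa_{\mathrm{Haar}}$ gives the bijection and the triangle $\mu_{\sigma}=\mu_{\beta}\circ\kappa_{\mathrm{Haar}}^{\vee}$, and Proposition \ref{weinst-conj0} collapses $\kappa_{\mathrm{Haar}}^{\vee}$ to the identity after passing to $C_{c}^{\infty}(B)$ via the averaging maps, yielding $\mu_{\sigma}=\mu_{\beta}$ on $B$.
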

 
Or, on a commutative diagram:
\[ \xymatrix{
 & C_{c}^{\infty}(B) & \\
 C_{c}^{\infty}(M//\G)\ar[rr]^{\kappa^{\vee}}_{\sim} \ar[rd]_-{\mu_{\sigma}}
 \ar[ru]^{\mathrm{Av}_{\mathcal{\cG}}}_{\sim} & & C_{c}^{\infty}(M//\mathcal{E})\ar[ld]^-{\mu_{\beta}} \ar[lu]_{\mathrm{Av}_{\mathcal{\cE}}}^{\sim}\\
 & \mathbb{R} & }\]

\section{Intermezzo: the case of symplectic groupoids}
\label{sec-sympl-gpds}

\subsection{The general case} 
\label{The general case} 

In this section we look at another particular class of groupoids: those that come from Poisson Geometry, i.e., symplectic groupoids \cite{GS,alan-GS}. As mentioned in the introduction, this case and its relevance to compactness in Poisson Geometry \cite{PMCT1,PMCT2,PMCT3} was one of our original motivations for this paper. Recall (cf. e.g. \cite{GS}) that a symplectic groupoid $(\G, \Omega)$ is a Lie groupoid $\G$ endowed with a symplectic form $\Omega\in \Omega^2(\G)$ which is multiplicative. In this case the Lie algebroid $A$ of $\G$ is, as a vector bundle, canonically isomorphic to $T^*M$ by
\[ A\cong T^*M, \ \ \alpha_x\mapsto (X_x\mapsto \Omega_x(\alpha_x, X_x)),\]
where we identify $x\in M$ with $1_x\in \G$. Therefore we obtain, as vector bundles, 
\begin{equation}\label{DTM-sympl-case}  
\mathcal{D}_{A}^{\textrm{tr}}= \mathcal{D}_{TM}\otimes \mathcal{D}_{TM}
\end{equation}
and similarly when replacing $\mathcal{D}$ by $\mathcal{V}$ or $\mathfrak{o}$.

In the remaining part of this section we fix a symplectic groupoid $(\G, \Omega)$ with connected $s$-fibers. Recall that the base $M$ carries an induced Poisson structure, uniquely determined by the fact that the source map is a Poisson map \cite{GS}. For $f\in C^{\infty}(M)$ we will denote by $X_{f}$ the corresponding Hamiltonian vector field on $M$. We say that a density $\tau$ on $M$ is invariant under Hamiltonian flows if 
\[ L_{X_f}(\tau)= 0 \ \ \ \forall\ f\in C^{\infty}(M).\]

\begin{proposition}\label{Poisson-first} The correspondence
\[ \tau\in \Gamma(\mathcal{D}_{TM}) \mapsto \sigma:= \tau\otimes \tau\in \Gamma(\mathcal{D}_{A}^{\textrm{tr}})\]
induces a bijection between strictly positive:
\begin{itemize}
\item transverse densities $\sigma$ for $\G$. 
\item densities $\tau$ on $M$ which are invariant under Hamiltonian flows.
\end{itemize}
Furthermore, $\mathcal{D}_{TM}$ can be made into a representation of $\G$ in a canonical way, uniquely determined by the conditions:
\begin{enumerate}\item[1.] (\ref{DTM-sympl-case}) becomes an isomorphism of representations of $\G$
\item[2.] the action preserves the (fiberwise) positivity of densities.
\end{enumerate}
With respect to this action, a density $\tau$ on $M$ is invariant as a section of $\mathcal{D}_{TM}$ if and only if $\tau$ 
is invariant under Hamiltonian flows.
\end{proposition}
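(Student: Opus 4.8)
The plan is to reduce the statement to its infinitesimal counterpart and then to an explicit computation of the canonical $A$-action in the symplectic case. First I would dispose of the passage between the representation on $\mathcal{D}_{TM}$ and the one on $\mathcal{D}_{A}^{\textrm{tr}}$. By its defining properties, the action on $\mathcal{D}_{TM}$ is the unique positivity-preserving square root of the action on $\mathcal{D}_{A}^{\textrm{tr}}=\mathcal{D}_{TM}\otimes\mathcal{D}_{TM}$, so for an arrow $g\colon x\to y$ the map $g_*\colon (\mathcal{D}_{TM})_x\to(\mathcal{D}_{TM})_y$ is pinned down by $g_*\otimes g_*=g_*^{\textrm{tr}}$ together with the requirement that it send positive densities to positive ones. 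Consequently, for strictly positive $\tau$ one has $g_*\tau_x=\tau_y$ if and only if $g_*^{\textrm{tr}}(\tau_x\otimes\tau_x)=\tau_y\otimes\tau_y$ (take positive square roots, legitimate since $g_*\tau_x$ and $\tau_y$ are both positive). Hence $\tau$ is invariant as a section of $\mathcal{D}_{TM}$ precisely when $\sigma:=\tau\otimes\tau$ is an invariant section of $\mathcal{D}_{\G}^{\textrm{tr}}$, i.e.\ a transverse density. This turns the claim into the assertion that $\tau\otimes\tau$ is a transverse density if and only if $L_{X_f}\tau=0$ for all $f$.

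For the latter I would work infinitesimally. As $\G$ has connected $s$-fibers, a section of a representation of $\G$ is $\G$-invariant if and only if it is flat for the induced $A$-connection $\nabla^{\textrm{tr}}$ on $\mathcal{D}_{A}^{\textrm{tr}}$; moreover $\nabla^{\textrm{tr}}_{\alpha}\sigma$ is tensorial in $\alpha$, so flatness need only be tested on a pointwise spanning family of sections of $A$. Under the symplectic identification $A\cong T^*M$ of \eqref{DTM-sympl-case} the exact forms $df$ span $A=T^*M$ pointwise, so it suffices to evaluate $\nabla^{\textrm{tr}}_{df}\sigma$. Writing $\mathcal{D}_{A}^{\textrm{tr}}=\mathcal{D}_{A^*}\otimes\mathcal{D}_{TM}$, the connection splits by the Leibniz rule as $\nabla^{\textrm{tr}}_{df}=\nabla^{A^*}_{df}\otimes 1+1\otimes L_{\sharp(df)}$, and since $\sharp(df)=X_f$ the second factor contributes $L_{X_f}\tau=\mathrm{div}_\tau(X_f)\,\tau$.

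The crux — and the step I expect to be the main obstacle — is to show that the first factor contributes the \emph{same} term, so that $\nabla^{\textrm{tr}}_{df}(\tau\otimes\tau)=2\,\mathrm{div}_\tau(X_f)\,(\tau\otimes\tau)$ and hence the positive square-root action satisfies $\nabla_{df}\tau=L_{X_f}\tau$. Concretely one must identify the canonical action $\nabla^{A^*}_{df}$ on $\mathcal{D}_{A^*}\cong\mathcal{D}_{TM}$ (coming from the adjoint action on $\Lambda^{\textrm{top}}A$ of Section \ref{sub-tr-dens-bundle}) with $L_{X_f}$ as well; this is exactly the statement that the modular class of the Lie algebroid $T^*M$ equals twice the modular class of the Poisson manifold, the factor two being the source of the square $\tau\otimes\tau$. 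I would verify the needed cocycle identity directly, computing via the Koszul bracket the algebroid divergence of $df$ against the reference density and checking that it equals $\mathrm{div}_\tau(X_f)$. Alternatively, and perhaps more transparently here, I would invoke Proposition \ref{prop-equiv-viewpoints}: taking the decomposition $\sigma=\rho^\vee\otimes\tau$ with $\rho\in\Gamma(\mathcal{D}_A)$ dual to $\tau$ under $A\cong T^*M$, $\sigma$ is a transverse density if and only if the density $\rho_{\G}$ on $\G$ is invariant under the inversion $i$. Since $i$ is anti-symplectic, $i^*\Omega=-\Omega$, the Liouville density $\vol_{\Omega}$ is automatically $i$-invariant; writing $\rho_{\G}=h\cdot\vol_{\Omega}$ and unwinding $i$-invariance of $\rho_\G$ via \eqref{DTG-dec} and the symplectic identification converts it into the condition $L_{X_f}\tau=0$, bypassing the modular bookkeeping.

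Finally I would assemble the pieces: flatness of $\sigma$ against every $df$ is equivalent to $\mathrm{div}_\tau(X_f)=0$ for all $f$, i.e.\ to $L_{X_f}\tau=0$ for all $f$, which is precisely invariance of $\tau$ under all Hamiltonian flows; combined with the square-root reduction of the first paragraph this yields the asserted equivalence. The only genuinely delicate point is the factor-of-two computation of the third paragraph; everything else is formal once the positivity and connected-fiber reductions are in place.
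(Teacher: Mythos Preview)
Your overall strategy matches the paper's: reduce to the infinitesimal level using $s$-connectedness, test flatness of $\sigma=\tau\otimes\tau$ against sections $df$, and construct the $\G$-action on $\mathcal{D}_{TM}$ as the positive square root of the one on $\mathcal{D}_{A}^{\textrm{tr}}$. The paper carries out the square-root construction exactly as you describe, picking a reference $\tau$, writing $g\cdot(\tau_x\otimes\tau_x)=c^\tau(g)\,\tau_y\otimes\tau_y$ with $c^\tau>0$, and setting $g\cdot\tau_x=\sqrt{c^\tau(g)}\,\tau_y$.

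Where the paper is shorter is at your ``crux''. Rather than splitting $\nabla^{\textrm{tr}}_{df}$ over the tensor decomposition $\mathcal{D}_{A^*}\otimes\mathcal{D}_{TM}$ and then working to identify the $\mathcal{D}_{A^*}$-factor with a Lie derivative (via a Koszul/modular-class computation), the paper simply invokes the explicit formula for the algebroid action on $Q_A$ from \cite{QA}, which in the symplectic case $A=T^*M$ reads
\[
\nabla_{df}(\tau_1\otimes\tau_2)=(\cL_{X_f}\tau_1)\otimes\tau_2+\tau_1\otimes(\cL_{X_f}\tau_2).
\]
With this in hand your ``factor of two'' is immediate and no separate identification of $\nabla^{A^*}_{df}$ is needed; what you flag as the main obstacle is in fact a one-line citation. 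Your alternative route through Proposition~\ref{prop-equiv-viewpoints} and the $i$-invariance of the Liouville density is a genuinely different (groupoid-level) argument that the paper does not use here; it is plausible but you would still have to translate $i^*\rho_\G=\rho_\G$ back into $\cL_{X_f}\tau=0$, which amounts to the same infinitesimal computation in the end.
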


\begin{proof} In general, the action of $\G$ on $\mathcal{D}_{A}^{\textrm{tr}}$ induces, by differentiation, an  infinitesimal action of $A$, i.e., a flat $A$-connection $\nabla$ on $\mathcal{D}_{A}^{\textrm{tr}}$. Using the formula for $\nabla$ from \cite{QA} (but using $\mathcal{D}_{A}^{\textrm{tr}}$ instead of $\mathcal{V}_{A}^{\textrm{tr}}$), in the case where $A=T^*M$ is the Lie algebroid of the symplectic groupoid $\G$, we have that \[\nabla_{df}(\tau_1\otimes\tau_2)=(\cL_{X_f}\tau_1)\otimes \tau_2+\tau_1\otimes(\cL_{X_f}\tau_2). \]

Therefore, if $\tau$ is a strictly positive density on $M$ invariant under Hamiltonian flows, then $\nabla_{df}(\tau\otimes\tau)=0$ for all $f\in C^\infty (M)$. Hence $\tau\otimes\tau$ is invariant with respect to the infinitesimal action of $A$. Since $\G$ is $s$-connected, $\tau\otimes\tau$ is $\G$-invariant as well. Moreover, the mapping $\tau\mapsto \tau\otimes\tau$ is injective on strictly positive densities.

Conversely, if $\sigma$ is a strictly positive transverse density for $\G$,  we can choose a decomposition of the form $\sigma=\tau\otimes u\tau$, where $u\in C^\infty(M)$ is strictly positive. Then $\tau'=\sqrt{u}\tau$ is a strictly positive smooth density on $M$ and $\sigma=\tau'\otimes \tau'$. Since $\sigma$ is a transverse density for $\G$, we have: \[0=\nabla_{df}\sigma=(\cL_{X_f}\tau')\otimes \tau'+\tau'\otimes (\cL_{X_f}\tau')\]

Writing $\cL_{X_f}\tau'=v\tau'$, where $v\in C^\infty(M)$, we conclude that $2v\tau'\otimes\tau'=0$. Since $\tau'$ is strictly positive, $v=0$. Therefore $\tau'$ is invariant under Hamiltonian flows.

In order to make $\mathcal{D}_{TM}$ into a representation of $\G$  as in the statement, pick a strictly positive density $\tau$ on $M$. Then, for any arrow $g:x\to y$ in $\G$, \[(g\cdot\tau_x)\otimes (g\cdot \tau_x)=g\cdot(\tau_x \otimes \tau_x).\] Since the action of $\G$ on $\mathcal{D}_{A}^{\textrm{tr}}$ preserves the positivity of sections, there exists a positive smooth function  $c^\tau\in C^\infty (\G)$, such that \[g\cdot(\tau_x \otimes \tau_x)=c^\tau(g)(\tau_y\otimes\tau_y),\]  for any arrow $g$ of $\G$ from $x$ to $y$. We are then forced to set \[g\cdot \tau_x=\sqrt{c^\tau(g)}\tau_y.\] This defines an action of $\G$ on $\mathcal{D}_{TM}$. We check that it does not depend on the choice of $\tau$: if $\tau'$ is another strictly positive density on $M$, then $\tau'=f\tau$, where $f$ is a positive smooth function on $M$. Then \[g\cdot(\tau'_x\otimes\tau'_x)=f(x)^2g\cdot(\tau_x\otimes\tau_x)=f(x)^2c^{\tau}(g)(\tau_y\otimes \tau_y)=\left(\frac{f(x)}{f(y)}\right)^2c^{\tau}(g)(\tau'_y\otimes \tau'_y),\] and, on the other hand, \[g\cdot(\tau'_x\otimes\tau'_x)=c^{\tau'}(g)(\tau'_y\otimes \tau'_y),\] so $c^{\tau'}(g)=\left(\frac{f(x)}{f(y)}\right)^2c^{\tau}(g).$ Therefore \[\sqrt{c^{\tau'}(g)}\tau'_y=\frac{f(x)}{f(y)}\sqrt{c^{\tau}(g)}\tau'_y=f(x)\sqrt{c^\tau(g)}\tau_y=g\cdot \tau'_x.\]

Finally, since (\ref{DTM-sympl-case}) is now an isomorphism of representations and the action preserves positivity of densities, a density $\tau$ on $M$ is invariant with respect to the representation of $\G$ on $\cD_{TM}$ if and only if $\tau\otimes \tau$ is an invariant section of $\mathcal{D}_{A}^{\textrm{tr}}$. 
\end{proof}

\begin{remark}[on the existence] By the general theory (Proposition \ref{existence-tr-dens}), in the proper case, strictly positive transverse densities do exist. However, it is interesting to search for natural ones, associated to the  symplectic/Poisson geometry that is present in this context. Searching for a canonical $\sigma$ is the same thing as searching for a canonical $\tau$ or, setting $\rho= \tau^{\vee}$, searching for a Haar density 
\[ \rho\in \Gamma(\mathcal{D}_A) \]
(see Definition \ref{def-Haar-density} in the Appendix). With the intuition that such Haar densities are ``smooth'' families of densities along the orbits (see Corollary \ref{cor-rewrite-Haar} in the Appendix),  there is an obvious candidate: the one that uses the Liouville forms of the symplectic leaves! However, the resulting family is not smooth, unless we are in the regular case (and we choose the normalizations carefully). Nevertheless, in the general case one can still make sense of the resulting measure (just that it might not come from a transverse density); see \cite{PMCT3}. 
\end{remark}

Despite the previous remark, under a slightly stronger condition on $\G$, namely that $s: \cG\to M$ is proper, one does obtain a canonical transverse density. First one integrates along the $s$-fibers the Liouville density of the groupoid:
\[ \rho^{M}_{DH}:= \int_{s-\textrm{fibers}} \frac{|\Omega^\textrm{top}|}{\textrm{top}!} \]
(note that, since we do not assume any orientability, although the Liouville density comes from a volume form, integration over the $s$-fibers gives only  a density). As explained in \cite{PMCT1,PMCT2}, $\rho^{M}_{DH}$ is an invariant density on $M$; hence it induces (by the previous proposition) a canonical (strictly positive) transverse density for $\G$,
\[ \sigma_{DH}= \rho^{M}_{DH}\otimes \rho^{M}_{DH} \in \Gamma(\mathcal{D}_{A}^{\textrm{tr}}),\]
and an associated transverse measure $\mu_{DH}$ (here ``DH'' stands for Duistermaat-Heckman).

\begin{remark} Even when the procedure from the previous remark works, the resulting transverse density is different from $\sigma_{DH}$; these two are related by a Duistermaat-Heckman formula (see \cite{PMCT2} and also our next subsection). 
\end{remark}

\subsection{The regular case}\label{subsec-symplectic regular case}

In the regular case there is yet another description of transverse densities: using the normal bundle 
\[ \nu= TM/\F\]
of the symplectic foliation $\F$ associated to Poisson structure $\pi$ on $M$; recall that, as an involutive sub-bundle of $TM$, $\F$ is the image of $\pi^{\sharp}: T^*M\rightarrow TM$ (the Poisson bivector interpreted as a linear map). As for any foliation, $\nu$ comes with an action of the holonomy groupoid of $\cF$; hence one can talk about sections of 
$\mathcal{D}_{\nu}$ which are invariant under holonomy. 

We are in the setting of Subsection \ref{The regular case}: the symplectic groupoid $\G$ is regular, $\cF$ is the associated foliation and the resulting action of $\G$ on $\nu$ (see the subsection) factors through the holonomy action. One special feature of this case is that $\mathfrak{g}= \textrm{Ker}(\pi^{\sharp})$ is just the dual of $\nu$, as representations. Hence the isomorphism (\ref{reg-rewrite}) becomes
\begin{equation}\label{rewr-Poisson} 
\mathcal{D}_{A}^{\textrm{tr}} \cong \cD_{\nu}\otimes \cD_{\nu},
\end{equation}
an isomorphism of representations of $\G$. More explicitly: the Liouville forms induced by the leafwise symplectic forms define a section which trivializes $\mathcal{D}_{\F}$:
\[ \frac{|\omega^{\textrm{top}}|}{\textrm{top}!} \in \Gamma(\mathcal{D}_{\mathcal{F}})\]
Using again the identification $\mathcal{D}_{TM}= \mathcal{D}_{\F}\otimes \mathcal{D}_{\nu}$, we obtain an isomorphism
\[ \mathcal{D}_{TM}\cong   \mathcal{D}_{\nu}, \ \  \frac{|\omega^{\textrm{top}}|}{\textrm{top}!} \otimes \beta  \longleftrightarrow \beta.\]

We now apply Proposition \ref{pro-kappa}; given the fact that $\mathfrak{g}= \mathfrak{\nu}^*$, the (positive) sections $\kappa$ of $\cD_{\mathfrak{g}}$ appearing in the proposition will be written as duals $\beta^{\vee}$ of sections $\beta$ of $\cD_{\nu}$. To avoid confusion, the resulting map $\kappa^{\vee}$  (now given by pairing with $\beta$) will be denoted $\langle \beta, \cdot \rangle$.  Combining the proposition also with Proposition \ref{Poisson-first}, we find:

\begin{proposition} Consider the relations
 \[    \Gamma(\cD_{TM})\ni \tau= \frac{|\omega^{\mathrm{top}}|}{\mathrm{top}!} \otimes \beta \in \Gamma(\cD_{\cF}\otimes \cD_{\nu})\]
\[    \Gamma(\cD_{A}^{\textrm{tr}})\ni \sigma = \tau\otimes \tau\in \Gamma(\mathcal{D}_{TM}\otimes \mathcal{D}_{TM})\]
and $\sigma\equiv \beta\otimes \beta$ modulo (\ref{rewr-Poisson}). These induce bijections between  strictly positive:
\begin{enumerate}
\item[1.] transverse densities $\sigma$ for $\G$.
\item[2.] densities $\tau$ on $M$ invariant under Hamiltonian flows.
\item[3.] sections $\beta$ of $\mathcal{D}_{\nu}$ invariant under holonomy ($=$ transverse densities for $\cE$). 
\end{enumerate}
Moreover, in this case the pairing with $\beta$ descends to an isomorphism 
\[ \langle \beta, \cdot \rangle : C_{c}^{\infty}(M//\G) \stackrel{\sim}{\rightarrow} C_{c}^{\infty}(M//\mathcal{E})\]
which relates the transverse measure $\mu_{\sigma}$ for $\G$ 
with the transverse measure $\mu_{\beta}$ for $\mathcal{E}$ through the commutative diagram
\[ \xymatrix{
 C_{c}^{\infty}(M//\G)\ar[rr]^-{\langle \beta, \cdot \rangle}_-{\sim} \ar[rd]_-{\mu_{\sigma}} & & C_{c}^{\infty}(M//\mathcal{E})\ar[ld]^-{\mu_{\beta}}\\
 & \mathbb{R} & }\]
\end{proposition}

\subsection{The proper regular case}\label{subsec-symplectic proper regular case} 
Under the condition that the symplectic groupoid $\G$ is both regular and proper we can further specialize the discussion from Subsection \ref{The proper regular case} and use Corollary \ref{cor-reg-proper}. In this case we have at our disposal the Haar densities associated to the identity components of the isotropy groups $\G_x$, which give rise to a strictly positive section
\[ \beta_{\mathrm{Haar}}\in \Gamma(\cD_{\nu}).\]
This is just the dual of $\kappa_{\mathrm{Haar}}$ from Subsection \ref{The proper regular case} (as above, in the Poisson case we pass from $\mathfrak{g}= \nu^*$ to $\nu$).

\begin{corollary}\label{corr-sympl-gpd-affine} Any proper regular symplectic groupoid $(\G, \Omega)$ carries a canonical transverse density: the $\sigma$ that corresponds to $\beta_{\mathrm{Haar}}\otimes \beta_{\mathrm{Haar}}$ modulo the isomorphism (\ref{rewr-Poisson}) or, equivalently, corresponding to the density on $M$ given by 
\[  \frac{|\omega^{\textrm{top}}|}{\textrm{top}!}\otimes \beta_{\mathrm{Haar}}.\]
Moreover, the measure $\mu_{\sigma}$ induced by $\sigma$ on $B= M//\G$ coincides with the measure $\mu_{\beta}$ induced by $\beta$ on $B= M//\cE$. 

\[ \xymatrix{
 & C_{c}^{\infty}(B) & \\
 C_{c}^{\infty}(M//\G)\ar[rr]^{\langle \beta_{\mathrm{Haar}}, \cdot\rangle}_{\sim} \ar[rd]_-{\mu_{\sigma}= \mu_{\mathrm{aff}}}
 \ar[ru]^{\mathrm{Av}_{\mathcal{\cG}}}_{\sim} & & C_{c}^{\infty}(M//\mathcal{E})\ar[ld]^-{\mu_{\beta_{\mathrm{Haar}}}} \ar[lu]_{\mathrm{Av}_{\mathcal{\cE}}}^{\sim}\\
 & \mathbb{R} & }\]
\end{corollary}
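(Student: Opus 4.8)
The plan is to assemble the corollary directly from the (unlabeled) proposition on the regular symplectic case that immediately precedes it and from Corollary~\ref{cor-reg-proper}, specialized to the distinguished section $\beta_{\mathrm{Haar}}$. First I would record that $\beta_{\mathrm{Haar}}\in\Gamma(\cD_\nu)$ is strictly positive and holonomy-invariant, hence a transverse density for $\cE$; this is exactly how it was produced, from the normalized Haar densities on the identity components $\G_x^0$ of the isotropy groups (which are compact by properness). Feeding $\beta=\beta_{\mathrm{Haar}}$ into that preceding proposition then yields a strictly positive transverse density $\sigma$ for $\G$, together with the two descriptions in the statement: $\sigma\equiv\beta_{\mathrm{Haar}}\otimes\beta_{\mathrm{Haar}}$ modulo~(\ref{rewr-Poisson}), and $\sigma=\tau\otimes\tau$ with $\tau=\frac{|\omega^{\mathrm{top}}|}{\mathrm{top}!}\otimes\beta_{\mathrm{Haar}}$. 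Their equivalence is precisely the content of that proposition, so no new computation is required here.

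For canonicity, I would simply note that $\beta_{\mathrm{Haar}}$ is intrinsic: it is determined by the normalized Haar densities on the identity components $\G_x^0$, and, translated through the identification $\mathfrak{g}=\nu^*$, Proposition~\ref{weinst-conj0} characterizes it as the unique strictly positive invariant section making the pairing isomorphism the identity after the averaging identifications. Hence $\sigma$ depends on no auxiliary choices, which justifies calling it \emph{the} canonical transverse density and naming the induced measure $\mu_{\mathrm{aff}}$.

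To obtain the coincidence of measures and the commutative diagram, the key bookkeeping step is the identification $\mathfrak{g}=\nu^*$ valid in the symplectic case, under which $\kappa_{\mathrm{Haar}}=\beta_{\mathrm{Haar}}^{\vee}$ and the generic map $\kappa^{\vee}$ of Proposition~\ref{pro-kappa} becomes the pairing $\langle\beta_{\mathrm{Haar}},\cdot\rangle$. With this dictionary, Corollary~\ref{cor-reg-proper} applies directly: the relation $\sigma\equiv\kappa_{\mathrm{Haar}}^{\vee}\otimes\beta_{\mathrm{Haar}}=\beta_{\mathrm{Haar}}\otimes\beta_{\mathrm{Haar}}$ is the one above, and the corollary yields $\mu_\sigma=\mu_{\beta_{\mathrm{Haar}}}$ on $B$. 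The displayed diagram is then the diagram of Corollary~\ref{cor-reg-proper}, with the two averaging isomorphisms supplied by Theorem~\ref{thm-iso-Cc(B)} applied to $\G$ and to $\cE$ (both proper), and with the horizontal isomorphism relabelled $\langle\beta_{\mathrm{Haar}},\cdot\rangle$.

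The only genuinely delicate point is this duality bookkeeping: one must check that passing from $\mathfrak{g}$ to $\nu$ via $\mathfrak{g}=\nu^*$ turns $\kappa_{\mathrm{Haar}}$ into $\beta_{\mathrm{Haar}}$ (rather than its inverse or a rescaling) and that the Haar normalization is preserved, so that the top triangle of the diagram commutes on the nose and not merely up to a scalar. Once this is verified, the remainder of the statement is a direct transcription of results already established.
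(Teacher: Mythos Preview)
Your proposal is correct and follows essentially the same route as the paper: the corollary is stated there without separate proof, as a direct specialization of the preceding unlabeled proposition and of Corollary~\ref{cor-reg-proper} to $\beta=\beta_{\mathrm{Haar}}$, using exactly the duality $\mathfrak{g}=\nu^*$ (so that $\kappa_{\mathrm{Haar}}=\beta_{\mathrm{Haar}}^{\vee}$) that the paper records just before the statement. Your identification of the bookkeeping point and your appeal to Proposition~\ref{weinst-conj0} for the canonicity are also in line with how the paper sets things up.
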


\begin{remark} Let us also point out that, in the case of symplectic groupoids, the Haar density $\beta_{\mathrm{Haar}}$  has a nice Poisson geometric interpretation. For instance, if the symplectic groupoid has 1-connected $s$-fibers, then the variation of the leafwise symplectic areas gives rise (because of properness) to a lattice in $\nu$ (a transverse integral affine structure), see e.g. \cite{alan-volume}; then $\beta_{\mathrm{Haar}}$ is just the corresponding density. If also the leaves are 1-connected - which ensures that $B= M/\G$ is smooth, then this induces an integral affine structure on $B$ and $\beta_{\mathrm{Haar}}$ is the canonical density associated to it. A similar interpretation holds for general symplectic groupoids $\G$ - see \cite{PMCT2}. For this reason, the resulting measure on $B$ is called the affine measure induced by $\G$, denoted $\mu_{\mathrm{aff}}$. This is related to the Duistermaat-Heckman measure from Subsection \ref{The general case} by a Duistermaat-Heckman formula - see \cite{PMCT2}. 
\end{remark}

One can use Proposition \ref{prop-tr-dens-B} to obtain a Weyl-type integration formula for $\mu_{\mathrm{aff}}$. 
One obtains the following result from \cite{PMCT2} (but with a different proof):

\begin{corollary} For the affine measure $\mu_{\mathrm{aff}}$ on $B$, denoting by $\mu_{M}$ the measure on $M$ induced by the corresponding density on $M$ ($\tau$ above), one has 
\[ \int_M f(x)\ d\mu_{M}(x) =\int_B \iota(b) \left(\int_{\O_b}f(y)\ d\mu_{\O_b}(y) \right)\ d\mu_{\mathrm{aff}}(b),\]
for all $f\in C_{c}^{\infty}(M)$, where $\mu_{\O}$ denotes the Liouville measure of the symplectic leaf $\mathcal{O}$, $\iota(b)= \iota(x)$
is the number of connected components of $\G_x$ with $x\in \mathcal{O}_b$ (any). 
\end{corollary}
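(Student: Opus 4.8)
```latex
The plan is to deduce this corollary by specializing the general Weyl-type formula from Proposition \ref{prop-tr-dens-B} to the proper regular symplectic setting, where the canonical transverse density $\sigma$ is the one identified in Corollary \ref{corr-sympl-gpd-affine}. Recall that Proposition \ref{prop-tr-dens-B} gives, for any decomposition $\sigma = \rho^{\vee}\otimes \tau$,
\[
\int_M f(x)\ d\mu_{\tau}(x) = \int_B\left(\int_{\O_b} f(y)\ d\mu_{\rho_{\O_b}}(y)\right)\ d\mu_{\sigma}(b),
\]
so the entire task reduces to identifying, for the affine density, what $\tau$ is, what the induced density $\rho_{\O}$ on each orbit is, and how the latter compares to the Liouville measure $\mu_{\O}$ of the symplectic leaf. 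The discrepancy between $\mu_{\rho_{\O}}$ and $\mu_{\O}$ will be exactly the factor $\iota(b)$ appearing in the statement.

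First I would fix the decomposition coming from the affine structure: by Corollary \ref{corr-sympl-gpd-affine} the relevant density on $M$ is $\tau = \frac{|\omega^{\textrm{top}}|}{\textrm{top}!}\otimes \beta_{\mathrm{Haar}}$, so $\mu_M = \mu_{\tau}$ and the left-hand side matches immediately. Writing $\sigma = \rho^{\vee}\otimes\tau$ with $\rho = \tau^{\vee}$, the induced density $\rho_{\O}$ along an orbit $\O = \O_b$ is obtained (cf. Section \ref{Induced measures/densities on the orbits}) by restricting $\overrightarrow{\rho}$ to the $s$-fibers and using that the orbit map identifies $s^{-1}(x)$ with a covering of $\O_x$; concretely, by the computation recalled in the proof of the preceding corollaries, the $s$-fiber volume with respect to $\overrightarrow{\rho}$ equals the orbit volume with respect to $\rho_{\O}$. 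The key point is that in the regular case $\rho$ is built from the Haar density $\kappa_{\mathrm{Haar}}$ of the identity components $\G_x^0$ of the isotropy groups, and the density it induces on each leaf is precisely the leafwise Liouville density.

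The main obstacle — and the only genuinely new content beyond bookkeeping — will be tracking the normalization factor $\iota(b)$. The subtlety is that the isotropy Haar density $\kappa_{\mathrm{Haar}}$ is normalized using the identity component $\G_x^0$ rather than the full isotropy group $\G_x$, exactly as flagged in the discussion preceding Proposition \ref{weinst-conj0} and in Example \ref{ex-Haar0} (where it is observed that, for a non-connected compact group, the Haar density of the full group differs from that of the identity component by the number of connected components). Consequently the density $\rho_{\O_b}$ induced on the leaf by this choice of $\kappa_{\mathrm{Haar}}$ differs from the honest Liouville measure $\mu_{\O_b}$ by exactly $\iota(b) = \iota(x)$, the number of connected components of $\G_x$ for $x\in\O_b$: integrating over the $s$-fiber overcounts each point of the leaf $\iota(x)$ times relative to integrating over the leaf itself with its Liouville measure. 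I would therefore establish the identity $\mu_{\rho_{\O_b}} = \iota(b)\,\mu_{\O_b}$ by a careful local comparison of $s^{-1}(x)\to\O_x$ as a principal $\G_x$-bundle versus the leafwise symplectic geometry, and then substitute this into the Weyl formula above to obtain
\[
\int_M f(x)\ d\mu_{M}(x) = \int_B \iota(b)\left(\int_{\O_b} f(y)\ d\mu_{\O_b}(y)\right)\ d\mu_{\mathrm{aff}}(b),
\]
which is the desired formula, with $\mu_{\sigma} = \mu_{\mathrm{aff}}$ by Corollary \ref{corr-sympl-gpd-affine}. The continuity and smoothness of $\iota$ on $B$ (needed for the integral to make sense) follows from the smoothness of the bundle of identity components, again by Proposition 2.5 in \cite{moerdijk}.
```
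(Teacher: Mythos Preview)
Your proposal is correct and follows essentially the same approach as the paper: apply the Weyl-type formula from Proposition~\ref{prop-tr-dens-B} to the canonical decomposition $\sigma=\rho^{\vee}\otimes\tau$ with $\tau=\frac{|\omega^{\mathrm{top}}|}{\mathrm{top}!}\otimes\beta_{\mathrm{Haar}}$, and then identify the induced orbit density $\rho_{\O_b}$ as $\iota(b)$ times the Liouville density, the factor arising (via Example~\ref{ex-Haar0} and equation~(\ref{app-Haar-subtelty})) because $\kappa_{\mathrm{Haar}}$ is normalized on the identity component $\G_x^0$ whereas the construction of $\rho_{\O}$ in Section~\ref{Induced measures/densities on the orbits} uses the Haar density of the full isotropy group $\G_x$. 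The paper's proof is terser but makes exactly these two moves.
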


\begin{proof} 
We just have to be careful with computing the resulting densities on the orbits: they arise when looking at the principal $\G_x$-bundles $t: s^{-1}(x)\rightarrow \mathcal{O}_x$ and decomposing the resulting density on $s^{-1}(x)$ using the Haar density associated to $\G_x$ (see (\ref{app-Haar-subtelty}) in the Appendix). This differs from the density $\beta_{\mathrm{Haar}}$ that we used before precisely by the factor $\iota(x)$ (see Example \ref{ex-Haar0}). And the final conclusion is that the resulting measure on $\mathcal{O}_x$ is $\iota(x)$ times the Liouville measure $\mu_{\mathcal{O}_x}$. 
\end{proof}

\begin{corollary} If $\G$ is compact then the affine volume of $B= M/\G$ is given by
\[ \mathrm{Vol}(B, \mu_{\mathrm{aff}})=\int_M \frac{1}{\iota(x) \mathrm{Vol}(\O_x, \mu_{\O_x})}\ d\mu_{M}(x) .\]
\end{corollary}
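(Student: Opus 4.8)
The plan is to obtain the volume formula as a single application of the Weyl-type integration formula of the previous corollary. Since $\G$ is compact, $M$ is compact and hence so is $B = M/\G$; in particular the constant function $1$ lies in $\CC_c(B)$, so $\mathrm{Vol}(B, \mu_{\mathrm{aff}}) = \int_B 1\, d\mu_{\mathrm{aff}}(b)$ is well defined. The idea is to feed into that corollary a test function $f \in C_c^\infty(M)$ engineered so that its right-hand side collapses to $\int_B 1\, d\mu_{\mathrm{aff}}$, while its left-hand side is precisely the asserted integral.

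The natural candidate is
\[ f(x) := \frac{1}{\iota(x)\, \mathrm{Vol}(\O_x, \mu_{\O_x})}. \]
This $f$ is constant along the orbits: $\iota$ is an orbit invariant (the isotropy groups along an orbit are isomorphic), and so is the Liouville volume $\mathrm{Vol}(\O_x, \mu_{\O_x})$. Plugging $f$ into the previous corollary, the inner integral over $\O_b$ evaluates to
\[ \int_{\O_b} f(y)\, d\mu_{\O_b}(y) = \frac{1}{\iota(b)\, \mathrm{Vol}(\O_b, \mu_{\O_b})}\, \mathrm{Vol}(\O_b, \mu_{\O_b}) = \frac{1}{\iota(b)}, \]
so that the integrand $\iota(b)\int_{\O_b} f\, d\mu_{\O_b}$ is identically $1$. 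Hence the right-hand side is $\int_B 1\, d\mu_{\mathrm{aff}}(b) = \mathrm{Vol}(B, \mu_{\mathrm{aff}})$, while the left-hand side is $\int_M f\, d\mu_M$, which is exactly the claimed formula.

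The one point genuinely needing justification --- and the step I would treat as the real content --- is that $f$ is an admissible test function, i.e.\ $f \in C_c^\infty(M)$. Compact support is automatic from compactness of $M$. For smoothness, I would use that the denominator of $f$ is nothing but the $s$-fiber volume of the Haar density $\rho = \tau^{\vee}$: as computed in the proof of the previous corollary, the orbit density $\mu_{\rho_{\O_x}}$ equals $\iota(x)$ times the Liouville measure $\mu_{\O_x}$, whence
\[ \iota(x)\, \mathrm{Vol}(\O_x, \mu_{\O_x}) = \mathrm{Vol}(\O_x, \mu_{\rho_{\O_x}}) = \mathrm{Vol}\bigl(s^{-1}(x), \mu_{\overrightarrow{\rho}}\bigr). \]
The right-hand side is strictly positive and smooth as a function of $x$ by the axioms of a Haar system (Definition~\ref{def-Haar-system}), so $f$, being its reciprocal, is smooth. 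This same identification also yields the formula along an alternative route: substituting $\mu_{\rho_{\O_x}} = \iota(x)\mu_{\O_x}$ into the general proper-case volume identity $\mathrm{Vol}(B, \mu_\sigma) = \int_M \mathrm{Vol}(\O_x, \mu_{\rho_{\O_x}})^{-1}\, d\mu_\tau(x)$ produces the factor $\iota(x)$ in the denominator directly.
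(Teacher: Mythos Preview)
Your proof is correct and is exactly the argument the paper has in mind: the corollary is stated without proof because it is immediate from the previous Weyl-type formula (or, as you note at the end, from the general proper-case volume formula of Section~\ref{sec-proper}) once one uses the identification $\mu_{\rho_{\O_x}} = \iota(x)\,\mu_{\O_x}$ established in the proof of that Weyl formula. Your care in checking smoothness of $f$ via $\iota(x)\,\mathrm{Vol}(\O_x,\mu_{\O_x}) = \mathrm{Vol}(s^{-1}(x),\mu_{\overrightarrow{\rho}})$ is the only nontrivial point, and you handle it correctly.
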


Finally, we point out that Proposition \ref{weinst-conj0} immediately implies Conjecture 5.2 from \cite{alan-volume} (even without the simplifying assumptions from {\it loc.cit}). 

\begin{corollary} Let $\sigma$ be a transverse density for the regular proper symplectic groupoid $(\G, \Omega)$ (denoted $\lambda$ in \cite{alan-volume}). Write $\sigma= \tau\otimes \tau$ with $\tau$ an invariant density on $M$ and write $\tau= 
\frac{|\omega^{\textrm{top}}|}{\textrm{top}!}\otimes \beta$ with $\beta\in \Gamma(\cD_{\nu})$ invariant. Then $\mu_{\sigma}= \mu_{\beta}$ (an equality of measures on $B$) if and only if $\beta= \beta_{\mathrm{Haar}}$.
\end{corollary}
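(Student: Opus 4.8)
The plan is to deduce the corollary from the uniqueness already recorded in Proposition~\ref{weinst-conj0}, the bridge being that the equality of measures ``$\mu_{\sigma}=\mu_{\beta}$ on $B$'' is equivalent to the pairing isomorphism $\langle\beta,\cdot\rangle$ becoming the identity under the averaging identifications. First I would invoke Proposition~\ref{pro-kappa} (in its symplectic incarnation, where $\mathfrak{g}=\nu^{*}$, so $\kappa=\beta^{\vee}$ and $\kappa^{\vee}=\langle\beta,\cdot\rangle$): it gives the commutative triangle $\mu_{\sigma}=\mu_{\beta}\circ\langle\beta,\cdot\rangle$ on $C_{c}^{\infty}(M//\G)$. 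Transporting everything to $B$ through the two averaging isomorphisms $\mathrm{Av}_{\G}\colon C_{c}^{\infty}(M//\G)\to C_{c}^{\infty}(B)$ and $\mathrm{Av}_{\cE}\colon C_{c}^{\infty}(M//\cE)\to C_{c}^{\infty}(B)$ of Theorem~\ref{thm-iso-Cc(B)}, and setting $\Phi_{\beta}:=\mathrm{Av}_{\cE}\circ\langle\beta,\cdot\rangle\circ\mathrm{Av}_{\G}^{-1}\colon C_{c}^{\infty}(B)\to C_{c}^{\infty}(B)$, the triangle becomes $\mu_{\sigma}=\mu_{\beta}\circ\Phi_{\beta}$ as functionals on $C_{c}^{\infty}(B)$. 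Hence $\mu_{\sigma}=\mu_{\beta}$ on $B$ if and only if $\mu_{\beta}\circ\Phi_{\beta}=\mu_{\beta}$.

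The second step is to show that $\Phi_{\beta}$ is multiplication by a strictly positive $\psi_{\beta}\in C^{\infty}(B)$. The key observation is that all three constituent maps are $C^{\infty}(B)$-linear: by Theorem~\ref{thm-iso-Cc(B)} the inverse averaging sends $f$ to the class of $\pi^{*}(f)\rho$, so it intertwines multiplication on $C_{c}^{\infty}(B)$ with the action $g\cdot[\,\cdot\,]=[\pi^{*}(g)\,\cdot\,]$; pairing with $\beta$ is $C^{\infty}(M)$-linear, hence commutes with $\pi^{*}(g)$; and $\mathrm{Av}_{\cE}(\pi^{*}(g)\cdot\eta)=g\,\mathrm{Av}_{\cE}(\eta)$. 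Therefore $\Phi_{\beta}(gf)=g\,\Phi_{\beta}(f)$, and the standard localization argument applies: for $f_{1},f_{2}\in C_{c}^{\infty}(B)$ one has $f_{2}\,\Phi_{\beta}(f_{1})=\Phi_{\beta}(f_{1}f_{2})=f_{1}\,\Phi_{\beta}(f_{2})$, so $\Phi_{\beta}(f)(b)/f(b)$ is independent of the choice of $f$ with $f(b)\neq 0$, defining $\psi_{\beta}$ pointwise with $\Phi_{\beta}(f)=\psi_{\beta}f$. Smoothness of $\psi_{\beta}$ follows because $\psi_{\beta}\circ\pi$ is locally a quotient of smooth functions on $M$, and $\psi_{\beta}>0$ because $\Phi_{\beta}$ is an isomorphism sending nonnegative functions to nonnegative functions.

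With this I would conclude as follows. Since $\beta$ is strictly positive, $\mu_{\beta}$ is a measure of full support on $B$ (immediate from strict positivity via the Weyl-type formula of Proposition~\ref{prop-tr-dens-B}); thus $\mu_{\beta}\circ\Phi_{\beta}=\mu_{\beta}$ reads $\int_{B}(\psi_{\beta}-1)\,f\,d\mu_{\beta}=0$ for all $f\in C_{c}^{\infty}(B)$, which forces $\psi_{\beta}\equiv 1$, i.e.\ $\Phi_{\beta}=\mathrm{id}$. But $\Phi_{\beta}=\mathrm{id}$ is exactly the statement that $\langle\beta,\cdot\rangle$ becomes the identity after the averaging identifications, and by Proposition~\ref{weinst-conj0} (with $\kappa_{\mathrm{Haar}}=\beta_{\mathrm{Haar}}^{\vee}$) this holds precisely when $\beta=\beta_{\mathrm{Haar}}$. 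Conversely, for $\beta=\beta_{\mathrm{Haar}}$ the equality $\mu_{\sigma}=\mu_{\beta}$ is the content of the commutative diagram in Corollary~\ref{corr-sympl-gpd-affine}. This establishes the ``if and only if''.

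The main obstacle I anticipate is the second step, identifying $\Phi_{\beta}$ with multiplication by a function. The localization argument is routine on a manifold, but here $B$ is only an orbit space, so I must make sure its ingredients transfer: that $C^{\infty}(B)$ separates points and admits enough compactly supported bump functions (which follows from Lemma~\ref{illustration-average} and Proposition~\ref{1to1measures-orbi}), and that the full support of $\mu_{\beta}$ makes sense in this singular setting. Once these are in place, the evaluation-at-$b$ manipulations are legitimate because elements of $C^{\infty}(B)$ and the outputs $\Phi_{\beta}(f)$ are genuine functions on $B$, and the remaining computations are straightforward.
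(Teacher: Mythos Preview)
Your proof is correct and follows exactly the route the paper indicates: the paper simply states that the corollary is an immediate consequence of Proposition~\ref{weinst-conj0}, and you have carefully supplied the bridge it leaves implicit, namely that the equality $\mu_{\sigma}=\mu_{\beta}$ on $B$ forces the transported isomorphism $\Phi_{\beta}=\mathrm{Av}_{\cE}\circ\langle\beta,\cdot\rangle\circ\mathrm{Av}_{\G}^{-1}$ to be the identity. Your $C^{\infty}(B)$-linearity argument for this step is sound; a slightly shorter variant is to write $\beta=\pi^{*}(\bar f)\cdot\beta_{\mathrm{Haar}}$ with $\bar f\in C^{\infty}(B)$ strictly positive and observe directly that $\Phi_{\beta}=\bar f\cdot\Phi_{\beta_{\mathrm{Haar}}}=\bar f\cdot\mathrm{id}$ (so your $\psi_{\beta}$ is just $\bar f$), which gives $\mu_{\sigma}=\bar f^{2}\mu_{\beta_{\mathrm{Haar}}}$ and $\mu_{\beta}=\bar f\,\mu_{\beta_{\mathrm{Haar}}}$, whence $\mu_{\sigma}=\mu_{\beta}$ iff $\bar f=1$.
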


\section{Stokes formula and (Ruelle-Sullivan) currents}\label{sec-Stokes formula and (Ruelle-Sullivan) currents}
We now return to the general theory. In this section we point out that, as in the case of our motivating example of foliations, transverse measures for groupoids give rise to closed $r$-currents on the base manifold $M$, where $r$ is the dimension of the $s$-fibers (or the rank of the Lie algebroid); one advantage of Haefliger's approach is that it makes such constructions rather obvious. 

\subsection{Stokes formula and Poincare duality for usual densities}\label{subsec-Stokes formula and Poincare duality for usual densities} 

The main property of the canonical integration of densities (\ref{eq-can-int}), which distinguishes it from other linear functionals on $C^{\infty}_c(M, \mathcal{D}_M)$, is the Stokes formula. To state it for general densities, one first reinterprets the sections of $\mathcal{D}_M$ as top-forms with values in the orientation bundle, 
\begin{equation}\label{int-dens-bdle} 
C^{\infty}_{c}(M, \mathcal{D}_{TM})= \Omega^{\textrm{top}}_{c}(M, \mathfrak{o}_{M}),
\end{equation}
The orientation bundle comes with a flat connection.
By a flat vector bundle we mean a vector bundle $E$ together with a fixed flat connection 
\[ \nabla: C^{\infty}(M,TM)\times C^{\infty}(M,E)\rightarrow C^{\infty}(M,E), \ (X, e)\mapsto \nabla_{X}(e).\]
They provide  the geometric framework for local coefficients; the main point for us is that, for such $E$, one has the spaces of $E$-valued forms
\begin{equation}\label{flat-conn}
 \Omega^{\bullet}(M, E)= C^{\infty}(M,\Lambda^{\bullet}T^*M\otimes E),
\end{equation}
the flat connection $\nabla$ gives rise to a DeRham operator $d_{\nabla}$ on $\Omega^{\bullet}(M, E)$ given explicitly by the standard Koszul-type formula:
\begin{align} \label{Koszul-form}
\d_{\nabla}\omega(X_1, \ldots , X_{k+1})=&\sum_{i}(-1)^{i+1} \nabla_{X_i}(\omega(X_1, \ldots , \widehat{X_i}, \ldots , X_{k+1}))+ \\
& + \sum_{i< j} (-1)^{i+j}\omega([X_i, X_j], \ldots , \widehat{X_i}, \ldots, \widehat{X_j}, \ldots , X_{k+1}), \nonumber
\end{align}
and the flatness of $\nabla$ is equivalent to $d_{\nabla}^{2}= 0$. Therefore one can talk about DeRham cohomology with coefficients in $E$, denoted $H^{\bullet}(M, E)$.
Back to densities, one uses the DeRham differential with coefficients in $\mathfrak{o}_M$ 
\[ d: \Omega^{\textrm{top}-1}_{c}(M, \mathfrak{o}_{M})\rightarrow \Omega^{\textrm{top}}_{c}(M, \mathfrak{o}_{M}),\]
and the Stokes formula for the canonical integration reads, via (\ref{int-dens-bdle}):
\[ \int_{M} d \omega= 0 \ \ \ \textrm{for\ all}\ \omega\in \Omega^{\textrm{top}-1}_{c}(M, \mathfrak{o}_{M}).\]
Equivalently, $\int_{M}$ descends to a linear map
\[ \int_{M}:  H^{\mathrm{top}}_{c}(M, \mathfrak{o}_{M}) \rightarrow \mathbb{R}.\]
The fact that the domain is always $1$-dimensional shows that the Stokes formula characterizes the integration of densities uniquely, up to multiplication by scalars.

\subsection{Stokes for transverse measures}\label{subsec-Stokes for transverse measures}

For a Lie algebroid $A$, the basic constructions that allows us to talk about the Stokes formula have an obvious $A$-version, mainly by replacing the tangent bundle $TM$ by $A$; one obtains the
notion of $A$-flat vector bundle $(E, \nabla)$ 
(implement the mentioned replacement in (\ref{flat-conn})), 
also known as representations of $A$, $A$-differential forms with values in $E$, $\Omega^{\bullet}(A, E)= C^{\infty}(M,\Lambda^{\bullet} A^*\otimes E)$, DeRham differential given by the same Koszul formula as above, the cohomology of $A$ with coefficients in $E$, $H^{\bullet}(A, E)$; for details, see e.g. \cite{VanEst,QA}. Of course, considering only compactly supported sections one obtains the cohomology with compact supports (with coefficients in an arbitrary representation $E$), denoted
\[ H^{\bullet}_{c}(A, E). \]
Via the anchor map $\sharp: A\rightarrow TM$, any flat vector bundle over $M$ can be seen as a representation of $A$ (this applies in particular to vector bundles of type $\mathfrak{o}_{E}$ where $E$ is any vector bundle over $M$). 
One has the analogue of (\ref{int-dens-bdle}) describing the domain of definition for transverse measures:
\[ C^{\infty}_c(M, \mathcal{D}_A)= \Omega^{\textrm{top}}_{c}(A, \mathfrak{o}_A)\]
where $\textrm{top}$ stands for the rank of $A$. Therefore we will use the flat vector bundle $\mathfrak{o}_{A}$ and the DeRham operator:
\[ d_{A}:  \Omega^{\textrm{top}-1}_{c}(A, \mathfrak{o}_{A})\rightarrow \Omega^{\textrm{top}}_{c}(A, \mathfrak{o}_{A}).\]

\begin{proposition}\label{pre-VE-iso-deg-0}
Any transverse measure $\mu$ for $\G$ a Lie groupoid satisfies the Stokes formula
\[ \mu(d_A\omega)= 0 \ \ \ \forall\ \omega\in \Omega^{\mathrm{top}-1}_{c}(A, \mathfrak{o}_{A}).\]
Equivalently, but at the level of cohomology: the quotient map 
\[ \Omega^{\mathrm{top}}_{c}(A, \mathfrak{o}_A)= C^{\infty}_c(M, \mathcal{D}_A)\rightarrow C_{c}^{\infty}(M//\G)\]
descends to a surjection
\begin{equation}\label{VE-hidden} 
VE: H^{\mathrm{top}}_{c}(A, \mathfrak{o}_A) \rightarrow C_{c}^{\infty}(M//\G) .
\end{equation}
\end{proposition}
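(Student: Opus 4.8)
The plan is to derive both formulations, and the surjectivity of $VE$, from a single algebraic fact: that $\mathrm{Im}(d_A)\subseteq \mathrm{Im}(s_!-t_!)$. The equivalence of the two formulations is then purely formal. A transverse measure $\mu$ annihilates $\mathrm{Im}(s_!-t_!)$ by its defining invariance \eqref{transv-meas-invariance}; so if $\mathrm{Im}(d_A)\subseteq \mathrm{Im}(s_!-t_!)$ then $\mu(d_A\omega)=0$ for all $\omega$, which is the Stokes formula. At the cohomological level, the quotient map $C^\infty_c(M,\mathcal{D}_A)\to C_c^\infty(M//\G)$ kills $\mathrm{Im}(s_!-t_!)\supseteq\mathrm{Im}(d_A)$, hence factors through $H^{\mathrm{top}}_c(A,\mathfrak{o}_A)=C^\infty_c(M,\mathcal{D}_A)/\mathrm{Im}(d_A)$; the induced map $VE$ is surjective simply because the quotient map already is. Thus the whole proposition reduces to the inclusion of images.

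Next I would reduce this inclusion to a Lie-derivative statement. Fix a strictly positive $\rho\in\Gamma(\mathcal{D}_A)$ (density bundles are trivializable). Contraction with $\rho$ is a bundle isomorphism $A\xrightarrow{\sim}\Lambda^{\mathrm{top}-1}A^*\otimes\mathfrak{o}_A$, $X\mapsto \iota_X\rho$, so every $\omega\in\Omega^{\mathrm{top}-1}_c(A,\mathfrak{o}_A)$ is $\omega=\iota_X\rho$ for a unique compactly supported $X\in\Gamma(A)$. Since $d_A\rho$ lives in degree $\mathrm{top}+1$ and hence vanishes, Cartan's formula on the complex $\Omega^\bullet(A,\mathfrak{o}_A)$ gives $d_A\omega=\mathcal{L}_X\rho$. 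Therefore it suffices to show that the algebroid Lie derivative $\mathcal{L}_X\rho$ lies in $\mathrm{Im}(s_!-t_!)$ for every compactly supported $X$.

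The geometric heart is a fiberwise Stokes argument. Pulling back along $t$ identifies algebroid forms with right-invariant fiberwise forms on the $s$-fibers, under which $d_A$ becomes the fiberwise de Rham differential, and $\mathcal{L}_X\rho$ is the infinitesimal generator at $\epsilon=0$ of the flow $\Phi_\epsilon$ of the right-invariant vector field $\overrightarrow{X}$. For small $\epsilon$ this flow integrates $X$ to \emph{global} bisections $b_\epsilon$ (equal to the units outside $\mathrm{supp}(X)$, since $\overrightarrow{X}$ vanishes there, and with $s\circ b_\epsilon=\mathrm{id}$, $t\circ b_\epsilon=\phi^{\sharp X}_\epsilon$). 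I would then show that the action of each $b_\epsilon$ on $C^\infty_c(M,\mathcal{D}_A)$ descends to the identity on $C^\infty_c(M//\G)$ — morally, a bisection is an interior symmetry of the orbit space and so acts trivially on transverse functions. Concretely, this amounts to producing a family $\Theta_\epsilon\in C^\infty_c(\G,t^*\mathcal{D}_A\otimes s^*\mathcal{D}_A)$, smooth in $\epsilon$ with $\Theta_0=0$, such that $b_\epsilon^*\rho-\rho=(s_!-t_!)(\Theta_\epsilon)$; differentiating at $\epsilon=0$ yields $\mathcal{L}_X\rho=(s_!-t_!)(\dot\Theta_0)$, the desired explicit primitive.

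The main obstacle is exactly the construction of this primitive, which reflects the failure of a naive fiberwise Stokes: because $X$ is only compactly supported on $M$, the form $t^*\omega$ need not have compact support along an individual fiber $s^{-1}(x)$, so $\int_{s^{-1}(x)}t^*(d_A\omega)$ is not manifestly zero. The antisymmetrized pair $s_!-t_!$ is precisely what records the ``boundary at infinity'' that a single fiber integration misses, and packaging this cleanly requires a homotopy operator on the nerve of $\G$ — the degree-one component $VE_1$ of the Van Est map — whose chain-map identity in top degree reads $VE_0\circ d_A=(s_!-t_!)\circ VE_1$ with $VE_0$ the identity on $C^\infty_c(M,\mathcal{D}_A)$. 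I expect the only real work to be verifying this identity (equivalently, the smooth $\epsilon$-dependence of $\Theta_\epsilon$ and the explicit formula for $\dot\Theta_0$); once it is in place the proposition is immediate, and the same $VE_1$ is the germ of the full Van Est map of Theorem \ref{thm-Van-Est}.
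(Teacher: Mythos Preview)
Your reduction of the proposition to the inclusion $\mathrm{Im}(d_A)\subseteq \mathrm{Im}(s_!-t_!)$ is exactly right and matches the paper. The rewriting $d_A\omega=\mathcal{L}_X\rho$ via a fixed strictly positive $\rho$ is also correct, though the paper does not use it.

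The gap is in the middle step. Your bisection/flow argument is never actually carried out: you identify ``the construction of this primitive'' $\Theta_\epsilon$ as the main obstacle and then, instead of constructing it, invoke ``the degree-one component $VE_1$ of the Van Est map'' and the chain identity $VE_0\circ d_A=(s_!-t_!)\circ VE_1$. But that chain map with compact supports is precisely what Proposition~\ref{pre-VE-iso-deg-0} (together with Theorem~\ref{thm-Van-Est}) is building; assuming its existence here is assuming the conclusion. There is also a smaller issue: you speak of ``the action of each $b_\epsilon$ on $C^\infty_c(M,\mathcal{D}_A)$'', but $\mathcal{D}_A$ is not a representation of $\G$, so this action has to be defined, and doing so cleanly already leads you back to the fiberwise picture on $\G$.

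The paper supplies exactly the missing primitive, and by a route very close in spirit to what you are reaching for. It works on $\G$ with the foliation $\mathcal{F}(s)$ by $s$-fibers. Given $u=d_A v$ with $v\in\Omega^{r-1}_c(A,\mathfrak{o}_A)$, one uses the surjection $\int_t:\Omega^{r-1}_c(\mathcal{F}(s),\mathfrak{o}_{\mathcal{F}(s)}\otimes s^*\mathcal{D}_A)\to\Omega^{r-1}_c(A,\mathfrak{o}_A)$ (Lemma~\ref{lemma-coinv-DeRham}) to choose $\xi$ with $\int_t\xi=v$, and sets $w=d_s\xi$. Then $t_!(w)=d_A(\int_t\xi)=u$ because $\int_t$ is a chain map (again Lemma~\ref{lemma-coinv-DeRham}), while $s_!(w)=s_!(d_s\xi)=0$ is nothing but ordinary Stokes along the fibers of the submersion $s$ (Lemma~\ref{conf3}). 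Hence $u=(t_!-s_!)(w)\in\mathrm{Im}(s_!-t_!)$. This $w$ is your $\dot\Theta_0$, produced without any appeal to bisections or to a pre-existing Van Est map.
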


The proof will be given in Subsection \ref{The missing proofs}. 
Similarly to the classical case, this implies that a transverse measure $\mu$ descends to a linear functional on $H^{\textrm{top}}_{c}(A, \mathfrak{o}_A)$ hence, 
composing with the pairing induced by wedge products, one obtains 
\[ \langle \cdot, \cdot \rangle_{\mu}:  H^{k}(A) \times H^{\mathrm{top}-k}_{c}(A, \mathfrak{o}_A) \stackrel{\wedge}{\rightarrow} H^{\mathrm{top}}_{c}(A, \mathfrak{o}_A)\stackrel{\mu}{\rightarrow} \mathbb{R},\]
called \textbf{the Poincare pairing induced by $\mu$}. In many interesting examples this pairing is non-degenerate; however, that cannot happen in general since $VE$ may fail to be an isomorphism
(but see Theorem \ref{VE-iso-deg-0} below).

\subsection{Reformulation in terms of (Ruelle-Sullivan) currents}\label{subsec-Reformulation in terms of (Ruelle-Sullivan) currents}

The previous discussion on the integration of densities can be slightly reformulated using currents \cite{derham}. 
Recall that a \textbf{$p$-current} on a manifold $M$ is a continuous linear map
\[ \xi: \Omega_{c}^{p}(M) \rightarrow \mathbb{R}, \]
where the continuity is in the distributional sense, i.e., it uses the inductive limit topology arising from writing
\[  \Omega_{c}^{p}(M)= \cup_{K-\textrm{compact}}\Omega_{K}^{p}(M),\]
where the space $\Omega_{K}^{p}(M)$ of $p$-forms supported in $K$ is endowed with the topology of uniform convergence for all derivatives. The resulting spaces $\Omega_{p}(M)$ of $p$-currents fit into a chain complex
\[ \ldots  \stackrel{d^*}{\rightarrow} \Omega_{1} (M) \stackrel{d^*}{\rightarrow} \Omega_{0}(M)\]
where $d^{*}(\xi)= \xi\circ d$. By construction, the resulting homology $H_{\bullet}(M)$ is in duality with the compactly supported DeRham cohomology; moreover, this pairing induces a canonical isomorphism
\begin{equation}\label{DeRham-duality} 
H_{\bullet}(M) \cong H^{\bullet}_{c}(M)^{*} .
\end{equation}
Similarly one talks about currents on $M$ with values in a flat vector bundle $E$ and the homology $H_{\bullet}(M, E)$.
Back to densities we see that the canonical integration becomes a $\textrm{top}$-current on $M$ with coefficients in $\mathfrak{o}_{M}$, $\int_{M}\in \Omega_{\textrm{top}}(M, \mathfrak{o}_{M})$, 
and the Stokes formula says that this is a closed current. Hence it gives rise to a completely canonical homology class
\begin{equation}\label{fund-class-gen} 
\left[\int_{M}\right]\in H_n(M, \mathfrak{o}_{M}) ,
\end{equation}
where $M$ is assumed to be connected, of dimension $n$. When $M$ is compact and oriented, the orientation trivializes $\mathfrak{o}_{M}$ and $H_{\bullet}(M)$  is canonically isomorphic to singular homology (associate to a singular $p$-chain $\sigma: \Delta^p\rightarrow \mathbb{R}$ the $p$-current $C_{\sigma}(\omega)= \int_{\Delta^p} \sigma^*\omega$); with these identifications, $[\int_{M}]$  becomes the standard fundamental class $[M]\in H_n(M)$ of the compact oriented manifold $M$.

\begin{remark} \rm \ Note also that, via the isomorphism (\ref{DeRham-duality}), the Poincare duality can now be stated in terms of currents as an isomorphism
\begin{equation}\label{P-d-currents} 
H^{\bullet}(M)\cong H_{n-\bullet}(M, \mathfrak{o}_{M}).
\end{equation}
In turn, this follows easily by a sheaf-theoretic argument. Indeed, the complex computing $H_{\bullet}(M, \mathfrak{o}_{M})$ can be arranged into a chain complex augmented by $\mathbb{R}$,
\[ \mathbb{R}\rightarrow \Omega_n(M, \mathfrak{o}_{M})\rightarrow \Omega_{n-1}(M, \mathfrak{o}_{M}) \rightarrow \ldots \ ,\]
where the first map takes a scalar $\lambda$ to $\lambda \cdot \int_{M}$. This complex is local, i.e., can be seen as a complex of sheaves over $M$; as such, it is actually a resolution of $\mathbb{R}$ by fine sheaves (basically the Poincare lemma), therefore it computes the  
cohomology of $M$, giving rise to the Poincare duality isomorphism (\ref{P-d-currents}). 
\end{remark}

The basic constructions from the previous discussion have an obvious generalization to algebroids, allowing us to talk about the space $\Omega_{p}(A, E)$ of $A$-currents of degree $p$ with coefficients in a representation $E$ of $A$ and the resulting homology $H_{\bullet}(A, E)$. With this, a transverse measure $\mu$ 
can be interpreted as a top $A$-current, $\mu\in \Omega_{\textrm{top}}(A, \mathfrak{o}_A)$, and the Stokes formula can be reformulated as:

\begin{corollary} (reformulation of Proposition \ref{pre-VE-iso-deg-0}) Any transverse measure, interpreted as a top $A$-current, is closed.
\end{corollary}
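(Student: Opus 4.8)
The plan is to recognize that this corollary carries no new content beyond Proposition \ref{pre-VE-iso-deg-0}: it is a pure translation of the Stokes formula into the language of $A$-currents, so the whole task is to unwind the relevant definitions and check that $\mu$ qualifies as a current in the first place.

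First I would recall that, under the identification $C_{c}^{\infty}(M, \mathcal{D}_A) = \Omega_{c}^{\mathrm{top}}(A, \mathfrak{o}_A)$ (with $\mathrm{top}$ the rank of $A$), a transverse measure $\mu$ for $\G$ is a linear functional on $\Omega_{c}^{\mathrm{top}}(A, \mathfrak{o}_A)$. To see that it is genuinely a top $A$-current $\mu \in \Omega_{\mathrm{top}}(A, \mathfrak{o}_A)$, I must verify that it is continuous in the distributional sense. This is automatic from positivity: exactly as in part 2 of Proposition \ref{1to1measures}, positivity of $\mu$ forces sup-norm continuity on each subspace of sections supported in a fixed compact $K$; since the $C^{\infty}$ topology used to define the spaces of currents is finer than the sup-norm topology, sup-norm continuity a fortiori implies continuity in the distributional sense. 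Hence $\mu$ is indeed a top $A$-current with coefficients in $\mathfrak{o}_A$.

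Next I would unwind what closedness means. By the definition of the boundary operator on $A$-currents, $d^{*}(\mu) = \mu \circ d_A$, where $d_A : \Omega_{c}^{\mathrm{top}-1}(A, \mathfrak{o}_A) \rightarrow \Omega_{c}^{\mathrm{top}}(A, \mathfrak{o}_A)$ is the algebroid DeRham differential with coefficients in the flat bundle $\mathfrak{o}_A$. Thus $\mu$ being closed, i.e. $d^{*}(\mu) = 0$, says precisely that
\[ \mu(d_A \omega) = 0 \qquad \forall\, \omega \in \Omega_{c}^{\mathrm{top}-1}(A, \mathfrak{o}_A),\]
which is verbatim the Stokes formula established in Proposition \ref{pre-VE-iso-deg-0}. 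The claim follows at once.

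I expect no genuine obstacle here; the single point meriting a word of justification is the distributional regularity of $\mu$, and even that reduces to the automatic-continuity argument already used for ordinary measures. All the substance of the corollary is contained in Proposition \ref{pre-VE-iso-deg-0}, and this statement merely records it in homological language.
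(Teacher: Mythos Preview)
Your proposal is correct and matches the paper's treatment: the corollary is explicitly labeled a reformulation of Proposition \ref{pre-VE-iso-deg-0} and the paper offers no separate proof, treating the passage from the Stokes formula to closedness of the current as a tautology. Your added remark on automatic continuity (ensuring $\mu$ is genuinely a current) is a welcome bit of care that the paper leaves implicit.
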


In particular, when $A$ is oriented, composing with the anchor $\sharp$ induces a map
\[ \sharp_*: \Omega_{r}(A) \rightarrow \Omega_r(M) \ \ \ \ (r= \textrm{the\ rank\ of}\ A),\]
hence any transverse measure $\mu$ of $\G$ induces an $r$-current on $M$, called \textbf{the Ruelle-Sullivan current} on $M$ induced by $\mu$. 
Of course, the standard notion \cite{ruelle} is obtained in the case of foliations. 
Finally, we have the following converse of the previous corollary. 

\begin{proposition} \label{VE-iso-deg-0}
If $\G$ is a Lie groupoid with connected $s$-fibers then (\ref{VE-hidden}) is an isomorphism. As a consequence, transverse measures on $\G$ are the same thing as closed $\textrm{top}$ $A$-currents with coefficients in $\mathfrak{o}_A$,
\[ \mu\in \Omega_{\mathrm{top}}(A, \mathfrak{o}_A), \] 
which are positive in the sense that, if $\rho\in \Omega_{c}^{\mathrm{top}}(A, \mathfrak{o}_A)= C^{\infty}_c(M, \mathcal{D}_A)$ is positive as a density, then $\mu(\rho)\geq 0$. 
\end{proposition}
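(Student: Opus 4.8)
The plan is to read off both assertions from two things already in hand: the surjectivity established in Proposition \ref{pre-VE-iso-deg-0}, and the degree-zero case of the Van Est theorem. First I would note that the map (\ref{VE-hidden}) is, by its very construction, the lowest-degree Van Est map: it is the descent to cohomology of the quotient map $\Omega^{\top}_c(A,\mathfrak{o}_A)=C^\infty_c(M,\mathcal{D}_A)\to C^\infty_c(M//\G)$. Under the re-indexing $H_\bullet(\G)=H^{r-\bullet}_c(\G)$ recalled in the introduction, cohomological degree $\top=r$ corresponds to homological degree $0$, so (\ref{VE-hidden}) is exactly the degree-zero instance of the Van Est comparison. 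The hypothesis that the $s$-fibers be homologically $0$-connected is precisely their connectedness, so Theorem \ref{thm-Van-Est} applies with $k=0$ and yields that (\ref{VE-hidden}) is an isomorphism. Since surjectivity is already Proposition \ref{pre-VE-iso-deg-0}, the genuinely new content is the injectivity.

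Injectivity is the crux, and it is the single step that really uses the global topology of the $s$-fibers. Unwinding the definitions, injectivity of (\ref{VE-hidden}) says that any $\rho\in C^\infty_c(M,\mathcal{D}_A)$ lying in $\mathrm{Im}(s_!-t_!)$ is already $d_A$-exact; equivalently, that the two subspaces
\[ \mathrm{Im}(s_!-t_!)\quad\text{and}\quad\mathrm{Im}(d_A) \]
of $C^\infty_c(M,\mathcal{D}_A)$ coincide, Proposition \ref{pre-VE-iso-deg-0} supplying one inclusion. I would prove this by the double-complex comparison underlying the Van Est map: filter the complex computing $H^\bullet_c(\G)$ by simplicial degree in the nerve of $\G$, and compare it, via fiberwise integration, with the algebroid complex through the fiber de Rham complexes along the $s$-fibers. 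Homological $0$-connectedness makes the bottom fiber cohomology trivial enough to collapse the comparison in the relevant range, forcing an isomorphism in degree $0$. I expect this to be the main obstacle, since it is the only place where formal manipulation is insufficient and the connectedness assumption is essential.

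Granting the isomorphism, the second assertion is then purely formal. A top $A$-current is a continuous linear functional $\mu\colon C^\infty_c(M,\mathcal{D}_A)\to\mathbb{R}$, and $d^*\mu=\mu\circ d_A$, so closedness means exactly that $\mu$ vanishes on $\mathrm{Im}(d_A)$; by the automatic continuity of positive functionals (argued as in Proposition \ref{1to1measures}), a positive closed current is the same as a positive linear functional vanishing on $\mathrm{Im}(d_A)$. On the other hand, a transverse measure is a positive linear functional on $C^\infty_c(M,\mathcal{D}_A)$ vanishing on $\mathrm{Im}(s_!-t_!)$. Since the isomorphism (\ref{VE-hidden}) forces $\mathrm{Im}(d_A)=\mathrm{Im}(s_!-t_!)$, these two classes of positive functionals coincide, which is precisely the claimed bijection. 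I would close by observing that the two notions of positivity match automatically: both are positivity of $\mu$ on the cone of positive densities in $C^\infty_c(M,\mathcal{D}_A)$, and the identification is induced by the identity on $C^\infty_c(M,\mathcal{D}_A)$, so nothing needs to be transported.
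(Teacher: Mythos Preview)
Your proposal is correct. You recognize that (\ref{VE-hidden}) is the Van Est map in lowest homological degree and invoke Theorem~\ref{thm-Van-Est} with $k=0$; this is valid since the proof of that theorem does not depend on the proposition in question, so there is no circularity. The paper, however, does not invoke the general theorem: it gives a direct diagram chase. Starting from $u\in\mathrm{Im}(s_!-t_!)$, it first uses a diagram built on $\G_2$ (with the face maps $\delta_0,\delta_1,\delta_2$ and the exactness supplied by Lemma~\ref{conf2}) to rewrite $u=t_!(w)$ for some $w$ with $s_!(w)=0$; then it reverses the diagram from the proof of Proposition~\ref{pre-VE-iso-deg-0}, where connectedness of the $s$-fibers (via Lemma~\ref{conf3}) forces $w=d_s(\xi)$, whence $u=t_!(d_s\xi)=d_A(t_!\xi)$.

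The underlying idea is the same---both are instances of the double-complex comparison---but the paper's version is concrete and self-contained, while yours is more economical once Theorem~\ref{thm-Van-Est} is already available. Your middle paragraph sketching the spectral-sequence collapse is correct in spirit but too vague to stand on its own; if you wish to avoid the black-box appeal to Theorem~\ref{thm-Van-Est}, the paper's two-diagram argument is exactly what needs to be filled in. Your treatment of the second assertion (equating transverse measures with positive closed currents via $\mathrm{Im}(d_A)=\mathrm{Im}(s_!-t_!)$, with automatic continuity handling the topology) is straightforward and matches the paper's implicit reasoning.
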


The proof of this result and of Proposition \ref{pre-VE-iso-deg-0} are given in the next section.

\section{Cohomological insight and the Van Est isomorphism}
\label{sec:cohomological insight}

Another way to look at the intrinsic model $C_{c}^{\infty}(M//\G)$ is via a compactly supported version of differentiable cohomology; then Propositions \ref{pre-VE-iso-deg-0} and \ref{VE-iso-deg-0} will become part of a
compactly supported version of the Van Est isomorphism \cite{VanEst}. Here we give the details. 

\subsection{Differentiable cohomology with compact supports}\label{Differentiable cohomology with compact supports} 

A version of differentiable cohomology with compact supports was briefly sketched in Remark 4 of \cite{VanEst}. While {\it loc.cit}  made use of a Haar system, in our context, there is a clear intrinsic model. For each $k\geq 0$ integer, we denote by $\G_k$ the manifold consisting of strings 
\begin{equation}\label{k-strings} 
x_0 \stackrel{g_1}{\leftarrow} x_1 \ldots x_{k-1} \stackrel{g_k}{\leftarrow} x_k
\end{equation} 
of $k$ composable arrows of $\G$. Recall that these spaces are related by the face maps

\[  \delta_i: \G_k\rightarrow \G_{k-1}, \ \delta_i(g_1, \ldots, g_k)= \left\{ \begin{array}{lll} 
       (g_2, \ldots , g_k) & \mbox{if $i= 0$}\\
       (g_1, \ldots , g_ig_{i+1}, \ldots , g_k) & \mbox{if $1\leq i\leq k-1$}\\
       (g_1, \ldots , g_{k-1}) & \mbox{if $i= k$} 
                                                        \end{array}
                                             \right. 
\]
On each of the spaces $\G_k$ one considers the line bundle 
\[ \mathcal{D}^{k+1}= p_{0}^{*}\mathcal{D}_A \otimes \ldots \otimes p_{k}^{*}\mathcal{D}_A\]
where $p_i: \G_k\rightarrow M$ ($0\leq i\leq k$) takes a $k$-string (\ref{k-strings}) to $x_i$. The spaces $C^{\infty}_{c}(\G_k, \mathcal{D}^{k+1})$ come with the differential
\[ \delta= \sum_{i= 0}^{} (-1)^i \delta_{i\, !}: C^{\infty}_{c}(\G_k, \mathcal{D}^{k+1}) \rightarrow C^{\infty}_{c}(\G_{k-1}, \mathcal{D}^{k}).\]
Since we end up with a chain complex we will talk about the \textbf{differentiable homology} of $\G$ and to use the notation $H_{\bullet}^{\textrm{diff}}(\G)$. However, it is sometimes useful to think of it as a cohomology with compact supports; then we use the notation
\[ H^{\bullet}_{c}(\G):= H_{r-\bullet} (\G),\]
where $r$ is the rank of the Lie algebroid of $\G$. For instance, in this way, the Van Est map will become a map between cohomologies with compact support, preserving the degree.
With these, $C_{c}^{\infty}(M//\G)$ is simply the homology in degree zero:
\[ C_{c}^{\infty}(M//\G)= H_{0}^{\textrm{diff}}(\G)= H^{r}_{c}(\G). \]
Let us also remark that, as in the case of differentiable cohomology, there is also a version $H^{\bullet}_{c}(\G, E):= H_{r-\bullet} (\G, E)$ with coefficients in a representation $E$ of $\G$. The only subtlety is that (still as in the case of differentiable cohomology) one has to use the action of $E$ in order to define the differential. For instance, in the lowest degree, while the integration over the $s$-fibers with coefficients, 
\[ s_{!}^{E}: C_{c}^{\infty}(\G, s^*E\otimes t^*\mathcal{D}_A\otimes s^* \mathcal{D}_A)\rightarrow C_{c}^{\infty}(M, E\otimes \mathcal{D}_A),\]
is defined exactly as (\ref{S-!})), for $t_{!}^{E}$ one first composes with the isomorphism $s^*E\cong t^*E$ induced by the action of $\G$ on $E$ (so that $s_{!}^{E}$ and $t_{!}^{E}$ are defined on the same space). Similarly in higher degrees.

\begin{example}[the case of submersions, continued]\label{ex-case-submersions-c}\rm \ 
Let us continue the discussion for the groupoid $\G(\pi)$ associated to a submersion $\pi: P\rightarrow B$ (see Example \ref{ex-case-submersions}). For this groupoid, the representations of $\G(\pi)$ are simply pull-backs of vector bundles $E$ over $B$ endowed with the tautological action (the arrow from $x$ to $y$, which exists only when $\pi(x)= \pi(y)$, sends $(\pi^*E)_x= E_{\pi(x)}$ to  $(\pi^*E)_y= E_{\pi(y)}$ by the identity map). Note that this can also be seen as a consequence of the fact that $\G(\pi)$ is Morita equivalent to $B$ viewed as a groupoid with only unit arrows. 

While Lemma \ref{conf1} (and its obvious version with coefficients) can be seen as a computation of degree zero homology, we have the following (itself a particular case of Morita invariance, but used in the proof of the main theorem):

\begin{lemma}\label{conf2} For the groupoid $\G(\pi)$ associated to a submersion $\pi: P\rightarrow B$, 
\[ H^{k}_{c}(\G(\pi), E)= H_{r- k}(\G(\pi), E)\cong 
\left\{
\begin{array}{cc} 
C_{c}^{\infty}(B, E)  &  \textrm{if}\ k= r \\ 
0  &  \textrm{otherwise}
\end{array}
\right.\] where $r$ is the rank of the Lie algebroid of $\G$.
\end{lemma}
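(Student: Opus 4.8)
The plan is to exploit the explicit bar-type complex computing $H_{\bullet}(\G(\pi), E)$ and to build a contracting homotopy out of the fiber-integration-normalized density $c$ constructed in the proof of Lemma \ref{conf1}. First I would unwind the geometry of the complex. A string of composable arrows of $\G(\pi)= P\times_{B} P$ is nothing but a tuple $(x_0, \ldots, x_k)$ of points of $P$ lying in a common $\pi$-fiber, so the manifold $\G(\pi)_k$ is the $(k+1)$-fold fiber product $P\times_{B}\cdots\times_{B} P$, the face map $\delta_i$ is the omission of $x_i$, and the line bundle $\mathcal{D}^{k+1}= p_0^*\mathcal{D}_A\otimes\cdots\otimes p_k^*\mathcal{D}_A$ carries in its $i$-th slot exactly the density $\mathcal{D}_A= \mathcal{D}_{\F(\pi)}$ needed to integrate over the $x_i$-direction along the $\pi$-fiber; hence $\delta_{i!}$ is integration over $x_i$. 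Because the representation $E=\pi^*E_B$ is pulled back from $B$, the $\G(\pi)$-action on $E$ is the identity, so the only face omitting the vertex where the coefficient sits (via $s^*$, i.e.\ $p_k^*E$) uses merely the canonical identification $E_{B,\pi(x_i)}=E_{B,\pi(x_j)}$, and no genuine twisting occurs.

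In homological degree $0$ the cokernel of $\delta$ is precisely $C_c^\infty(P//\G(\pi))$ with coefficients, so Lemma \ref{conf1} in its evident version with coefficients in $E$ already yields $H_0(\G(\pi),E)\cong C_c^\infty(B,E)$. For degrees $\geq 1$ I would prove acyclicity directly. Fix $c\in C^\infty_{\pi-c}(P,\mathcal{D}_{\F(\pi)})$ with $\int_\pi c=1$ as in the proof of Lemma \ref{conf1}, and define
\[ h\colon C_c^\infty(\G(\pi)_k,\mathcal{D}^{k+1}\otimes p_k^*E)\to C_c^\infty(\G(\pi)_{k+1},\mathcal{D}^{k+2}\otimes p_{k+1}^*E) \]
by $(hu)(x_0,\ldots,x_{k+1})=c(x_0)\otimes u(x_1,\ldots,x_{k+1})$, inserting $c$ as a new front vertex and leaving the coefficient (carried at the back vertex) untouched. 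A short check shows $hu$ is again compactly supported: its support projects into the compact set $\pi(\mathrm{supp}\,u)\subset B$, over which $\mathrm{supp}(c)$ is compact since $\pi|_{\mathrm{supp}(c)}$ is proper.

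The only computations needed are $\delta_{0!}(hu)=\big(\int_\pi c\big)\,u=u$ and $\delta_{i!}(hu)=h(\delta_{(i-1)!}u)$ for $i\geq 1$, the front insertion commuting with the remaining face integrations and the identity action reconciling the two sides for the coefficient-carrying face. Summing with signs gives
\[ \delta h+h\delta=\mathrm{id} \]
on $C_c^\infty(\G(\pi)_k,-)$ for every $k\geq 1$, so every cycle is a boundary and $H_k(\G(\pi),E)=0$ for $k\geq 1$. Re-indexing by $H^j_c(\G(\pi),E)=H_{r-j}(\G(\pi),E)$ gives exactly $C_c^\infty(B,E)$ for $j=r$ and $0$ otherwise. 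If preferred, the degree-$0$ case folds into the same argument by augmenting the complex with $\varepsilon=\int_\pi\colon C_c^\infty(P,E\otimes\mathcal{D}_A)\to C_c^\infty(B,E)$ and extending $h$ in degree $-1$ by $h(f)=\pi^*(f)\cdot c$, for which $\varepsilon h=\mathrm{id}$ and $\delta_{1!}h=h\varepsilon$ follow from $\int_\pi c=1$. The main obstacle is bookkeeping rather than depth: one must verify that $h$ preserves compact supports using the $\pi$-properness of $\mathrm{supp}(c)$, and thread $E$ consistently through the face maps and through $h$, checking that the identity $\G(\pi)$-action makes the single coefficient-carrying face behave formally like the others. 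Once the simplicial identities $\delta_{0!}h=\mathrm{id}$ and $\delta_{i!}h=h\delta_{(i-1)!}$ are in place, acyclicity is automatic.
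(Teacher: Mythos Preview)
Your argument is correct and is precisely the ``same type of argument'' the paper alludes to: the paper gives no details beyond pointing to Lemma~1 of \cite{VanEst}, and what you have written is the compactly supported dual of that proof, namely the standard extra-degeneracy contracting homotopy, built here from the fiberwise-normalized density $c$ of Lemma~\ref{conf1}. The only point worth recording explicitly is that the $c$ produced in Lemma~\ref{conf1} can be chosen so that $\pi|_{\mathrm{supp}(c)}$ is proper (this follows from the local construction $c=\sum_i \eta_i c_i$ with each $c_i$ supported in $V_i\times K_i$ for $K_i\subset\mathbb{R}^q$ compact), which is exactly what you use to guarantee that $hu$ has compact support.
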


This lemma is it the compactly supported version of Lemma 1 from \cite{VanEst}, and it follows by the same type of arguments as there. 
\end{example}

\begin{theorem}\label{thm-Mor-inv}  The homology $H_{\bullet}(\G, \cdot)$ is Morita invariant. 
\end{theorem}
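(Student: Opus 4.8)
The plan is to reduce the statement to the single elementary move of pulling back along a surjective submersion, and then to prove invariance under that move by an explicit chain homotopy equivalence whose contracting homotopy is supplied by Lemma \ref{conf2}.

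First I would record the coefficient bookkeeping and cut down the class of equivalences to be treated. A Morita equivalence between $\G\tto M$ and $\H\tto N$ is witnessed by a principal bibundle $P$, whose two moment maps $\mathbf a:P\to M$ and $\mathbf b:P\to N$ are surjective submersions; pulling $\G$ back along $\mathbf a$ and $\H$ back along $\mathbf b$ yields canonically isomorphic groupoids. Hence every Morita equivalence factors as two pullback moves $\G\rightsquigarrow\G[\mathbf a]$ and $\H\rightsquigarrow\H[\mathbf b]$ with $\G[\mathbf a]\cong\H[\mathbf b]$, and a representation $E$ of $\G$ corresponds to its pullback on the pullback groupoid (the action being transported through the arrows). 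So it suffices to fix a surjective submersion $\epsilon:\tilde M\to M$, write $\tilde\G:=\G[\epsilon]=\tilde M\times_M\G\times_M\tilde M$, and show that the canonical comparison induces an isomorphism $H_\bullet(\tilde\G,\epsilon^*E)\cong H_\bullet(\G,E)$ compatible with these identifications.

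Next I would introduce the comparison at chain level. For each $q$ the projection $\pi_q:\tilde\G_q\to\G_q$ is a surjective submersion whose fibre over a string with vertices $x_0,\dots,x_q$ is $\prod_i\epsilon^{-1}(x_i)$; integration over these fibres gives maps $\Psi_q$ from the degree-$q$ space of the complex computing $H_\bullet(\tilde\G,\epsilon^*E)$ to that computing $H_\bullet(\G,E)$, and since $\pi_\bullet$ is simplicial and fibre integration commutes with the shriek face maps (Fubini), $\Psi$ is a chain map. For a candidate homotopy inverse I would fix, exactly as in the proof of Lemma \ref{conf1}, a fibrewise compactly supported $c\in C^\infty(\tilde M,\mathcal D_{\ker d\epsilon})$ with $\int_\epsilon c=1$, and define $\Sigma_q$ by tensoring the pulled-back density with a copy of $c$ at each of the $q+1$ lift slots. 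Normalisation gives $\Psi\Sigma=\mathrm{id}$ on the $\G$-complex, and $\Sigma$ is again a chain map with the required compact supports (fibrewise-compact support of $c$ together with compact support over $\G_q$ forces compact support on $\tilde\G_q$).

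It then remains to produce a chain homotopy $\mathrm{id}\simeq\Sigma\Psi$ inside the complex computing $H_\bullet(\tilde\G,\epsilon^*E)$, and this is where the main difficulty lies. The operator $\Sigma\Psi$ averages out all $q+1$ lifts and respreads them by $c$, so the homotopy must contract the $q+1$ ``Čech'' directions of the cover $\epsilon$ simultaneously while remaining compatible with the simplicial face maps $\delta_i$ of $\tilde\G$. I would build it from the one-variable contraction of the augmented Čech nerve of a single surjective submersion -- precisely the mechanism underlying Lemma \ref{conf2} -- applied one lift slot at a time and assembled by the usual alternating/shuffle bookkeeping, carrying along the density, orientation ($\mathfrak o$) and coefficient ($E$) factors. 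The hard part is organising this multi-slot homotopy coherently and checking at each stage that fibrewise integration against $c$ preserves compact supports and intertwines correctly with the $E$-action used to define the face maps in the twisted complex; once it is in place, $\Psi$ is a homotopy equivalence, hence a quasi-isomorphism, and tracing through the two pullback moves gives the asserted Morita invariance of $H_\bullet(\G,\cdot)$ together with its naturality.
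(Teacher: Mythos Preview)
Your approach is correct in outline but takes a genuinely different route from the paper. The paper works directly with the principal bibundle $P$ witnessing the Morita equivalence and builds a single double complex $C_{\bullet,\bullet}(P)$ over the spaces $P_{p,q}\subset\G^p\times P\times\H^q$ of composable strings; the augmented columns are the compactly supported complexes of the action groupoid of $\H$ on $P_{p,0}$, the augmented rows those of $\G$ on $P_{0,q}$, and Lemma~\ref{conf2} (via Corollary~\ref{cor-coinv-princ}) gives exactness of both, so the two augmentation maps $\int_\alpha$ and $\int_\beta$ are quasi-isomorphisms from $\mathrm{tot}(C_{\bullet,\bullet}(P))$ to $C_\bullet(\G)$ and $C_\bullet(\H)$ respectively. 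This is a standard acyclic-assembly/spectral-sequence argument and requires no explicit homotopies.

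You instead factor through pullback groupoids and aim for an explicit chain homotopy equivalence $\Psi\dashv\Sigma$. The reduction and the construction of $\Psi,\Sigma$ are fine, and $\Psi\Sigma=\mathrm{id}$ is immediate. The place where your sketch is thin is exactly the one you flag: building the multi-slot homotopy $\mathrm{id}\simeq\Sigma\Psi$. The ``one slot at a time'' idea does work, but the intermediate operators (averaged in the first $k$ slots, untouched in the rest) are not chain maps for the full simplicial differential, so you cannot simply concatenate one-variable homotopies; you need to produce genuine simplicial homotopies and sum them with the correct signs, and then check compatibility with the $E$-twist in the face maps. This is doable but is real work, and your proposal stops short of carrying it out. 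The paper's double-complex packaging sidesteps this entirely: once the rows and columns are known to be acyclic (which is exactly your one-variable input, Lemma~\ref{conf2}), the comparison of the two edge augmentations is automatic. In effect both arguments rest on the same acyclicity, but the double complex organises the bookkeeping for you, whereas your route asks you to build the homotopy by hand.
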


\begin{proof} The proof goes exactly as the proof of the Morita invariance of differentiable cohomology from \cite{VanEst}: given a Morita equivalence between $\G$ (over $M$) and $\H$ (over $N$), i.e., a principal bi-bundle $P$ (with $\alpha: P\rightarrow M$, $\beta: P\rightarrow N$) one forms a double complex $C_{\bullet, \bullet}(P)$ together with quasi-isomorphisms
\[ C_{\bullet}(\G) \stackrel{\int_{\alpha}}{\leftarrow} \textrm{tot}(C_{\bullet, \bullet}(P))  \stackrel{\int_{\beta}}{\rightarrow} C_{\bullet}(\H);\]
the fact that the two maps are quasi-isomorphism is ensured by the fact that each column $C_{p, \bullet}(P)$ comes with an augmentation $\int_{\alpha}: C_{p, \bullet}(P) \rightarrow C_{p}(\G)$ compatible with the differentials, and similarly for the rows. Explicitly, $C_{p, q}(P)$ is built on the subspace $P_{p, q}\subset \G^p\times P\times \H^q$ consisting of elements $(g_1, \ldots, g_p, x, h_1, \ldots, h_q)$ with the property that the product $g_1\ldots g_pxh_1\ldots h_q$ is defined. 
On each such space one has a line bundle $\mathcal{D}_{p, q}$ defined as follows: one pulls-back the line bundle $\mathcal{D}_{\G}^{p+1}$ that appears in the definition of $C_{\bullet}(\G)$ via the projection $P_{p, q}\rightarrow \G_p$, similarly for $\mathcal{D}_{\H}^{q+1}$; then, on $P$, one has the pull-back algebroid $C= \alpha^{!}A\cong \beta^{!}B$ (see e.g. Example 5 in \cite{VanEst}) and one pulls-back $\mathcal{D}:= \mathcal{D}_{C}$ to $P_{p, q}$; with these, 
\[ \mathcal{D}_{p, q}= \mathcal{D}_{\G}^{p+1} \otimes \mathcal{D}\otimes \mathcal{D}_{\H}^{q+1}.\]
define $C_{p, q}(P)= \Gamma_{c}(\mathcal{D}_{p, q})$. 
Fixing $p$, the differential of the $p$-column is defined so that it becomes the differentiable complex with compact supports of the action groupoid associated to the right action of $\H$ on $P_{p, 0}$; since this action is principal, the previous lemma implies that the cohomology of the $p$-column is zero everywhere, except in degree zero where it is $C_p(\G)$, with the isomorphism induced by $\int_{\alpha}$. Similarly for the rows, but using $\H$ instead of $\G$. Hence the conclusion follows.  
\end{proof}

\begin{remark}\rm \ Of course, the same holds in the presence of coefficients. Also, with arguments similar to the ones from \cite{VanEst}, these isomorphisms are compatible with respect to the tensor product of bi-bundles (hence they are functorial). Even more, if $P$ is only principal as an $\H$-bundle, i.e., a generalized morphism from $\G$ to $\H$, then a careful look at the previous argument (especially at the coefficients) gives rise to the ``integration over the $P$-fibers map'' (which is functorial in $P$). 
\end{remark}

\begin{remark} [Co-invariants] \label{remark-coinvariants}\rm \ The definition of $C_{c}^{\infty}(M//\G)$ can also be thought of as making sense of ``the space of $\G$-coinvariants associated to $C_{c}^{\infty}(M)$'' (see Example \ref{exemplification}). This viewpoint becomes important when looking for compactly supported versions of statements with no conditions on the support. For that purpose, we use a more suggestive notation and we extend the notion to a slightly larger setting. First of all, for a representation $E$ of $\G$ define the space of coinvariants $C_{c}^{\infty}(M, E)_{\G-\textrm{coinv}}$ (dual to the obvious space $C^{\infty}(M, E)^{\G-\textrm{inv}}$ of invariants) as
\[ C^{\infty}_c(M, E)_{\G-\textrm{coinv}}:= H_{0}(\G, E).\]
Morally, while representations $E$ of $\G$ can be thought of as vector bundles $E/\G$ over $M/\G$, the space $C^{\infty}(M, E)^{\G-\textrm{inv}}$ plays the role of ``$C^{\infty}(M/\G, E/\G)$'' and similarly  $C^{\infty}_{c}(E)_{\G-\textrm{coinv}}$ plays the role of ``$C^{\infty}_c(M/\G, E/\G)$''. These heuristics become precise in the case of the groupoid $\G(\pi)$ associated to a submersion $\pi: P\rightarrow B$ when, by Lemma \ref{conf1}, as the fiber integration induces an isomorphism
\begin{equation}\label{sub-case-extra} 
C^{\infty}_c(\pi^*E)_{\G(\pi)-\textrm{coinv}} \cong C^{\infty}_c(B, E). 
\end{equation}
We will use the same notations in a slightly more general context: when $\G$ acts on a manifold $P$ (say from the right) and $E$ is a $\G$-equivariant vector bundle over $P$.
To put ourselves in the previous setting, one considers the associated action groupoid $P\rtimes \G$ and interprets $E$ as a representation of it. The resulting space of $P\rtimes \G$-coinvariants will still be denoted by $C^{\infty}_{c}(P, E)_{\G-\textrm{coinv}}$. 

\begin{corollary}\label{cor-coinv-princ} If $\pi: P\rightarrow B$ is a principal $\G$-bundle over a manifold  $B$, then $\G$-equivariant vector bundles on $P$ are necessarily of type $\pi^*E$ with $E$ a vector bundle over $B$ (so that the action on $\pi^*E$ is the tautological action) and 
\[ C^{\infty}_{c}(P, \pi^*E)_{\G-\mathrm{coinv}} \cong C^{\infty}_c(B, E).\]
Moreover, in degrees $k>0$, $H_{k}(P\rtimes \G, \pi^*E)= 0$. 
\end{corollary}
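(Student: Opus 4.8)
The plan is to reduce the entire corollary to the already-settled case of the submersion groupoid $\G(\pi)$ from Example \ref{ex-case-submersions-c}, by observing that the principal bundle condition makes the action groupoid $P\rtimes\G$ literally isomorphic (not merely Morita equivalent) to $\G(\pi)= P\times_B P$. Writing $\mu\colon P\to M$ for the moment map of the action, the defining property of a principal $\G$-bundle is that
\[ P\times_M\G \longrightarrow P\times_B P,\qquad (p,g)\longmapsto (p,\,p\cdot g), \]
is a diffeomorphism. First I would check that this diffeomorphism is an isomorphism of Lie groupoids: the left-hand side is the arrow manifold of $P\rtimes\G$ (source $p\cdot g$, target $p$), the right-hand side is the arrow manifold of $\G(\pi)$, and a direct verification shows that source, target, units and multiplication all correspond. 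For instance the composite $(p,g)\cdot(p\cdot g,h)=(p,gh)$ is sent to $(p,\,p\cdot g\cdot h)$, which is exactly the product $(p,\,p\cdot g)\cdot(p\cdot g,\,p\cdot g\cdot h)$ computed in $P\times_B P$.

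With this identification in hand, the first assertion is immediate: a $\G$-equivariant vector bundle on $P$ is precisely a representation of $P\rtimes\G$, and transporting along the isomorphism it becomes a representation of $\G(\pi)$. By Example \ref{ex-case-submersions-c} every such representation is the pull-back $\pi^*E$ of a vector bundle $E$ over $B$ with the tautological action; this both proves the classification and legitimizes the notation $\pi^*E$ in the remaining statements.

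For the homological statements I would simply invoke Lemma \ref{conf2}. By the definition of coinvariants (Remark \ref{remark-coinvariants}) together with the groupoid isomorphism,
\[ H_k(P\rtimes\G,\pi^*E)\cong H_k(\G(\pi),\pi^*E), \]
and Lemma \ref{conf2}, read in homological degree (so that $k=r$ corresponds to $H_0$), says this vanishes in all positive degrees and equals $C^\infty_c(B,E)$ in degree zero. The degree-zero case is exactly $C^{\infty}_{c}(P,\pi^*E)_{\G-\mathrm{coinv}}\cong C^{\infty}_c(B,E)$, while positive degrees give the vanishing claimed.

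The only point requiring genuine care, though it is a mild one, is the bookkeeping around the moment map and the precise conventions for the principal bundle condition and the action groupoid, so that the displayed diffeomorphism is really the asserted groupoid isomorphism. In particular one must confirm that the anchor of the pull-back algebroid and the coefficient bundles $\pi^*E$ entering the differential on $C_\bullet(P\rtimes\G,\pi^*E)$ agree with those used on $C_\bullet(\G(\pi),\pi^*E)$; this is automatic once the groupoid isomorphism is in place, since representations and their associated chain complexes transport functorially. (Should one prefer to avoid the literal isomorphism, the same conclusions follow from Morita invariance, Theorem \ref{thm-Mor-inv}, applied to the Morita equivalence between $P\rtimes\G$ and $B$, but the direct identification with $\G(\pi)$ is cleaner and lets one quote Lemma \ref{conf2} verbatim.)
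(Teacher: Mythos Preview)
Your proof is correct and follows exactly the paper's approach: identify the action groupoid $P\rtimes\G$ with $\G(\pi)=P\times_B P$ via the principal bundle diffeomorphism $(p,g)\mapsto(p,pg)$, and then invoke Lemma \ref{conf2}. The paper's proof is essentially a two-line version of what you wrote, so your additional verification that the map is a groupoid isomorphism and your remarks on bookkeeping are harmless elaborations rather than departures.
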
 

\begin{proof} In this situation one has a diffeomorphism $P\times_{M}\G \cong P\times_{B} P$, given by $(p, g)\mapsto (p, pg)$, and this identifies the action groupoid with the groupoid $\G(\pi)$. The statement becomes that of Lemma \ref{conf2}. 
\end{proof}
\end{remark}

Next we mention another important property of the differentiable cohomology with compact supports: the Van Est theorem. 

\begin{theorem}\label{thm-Van-Est} For any Lie groupoid $\G$, with Lie algebroid denoted by $A$, and for any representation $E$ of $\G$, there is a canonical map
\[ VE^{\bullet}: H^{\bullet}_{c}(A, E\otimes \mathfrak{o}_A) \rightarrow H^{\bullet}_{c} (\G, E) .\]
Moreover, if $k\in \{0, 1, \ldots, r-1\}$ where $r$ is the rank of $A$ and if the fibers of $s$ are homologically $k$-connected, then $VE^{0}, \ldots , VE^{k}$ are isomorphisms. The same is true for $k\geq r$ except for the case when some $s$-fiber is compact and orientable. 
\end{theorem}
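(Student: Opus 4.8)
The plan is to transpose the proof of the classical Van Est theorem from \cite{VanEst} to the compactly supported (homological) setting, letting Poincar\'e--Lefschetz duality along the $s$-fibers account for the appearance of the \emph{homological} connectivity hypothesis. First I would build a Van Est double complex $C_{\bullet,\bullet}$ whose two directions are the simplicial direction of the nerve $\G_\bullet$ and a longitudinal direction given by the compactly supported de Rham complex along the fibers of the ``last source'' projection $\bar s\colon \G_p\to M$, $\bar s(g_1,\dots,g_p)=s(g_p)$, everything twisted by the coefficient bundle $E$ and by exactly the density/orientation bundles that already make $C^\infty_c(M,\mathcal{D}_A)=\Omega^{\mathrm{top}}_c(A,\mathfrak{o}_A)$ intrinsic. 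The two differentials are the simplicial differential $\delta=\sum(-1)^i\delta_{i!}$ of Subsection \ref{Differentiable cohomology with compact supports} and the longitudinal de Rham differential (twisted by $\mathfrak{o}$ of the fibers). The first routine point is to check that these commute up to sign; this is a Koszul-formula bookkeeping exercise identical in spirit to the one in \cite{VanEst}.

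Next I would run the two spectral sequences. Collapsing the simplicial direction first produces the algebroid complex $\Omega^\bullet_c(A,E\otimes\mathfrak{o}_A)$ along one edge, by the same local argument as in \cite{VanEst}; the Van Est map $VE^\bullet$ is then defined as the resulting edge homomorphism, and its naturality and Morita invariance follow from Theorem \ref{thm-Mor-inv}. Computing the longitudinal cohomology first identifies the $E_1$-page with the compactly supported de Rham cohomology of the $\bar s$-fibers of the $\G_p$; since each such fiber is an iterated $s$-fiber bundle, this reduces to $H^\bullet_c(S,\mathfrak{o}_S)$ for a single $s$-fiber $S$ (an $r$-manifold). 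Fiber integration extracts the top class and, when $S$ is connected, returns the groupoid chain complex $C_\bullet(\G,E)$ along the top edge $q=\mathrm{top}$, exactly as in Lemma \ref{conf2}. In this picture the two ends of the range recover Propositions \ref{pre-VE-iso-deg-0} and \ref{VE-iso-deg-0}: surjectivity of $VE$ in the measure degree is the closedness of the top class, while its injectivity for connected $s$-fibers is the statement that $\int_S\colon H^{\mathrm{top}}_c(S,\mathfrak{o}_S)\to\mathbb{R}$ is an isomorphism, a fact governed by $H_0(S)$ alone.

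The heart of the argument is the vanishing forcing degeneration in a range. Here I would invoke Poincar\'e--Lefschetz duality on the $r$-dimensional fiber in the form $H^j_c(S,\mathfrak{o}_S)\cong H_{r-j}(S)$: homological $k$-connectivity says $H_i(S)=0$ for $1\le i\le k$, so the longitudinal cohomology vanishes precisely in the band of degrees just below the top, which is what is needed to make $VE^0,\dots,VE^k$ isomorphisms (these correspond, under the reindexing $H^\bullet_c=H_{\mathrm{top}-\bullet}$, to the top homological degrees of the total complex). The main obstacle, and the origin of the exceptional clause, is the top fiber degree: the detected class and the map $\int_S$ are controlled by $H_0(S)$ \emph{and} $H_r(S)$, and a compact \emph{orientable} $S$ is never homologically $r$-connected, since its fundamental class forces $H_r(S)=\mathbb{R}$ and the corresponding non-exact volume form survives, obstructing the isomorphism past degree $r-1$. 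For non-compact or non-orientable fibers this obstruction is absent and the argument extends to $k\ge r$. I expect to spend the most care on the intrinsic density and orientation twists that make each fiber integration well defined without auxiliary choices, and on the duality bookkeeping in the non-orientable case, where $\mathfrak{o}_S$ and its interplay with $\mathfrak{o}_A$ must be tracked throughout.
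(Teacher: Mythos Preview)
Your strategy is the paper's: an augmented double complex whose two edges are the compactly supported algebroid and groupoid complexes, with the isomorphism range controlled by the vanishing of fiberwise compactly supported de Rham cohomology (hence, via Poincar\'e duality on the fiber, by homological connectivity). Two points where your setup diverges from the paper's and where you should tighten things:

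The paper does not foliate $\G_p$ by the fibers of your $\bar s\colon \G_p\to M$; those fibers have dimension $pr$, varying with $p$, so the resulting bicomplex would have a staircase shape and your reduction ``iterated $s$-fiber bundle $\Rightarrow$ single $s$-fiber'' would need an extra Leray-type argument. Instead the paper places the double complex on $\G_{k+1}$ and foliates by the fibers of the single face map $\delta_0\colon \G_{k+1}\to\G_k$, which are precisely individual $s$-fibers of dimension $r$; Lemma~\ref{conf3} then applies directly to each row. This keeps the degree bookkeeping straight and makes $r$ the honest cutoff.

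The collapse in the simplicial direction is not ``the same local argument as in \cite{VanEst}'' in the compactly supported setting: the paper identifies each column with the differentiable homology complex of the action groupoid for the right translation action of $\G$ on itself, which is principal with quotient $t\colon\G\to M$, so Corollary~\ref{cor-coinv-princ} gives exactness of the augmented columns. The augmentation to $\Omega^\bullet_c(A,E\otimes\mathfrak{o}_A)$ is the fiber integration $\int_t$, and its compatibility with $d_A$ (your ``Koszul bookkeeping'') is exactly Lemma~\ref{lemma-coinv-DeRham}. Your invocation of Poincar\'e--Lefschetz duality on the fiber to convert homological connectivity into the required vanishing of $H^j_c(S,\mathfrak{o}_S)$ is correct and is what lies behind Lemma~\ref{conf3}; your reading of the exceptional clause for compact orientable fibers is also right.
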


\begin{example}[the case of submersions, continued]\label{ex-case-submersions-coh}\rm \ 
Let us continue our discussion on the basic example associated to a submersion $\pi: P\rightarrow B$. In this case the algebroid is $A= \textrm{Ker}(d\pi)= \mathcal{F}(\pi)$ - the foliation induced by $\pi$. While the resulting differentiable cohomology vanishes except in degree $r$ (the rank of $A$), we are looking at a vanishing result for the foliated cohomology with compact supports. For trivial coefficients, the complex under discussion is 
\[ (\Omega^{\bullet}_{c}(\F(\pi), \mathfrak{o}_{\F(\pi)}), d_{\pi})\]
where $d_{\pi}= d_{\mathcal{F}(\pi)}$ is now just DeRham differentiation along the fibers of $\pi$. One may think that the role of the orientation bundle is to make the top degree
\[ \Omega^{\textrm{top}}_{c}(\F(\pi), \mathfrak{o}_{\F(\pi)})= C^{\infty}_{c}(P, \mathcal{D}_{\F(\pi)}), \]
the domain of the canonical fiber integration $\int_{\pi}$ with values in $C^{\infty}(B)$. For the version with coefficients, necessarily of type $\pi^*E$ for some vector bundle $E$ over $B$ (cf. Example \ref{ex-case-submersions-c}), $(\Omega^{\bullet}_{c}(\F(\pi), \mathfrak{o}_{\F(\pi)}\otimes \pi^*E), d_{\pi})$ uses the flat $\F(\pi)$-connection which is the infinitesimal counterpart of the tautological action of $\G(\pi)$ on $\pi^*E$; this is uniquely characterized by the condition that sections of type $\pi^*s$ are flat. Although the following is a particular case of the Theorem \ref{thm-Van-Est}, it will be used to prove the theorem.  Note that it clarifies why the degree $r$ is special: for an $r$-dimensional manifold $F$, $H^{0}_{c}(F, \mathfrak{o}_F)$ is non-zero if and only if $F$ is compact and orientable.

\begin{lemma}\label{conf3} For the foliation $\F(\pi)$ associated to a submersion $\pi: P\rightarrow B$, if the fibers of $\pi$ are homologically $k$-connected, where $k\in\{0, 1, \ldots, r-1\}$, then the following sequence is exact:
\begin{small}
\[  \Omega^{r-k-1}_{c}(\F(\pi), \mathfrak{o}_{\F(\pi)}\otimes \pi^*E)\stackrel{d_{\pi}}{\longrightarrow} \ldots \stackrel{d_{\pi}}{\longrightarrow} 
\Omega^{r}_{c}(\F(\pi), \mathfrak{o}_{\F(\pi)}\otimes \pi^*E)\stackrel{\int_{\pi}}{\longrightarrow} C_{c}^{\infty}(B, E) \rightarrow 0.\]
\end{small}

(hence the corresponding compactly supported cohomology is zero in degrees $r-k$, \ldots, $r-1$ and is $C_{c}^{\infty}(B, E)$ in degree $r$). The same is true for $k= r$  except for the case when some fiber of $\pi$ is compact and orientable. 
\end{lemma}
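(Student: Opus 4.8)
The plan is to realise the displayed complex as the compactly supported sections of a complex of fine sheaves on the base $B$, and then to reduce exactness to a fiberwise computation that is governed by Poincar\'e duality on the fibers $F_b=\pi^{-1}(b)$ (which are $r$-dimensional). First note that the sequence is genuinely a complex and that its last map is onto: $\int_{\pi}\circ d_{\pi}=0$ is Stokes' theorem along the fibers applied to forms with fiberwise compact support, and the surjectivity of $\int_{\pi}$ is Lemma \ref{conf1} together with its version with coefficients. Hence the content is exactness at the interior spots, i.e. $\Ker d_{\pi}=\mathrm{Im}\,d_{\pi}$ in degrees $r-k,\ldots,r-1$, and $\Ker\int_{\pi}=\mathrm{Im}\,d_{\pi}$ in degree $r$.

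Next I would localise over $B$. For $V\subseteq B$ open let $\cA^{p}(V)$ be the space of foliated $p$-forms on $\pi^{-1}(V)$ with coefficients in $\mathfrak{o}_{\F(\pi)}\otimes\pi^{*}E$ whose support is proper over $V$; these assemble into a complex of sheaves $(\cA^{\bullet},d_{\pi})$ on $B$, augmented by $\int_{\pi}\colon\cA^{r}\to\cC^{\infty}_{B}(E)$, the sheaf of smooth $E$-valued functions. By construction $\Omega^{p}_{c}(\F(\pi),\mathfrak{o}_{\F(\pi)}\otimes\pi^{*}E)=\Gamma_{c}(B,\cA^{p})$ and $C_{c}^{\infty}(B,E)=\Gamma_{c}(B,\cC^{\infty}_{B}(E))$. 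Each $\cA^{p}$ is fine: multiplication by $\pi^{*}\chi$ for a partition of unity $\{\chi\}$ on $B$ supplies the required partition, and these multipliers commute with $d_{\pi}$ because $\pi^{*}\chi$ is locally constant along the fibers. Since fine sheaves are $\Gamma_{c}$-acyclic, the exactness of the global compactly supported sequence will follow once the sheaf-level sequence
\[ \cA^{r-k-1}\xrightarrow{d_{\pi}}\cdots\xrightarrow{d_{\pi}}\cA^{r}\xrightarrow{\int_{\pi}}\cC^{\infty}_{B}(E)\to 0 \]
is shown to be exact, i.e. once the cohomology sheaves $\mathcal{H}^{p}(\cA^{\bullet})$ are seen to vanish for $r-k\le p\le r-1$ and $\int_{\pi}$ is seen to identify $\mathcal{H}^{r}(\cA^{\bullet})$ with $\cC^{\infty}_{B}(E)$. (The passage from sheaf exactness to $\Gamma_{c}$-exactness is the standard cascade of short exact sequences of soft sheaves.)

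The heart of the matter is the stalk $\mathcal{H}^{p}(\cA^{\bullet})_{b}$, which is the fiberwise compactly supported foliated cohomology near $F_b$. The input is twofold. Locally on $P$ the submersion is the projection $V\times\R^{r}\to V$, and for this model the fiberwise compactly supported de Rham complex (with the orientation twist) is a resolution: by the compactly supported Poincar\'e lemma it has cohomology $\cC^{\infty}(V,E)$ in degree $r$, via $\int_{\R^{r}}$, and $0$ in lower degrees. Assembling these plaque-wise models over a good cover of the fiber $F_b$ into a \v{C}ech--de Rham double complex, the spectral sequence collapses at its second page onto the \v{C}ech cohomology of $F_b$, and one reads off
\[ \mathcal{H}^{p}(\cA^{\bullet})_{b}\ \cong\ H^{p}_{c}(F_b,\mathfrak{o}_{F_b})\otimes E_b\ \cong\ H_{r-p}(F_b)\otimes E_b, \]
the last step being Poincar\'e duality on the (possibly non-compact, non-orientable) fiber. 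The hypothesis that $F_b$ is homologically $k$-connected gives $H_{i}(F_b)=0$ for $1\le i\le k$ and $H_{0}(F_b)=\R$, which is exactly the vanishing of $\mathcal{H}^{p}$ for $r-k\le p\le r-1$ and the identification $\mathcal{H}^{r}\cong\cC^{\infty}_{B}(E)$. The case $k=r$ is covered by the same reading: one reaches degree $0$, where $\mathcal{H}^{0}_{b}\cong H_{r}(F_b)\otimes E_b$ vanishes precisely unless $F_b$ is compact and orientable, which is the stated exception.

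The step I expect to be the main obstacle is this last fiberwise identification, since a general submersion need not be locally trivial, so the fibers may vary and $\pi^{-1}(V)$ need not be a product. The careful point is to run the \v{C}ech--de Rham argument along a fixed good cover of the single fiber $F_b$ while simultaneously controlling the smooth dependence on the base parameter and the fiberwise-proper support condition, so that the resulting isomorphism is with the honest compactly supported cohomology of $F_b$ and is natural in $b$. Once this local computation is in place, the remaining bookkeeping (fineness, $\Gamma_{c}$-acyclicity, and the reassembly into the global statement) is formal.
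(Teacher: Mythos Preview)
Your argument is correct and considerably more detailed than what the paper actually provides. The paper's proof is essentially a citation: it observes that the statement is the compactly supported version of Theorem~2 in \cite{VanEst} (applied to the zero Lie algebroid), remarks that the first lines of that proof adapt to compact supports, and offers as an alternative that one can show the Poincar\'e pairing with respect to a measure on $B$ is non-degenerate and then deduce the lemma from the non-compactly-supported result in \cite{VanEst}.

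Your route---realising the complex as $\Gamma_c$ of a complex of fine sheaves on $B$ and identifying the cohomology sheaves stalkwise with $H^{\bullet}_c(F_b,\mathfrak{o}_{F_b})\otimes E_b\cong H_{r-\bullet}(F_b)\otimes E_b$---is exactly the kind of argument that lies behind the cited theorem, made explicit. The paper's alternative suggestion (duality with the non-compact case) is in fact the same idea as your fiberwise Poincar\'e duality step, just phrased globally rather than stalkwise. What your approach buys is self-containment; what the paper's buys is brevity by leaning on \cite{VanEst}. One minor wording point: $\pi^*\chi$ is constant (not merely locally constant) along fibers, which is why $d_\pi(\pi^*\chi)=0$ and the multiplication commutes with $d_\pi$; your conclusion is unaffected. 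Your honest flagging of the stalk computation as the delicate step is appropriate---the non-triviality there is precisely the failure of local triviality of a general submersion, and the \v{C}ech--de Rham assembly over a good cover of $F_b$ (with smooth base dependence) is the right way through.
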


\begin{proof}
This is just the version of compact supports of Theorem 2 from \cite{VanEst} applied to the zero Lie algebroid; the first few lines of the proof in {\it loc.cit.} adapt immediately to compact supports. Alternatively, one can first show that the Poincare pairing (with respect to any positive measure on $B$) is non-degenerate and obtain our lemma as a consequence of the result from \cite{VanEst}. 
\end{proof}
\end{example}

\subsection{The compactly supported $A$-DeRham complex via forms along $s$-fibers}\label{subsec-The compactly supported A-DeRham complex via forms along s-fibers}

The definition of algebroid cohomology often comes with the remark that the defining complex can be interpreted as the complex of right-invariant forms along the $s$-fibers of $\G$. This makes the complex $\Omega^{\bullet}(A)$ into a subcomplex of the DeRham complex associated to the foliation $\F(s):=\Ker(ds)$ induced by $s$ (fiberwise differential forms): 
\[ t^*: (\Omega^{\bullet}(A), d_A)\hookrightarrow  (\Omega^{\bullet}(\F(s)), d_s),\]
(Hence $d_s= d_{\F(s)}$ is the DeRham differential of $\F(s)$ viewed as a Lie algebroid or, equivalently, DeRham differential along the fibers of $s$).  Recalling that the right translations allow us to extend a section $\alpha$ of $A$ to a vector field $\overrightarrow{\alpha}$ on $\G$ (tangent to the $s$-fibers) and to identify $t^*A$ with $\F(s)$ (so that $t^*\alpha$ corresponds to $\overrightarrow{\alpha}$), the inclusion above identifies $\omega\in \Omega^{\bullet}(A)$ with the foliated form $\overrightarrow{\omega}$ defined by $\overrightarrow{\omega}(\overrightarrow{\alpha_1}, \ldots )= \omega(\alpha_1, \ldots )$; the foliated forms of type 
$\overrightarrow{\omega}$ are precisely the foliated forms that are right-invariant; i.e., $\omega\mapsto \overrightarrow{\omega}$ gives an isomorphism
\[ t^*: (\Omega^{\bullet}(A), d_A)\cong  (\Omega^{\bullet}(\F(s)), d_s)^{\G-\textrm{inv}}.\]
For the later discussions, it is worth keeping in mind the structure that allows us to talk about $\G$-invariance: one has a space $P= \G$ together with a vector bundle $E= \F(s)$ over $P$, and $\G$ acts on both from the right: on $P$ by right translations, while the action on $\F(s)= t^*A$ is the tautological one ($(t^*A)_{ag}= (t^{*}A)_{a}$). 

We would like to understand the compactly supported counterpart of the previous discussion; the main point is that, in the resulting dual picture, subcomplexes (like $\Omega^{\bullet}(A)$ sitting as a sub-complex of $(\Omega^{\bullet}(\F(s)), d_s)$) will turn into quotient complexes, the inclusions will turn to into integrations over fibers and invariants into coinvariants. Let us also allow as coefficients any representation $E$ of $A$. 
The relevant complex at the level of $\G$ is then 
\[ (\Omega^{\bullet}_{c}(\F(s), t^*E \otimes s^*\mathcal{D}_A), d_{s}),\]
where $\F(s)$ is the foliation induced by the source map $s$. Regarding the coefficients, we have mentioned above that pull-backs by $s$ are canonically representations of $\F(s)$; for $t^*E$, the $\F(s)$-action is the pull-back of the action of $A$ on $E$: $\nabla_{\overrightarrow{\alpha}}(t^*e)= t^* \nabla_{\alpha}(e)$ for $\alpha\in C^\infty(M,A)$ and $e\in C^\infty(M,E)$). Recalling the identifications $\F(s)= t^*A$, $s^*A= \F(t)$, we see that 
\[ \Omega^{\bullet}_{c}(\F(s), t^*E\otimes s^*\mathcal{D}_A)= C_{c}^{\infty}(\G, t^*(\Lambda^{\bullet}A^*\otimes E)\otimes \mathcal{D}_{\F(t)})\]
hence the integration along the $t$-fibers makes sense as a map
\begin{equation}\label{eq-lemma-coinv-DeRham} 
\int_{t}: \Omega^{\bullet}_{c}(\F(s), t^{*}E\otimes s^*\mathcal{D}_A)\rightarrow C_{c}^{\infty}(M, \Lambda^{\bullet}A^*\otimes E)= \Omega_{c}^{\bullet}(A, E).
\end{equation}
Note also that we are in the position of talking about the $\G$-coinvariants of $\Omega^{\bullet}_{c}(\F(s), t^{*}E)$, in the sense of Remark \ref{remark-coinvariants} where, as above, $\G$ acts on $\G$ from the right and $\F(s)$ is viewed as an  equivariant $\G$-bundle over $\G$.

\begin{lemma}\label{lemma-coinv-DeRham}  The map $\int_{t}$ is a surjective morphism of cochain complexes which descends to an isomorphism
\[  \Omega^{\bullet}_{c}(\F(s), t^{*}E\otimes s^*\mathcal{D}_A)_{\G-\mathrm{coinv}} \stackrel{\sim}{\longrightarrow} \Omega_{c}^{\bullet}(A, E).\]
\end{lemma}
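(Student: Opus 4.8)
The plan is to recognize $\int_t$, degreewise in the exterior degree, as the fibre-integration isomorphism attached to a principal $\G$-bundle, and then to upgrade this to an isomorphism of complexes by checking that $\int_t$ intertwines the two differentials. First I would rewrite the source complex using the identifications $\F(s)=t^*A$ and $s^*A=\F(t)$ (so that $s^*\mathcal{D}_A=\mathcal{D}_{\F(t)}$): for each fixed exterior degree $k$ one has
\[ \Omega^{k}_{c}(\F(s), t^*E\otimes s^*\mathcal{D}_A) = C^{\infty}_{c}(\G, t^*(\Lambda^{k}A^*\otimes E)\otimes \mathcal{D}_{\F(t)}), \]
so that $\int_t$ is exactly the integration over the $t$-fibres with values in $\Lambda^{k}A^*\otimes E$. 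Now $t\colon\G\to M$, with $\G$ acting on itself by right translations, is a principal $\G$-bundle whose orbit map is $t$ (the orbit of $g$ is $t^{-1}(t(g))$), and $t^*(\Lambda^{k}A^*\otimes E)$ is the corresponding equivariant bundle with its tautological action. Since the algebroid of the action groupoid $\G\rtimes\G$ is $s^*A=\F(t)$, its density bundle is precisely $\mathcal{D}_{\F(t)}$; hence $\Omega^{k}_{c}(\F(s),\dots)$ is exactly the space whose $\G$-coinvariants are computed in Corollary \ref{cor-coinv-princ}.

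With this in place, surjectivity and the coinvariant isomorphism are immediate, degree by degree. Applying Corollary \ref{cor-coinv-princ} (equivalently, Lemma \ref{conf1}/\ref{conf2} through the diffeomorphism $\G\rtimes\G\cong\G(t)$, $(p,g)\mapsto(p,pg)$) with base $M$ and coefficients $\Lambda^{k}A^*\otimes E$, the map $\int_t$ is surjective and its kernel is the image of $s_{!}-t_{!}$ for $\G\rtimes\G$, i.e. exactly the subspace one divides out to form the $\G$-coinvariants. Thus in each exterior degree $\int_t$ descends to an isomorphism $\Omega^{k}_{c}(\F(s),\dots)_{\G\text{-coinv}}\stackrel{\sim}{\to}\Omega^{k}_{c}(A,E)$.

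It then remains to check that $\int_t$ is a morphism of cochain complexes, $\int_t\circ d_s = d_A\circ\int_t$; granting this, the descended degreewise isomorphisms assemble into the asserted isomorphism of complexes, the differential descending to the coinvariants because $d_s$ is equivariant (right translation preserves the $s$-fibres). This compatibility is the compact-support, density-twisted dual of the already-used fact that $t^*\colon(\Omega^\bullet(A),d_A)\hookrightarrow(\Omega^\bullet(\F(s)),d_s)$ is a chain map. Concretely I would evaluate $d_s\omega$ on right-invariant fields $\overrightarrow{\alpha_0},\dots,\overrightarrow{\alpha_k}$ before integrating: the bracket terms match those of $d_A$ at once via $[\overrightarrow{\alpha},\overrightarrow{\beta}]=\overrightarrow{[\alpha,\beta]}$, while the connection terms reduce to the identity $\int_{t^{-1}(x)}\nabla_{\overrightarrow{\alpha}}\psi=\nabla_\alpha\big(\int_t\psi\big)$ for $\psi$ an $E$-valued density along the $t$-fibres.

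I expect this last twisted-Stokes identity to be the only real work, and the $\mathcal{D}_{\F(t)}$-twist is exactly what makes it hold: the flow of the right-invariant field $\overrightarrow{\alpha}$ is by (infinitesimal) left translations, which permute the $t$-fibres, so differentiating under the integral sign turns the Lie derivative of the fibrewise density into the covariant derivative $\nabla_\alpha$ defined by the infinitesimal $\G$-action, and the fibrewise boundary term vanishes because $\psi$ has compact support. Everything else is bookkeeping with the identifications above together with the appeal to Corollary \ref{cor-coinv-princ}.
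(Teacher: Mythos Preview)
Your proposal is correct and follows essentially the same route as the paper: first invoke Corollary~\ref{cor-coinv-princ} for the principal right $\G$-action on $\G$ (with quotient $t$) to get the degreewise isomorphism, then reduce the chain-map property via the Koszul formula and $[\overrightarrow{\alpha},\overrightarrow{\beta}]=\overrightarrow{[\alpha,\beta]}$ to the single identity $\int_t\nabla_{\overrightarrow{\alpha}}\psi=\nabla_{\alpha}\int_t\psi$, and finally establish this by the $t$-projectability of $\overrightarrow{\alpha}$ and invariance of fibre integration under flows. The one point the paper makes more explicit than you do is the identification of the tautological $\F(s)$-connection $\nabla_{\overrightarrow{\alpha}}$ on $\mathcal{D}_{\F(t)}$ with the usual Lie derivative $L_{\overrightarrow{\alpha}}$: this is argued via $[\overrightarrow{\alpha},\overleftarrow{\beta}]=0$, which shows that left-invariant sections $\overleftarrow{\beta}$ are flat for $L_{\overrightarrow{\alpha}}$ as well, forcing the two operators to agree; you implicitly assume this when you invoke ``differentiating under the integral sign,'' so it would strengthen your write-up to spell it out.
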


\begin{proof} The fact that $\int_{t}$ descends to a degreewise isomorphism follows from Corollary \ref{cor-coinv-princ} since the action of $\G$ on $\G$ is principal, with quotient map $t: \G\rightarrow M$. The main issue is the compatibility with the differentials. For that we use the Koszul formula for the differential $d_A$ (and the analogous formula for $d_s$): 
\begin{align*} 
\d_A\omega(\alpha_1, \ldots , \alpha_{k+1})=&\sum_{i}(-1)^{i+1} \nabla_{\alpha_i}(\omega(\alpha_1, \ldots , \widehat{\alpha_i}, \ldots , \alpha_{k+1}))+\\
& + \sum_{i< j} (-1)^{i+j}\omega([\alpha_i, \alpha_j], \ldots , \widehat{\alpha_i}, \ldots, \widehat{\alpha_j}, \ldots , \alpha_{k+1}),
\end{align*}
where $\nabla$ stands for the action of $A$ on $E$. For $\omega\in \Omega^{\bullet}_{c}(\F(s), t^{*}E\otimes s^*\mathcal{D}_A)$, denote $\overline{\omega}= \int_{t}\omega\in  \Omega_{c}^{\bullet}(A, E)$. Explicitly, for $\alpha_1, \ldots, \alpha_k\in C^{\infty}(M,A)$, $\omega(\overrightarrow{\alpha_1}, \ldots , \overrightarrow{\alpha_k})\in C^{\infty}_c(\G,t^*E\otimes \mathcal{D}_{\F(t)})$ and 
\[ \overline{\omega}(\alpha_1, \ldots , \alpha_k)= \int_{t} \omega(\overrightarrow{\alpha_1}, \ldots , \overrightarrow{\alpha_k}) \in C^{\infty}(M,E).\]
We have to show that $\overline{d_s\omega}= d_{A} \overline{\omega}$. Writing this out using the previous formulas, and using that $[\overrightarrow{\alpha}, \overrightarrow{\beta}]= \overrightarrow{[\alpha, \beta]}$, we see that the identities we have to check will follow provided we prove the commutativity of the following diagram:
\[ \xymatrix{
C^{\infty}_c(\G,t^*E\otimes \mathcal{D}_{\F(t)}) \ar[r]^-{\nabla_{\overrightarrow{\alpha}} }\ar[d]_-{\int_t}     &            C^{\infty}_c(\G, t^*E\otimes \mathcal{D}_{\F(t)} ) \ar[d]^-{\int_t} \\
                                C^{\infty}_c(M,E) \ar[r]_-{\nabla_{\alpha}}                                                                    &            C^{\infty}_c(M,E) }\]
Writing the elements in the right upper corner as $t^*e\otimes \rho$ with $e\in C^{\infty}(M,E)$, $\rho\in C^{\infty}_{c}(M,\mathcal{D}_{\F(t)})$ (which are mapped by $\int_{t}$ to $e\cdot \int_{t} \rho$) we see that one may assume that $E$ is the trivial representation. In this case the bottom $\nabla_{\alpha}$ becomes the Lie derivative along $\sharp\alpha$, the image of $\alpha$ by the anchor map of $A$. For the upper horizontal arrow, $\nabla_{\overrightarrow{\alpha}}$, recall that it is the canonical $\F(s)$-connection on $s^*A= \F(t)$, hence it is uniquely determined by the Leibniz identity and the fact that sections of type $s^*\beta\cong \overleftarrow{\beta}$ are flat (where $\overleftarrow{\beta}$ is the left-invariant vector field induced by $\beta$). On the other hand, since $[\overrightarrow{\alpha}, \overleftarrow{\beta}]= 0$ holds in general, we see that the usual Lie derivative $L_{\overrightarrow{\alpha}}$ (defined as variations along the flow $\phi_{\overrightarrow{\alpha}}^{\epsilon}$) has exactly the same properties; hence $\nabla_{\overrightarrow{\alpha}}= L_{\overrightarrow{\alpha}}$. Therefore, denoting $\overrightarrow{\alpha}= \tilde{V}$, $\sharp{\alpha}= V$, the new diagram becomes
\[ \xymatrix{
C^{\infty}_c(M,\mathcal{D}_{\F(t)}) \ar[r]^-{L_{\tilde{V}}} \ar[d]_-{\int_t}     &            C^{\infty}_c(M,\mathcal{D}_{\F(t)} ) \ar[d]^-{\int_t} \\
                                C_{c}^{\infty}(M) \ar[r]_-{L_{V}}                                                                    &            C_{c}^{\infty}(M) }.\]
This diagram is commutative for any submersion $t: \G\rightarrow M$ between two manifolds and any vector field $\tilde{V}$ on $\G$ that is $t$-projectable to $V$. 
To see this, we interpret it as the infinitesimal counterpart of the diagram involving the flows:
\[ \xymatrix{
C^{\infty}_c(M,\mathcal{D}_{\F(t)}) \ar[r]^-{     \phi_{\tilde{V}}^{\epsilon}} \ar[d]_-{\int_t}     &            C^{\infty}_c(M,\mathcal{D}_{\F(t)} ) \ar[d]^-{\int_t} \\
                                C_{c}^{\infty}(M) \ar[r]_-{\phi_{V}^{\epsilon}}                                                                   &            C_{c}^{\infty}(M) }.\]
The commutativity of the last diagram follows immediately from the fact that the flow of $\tilde{V}$ covers the flow of $V$ and from the invariance of integration of densities. 
\end{proof}

\subsection{The missing proofs}
\label{The missing proofs}
We now fill in the missing proofs. 

\begin{proof} (of Proposition \ref{pre-VE-iso-deg-0})
We have to show that 
\[ \textrm{Im}(d_A: \Omega^{r-1}_{c}(A, \mathfrak{o}_A)\rightarrow \Omega^{r}_{c}(A, \mathfrak{o}_A)) \subset \textrm{Im}(s_{!}- t_{!}).\]
Let $u= d_{A}(v)$ in the left hand side; to show that it is on the right hand side, we will use the following diagram
\begin{small}
\[\xymatrix{
C_{c}^{\infty}(M, \mathcal{D})                                   & C_{c}^{\infty}(\G, \mathcal{D}^2)= \Omega_{c}^{r}(\F(s), \mathfrak{o}_{\F(s)}\otimes s^*\mathcal{D}) \ar[d]_-{t_!}\ar[l]_-{s_!} &  \Omega_{c}^{r-1}(\F(s), \mathfrak{o}_{\F(s)}\otimes s^*\mathcal{D}) \ar[d]_-{t_!}\ar[l]_-{d_s}  \\
                                                                                    & \Omega_{c}^{r}(A, \mathfrak{o}_A)                                                                                                                                                         & \Omega_{c}^{r-1}(A, \mathfrak{o}_A)\ar[l]_-{d_A} 
 }\]
\end{small}

In this diagram, the first horizontal sequence is of the type considered in Lemma \ref{conf3} (hence the composition of the two maps is zero, and the sequence is even exact if the $s$-fibers are connected). The 
vertical maps are the ones of type (\ref{eq-lemma-coinv-DeRham}) (for $E= \mathfrak{o}_A$), they are surjective and the square is commutative by Lemma \ref{lemma-coinv-DeRham}. We can now look at $v$ as an element sitting in the bottom right corner and write it as $\int_{t}\xi$ for some $\xi$ in the upper right corner. Consider then $w= d_s(\xi)$. It follows that $u= t_{!}(w)$ and $s_{!}(w)= 0$. Hence $u$ is in the image of $s_!- t_!$.
\end{proof}

\begin{proof} (of Proposition \ref{VE-iso-deg-0})
Here we have to prove the reverse inclusion. So, let $u\in \textrm{Im}(s_{!}- t_{!})$. Since $\G$ is $s$-connected, it is clear that the previous argument can be reversed to conclude that $u\in \textrm{Im}(d_A)$, provided we can show that we can write $u= t_!w$ for some $w$ that is killed by $s_!$. For that we work on another diagram:
\[
\xymatrix{
C_{c}^{\infty}(\G, \mathcal{D}^2) \ar[d]_-{\delta}   & C_{c}^{\infty}(\G_2, \mathcal{D}^3) \ar[d]_-{\delta'}\ar[l]_-{\delta_{0\, !}}  & \\
C_{c}^{\infty}(M, \mathcal{D})                                   & C_{c}^{\infty}(\G, \mathcal{D}^2)= \ar[d]_-{t_!}\ar[l]_-{s_!} & \hspace{-0.9cm} C^{\infty}_c(\G, t^* \mathcal{D}_A\otimes s^*\mathcal{D}_A)\\
                                                                                    & C^{\infty}_c(M, \mathcal{D}_A) &}
\]
where $\delta= s_{!}- t_{!}$, $\delta'= \delta_{1\, !}- \delta_{2\, !}$ and $\delta_i: \G_2\rightarrow \G$ are given by (cf. Subsection \ref{Differentiable cohomology with compact supports}) 
\[ \delta_0(g, h)= h,\ \delta_1(g, h)= gh, \ \delta_2(g, h)= g.\]

The commutativity of the diagram follows from the functoriality of the fiber integration and the obvious identities $s\circ \delta_0= s\circ \delta_1$, $t\circ \delta_0= s\circ \delta_2$. Lemma \ref{conf2} applied to $E= \mathcal{D}_A$ implies that the left vertical sequence is exact.
Look now at $u$ sitting in the lowest left corner; the hypothesis is that $u= \delta(v)$ for some $v$. Write $v= \delta^{0}_{!}(\xi)$ for some $\xi\in C_{c}^{\infty}(\G_2, \mathcal{D}^3)$ and consider $w'= \delta'(\xi)$, Then $u= s_!(w')$ and also $t_!(w')= 0$ since $t_!\circ \delta'= 0$. Of course, using the inversion $\iota$ of the groupoid, $w= \iota^*(w')$ will have the desired properties.
\end{proof}

\begin{proof} (of the Van Est isomorphism - Theorem \ref{thm-Van-Est})
For notational simplicity we assume that $E$ is the trivial representation. As in \cite{VanEst}, we use an augmented double complex 
argument; it is obtained by extending the last two diagrams:
\begin{tiny}
\[
\xymatrix{
 \ldots \ar[d]_-{\delta}&  \ldots \ar[d]_-{\delta} & \ldots \ar[d]_-{\delta} & \ldots \\
C_{c}^{\infty}(\G_2, \mathcal{D}^3) \ar[d]_-{\delta}& C_{c}^{\infty}(\G_3, \mathcal{D}^4)\ar[d]_-{\delta}\ar[l]_-{\delta_{0\, !}}                                                                                                               & C_{1, 2}  \ar[l]_-{d_{\delta_0}}\ar[d]_-{\delta} & \ldots \ar[l]_-{d_{\delta_0}}
 \\
C_{c}^{\infty}(\G, \mathcal{D}^2) \ar[d]_-{\delta}   & C_{c}^{\infty}(\G_2, \mathcal{D}^3)\ar[d]_-{\delta}\ar[l]_-{\delta_{0\, !}}                                                                                                             & C_{1, 1} \ar[l]_-{d_{\delta_0}} \ar[d]_-{\delta} & \ldots  \ar[l]_-{d_{\delta_0}}
 \\
C_{c}^{\infty}(M, \mathcal{D})                                   & C_{c}^{\infty}(\G, \mathcal{D}^2)= \Omega_{c}^{r}(\F(s), \mathfrak{o}_{\F(s)}\otimes s^*\mathcal{D}) \ar[d]_-{t_!}\ar[l]_-{\delta_{0\, !}= s_!} &  \Omega_{c}^{r-1}(\F(s), \mathfrak{o}_{\F(s)}\otimes s^*\mathcal{D}) \ar[d]_-{t_!}\ar[l]_-{d_s} & \ldots \ar[l]_-{d_s} \\
                                                                                    & \Omega_{c}^{r}(A, \mathfrak{o}_A)                                                                                                                                                         & \Omega_{c}^{r-1}(A, \mathfrak{o}_A)\ar[l]_-{d_A} &  \ldots \ar[l]_-{d_A}
}
\]
\end{tiny}

The first vertical row and the bottom horizontal one compute the cohomologies from the statement, and they are the augmentations of the rows / columns of the actual double complex $C_{\bullet, \bullet}$. Explicitly, the double complex is defined as:
\[ C_{i, k}= C_{c}^{\infty}(\G_{k+1}, p_{0}^{*}(\Lambda^{r-i} A^*\otimes \mathfrak{o}_A) \otimes p_{1}^{*}\mathcal{D}_A\otimes \ldots \otimes p_{k+1}^{*} \mathcal{D}_A).\]
For the differentials, let us look at the columns and the rows separately. Start with the $i^{th}$ column, for a fixed $i$. We will describe it as the complex computing the compactly supported cohomology of a groupoid $\tilde{\G}$ with coefficients in a representation that depends on $i$. For that, we consider the action of $\G$ on itself from the right, which makes $\G$ into a (right) principal $\G$-bundle over $M$ with 
projection $t: \G\rightarrow M$,
\[
\xymatrix{
  & \G\ar[dl]^-{t}\ar[drr]_-{s} & \ar@(dr, ur)   & \G \ar@<0.25pc>[d] \ar@<-0.25pc>[d]\\
M  & & & M}
\]
Let $\tilde{\G}$  be the resulting action groupoid: $\tilde{\G}= \G_2$ viewed as a groupoid over $\G$ with 
\[ (g, h)\stackrel{s= \delta_1}{\longrightarrow} gh, \quad (g, h)\stackrel{t= \delta_2}{\longrightarrow} g, \quad  (g, h)\cdot (gh, k)= (g, hk).\]
The algebroid $\tilde{A}$ of $\tilde{\G}$ is $\tilde{A}= \mathcal{F}(t)\cong s^*A $. 
The representations of $\tilde{\G}$ that are relevant here are tautological, i.e., pull-backs along the principal bundle projection $t$; more precisely, they are:
\[ E_i= t^*(\Lambda^{r-i} A^*\otimes \mathfrak{o}_A).\]
It is now straightforward to check that, as vector spaces,
\[ C_{i, k}= C_c^k(\tilde{\G}, E_i).\]
This defines the augmented $i^{th}$ column which, by Corollary \ref{cor-coinv-princ}, is exact. 

We now describe the $k^{th}$ row. For that, we first remark that $\delta_{0}: \G_{k+1}\rightarrow \G_k$ is a left principal $\G$-bundle over $M$, with the action defined along $p_0$:
\[
\xymatrix{
 \G \ar@<0.25pc>[d] \ar@<-0.25pc>[d]  & \ar@(dl, ul) & \G_{k+1}\ar[dll]^-{p_0}\ar[dr]_-{\delta_0} &  \\
M & & &  \G_k}
\]
This gives an identification of $p_{0}^{*}(A)$ with the foliation $\cF(\delta_0)$, so that 
\begin{small}\[ C_{i, k}=
\Omega^{r-i}_{c}(\cF(\delta_0), \mathfrak{o}_{\cF(\delta_0)} \otimes p_{1}^{*}\mathcal{D}_A\otimes \ldots \otimes p_{k+1}^{*} \mathcal{D}_A)= \Omega^{r-i}_{c}(\cF(\delta_0), \mathfrak{o}_{\cF(\delta_0)} \otimes (\delta_0)^*\mathcal{D}_{k+1}).\]
\end{small}
We see that we deal with spaces of the type that appear in Lemma \ref{conf3} - and this interpretation describes the horizontal differentials $d_{\delta_0}$ so that (by the lemma) the augmented rows have vanishing cohomology in the relevant degrees. 

Of course, one still has to check the compatibility of the (augmented) horizontal and the vertical differentials; in low degree, this is contained in the previous two proofs; in arbitrary degrees it is tedious but straightforward (and rather standard- see e.g. \cite{VanEst}); the trickiest part is for the squares in the bottom (involving $t_{!}$), but that was dealt with in Lemma \ref{lemma-coinv-DeRham}.
\end{proof}

\section{Appendix: Haar systems and cut-off functions, revisited}\label{Appendix}

The notion of Haar systems on groupoids, extending the notion of the Haar measures on Lie groups, is well-known \cite{renault}. In some sense, Haar systems are orthogonal to the transverse measures of this paper. They
are used in the paper but are of independent interest. Therefore, we collect some of the basics on Haar systems in this Appendix.  This will also give us the opportunity of clarifying a few points that are perhaps not so well-known.

\subsection{Haar systems}\label{subsec-app-Haar systems} 

Throughout this appendix $\G$ is a Lie groupoid over a manifold $M$. To talk about right-invariance, one first remarks that the right multiplication by an arrow $g: x\rightarrow y$ of $\G$ is no longer defined on the entire $\G$ (as for groups) but only between the $s$-fibers:
\begin{equation}\label{Rg} 
R_{g}: s^{-1}(y)\rightarrow s^{-1}(x).
\end{equation}
Hence, roughly speaking, right-invariance makes sense only for families of objects that live on the $s$-fibers of $\G$.

\begin{definition} \label{def-Haar-system}
A \textbf{smooth Haar system} on a Lie groupoid $\G$ is a family 
\[ \mu= \{\mu^x\}_{x\in M}\] 
of non-zero measures $\mu^{x}$ on $s^{-1}(x)$ which is right-invariant and smooth, i.e.: 
\begin{enumerate}
\item[1.] via any right-translation (\ref{Rg}) by an element $g: x\rightarrow y$, $\mu^x$ is pulled-back to $\mu^y$; or, in the integral notation (Remark 
\ref{notation-integrals-0}),
\[ \int_{s^{-1}(y)} f(hg)\  d\mu^{y}(h)= \int_{s^{-1}(x)} f(h)\ d\mu^x(h).\]
\item[2.] for any $f\in C_{c}^{\infty}(\G)$, the function obtained by integration over the $s$-fibers, 
\[ M\ni x\mapsto  \mu^x(f|_{s^{-1}(x)})= \int_{s^{-1}(x)} f(h)\ d\mu^{x}(h)\in \mathbb{R}\]
is smooth.
\end{enumerate}
The Haar system is called \textbf{full} if the support each $\mu^x$ is the entire $s^{-1}(x)$. 
\end{definition}

In general, the support of Haar system  $\mu$ is defined as
\[ \textrm{supp}(\mu)= \cup_{x\in M} \textrm{supp}(\mu^x) \subset \G.\]
Due to the invariance of $\mu$, $\textrm{supp}(\mu)$ is right $\G$-invariant, i.e., $ag\in \textrm{supp}(\mu)$ for all $a\in \textrm{supp}(\mu)$ and $g\in \G$ composable. Therefore $\textrm{supp}(\mu)$ is made of $t$-fibers, hence it is determined by its $t$-projection, which is denoted
\[ \textrm{supp}_{M}(\mu):= t(\textrm{supp}(\mu)).\]
More precisely, we have:
\begin{equation}
\label{eq-supports} 
\textrm{supp}(\mu)= t^{-1}(\textrm{supp}_{M}(\mu)).
\end{equation}

\begin{remark} In the existing literature one often restricts to what we call here full Haar systems. However, some important constructions (e.g. already for the averaging process for proper groupoids) are based on systems that are not full. For us, the fullness is replaced by the condition that each $\mu_{x}$ is non-trivial. In turn, our condition 
has consequences on the supports, expressed in one of the following equivalent ways
\begin{itemize}
\item $s|_{\textrm{supp}(\mu)}: \textrm{supp}(\mu)\rightarrow M$ is surjective.
\item the $\G$-saturation of $\textrm{supp}_{M}(\mu)$ is the entire $M$.
\end{itemize}
\end{remark}


\subsection{Geometric Haar systems (Haar densities)}\label{subsec-Geometric Haar systems-Appendix}

We now pass to Haar systems that are of geometric type, i.e., for which each $\mu^x$ comes from a density on $s^{-1}(x)$; this will bring us to sections
\[ \rho\in C^\infty(M,\mathcal{D}_A)\]
of the density bundle associated to the Lie algebroid $A$ of $\G$. To see this, recall the construction of $A$: it is the vector bundle over $M$ whose fiber above a point $x\in M$ is the tangent space at the unit $1_x$ of the $s$-fiber above $x$ : 
\[ A_x= T_{1_x} s^{-1}(x) .\] 
(Globally, $A$ is the restriction to $M$, via the unit map $M\hookrightarrow \G$, of the bundle $T^s\G= \textrm{Ker}(ds)$ of vector tangent to the $s$-fibers, also denoted by $\cF(s)$). With this, the right translation (\ref{Rg}) associated to an arrow $g: x\rightarrow y$ induces, after differentiation at the unit at $y$, an isomorphism:
\[ R_g: A_y\rightarrow T_g (s^{-1}(x))=T^s_g\G.\]
In this way any section $\alpha\in \Gamma(A)$ gives rise to a vector field $\overrightarrow{\alpha}$ on $\G$ and this identifies $\Gamma(A)$ with the space of vector fields on $\G$ that are tangent to the $s$-fibers and invariant under right translations; the Lie algebroid bracket $[\cdot, \cdot]$ on $\Gamma(A)$ is induced by the usual Lie bracket of vector fields on $\G$: $\overrightarrow{[\alpha, \beta]}= [\overrightarrow{\alpha}, \overrightarrow{\beta}]$). 

Stated in the spirit of the previous definition, this reinterprets elements $\alpha \in \Gamma(A)$  as families of vector fields on the $s$-fibers of $\G$, that are right-invariant and smooth. It is clear that the same reasoning applies to sections $\rho\in \Gamma(\cD_A)= C^\infty(M,\mathcal{D}_A)$, so that such sections can be interpreted as families of densities on the $s$-fibers of $\G$, that are right-invariant and smooth. Explicitly, right translating $\rho$ we obtain a (right-invariant) density on the vector bundle $\cF(s)$, 
\[ \overrightarrow{\rho}\in C^\infty(\G, \mathcal{D}_{\cF(s)}),\]
hence a family of densities 
\begin{equation}\label{dens-ind-s-fiber} 
\rho^{x}:= \overrightarrow{\rho}|_{s^{-1}(x)}\in \mathcal{D}(s^{-1}(x))\ \ \ (x\in M).
\end{equation}
Note that, for the usual notion of support of sections of vector bundles, we have the analogue of (\ref{eq-supports}):
\[ \textrm{supp}(\overrightarrow{\rho})= t^{-1}(\textrm{supp}(\rho)).\]
Of course, the two are related.

\begin{definition}\label{def-Haar-density} A \textbf{Haar density} for $\G$ is any positive density $\rho\in C^\infty(M,\mathcal{D}_A)$ of the Lie algebroid $A$ of $\G$ with the property that the $\G$-saturation of its support is the entire $M$.
It is called a full Haar density if it is strictly positive. 
\end{definition}

The previous discussion shows that these correspond to (full) Haar systems of geometric type, i.e., for which each of the measures is induced by a density. For $\rho$ such a Haar density, we will denote by
\[ \mu_{\rho}= \{ \mu_{\rho^x}\}_{x\in M}\]
the corresponding Haar system (given by (\ref{dens-ind-s-fiber})).  

\begin{remark} Sections $\rho\in C^\infty(M, \mathcal{D}_{A})$ can be multiplied by smooth functions $f\in C^{\infty}(M)$. At the level of measures we find
\[ \mu_{f\rho}(g)= f(t(g)) \mu_{\rho}(g).\]
Hence we are led to the operation of multiplying a Haar system $\mu$ by a positive function $f\in C^{\infty}(M)$ which, on $\G$, corresponds to the standard multiplication by $t^*(f)$. 
Note that 
\[ \textrm{supp}_M(f\cdot \mu) \subset \textrm{supp}(f)\]
hence, even if $\mu$ is full, for $f\cdot \mu$ to be a Haar system one still has to require that the $\G$-saturation of the support of $f$ is $M$.
\end{remark}

\subsection{Proper Haar systems; proper groupoids}\label{subsec-Proper Haar Systems-Appendix}

One of the standard uses of the Haar measure of compact Lie groups is to produce $G$-invariant objects out of arbitrary ones, by averaging. On the other hand it is well-known that the correct generalization of compactness when going from Lie groups to groupoids is properness. Recall here that a Lie groupoid $\G$ over $M$ is called proper if the map 
\[ (s, t): \G\rightarrow M\times M\]
is proper i.e., for every $K, L\subset M$ compacts, the subspace $\G(K, L)\subset \G$ of arrows that start in $K$ and end in $L$ is compact. The point is that averaging arguments do work well for proper groupoids. However, there is a small subtlety due to the fact that we would have to integrate over the $s$-fibers which may fail to be compact even if $\G$ is proper. For that reason, we need Haar systems $\mu= \{\mu^x\}$ in which each $\mu^x$ is compactly supported (see Lemma \ref{cpctly-cupp-measures}). 

\begin{definition} \label{proper-dens} A Haar system on a Lie groupoid $\G$ is said to be \textbf{proper} if 
\[ s|_{\textrm{supp}(\mu)}: \textrm{supp}(\mu)\rightarrow M\] 
is  proper. A Haar density $\rho$ is said to be proper if $\mu_{\rho}$ is proper, i.e., if the restriction of $s$ to $t^{-1}(\textrm{supp}(\rho))$ is proper.
\end{definition}

In this case one can talk about the volume of the $s$-fibers,
\[ \textrm{Vol}(s^{-1}(x), \mu^x)= \int_{s^{-1}(x)}\ d\mu^x \]
and this defines a smooth function on $M$ which is strictly positive; hence, rescaling $\mu$ by it (in the sense of the previous remark), we obtain a new proper Haar system satisfying the extra condition
\[ \textrm{Vol}(s^{-1}(x), \mu^x)= 1.\]
Such proper Haar systems are called normalized. Similarly for Haar densities. 

Note that full Haar systems cannot be proper even for proper non-compact groupoids. Also, while full Haar systems exist on all Lie groupoids, proper ones do not. Actually, we have:

\begin{proposition} \label{equiv-proper} For a Lie groupoid $\G$, the following are equivalent:
\begin{enumerate}
\item[1.] $\G$ is proper.
\item[2.] $\G$ admits a proper Haar system.
\item[3.] $\G$ admits a proper Haar density.
\end{enumerate}
In particular, if $\rho\in C^\infty(M, \mathcal{D}_A)$ is a full Haar density then there exists a function $c\in C^{\infty}(M)$ such that $c\cdot \rho$ is a proper Haar density which, moreover, may be arranged to be normalized. Such $c$ is called a \textbf{cut-off function for $\rho$} (cf. \cite{Tu}).
\end{proposition}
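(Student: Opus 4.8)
The plan is to establish the cycle $(1)\Rightarrow(3)\Rightarrow(2)\Rightarrow(1)$; the concluding statement about cut-off functions will then be read off from the construction proving $(1)\Rightarrow(3)$. The implication $(3)\Rightarrow(2)$ is immediate, since a Haar density is by definition a Haar system of geometric type (Definitions \ref{def-Haar-density} and \ref{def-Haar-system}) and the word \emph{proper} has the same meaning for both (Definition \ref{proper-dens}).

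For $(2)\Rightarrow(1)$ I would show directly that $(s,t)\colon\G\to M\times M$ is proper. Put $S=\mathrm{supp}(\mu)$; by hypothesis $s|_S$ is proper, by the non-triviality of the $\mu^x$ it is surjective, and by \eqref{eq-supports} $S=t^{-1}(\mathrm{supp}_M(\mu))$ is a union of whole $t$-fibres. Fix compact sets $K,L\subset M$ and let $g\colon x\to y$ be an arrow with $x\in K$, $y\in L$. Surjectivity gives $a\in S$ with $s(a)=y$; then $ag\in S$ (as $S$ is a union of $t$-fibres) and $s(ag)=x\in K$, so $a\in S\cap s^{-1}(L)$ and $ag\in S\cap s^{-1}(K)$, two compact sets by properness of $s|_S$. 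Since $g=a^{-1}(ag)$ is the image of $(a,ag)$ under the continuous map $(\alpha,\beta)\mapsto\alpha^{-1}\beta$ on the compact fibred product $\{(\alpha,\beta):t(\alpha)=t(\beta)\}$, the set $\G(K,L)$ lies in a compact set; being closed, it is compact, whence $\G$ is proper.

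The core is $(1)\Rightarrow(3)$. I would start from any full Haar density $\rho$ (these exist because $\mathcal{D}_A$ is trivializable and hence has strictly positive sections) with associated full system $\mu=\mu_\rho$, and reduce the construction of a cut-off function to finding a smooth $\psi\geq 0$ on $M$ such that (a) $\pi|_{\mathrm{supp}(\psi)}$ is proper, where $\pi\colon M\to B=M/\G$ is the open quotient onto the locally compact Hausdorff orbit space, and (b) $\{\psi>0\}$ meets every orbit. Given such $\psi$, properness of $\G$ and (a) force $s|_{t^{-1}(\mathrm{supp}\psi)}$ to be proper: the targets of arrows in $s^{-1}(K)\cap t^{-1}(\mathrm{supp}\psi)$ lie in $\mathrm{supp}(\psi)\cap s(t^{-1}(K))=(\pi|_{\mathrm{supp}\psi})^{-1}(\pi(K))$, which is compact, so that set is $(s,t)^{-1}$ of a product of compacts. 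Averaging, $\bar\psi(x)=\int_{s^{-1}(x)}\psi(t(g))\,d\mu^x(g)$ is finite and smooth, strictly positive by (b), and invariant (right-invariance of $\mu$ together with $t(hg)=t(h)$); since $\bar\psi(t(g))=\bar\psi(s(g))$ along every arrow, the rescaled $c:=\psi/\bar\psi$ obeys $\int_{s^{-1}(x)}c(t(g))\,d\mu^x(g)=1$, and $c\cdot\rho$ is the sought proper normalized Haar density.

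Finally I would build $\psi$ from a locally finite cover $\{O_j\}$ of $B$ by relatively compact opens together with a shrinking $\{O_j'\}$ with $\overline{O_j'}\subset O_j$ still covering $B$. Using that $\pi$ is open I choose, for each $j$, a compact $K_j\subset\pi^{-1}(O_j)$ with $\pi(K_j)\supseteq\overline{O_j'}$, and a bump function $\psi_j\in C_c^\infty(M)$ with $\psi_j\geq 0$, $\psi_j>0$ on $K_j$ and $\mathrm{supp}(\psi_j)\subset\pi^{-1}(O_j)$. As $\{\pi^{-1}(O_j)\}$ is locally finite in $M$, the sum $\psi=\sum_j\psi_j$ is smooth; $\{\psi>0\}\supseteq\bigcup_jK_j$ projects onto $B$, giving (b), while $\mathrm{supp}(\psi_j)\subset\pi^{-1}(O_j)$ and local finiteness of $\{O_j\}$ in $B$ give (a). I expect this last step to be the main obstacle: one must make $\mathrm{supp}(\psi)$ hit every orbit and yet project \emph{properly} to $B$, and since the orbits of a proper groupoid can be non-compact one cannot simply take $\mathrm{supp}(\psi)$ compact; it is the interplay between the locally finite cover of $B$ and the openness and properness of $\pi$ that reconciles the two demands.
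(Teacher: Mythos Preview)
Your argument is correct; the implication $(2)\Rightarrow(1)$ is essentially identical to the paper's. The substantive difference lies in $(1)\Rightarrow(3)$. The paper builds the support of the cut-off function out of \emph{slices}: it invokes the slice theorem for proper groupoids to produce a locally finite family $\{\Sigma_i'\}$ of transversals whose images cover $B$, shrinks to $\{\Sigma_i\}$, takes $T=\bigcup_i\overline{\Sigma_i}$, and then manufactures $c$ as a sum of bump functions adapted to these slices. You bypass slices entirely, working only with the facts that $B$ is locally compact Hausdorff and $\pi$ is open: a shrinking of a locally finite relatively compact open cover of $B$, compact lifts $K_j\subset\pi^{-1}(O_j)$, and compactly supported bumps $\psi_j$ do the job. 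Your route is more elementary---it does not appeal to the linearization/slice theorem---and it packages the normalization step transparently via the averaging $c=\psi/\bar\psi$; the paper's route, on the other hand, makes the geometric content of $\mathrm{supp}_M(c\cdot\rho)$ visible as (the closure of) a complete transversal. One small slip: in your closing sentence you refer to ``the openness and properness of $\pi$'', but $\pi$ is not proper (orbits need not be compact); what you actually use, and correctly, is openness of $\pi$ together with properness of $\G$ to deduce properness of $\pi|_{\mathrm{supp}(\psi)}$ and of $s|_{t^{-1}(\mathrm{supp}\psi)}$.
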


\begin{proof} The fact that $\G$ is proper if a proper Haar system $\mu$ exists will follow from the properties of $\textrm{supp}(\mu)$ (right $\G$-invariance plus the fact that the restriction of $s$ to $\textrm{supp}(\mu)$ is proper and surjective) or, equivalently, those of 
$T:= \textrm{supp}_{M}(\mu)$:
\begin{itemize}
\item the $\G$-saturation of $T$ is the entire $M$ 
\item $\G(K, T)$ (arrows that start in $K$ and end in $T$) is compact if $K$ is.
\end{itemize}
Indeed, the first property implies that for any $K, L\subset M$ one has
\[ \G(K, L)\subset \G(K, T)^{-1} \cdot \G(L, T) \]
(for $g$ in the left hand side, choose any $a$ from $t(g)$ to an element in $T$ and write $g= a^{-1}\cdot (ag)$); therefore, if 
if $K$ and $L$ are compacts, then so will be $\G(K, L)$ (as closed inside a compact). 

Recall that a full Haar density $\rho$ always exists and if we multiply it by a positive function $c$ then the $M$-support of the resulting Haar system coincides with $\textrm{supp}(c\rho)= \textrm{supp}(c)$. Therefore, to close the proof, it suffices to show that if $\G$ is proper then one finds a smooth function $c$ such that its support $T:= \textrm{supp}(c)$ has 
the properties mentioned above. We first construct $T$. Recall that a slice through $x\in M$ is any $T\subset M$ that intersects transversally all the orbits that it meets, intersects the orbit through $x$ only at $x$ and its dimension equals the codimension of that orbit. We construct $T$ as a union of such slices (enough, but not too many). For that we need the basic properties of slices for proper groupoids (cf. e.g. \cite{ivan}): through each point of $M$ one can find a slice and, for any slice $\Sigma$, its saturation $\G \Sigma$ is open in $M$ (in particular, $\pi(\Sigma)$ is open in $M/\G$). Since 
$M/\G$ is paracompact, we find a family $\{\Sigma_{i}^{'}\}_{i\in I}$ of locally compact slices such that $V_{i}^{'}:= \pi(\Sigma_{i}^{'})$ defines a locally finite cover of $M/\G$. As usual, we refine this cover to a new cover $\{V_i\}_{i\in I}$ with $\overline{V}_i\subset V_{i}^{'}$ and write $V_i= \pi(\Sigma_i)$ with $\Sigma_i= \pi^{-1}(V_i)\cap \Sigma_{i}^{'}$. Note that 
\[ \pi(\overline{\Sigma}_i)\subset \overline{\pi(\Sigma_i})= \overline{V}_i\subset V_{i}^{'}= \pi(\Sigma_{i}^{'})\]
hence $\overline{\Sigma}_i$ is contained in the saturation of $\Sigma_{i}^{'}$ (but may fail to be contained in $\Sigma_{i}^{'}$!). 

We claim that $T:= \cup_{i} \overline{\Sigma}_i$ will have the desired properties; the saturation property is clear (even before taking closures) since the $V_i$'s cover $M/\G$. Next, 
the compactness of $\G(K, T)$ when $K$ is compact: if $B(K, \overline{\Sigma}_{i})$ is non-empty, then 
$\pi(K)$ must intersect $\pi(\overline{\Sigma}_{i})$ hence also $V_{i}^{'}$; but, since $\pi(K)$ is compact, it can intersects only a finite number of $V_{i}^{'}$; therefore we find indices $i_1, \ldots , i_k$ such that 
\[ B(K, T)= B(K, \overline{\Sigma}_{i_1}) \cup \ldots \cup B(K, \overline{\Sigma}_{i_1}),\]
hence $B(K, T)$ is compact. Note that the proof shows a bit more: instead of $\Sigma_i$ we can use any $C_i$ relatively compact, with
\[ \Sigma_i\subset C_i\subset \overline{C}_i\subset \G \Sigma^{'}_{i},\]
and then $T= \cup_{i} \overline{C}_i$ still has the desired properties. This is important for constructing $c$ since not every closed subspace can be realized as the support of a smooth function. 
Since each $\overline{\Sigma}_i$ is compact and sits inside the open $\G \Sigma_{i}^{'}$, we find $c_i: M\rightarrow [0, 1]$ smooth, supported in $\G \Sigma_{i}^{'}$ with $c_{i}> 0$ on $\overline{\Sigma}_i$. Hence, in the previous construction we can set $C_i= \{c_i> 0\}$ to define $T$. 
By construction, $\pi(\textrm{supp}(c_i))$ sits inside $V^{'}_i$, hence $\{c_i\}_{i\in I}$ is locally finite and $c= \sum_{i} c_i$ makes sense as a smooth function. Moreover, $\{c\neq 0\}= \cup_{i} \{c_i\neq 0\}$ is a locally finite union, hence
\[ \textrm{supp}(c)= \cup_{i} \textrm{supp}(c_i)= T. \]\end{proof}

Let us also illustrate the averaging technique with one simple example. 

\begin{lemma}\label{illustration-average}
If $\G$ is a proper Lie groupoid over $M$ and $A, B\subset M$ are two closed disjoint subsets that are saturated, then there exists a smooth function $f: M\rightarrow [0, 1]$ that $\G$-invariant (i.e.,  constant on the orbits of $\G$) such that $f|_{A}= 0$, $f|_{B}= 1$.
\end{lemma}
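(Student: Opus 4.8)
The plan is to produce $f$ by averaging a non-invariant separating function over the orbits, exactly in the spirit of the averaging used in Proposition \ref{existence-tr-dens}. First I would forget the groupoid structure entirely and use only that $M$ is a manifold, hence a normal space carrying smooth partitions of unity: since $A$ and $B$ are closed and disjoint, the smooth Urysohn lemma yields a function $h\in C^{\infty}(M)$ with $0\le h\le 1$, $h|_{A}=0$ and $h|_{B}=1$. This $h$ is in no way $\G$-invariant, and the whole content of the lemma is to upgrade it to an invariant function without destroying the boundary values; averaging over the orbits is the natural way to do this.

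To average I would invoke Proposition \ref{equiv-proper}: properness of $\G$ guarantees the existence of a proper, normalized Haar density $\rho$, with associated Haar system $\mu_{\rho}=\{\mu^{x}\}_{x\in M}$, each $\mu^{x}$ a \emph{probability} measure compactly supported on $s^{-1}(x)$. I then set
\[ f(x):=\int_{s^{-1}(x)} h(t(g))\ d\mu^{x}(g). \]
Note that the integrand sweeps $h$ over the whole orbit, since $t(g)$ ranges over $\mathcal{O}_{x}$ as $g$ ranges over $s^{-1}(x)$. The key point is invariance, and it uses only the right-invariance condition~1 of Definition \ref{def-Haar-system}: for an arrow $a\colon x\to y$, applying that identity to the function $g\mapsto h(t(g))$ and using $t(ga)=t(g)$ gives
\[ f(y)=\int_{s^{-1}(y)}h(t(g))\ d\mu^{y}(g)=\int_{s^{-1}(x)}h(t(g))\ d\mu^{x}(g)=f(x), \]
so $f$ is constant on orbits, i.e.\ $\G$-invariant.

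It then remains to verify the three asserted properties. Since $0\le h\le 1$ and each $\mu^{x}$ is a probability measure, we get $0\le f\le 1$. For $x\in A$, saturation of $A$ forces $t(g)\in\mathcal{O}_{x}\subset A$ for every $g\in s^{-1}(x)$, hence $h(t(g))=0$ and $f(x)=0$; symmetrically, for $x\in B$ we get $h(t(g))=1$ and $f(x)=\int_{s^{-1}(x)}d\mu^{x}=1$ by the normalization $\mathrm{Vol}(s^{-1}(x),\mu^{x})=1$. The one genuinely delicate step — and the only place where properness is essential — is the smoothness and convergence of the defining integral: although $h\circ t$ is not compactly supported, properness of $\rho$ ensures that over any compact $K\subset M$ the arrows of $\mathrm{supp}(\overrightarrow{\rho})$ issuing from $K$ have targets in a fixed compact set, so near $K$ one may replace $h\circ t$ by a compactly supported function and apply the smoothness condition~2 of Definition \ref{def-Haar-system}. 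This is exactly the subtlety that the proper (normalized) Haar system is designed to handle, so I expect it to be the main obstacle to write out carefully, while the invariance and boundary-value checks are immediate.
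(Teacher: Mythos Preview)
Your proof is correct and follows essentially the same approach as the paper: start with a smooth Urysohn function separating $A$ and $B$, then average it with respect to a proper normalized Haar system, using right-invariance for $\G$-invariance and saturation plus normalization for the boundary values. If anything, you are more careful than the paper in spelling out the smoothness argument via properness of the Haar density.
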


Without the invariance condition, this is a basic property of smooth functions on manifolds. The idea is that, choosing any $f$ as above but possibly non-invariant, one replaces it by its average with respect to a proper normalized Haar system $\mu$:
\[\textrm{Av}_{\mu}(f)(x)= \int_{s^{-1}(x)} f(t(g))\ d\mu^x(g) .\]
While it is clear that $\textrm{Av}(f)$ vanishes on $A$, the normalization implies that it is $1$ on $B$. Also the invariance is immediate: 
if $x$ and $y$ are in the same orbit, i.e., if there exists an arrow $a: x\rightarrow y$ then, using the invariance of $\mu$ and $t(ga)= t(g)$, we find
\[ \textrm{Av}_{\mu}(f)(x)= \int_{s^{-1}(x)} f(t(g))\ d\mu^{x}(g)=  \int_{s^{-1}(y)} f(t(ga))\ d\mu^{y}(g)= \textrm{Av}_{\mu}(f)(y).\]
By similar techniques one proves the existence of invariant metrics on $\G$-vector bundles, or of other geometric structures.

\subsection{Induced measures/densities on the orbits}
\label{Induced measures/densities on the orbits}
The slogan that Haar systems and densities are related to measure theory along the orbits can be made very precise in the case of proper groupoids: in that case 
they induce (in a canonical fashion) measures/densities on the orbits (and conversely!). To explain this, let $\G$ be a proper groupoid over $M$ and let $\mu=\{\mu^x\}$ be a Haar system for $\G$. Above each $x\in M$ one has the isotropy Lie group $\G_x$ consisting of arrows that start and end at $x$ and the $s$-fiber above $x\in M$ is a principal $\G_x$-bundle over the orbit $\mathcal{O}_x$ of $\G$ through $x$, with projection map 
\begin{equation}\label{princ-=Gx-bdle}
t: s^{-1}(x)\rightarrow \mathcal{O}_x.
\end{equation}
The invariance of the family $\{\mu^{x}\}$ implies that each $\mu^x$ is an invariant measure on this bundle hence, since $\G_x$ is compact, $\mu^x$ corresponds to a measure on  $\mathcal{O}_x$  (cf. Example \ref{inv-meas}), namely $\mu_{\mathcal{O}_x}= t_!(\mu^x)$ or, in the integral formulation, 
\[ \int_{\mathcal{O}_x} f(y)\ d\mu_{\mathcal{O}_x}(y)= \int_{s^{-1}(x)} f(t(g))\ d\mu^x(g).\]
Using again the invariance of $\mu$, we see that each such $\mu_{\mathcal{O}_x}$ depends only on the orbit itself and not on the point $x$ in the orbit. Therefore any Haar system $\mu$ determines (and is determined by) a family of measures on the orbits of $\G$, 
\begin{equation}\label{measure-fam} 
\{ \mu_{\mathcal{O}}
\}_{\mathcal{O}- \textrm{orbit\ of}\ \G} .
\end{equation}
Of course, this is compatible with ``geometricity'', so that any Haar density $\rho\in C^\infty(M,\mathcal{D}_A)$ determines (and is determined by) a family of densities on the orbits of $\G$,
\begin{equation}\label{density-fam} 
\{ \rho_{\mathcal{O}}\in \mathcal{D}(\mathcal{O})\}_{\mathcal{O}- \textrm{orbit\ of}\ \G} .
\end{equation}
in such a way that $(\mu_{\rho})_{\mathcal{O}}=\mu_{\rho_{\mathcal{O}}}$. Explicitly, given $\rho$, the density $\rho_{\mathcal{O}}$ on the orbit through $x$ is $t_!(\rho^x)$, where $\rho^x= \overrightarrow{\rho}|_{s^{-1}(x)}$ is the induced density on the $s$-fiber - see (\ref{dens-ind-s-fiber}). Also, the discussion from Example \ref{inv-meas} tells us how to recover $\rho$ from this family (and similarly for $\mu$): consider $\mathfrak{g}_x=$ the isotropy Lie algebra at $x$ (the Lie algebra of $\G_x$), with corresponding Haar density denoted by $\mu_{\mathrm{Haar}}$ and then 
\begin{equation}\label{app-Haar-subtelty} 
\rho(x)\in \mathcal{D}_{A_x}\cong \mathcal{D}_{\mathfrak{g}_x}\otimes \mathcal{D}(T_x\mathcal{O})\ \ \textrm{equals\ to}\ \mu_{\mathrm{Haar}}\otimes \rho_{\mathcal{O}}(x).
\end{equation}

\begin{remark} \rm\ Therefore, for proper groupoids, a Haar system $\mu$ (Haar density $\rho$) is the same thing as a family of measures (densities) on the orbits, satisfying a certain smoothness condition that ensures that the reconstructed $\mu$ is smooth. Working this out we find the condition that
for any $f\in C_{c}^{\infty}(\G)$, the function 
\[ M\ni x \mapsto \int_{\mathcal{O}_x} \int_{\G_{x, y}} f(g)\ d\mu_{x, y}^{\mathrm{Haar}}(g) d\mu_{\rho_{\mathcal{O}_x}}(y) \in \mathbb{R}\]
is smooth. Here $\G_{x, y}$ is the space of arrows from $x$ to $y$; they are fibers of (\ref{princ-=Gx-bdle}), hence they come with a canonical Haar density (cf. Example \ref{inv-meas}) obtained by transporting the Haar density on $\G_x$ and $d\mu_{x, y}^{\mathrm{Haar}}$ stands for the resulting integration.
\end{remark}

\begin{corollary}\label{cor-rewrite-Haar}
If $\G$ is proper, there is a 1-1 correspondence between:
\begin{enumerate}
\item[1.] Haar densities $\rho$ of $\G$.
\item[2.] families (\ref{density-fam}) of non-trivial densities on the orbits of $\G$, smooth in the previous sense.
\end{enumerate}
Similarly for Haar systems.
\end{corollary}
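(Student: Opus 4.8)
The plan is to realize the asserted correspondence as a pair of mutually inverse assignments whose bijectivity is, fiber by fiber, nothing but a restatement of Example \ref{inv-meas}, and then to reconcile the two smoothness conditions. First I would record the two maps. In the forward direction a Haar density $\rho$ is sent to the family $\{\rho_{\mathcal{O}}\}$ of (\ref{density-fam}), where $\rho_{\mathcal{O}}=t_{!}(\rho^x)$ for any $x\in\mathcal{O}$ and $\rho^x$ is the induced density (\ref{dens-ind-s-fiber}) on the $s$-fiber. In the backward direction a family $\{\rho_{\mathcal{O}}\}$ is sent to the section $\rho$ of $\mathcal{D}_A$ prescribed pointwise by (\ref{app-Haar-subtelty}), i.e.\ $\rho(x)=\mu_{\mathrm{Haar}}\otimes\rho_{\mathcal{O}}(x)$ under the decomposition $\mathcal{D}_{A_x}\cong\mathcal{D}_{\mathfrak{g}_x}\otimes\mathcal{D}(T_x\mathcal{O})$ arising from $0\to\mathfrak{g}_x\to A_x\to T_x\mathcal{O}\to 0$.

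The key observation is that, $\G$ being proper, each isotropy group $\G_x$ is compact and $t\colon s^{-1}(x)\to\mathcal{O}_x$ of (\ref{princ-=Gx-bdle}) is a principal $\G_x$-bundle; moreover the right-invariance of $\overrightarrow{\rho}$ forces $\rho^x$ to be $\G_x$-invariant (apply condition $1$ of Definition \ref{def-Haar-system} to arrows $a\colon x\to x$). Example \ref{inv-meas} then supplies, for each fixed $x$, a bijection between $\G_x$-invariant densities on $s^{-1}(x)$ and densities on $\mathcal{O}_x$, whose inverse has exactly the shape $\beta\mapsto\rho^{\mathcal{O}}_{\mathrm{Haar}}\otimes\beta$; read at the unit $1_x$ through (\ref{dens-ind-s-fiber}) this is precisely the reconstruction formula (\ref{app-Haar-subtelty}), so the two maps are pointwise inverse. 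The remaining formal points are already implicit above: well-definedness of $\rho_{\mathcal{O}}$ (independence of the chosen $x\in\mathcal{O}$) is the invariance computation recorded before (\ref{measure-fam}); positivity of $\rho$ matches positivity of each $\rho_{\mathcal{O}}$; and the requirement in Definition \ref{def-Haar-density} that the $\G$-saturation of $\mathrm{supp}(\rho)$ be all of $M$ is precisely the condition that every $\rho_{\mathcal{O}}$ be non-zero, which is the force of the word \emph{non-trivial} in item $2$.

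The main obstacle, and the real content, is matching the two smoothness notions, and here some care is needed: in the non-regular case the isotropy Lie algebras $\mathfrak{g}_x=\Ker(\sharp_x)$ do not assemble into a smooth vector bundle, so the decomposition $\mathcal{D}_{A_x}\cong\mathcal{D}_{\mathfrak{g}_x}\otimes\mathcal{D}(T_x\mathcal{O})$ is merely pointwise and the smoothness of the reconstructed $\rho$ cannot be read off its two factors. I would therefore route everything through condition $2$ of Definition \ref{def-Haar-system}. The bridge is the Fubini identity for the principal bundle (\ref{princ-=Gx-bdle}), with fibers $\G_{x,y}$ carrying the transported Haar density:
\[ \int_{s^{-1}(x)}f(g)\ d\mu^x(g)=\int_{\mathcal{O}_x}\left(\int_{\G_{x,y}}f(g)\ d\mu^{\mathrm{Haar}}_{x,y}(g)\right)d\mu_{\rho_{\mathcal{O}_x}}(y),\qquad f\in C_{c}^{\infty}(\G). \]
This identity shows that smoothness of the right-hand side in $x$ for all $f$ — which is exactly the condition spelled out in the Remark preceding the corollary — is equivalent to condition $2$ of Definition \ref{def-Haar-system} for the reconstructed family $\{\mu^x\}$.

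It then remains to argue that condition $2$ is equivalent to $\rho$ being a smooth section of $\mathcal{D}_A$. The implication from a smooth section to a smooth Haar system is standard; for the converse, the non-obvious direction, I would localize near the unit section $M\hookrightarrow\G$, where $\overrightarrow{\rho}$ is determined by $\rho$ itself, and test against functions $f$ supported in a tubular neighborhood of $1_x$ in the $s$-fibers: the resulting integrals recover the coefficient of $\rho$ against a local frame of $A$ up to smooth factors, so their smoothness in $x$ forces $\rho$ to be smooth. Putting these equivalences together yields the bijection of the corollary. The identical argument, with the Haar density of $\mathfrak{g}_x$ replaced by the Haar measure of the compact group $\G_x$ and Example \ref{inv-meas} applied to measures rather than densities, settles the case of Haar systems.
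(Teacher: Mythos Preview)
Your proposal is correct and follows the same approach as the paper. In fact the paper gives no separate proof of this corollary at all: it is stated as an immediate summary of the discussion in Subsection~\ref{Induced measures/densities on the orbits}, namely the fiberwise bijection from Example~\ref{inv-meas} applied to the principal $\G_x$-bundle~(\ref{princ-=Gx-bdle}), the reconstruction formula~(\ref{app-Haar-subtelty}), and the Remark immediately preceding the corollary, where the smoothness condition on the family is \emph{defined} precisely as the condition that the double integral be smooth in $x$---i.e., tautologically as condition~2 of Definition~\ref{def-Haar-system} for the reconstructed system. Your Fubini identity is exactly what makes this tautology explicit, and your last paragraph (that condition~2 for a geometric Haar system forces $\rho$ itself to be a smooth section of $\mathcal{D}_A$) is a point the paper simply takes for granted in the passage before Definition~\ref{def-Haar-density}; you are being more careful there than the paper is.
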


The situation is even nicer in the regular case. For densities, we make use of the foliation on $M$ by the (connected components of the) orbits of $\G$, identified with the vector bundle 
$\F\subset TM$ of vectors tangent to the orbits.

\begin{corollary}
If $\G$ is proper and regular, there is a 1-1 correspondence between 
\begin{enumerate}
\item[1.] Haar densities $\rho$ of $\G$,
\item[2.]  positive densities of the bundle $\F$, $\rho_{\F}\in C^\infty(M, \mathcal{D}_{\F})$, 
with the property that the support of $\rho_{\F}$ meets each orbit of $\G$. 
\end{enumerate}
Moreover, $\rho$ is full if and only if $\rho_{\F}$ is strictly positive, and $\rho$ is proper if and only if 
the intersection of the support of $\rho_{\F}$ with each leaf is compact.
\end{corollary}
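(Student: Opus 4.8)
The plan is to reduce everything to the algebraic isomorphism $\cD_A\cong\cD_{\gg}\otimes\cD_{\F}$ together with Corollary \ref{cor-rewrite-Haar}. First I would exploit the two exact sequences of vector bundles over $M$ that are available in the regular case, relating $A$, the isotropy bundle $\gg=\Ker(\sharp)$ and the tangent bundle $\F$ of the foliation: from $0\to\gg\to A\xrightarrow{\sharp}\F\to 0$ and item 2 of Subsection \ref{Geometric measures: densities} one obtains a canonical isomorphism of line bundles $\cD_A\cong\cD_{\gg}\otimes\cD_{\F}$. Since $\G$ is proper and regular, the identity components of the isotropy groups $\G_x$ carry their normalized Haar densities, which assemble into the strictly positive smooth section $\kappa_{\mathrm{Haar}}\in\Gamma(\cD_{\gg})$ (Subsection \ref{The proper regular case}). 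I would then define the correspondence by $\rho_{\F}\mapsto\rho:=\kappa_{\mathrm{Haar}}\otimes\rho_{\F}$. Because $\kappa_{\mathrm{Haar}}$ is nowhere zero, this is a $C^\infty(M)$-linear bijection $C^\infty(M,\cD_{\F})\xrightarrow{\sim}C^\infty(M,\cD_A)$ that preserves both positivity and strict positivity and satisfies $\textrm{supp}(\rho)=\textrm{supp}(\rho_{\F})$.

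The listed conditions then translate immediately. Since the two supports agree and $\kappa_{\mathrm{Haar}}$ is strictly positive: $\rho$ is positive iff $\rho_{\F}$ is, and the $\G$-saturation of $\textrm{supp}(\rho)$ is all of $M$ iff $\textrm{supp}(\rho_{\F})$ meets every orbit; hence $\rho$ is a Haar density (Definition \ref{def-Haar-density}) iff $\rho_{\F}$ is positive with support meeting every orbit, and $\rho$ is full (strictly positive) iff $\rho_{\F}$ is strictly positive. Conceptually this is exactly Corollary \ref{cor-rewrite-Haar} specialized to the regular case: $\rho_{\F}$ restricts along each leaf $L$ to a density of $\F|_L=TL$, and (\ref{app-Haar-subtelty}) identifies that restriction with the induced orbit density $\rho_{\mathcal{O}}$ up to the fixed Haar normalization, so that a single global smooth section of $\cD_{\F}$ is precisely the data of a smooth family of orbit densities.

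The substantive point is properness. By Definition \ref{proper-dens}, $\rho$ is proper iff $s$ restricted to $t^{-1}(\textrm{supp}(\rho))$ is proper. I would analyse this fibrewise: writing $S=\textrm{supp}(\rho)=\textrm{supp}(\rho_{\F})$, the induced density $\rho^x=\overrightarrow{\rho}|_{s^{-1}(x)}$ of (\ref{dens-ind-s-fiber}) has support $t^{-1}(S\cap\mathcal{O}_x)\subset s^{-1}(x)$, and since $t\colon s^{-1}(x)\to\mathcal{O}_x$ is a principal bundle with compact structure group $\G_x$ it is a proper map; hence the fibre $(s|_{t^{-1}(S)})^{-1}(x)$ is compact iff $S\cap\mathcal{O}_x$ is compact. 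Leaf by leaf this is the stated condition, the leaf and orbit versions coinciding when the orbits are connected (e.g. for $s$-connected $\G$).

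The hard part will be to upgrade this fibrewise compactness to genuine properness of $s|_{t^{-1}(S)}$, that is, to control $\G(K,S)$ uniformly for every compact $K\subset M$. For this I would feed the leafwise-compact support into the slice description of proper groupoids used in the proof of Proposition \ref{equiv-proper} (each $\overline{\Sigma}_i$ compact and each $\G(K,\overline{\Sigma}_i)$ compact), together with the closedness of the orbits of a proper groupoid, to cover $\G(K,S)$ by finitely many compact pieces and conclude via the properness of $(s,t)$. This uniformity step, and the attendant care over whether one reads the compactness condition on the leaves or on the full (possibly disconnected) orbits, is where the real work lies; everything else is bookkeeping through the isomorphism $\cD_A\cong\cD_{\gg}\otimes\cD_{\F}$.
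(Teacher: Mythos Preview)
Your proposal is correct and follows precisely the route the paper intends: the corollary is stated without proof, as an immediate consequence of Corollary \ref{cor-rewrite-Haar} together with the decomposition $\cD_A\cong\cD_{\gg}\otimes\cD_{\F}$ and the reconstruction formula (\ref{app-Haar-subtelty}), and your argument unpacks exactly this. Your caution about the properness direction and the leaf-versus-orbit distinction is well placed; the paper leaves both implicit.
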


For Haar systems we obtain:

\begin{corollary}
If $\G$ is proper and regular, there is a 1-1 correspondence between:
\begin{enumerate}
\item[1.] Haar systems $\mu$ of $\G$.
\item[2.] families of non-trivial measures $\mu_{\mathcal{O}}$ on the orbits of 
$\G$ which are smooth in the sense that, for any $f\in C_{c}^{\infty}(M)$, the function
\[ x\mapsto \int_{\mathcal{O}_x} f(y)\ d\mu_{\mathcal{O}_x}(y)\]
is smooth. 
\end{enumerate}
\end{corollary}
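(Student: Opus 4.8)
The plan is to deduce this from the general proper case already recorded in Corollary~\ref{cor-rewrite-Haar} and to check that, under regularity, the smoothness condition there collapses to the stated one. Recall that the assignment $\mu\mapsto\{\mu_{\mathcal{O}}\}$ with $\mu_{\mathcal{O}_x}=t_!(\mu^x)$, together with its inverse built from the fiberwise Haar measures on the compact isotropy groups (Example~\ref{inv-meas}), already establishes a bijection between Haar systems and families of measures on the orbits in the proper case; here non-triviality of each $\mu_{\mathcal{O}}$ corresponds to non-triviality of each $\mu^x$. Disintegrating $\mu^x$ along the principal $\G_x$-bundle $t\colon s^{-1}(x)\to\mathcal{O}_x$ yields, for every $f\in C_c^\infty(\G)$,
\[ \int_{s^{-1}(x)} f\,d\mu^x=\int_{\mathcal{O}_x}\Bigl(\int_{\G_{x,y}} f\,d\mu^{\mathrm{Haar}}_{x,y}\Bigr)\,d\mu_{\mathcal{O}_x}(y), \]
so that condition~2 of Definition~\ref{def-Haar-system} is precisely the double-integral smoothness condition (call it (G)) underlying Corollary~\ref{cor-rewrite-Haar}. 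The entire content of the corollary is therefore the equivalence, for regular proper $\G$, of (G) with the simpler condition (S): that $x\mapsto\int_{\mathcal{O}_x}F\,d\mu_{\mathcal{O}_x}$ be smooth for all $F\in C_c^\infty(M)$.

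Both (G) and (S) are local in $M$, so I would argue in a neighborhood of a fixed orbit. Regularity provides the foliation $\F=\mathrm{im}(\sharp)$ by (connected components of) orbits, and by Proposition~2.5 of \cite{moerdijk} the identity components $\G_x^0$ form a smooth, locally trivial bundle of compact Lie groups, with the associated smooth family of Haar densities $\kappa_{\mathrm{Haar}}\in\Gamma(\mathcal{D}_{\mathfrak{g}})$ of Subsection~\ref{The proper regular case}; moreover the number of connected components of $\G_x$ is locally constant. Choosing a foliated chart together with a smooth local trivialization of this bundle identifies, near the orbit, each torsor $\G_{x,y}$ with a fixed compact Lie group $K$ (up to a locally constant, finite number of components), the orbits with the plaques, and the inner fiber integral with ordinary integration over $K$. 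In these coordinates (G) becomes the smoothness of an integral over $\text{plaque}\times K$ and (S) the smoothness of an integral over the plaque alone.

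In the local model the two conditions become inter-derivable. For (G)$\Rightarrow$(S) I would lift $F$ to the integrand $F\otimes\chi$ with $\chi\in C_c^\infty(K)$ of total mass $1$; since $K$ is compact this lift is admissible and its $K$-fiber integral returns $F$, so (G) gives (S). The substantive direction is (S)$\Rightarrow$(G): given $f\in C_c^\infty(\G)$, Fubini rewrites the $\text{plaque}\times K$ integral as the plaque integral of the $K$-fiber-averaged integrand $y\mapsto\int_{\G_{x,y}} f\,d\mu^{\mathrm{Haar}}_{x,y}$, and differentiation under the integral sign over the compact manifold $K$ shows that this averaged integrand is the restriction to the orbit of a function smooth on the chart; applying (S) to it then yields the smoothness of the full expression in (G).

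The step I expect to be the main obstacle is precisely this last reduction: neutralizing the basepoint dependence of $\G_{x,y}$ and promoting ``smooth along each orbit'' to ``restriction of a function smooth on $M$''. The honest input here is the local triviality of the identity-component isotropy bundle together with the locally constant component count, which together let the integration variable range over a fixed compact $K$; once that is in place, interchanging integration over $K$ with differentiation in the transverse directions is routine, and the equivalence (G)$\Leftrightarrow$(S)---hence the corollary, including the matching of the non-triviality conditions---follows.
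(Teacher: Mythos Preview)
The paper states this corollary without proof, as an immediate specialization of Corollary~\ref{cor-rewrite-Haar} to the regular case; your approach follows the same route and correctly isolates the content as the equivalence between the double-integral smoothness condition (G) and the simpler condition (S).

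There is, however, a gap in your (S)$\Rightarrow$(G) argument. In your local model, after averaging $f\in C_c^\infty(\G)$ over the compact fibre $K$ you obtain a smooth function $G(u,v',v)$ with $x=(u,v)$ the source and $y=(u,v')$ running over the plaque, and the quantity to control is $h(u,v)=\int_V G(u,v',v)\,d\mu_u(v')$. You then ``apply (S) to'' $G$; but (S) takes a \emph{single} test function $F(u,v')$ and returns smoothness in $u$, while your integrand $G(\cdot,\cdot,v)$ still carries the leafwise source coordinate $v$. So (S) only yields that $u\mapsto h(u,v)$ is smooth for each fixed $v$, not joint smoothness in $(u,v)$. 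This residual $v$-dependence is the genuine basepoint issue, and it is not removed by trivializing the isotropy bundle---it comes from the source argument of $f$ itself, which survives the $K$-average. The repair is standard but must be supplied: either observe that the linear map $F\mapsto\bigl(u\mapsto\int_V F(u,\cdot)\,d\mu_u\bigr)$ from $C_c^\infty(U\times V)$ to $C^\infty(U)$ has closed graph (pointwise convergence is controlled by positivity of the Radon measures $\mu_u$), hence is continuous, so smoothness of $v\mapsto G(\cdot,\cdot,v)$ transfers to $h$; or use positivity as in Proposition~\ref{1to1measures} to bound the $\mu_u$ locally uniformly, differentiate under $\int_V$ in $v$, and apply (S) to each $\partial_v^\beta G$ to get all mixed partials. (A minor point in the other direction: your lift $F\otimes\chi$ is not compactly supported in the source variable $v$; multiply by a further cut-off in $v$ and divide it out afterwards.)
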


The last two corollaries imply that, for proper regular Lie groupoids with connected $s$-fibers, having a Haar system/density for $\G$ is equivalent to having one for $\F$, where the notion of Haar measure/density for a foliation $\F$ is the one described by points 2. of the previous corollaries.

\bibliography{mybiblio}   
\bibliographystyle{plain}

\end{document}